\numberwithin{equation}{section}
\newcommand\N{\mathbb N}
\newcommand\R{\mathbb R}
\newcommand\mbb\mathbb
\newcommand\mbf\mathbf
\newcommand\mcal\mathcal
\newcommand\mfrak\mathfrak
\newcommand\mrm\mathrm
\newcommand\msf\mathsf
\renewcommand\a\alpha
\renewcommand\b\beta
\newcommand\g\gamma
\newcommand\G\Gamma
\renewcommand\d\delta
\newcommand\D\Delta
\newcommand\e\varepsilon
\newcommand\z\zeta
\renewcommand\t\theta
\newcommand\Th\Theta
\newcommand\la\lambda
\newcommand\La\Lambda
\newcommand\s\sigma
\newcommand\si\varsigma
\newcommand\Si\Sigma
\newcommand\ups\upsilon
\newcommand\U\Upsilon
\newcommand\ph\varphi
\renewcommand\o\omega
\renewcommand\O\Omega
\newcommand\wt\widetilde
\newcommand\wh\widehat
\newcommand\ol\overline
\newcommand\ul\underline
\newcommand\mr\mathring
\newcommand\ub\underbrace
\newcommand\pa\partial
\newcommand\n\nabla
\newcommand\fa\forall
\newcommand\ex\exists
\newcommand\es\emptyset
\newcommand\wk\rightharpoonup
\newcommand\inc\hookrightarrow
\newcommand\linf\varliminf
\newcommand\lsup\varlimsup
\newcommand\os\overset
\newcommand\us\underset
\newcommand\sr\stackrel
\newcommand\Ot\Leftarrow
\newcommand\To\Rightarrow
\newcommand\map\mapsto
\newcommand\ot\leftarrow
\newcommand\lot\longleftarrow
\newcommand\lto\longrightarrow
\newcommand\tot\leftrightarrow
\newcommand\ltot\longleftrightarrow
\newcommand\sm\backslash
\renewcommand\Cup\bigcup
\renewcommand\Cap\bigcap
\newcommand\sub\subset
\newcommand\Sub\Subset
\newcommand\sne\subsetneq
\newcommand\bus\supset
\newcommand\Bus\Supset
\newcommand\eq\equiv
\newcommand\ox\otimes
\newcommand\Ox\bigotimes
\newcommand\pl\oplus
\newcommand\Pl\bigoplus
\newcommand\x\times
\renewcommand\c\circ
\newcommand\q\quad
\renewcommand\l\left
\renewcommand\r\right
\newcommand\fr\frac
\definecolor{bondiblue}{rgb}{0.0, 0.58, 0.71}
\def\sideremark#1{\ifvmode\leavevmode\fi\vadjust{\vbox to0pt{\vss% the remark
			\hbox to 0pt{\hskip\hsize\hskip1em%                          will appear only
				\vbox{\hsize2.1cm\tiny\raggedright\pretolerance10000%          on the side
					\noindent #1\hfill}\hss}\vbox to15pt{\vfil}\vss}}}%
\newtheorem{Thm}{Theorem}[section]
\newtheorem{Lem}[Thm]{Lemma}
\newtheorem{Prop}[Thm]{Proposition}
\newtheorem{Rem}[Thm]{Remark}
\begin{document}

\title[Lane-Emden problem]
{Morse index, topological degree and
 local uniqueness of multi-spikes solutions to the Lane-Emden problem in dimension two}
\author[I. Ianni, P. Luo and S. Yan]{Isabella Ianni, Peng Luo, Shusen Yan}

% \address[Massimo Grossi]{Dipartimento di Matematica Guido Castelnuovo, Universit$\grave{a}$ Sapienza,
%  P.le Aldo Moro 5, 00185 Roma, Italy}
%\email{massimo.grossi@uniroma1.it}

 \address[Isabella Ianni]{Dipartimento di Scienze di Base e Applicate per l'Ingegneria, Sapienza
 Universit$\grave{a}$di Roma, Via Scarpa 16, 00161 Roma, Italy}
 \email{isabella.ianni@uniroma1.it}

  \address[Peng Luo]{School of Mathematics and Statistics, Key Laboratory of Nonlinear Analysis and Applications
(Ministry of Education), and Hubei Key Laboratory of Mathematical
Sciences,
Central China Normal University, Wuhan 430079, China}
  \email{pluo@ccnu.edu.cn}

 \address[Shusen Yan]{School of Mathematics and Statistics, Key Laboratory of Nonlinear Analysis and Applications
(Ministry of Education), Central China Normal University, Wuhan 430079, China}
\email{syan@ccnu.edu.cn}

%\thanks{Luo was supported  by NSFC grants (No.11701204, 11831009).}
\begin{abstract}
We consider {\sl multi-spike} positive solutions to  the  Lane-Emden problem in any  bounded smooth planar domain and compute  their Morse index, extending to the dimension $N=2$ classical theorems due to Bahri-Li-Rey (1995) \cite{BLR95} and Rey (1999) \cite{Rey} when $N\geq 4$ and $N=3$, respectively. Furthermore, by deeply investigating their concentration behavior, we also derive the total topological degree.
The Morse index and the degree counting formula yield a new local uniqueness result.
\end{abstract}
%\date{\today}
\maketitle
{\small
\keywords {\noindent {\bf Keywords:} {\small Lane-Emden equation, multi-spike solutions, linearized eigenvalue problem, Morse index, degree counting formula, local uniqueness
%, Green's function, Local Pohozaev identity
}
\smallskip
\newline
\subjclass{\noindent {\bf 2020 Mathematics Subject Classification:} 35A01 $\cdot$ 35B25 $\cdot$ 35J20 $\cdot$ 35J60}
}

\section{Introduction and main results}

\setcounter{equation}{0}

We consider  the Lane-Emden problem
\begin{equation}\label{1.1}
\begin{cases}
-\Delta u=u^{p}  &\text{in}~\Omega,\\[0.5mm]
u>0  &\text{in}~\Omega,\\[0.5mm]
u=0 &\text{on}~\partial \Omega,
\end{cases}
\end{equation}
where $\Omega\subset \R^2$ is a smooth bounded domain and $p>1$.
\vskip 0.1cm

It is well known that problem \eqref{1.1} admits at least one solution, while the question of  uniqueness or multiplicity is significantly
more complex,  depending on both the topology and geometry of the domain $\Omega$, as well as on the value of the exponent $p$ (see for instance \cite{Dancer88}). A complete understanding of this issue is still far from being reached both in dimension $N=2$ than in the higher dimensions.
For example, the solution is unique in any domain $\Omega$ provided that $p$ is sufficiently close  to $1$ (\cite{DamascelliGrossiPacella, DancerMA2003,  L94}). Moreover, for any value of $p$, uniqueness is known to hold when $\Omega$ is a ball (\cite{GNN}) or, more generally, when it is a symmetric and convex domain  with respect to $2$ orthogonal directions (\cite{Dancer88, DamascelliGrossiPacella}). More recently, uniqueness for problem \eqref{1.1} has also been established in general convex domains, without any symmetry assumption, for sufficiently large $p$ (see \cite{DGIP2019,GILY2021}), thus partially solving a longstanding conjecture proposed in \cite{Dancer88}).
Conversely, multiple solutions exist in suitable non-convex domains $\Omega$ (see \cite{Dancer88, EMP2006} for examples involving dumbbell shaped domains,  or \cite{EMP2006,BCGP} for the case of non-simply connected domains).
In particular all the solutions found in \cite{EMP2006} concentrate at a finite number of distinct points in $\Omega$, and vanish elsewhere as $p\rightarrow +\infty$. Note that all solutions of problem \eqref{1.1} are uniformly bounded in $p$ (see \cite{KS2018}). Notably, for the concentrating solutions built in  \cite{EMP2006}, the local maximum values around each concentration point converge all to $\sqrt e$.
The location of these concentration points is determined by  the Green function $G$ and  the Robin function $R$ associated with  $-\Delta$ in $\Omega$ under Dirichlet boundary conditions. Especially, in \cite{EMP2006},  the existence of a solution concentrating at $k\in\mathbb N$ distinct points $x_{\infty,1},\cdots, x_{\infty,k}\in\Omega$ is obtained whenever
$\boldsymbol{x_{\infty}} :=(x_{\infty,1},\cdots, x_{\infty,k})$
is a non-degenerate critical point of the  Kirchhoff-Routh function  $\Psi_{k}: \Omega^{k} \rightarrow \R$, which is defined as
\begin{equation}\label{stts}
\Psi_{k}(\boldsymbol{a}):= \sum^k_{j=1} \Psi_{k,j}(\boldsymbol{a}),\quad~\mbox{ with }~\Psi_{k,j}(\boldsymbol{a}):=  R\big(a_j\big)- \sum^{k}_{m=1,m\neq j} G\big(a_j,a_m\big),
\end{equation}
for $\boldsymbol{a}=(a_1,\cdots,a_k)$, with $a_j\in \Omega,$ $j=1,\cdots,k$.

\vskip 0.1cm
Recently, a rather complete characterization of the asymptotic behavior as $p\rightarrow +\infty$ of \emph{any} family $u_p$ of solutions to problem \eqref{1.1} has been obtained in \cite{DIP2017-1}, with sharp constants later provided in \cite{DGIP2018,T2019}, under the following  energy uniform bound assumption:
	\begin{equation*}
	\label{bound}
	\sup_{p}\ p \|\nabla u_p\|^2_{2}\leq C.
	\end{equation*}
It was shown that solutions $u_{p}$ necessarily concentrate, up to a subsequence, at a critical point $(x_{\infty,1},\cdots, x_{\infty,k})$ of $\Psi_{k}$, for some $k\in\mathbb N$, and vanish elsewhere, in the same way as the solutions found in \cite{EMP2006}. Moreover, their local maximum values converge to $\sqrt{e}$, as $p\rightarrow +\infty$, and the total energy is asymptotically quantized as an integer multiple of $8\pi e$:
		\[\lim_{p\rightarrow +\infty}p\|\nabla u_{p}\|^2_2=k\cdot 8\pi e.\]
Furthermore, the asymptotic local profile of the solutions (suitably rescaled around each concentration point) was also described, showing that it  is given by a radial solution  $U$ of the Liouville equation $-\Delta U = e^U$ in the whole $\mathbb R^2$.
\vskip 0.1cm
We refer to solutions having these properties as \emph{spike solutions}.
It is worth noting  that  least energy solutions of problem \eqref{1.1} are single-spike solutions (i.e. $k=1$). For previous characterizations of this simpler case we refer to  \cite{RenWei1, RenWei2, AdimurthiGrossi}. Observe that when $k=1$, the Kirchhoff-Routh  function $\Psi_{1}$ reduces to the Robin function $R$. Furthermore, by \cite{GrossiTakahashi}, when $\Omega$ is convex it must be $k=1$ for any spike-solution.

 \vskip 0.1cm

The sharp asymptotic  description provided in \cite{DIP2017-1} plays a crucial role in the proof of the  uniqueness result for convex domains developed in \cite{DGIP2019}. Indeed, a key ingredient to derive uniqueness in \cite{DGIP2019} is the computation of the \emph{Morse index} for single-spike solutions ($k=1$). A central observation is that for the \emph{Morse index} $m(u_p)$ (resp. the \emph{augmented Morse index} $m_0(u_p)$) of a solution $u_p$ to problem \eqref{1.1}, the following holds:
\begin{equation}\label{link}
m(u_p)=\sharp \big\{m\in \N: \lambda_{p,m}<1\big\}\quad (\text{resp. } m_0(u_p)=\sharp \big\{m\in \N: \lambda_{p,m}\leq 1\big\}),
\end{equation}
where $\lambda_{p,1}<\lambda_{p,2}\leq \lambda_{p,3}\leq \cdots$ denotes the sequence of eigenvalues for the  problem
\begin{equation}\label{03-16-1}
\begin{cases}
-\Delta v=\lambda p(u_p)^{p-1}v~&\mbox{in}~\Omega,\\
v=0~&\mbox{on}~\partial \Omega.
\end{cases}
\end{equation}
Thus, it becomes evident that the asymptotic information about the eigenvalues $\lambda_{p,j}$ can be derived from the asymptotic information on $u_p$, leading to the computation of $m(u_p)$. For $k$-spike solutions, this computation has been rigorously  carried out  in  \cite{DGIP2019} in the case $k=1$, while when $k\geq 2$, determining the exact  Morse index is still an open question, whose answer requires a deeper understanding of the asymptotic behavior of $u_p$.

\vskip 0.1cm

More recently, the characterization  in \cite{DIP2017-1}  has been refined in \cite{GILY2021}, where higher order asymptotic expansions have been established (see  Section \ref{subsection:asympt} for more details).

\vskip 0.1cm

One of the aims of this paper is  to compute the Morse index of any \emph{multi-spike} solution of problem \eqref{1.1}, making use of this sharpened analysis.

\vskip 0.1cm

Furthermore, the improved characterization in \cite{GILY2021} enabled the proofs in \cite{GILY2021}
of the nondegeneracy of multi-spike solutions, and of a local uniqueness result for single-spike solutions.
Here, \emph{local uniqueness} means that if $u$ and $\widetilde u$ are two solutions of problem \eqref{1.1} concentrating at the same points, then $u\equiv\widetilde u$, for  sufficiently large $p$. This property is particularly significant in non-convex domains, where  uniqueness \emph{tout-court} is not expected for problem \eqref{1.1}, and the focus instead shifts to counting the number of solutions.  However, for $k$-spike solutions, with $k\geq 2$,
local uniqueness remains an open problem. Indeed, dealing with multi-spike solutions is considerably more intricate, as the interactions between the spikes must be carefully taken into account. In this work, we also explore the local uniqueness property for multi-spike solutions, which, as we will see, is closely linked to the computation of the Morse index.

\vskip 0.1cm

Our first result concerns the computation of the Morse index for any \emph{multi-spike} solution of problem \eqref{1.1}.
To state it precisely, let us denote by $m(\boldsymbol{a},f)$
and $m_0(\boldsymbol{a},f)$ the \emph{Morse and augmented Morse index}, respectively,  of $\boldsymbol{a}=(a_1,\cdots,a_k)\in\Omega^k$ as a critical point of a $C^2$ function $f:\Omega^k\rightarrow \mathbb R$, that is:
\begin{equation}\label{def:MorseOfAPointf}
\begin{array}{lr}
m(\boldsymbol{a},f):=\sharp \Big\{l\in \{1,2,\cdots,2k\}: \theta_l<0 \Big\},\,\,\, \,\,\,
m_0(\boldsymbol{a},f):=\sharp \Big\{l\in \{1,2,\cdots,2k\}: \theta_l\leq 0 \Big\},
\end{array}
\end{equation}
where $\theta_1\leq \theta_2\cdots\leq \theta_{2k}$ are the eigenvalues of the Hessian matrix $D^2 f(\boldsymbol{a})$.
Moreover, we say that the critical point $\boldsymbol{a}$ of $f$ is \emph{nondegenerate} if the Hessian matrix $D^2 f(\boldsymbol{a})$ is nondegenerate.
We have:
\begin{Thm}\label{th1.1}
Let $k\in\mathbb N$, $u_p$ be a $k$-spike solution of problem \eqref{1.1}, and let  $x_{\infty,1},\cdots,x_{\infty,k}\in\Omega$ be its $k$ distinct concentration points.
Then there exists $\hat p>1$ such that  the following holds:
\begin{equation*}
k\leq k+m(\boldsymbol{\mathrm x}_{\infty}, \Psi_{k})\leq m(u_p)\leq m_0(u_p)\leq k+m_0(\boldsymbol{\mathrm x}_{\infty}, \Psi_{k})\leq 3k,\,\,\,\,\mbox{ for }~~p\geq\hat p,
\end{equation*}
where $\Psi_{k}$ is the function in \eqref{stts} and $\boldsymbol{\mathrm x}_{\infty}:=\big(x_{\infty,1},\cdots,x_{\infty,k}\big)$ is a critical point of $\Psi_{k}$. Furthermore, if $\boldsymbol{\mathrm x}_{\infty}$ is nondegenerate, then ($u_p$ is non-degenerate and)
\begin{equation*}
m(u_p)=k+m(\boldsymbol{\mathrm x}_{\infty},\Psi_{k})\in [k,3k],\,\,\,\,\mbox{ for }~~p\geq\hat p.
\end{equation*}
\end{Thm}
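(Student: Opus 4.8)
The plan is to connect the eigenvalue problem \eqref{03-16-1} to the Hessian of the Kirchhoff-Routh function $\Psi_k$ via a careful asymptotic analysis of the eigenvalues $\lambda_{p,m}$ as $p\to+\infty$, using the sharp expansions from \cite{GILY2021}. The heuristic is standard: around each concentration point $x_{\infty,j}$, after rescaling, $u_p$ looks like a solution $U$ of the Liouville equation $-\Delta U=e^U$, and the linearized operator $-\Delta - e^U$ on $\R^2$ has a three-dimensional kernel spanned by $\partial_{x_1}U$, $\partial_{x_2}U$ (the two translations) and the dilation $2 + x\cdot\nabla U$. So for a $k$-spike solution one expects $3k$ "small" eigenvalues of \eqref{03-16-1} clustering near $1$ (i.e. $\lambda_{p,m}\to 1$ for $m=1,\dots,3k$), while the rest stay uniformly away: $\liminf_p \lambda_{p,3k+1}>1$. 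This already gives, via \eqref{link}, the two-sided bound $k\le m(u_p)\le m_0(u_p)\le 3k$ once one checks the $k$ "dilation" directions always contribute $\lambda_{p,m}<1$ (the least energy single-spike computation in \cite{DGIP2019} is the model case: there $m(u_p)=1$), which pins down $k$ of the $3k$ small eigenvalues on the correct side of $1$.

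The refinement to the exact count requires the next-order term. First I would establish the clustering: use $U$ and its kernel to build $3k$ approximate eigenfunctions, apply the min-max characterization of $\lambda_{p,m}$ to get $\lambda_{p,m}=1+o(1)$ for $m\le 3k$, and a separate argument (orthogonality to the approximate kernel plus the spectral gap of the Liouville linearization on its orthogonal complement) to show $\lambda_{p,3k+1}\ge 1+\delta$ for some $\delta>0$. Then, for the $2k$ translation-type small eigenvalues, I would perform a Lyapunov–Schmidt–type reduction: project \eqref{03-16-1} onto the $2k$-dimensional approximate kernel and show that, after the correct normalization (dividing by $\|\nabla u_p\|_2^2$ or an equivalent scale, and keeping track of the $\frac1p$ and $\log p$ factors appearing in the expansions of \cite{GILY2021}), the $2k\times 2k$ reduced matrix governing whether $\lambda_{p,m}\lessgtr 1$ is asymptotic to a positive multiple of $D^2\Psi_k(\boldsymbol{\mathrm x}_\infty)$. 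Consequently the number of translation eigenvalues with $\lambda_{p,m}<1$ equals $m(\boldsymbol{\mathrm x}_\infty,\Psi_k)$ and the number with $\lambda_{p,m}\le 1$ equals $m_0(\boldsymbol{\mathrm x}_\infty,\Psi_k)$; adding the $k$ dilation eigenvalues (which are $<1$) and invoking \eqref{link} yields
\[
k+m(\boldsymbol{\mathrm x}_\infty,\Psi_k)\le m(u_p)\le m_0(u_p)\le k+m_0(\boldsymbol{\mathrm x}_\infty,\Psi_k).
\]
When $\boldsymbol{\mathrm x}_\infty$ is nondegenerate, $D^2\Psi_k(\boldsymbol{\mathrm x}_\infty)$ has no zero eigenvalue, so $m=m_0$ for the point and the chain collapses to the equality $m(u_p)=k+m(\boldsymbol{\mathrm x}_\infty,\Psi_k)$; moreover no $\lambda_{p,m}$ equals $1$, so $u_p$ is itself nondegenerate. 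The bound $m(\boldsymbol{\mathrm x}_\infty,\Psi_k)\le 2k$ is automatic from \eqref{def:MorseOfAPointf}, giving the range $[k,3k]$.

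The main obstacle is the delicate second-order analysis for the $2k$ translation directions: one must compute the reduced matrix precisely enough to recognize $D^2\Psi_k$, which means controlling the interaction terms between distinct spikes (the off-diagonal $-G(a_j,a_m)$ contributions in \eqref{stts}) and the self-interaction boundary effect (the $R(a_j)$ terms), all at the right order in $\frac1p$. This is exactly where the higher-order expansions of \cite{GILY2021} are indispensable: the leading behavior of $u_p$ only sees the Liouville bubbles and is "blind" to the domain geometry, so the geometry — hence $\Psi_k$ — enters only through the correction terms, and any sloppiness in their estimates would destroy the identification. A secondary technical point is ensuring the approximate eigenfunctions for distinct spikes are asymptotically orthogonal in the relevant weighted inner product $\int p u_p^{p-1}\varphi\psi$, so that the $3k$-dimensional reduced problem genuinely block-diagonalizes into the $k$ single-spike blocks plus the coupling encoded by $D^2\Psi_k$; this should follow from the exponential localization of the bubbles but must be checked against the $p$-dependent weight.
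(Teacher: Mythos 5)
Your high-level strategy (asymptotic eigenvalue analysis via \eqref{link}, finite-dimensional reduction onto an approximate kernel, identification of the reduced matrix with $D^2\Psi_k(\boldsymbol{\mathrm x}_\infty)$) is the same as the paper's, and the role you assign to the $2k$ translation modes is correct. However, the conceptual picture of the remaining modes is wrong in ways that would break the execution.

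First, the $k$ eigenvalues $\lambda_{p,1},\dots,\lambda_{p,k}$ do \emph{not} cluster near $1$; they tend to $0$. The paper's Proposition \ref{prop:ExpansionsFirstGroup} shows $\lambda_{p,l}\le \tfrac1p+O(\tfrac1{p^2})$ for $l\le k$, and the rescaled eigenfunctions $\widetilde v_{p,l,j}$ converge near each spike to \emph{constants} — that is, to bounded harmonic functions, which are precisely the $\lambda=0$ modes of the limit problem in Lemma \ref{lemma:limitEigenvalueProblem}. So the $k$ eigenvalues guaranteed to be below $1$ come from these ``constant'' modes, one per spike; in 2D the Liouville linearization admits a $\lambda=0$ sector (constants) that has no analogue in the $N\ge3$ Aubin--Talenti picture, and it is this sector, not the dilation, that produces the base contribution of $k$ to the Morse index.

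Second, the dilation mode $\frac{8-|y|^2}{8+|y|^2}$ (your $2+x\cdot\nabla U$, up to a factor) shows up at $\lambda_{p,3k+1}$, and Proposition \ref{prop:expansionLastGroup} shows $\lambda_{p,3k+1}\ge 1+\tfrac6p(1+o(1))$ — it sits \emph{above} $1$. So the dilation directions contribute nothing to the Morse index, and your attribution of the $k$ below-$1$ eigenvalues to dilations is reversed. Third, there is no uniform gap: $\lambda_{p,3k+1}\to 1$, not $\ge 1+\delta$. The orthogonal complement of the approximate kernel does not buy you a fixed spectral gap for the Liouville linearization on $\R^2$; in the paper the dilation eigenvalue is pinned down precisely by first showing its translation components vanish via orthogonality to the $2k$ already-analyzed eigenfunctions (Lemma \ref{lemma:tildevo}, relation \eqref{eachPointGivesContribution2}), and then applying the Pohozaev identity \eqref{3-12-04} in Proposition \ref{lemma:autofunz2casi} to extract the exact $\tfrac6p$ rate. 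Without these corrections, the count $m(u_p)=k+m(\boldsymbol{\mathrm x}_\infty,\Psi_k)$ cannot actually be verified along the route you describe.
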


\vskip 0.1cm

Theorem \ref{th1.1} extends to dimension $N=2$ classical results on Morse index computation
for  \emph{blowing-up} positive solutions of the higher dimensional Lane-Emden equation, originally established  by  Bahri-Li-Rey for $N\geq 4$ (\cite{BLR95}) and by Rey for $N=3$  (\cite{Rey}). A different, unified proof valid for any dimension  $N\geq 3$ was later provided by Grossi-Pacella (\cite{GP2005}) for {\sl single blowing-up} solutions, and by Choi-Kim-Lee (\cite{CKL16}) for {\sl multiple blowing-up} solutions. This new approach relies on analyzing the  linearized eigenvalue problem \eqref{03-16-1}  at a blowing-up solution $u_{p}$, as $p\rightarrow {(N+2)/(N-2)}^{-}$, leveraging the connection in \eqref{link}. Similar to \cite{GP2005,CKL16}, our proof  is based on obtaining asymptotic estimates for the eigenvalues of problem \eqref{03-16-1}, for $p$ sufficiently large. This is achieved by fully exploiting the sharp asymptotic characterization  of  multi-spike solutions $u_{p}$ established in  \cite{DIP2017-1,DGIP2018}, along with its significant refinements in \cite{GILY2021}. However, the similarity between the two cases ends here. Indeed, the asymptotic behavior of multi-spike solutions of problem \eqref{1.1} as $p\rightarrow +\infty$ is fundamentally different from that of blowing-up solutions  as $p\rightarrow {(N+2)/(N-2)}^{-}$ in the higher dimensional case $N\geq 3$. For instance, unlike blowing-up solutions, multi-spike solutions remain uniformly bounded.
In particular, we emphasize that  in the proof of Theorem \ref{th1.1},  higher order expansions are essential, due to the fact that, as $p\rightarrow +\infty$, the pure power nonlinearity $u^p$ in the equation \eqref{1.1} is not close enough to the exponential nonlinearity $e^U$ which appears in the \emph{limit problem}.  Consequently, the computations become quite intricate, making the proof particularly challenging from a technical standpoint. We stress that, when $\boldsymbol{\mathrm x}_{\infty}$ is non-degenerate, we also re-obtain the nondegeneracy of the multi-spike solution $u_p$, which was already known from \cite{GILY2021}. We recall that $u_p$ is nondegenerate if and only if $\lambda=1$ is not an eigenvalue for problem \eqref{03-16-1}.

\vskip 0.1cm

Our next result is a degree counting formula for the $k$-spike solutions of problem \eqref{1.1}.
\begin{Thm} \label{theorem:topological-degree-formula}
Let $k\in\mathbb N$ and $\boldsymbol{\mathrm x}_{\infty}:=(x_{\infty,1},\cdots,x_{\infty,k})\in \Omega^{k}$, $x_{\infty,i}\neq x_{\infty,j}$ if $i\neq j$ be
a critical point of the Kirchhoff-Routh function $\Psi_{k}$ (see \eqref{stts} for its definition).
If $\boldsymbol{\mathrm x}_{\infty}$ is nondegenerate then the total Leray-Schauder degree of all  $k$-spike solutions of problem \eqref{1.1} concentrating at $\boldsymbol{\mathrm x}_{\infty}$ is given by
\[(-1)^{k+m(\boldsymbol{\mathrm x}_{\infty},\Psi_{k})},\]
where
$m(\boldsymbol{\mathrm x}_{\infty},\Psi_{k})$ is the Morse index of $\Psi_{k}$ at the critical point $\boldsymbol{\mathrm x}_{\infty}$, as defined in \eqref{def:MorseOfAPointf}.
\end{Thm}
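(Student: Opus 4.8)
The plan is to obtain the degree counting formula by combining the Morse index information from Theorem \ref{th1.1} with a finite-dimensional Lyapunov-Schmidt reduction, exploiting the fact that, for $p$ large, the linearized operator at any $k$-spike solution is invertible (nondegeneracy, which is re-derived in Theorem \ref{th1.1} when $\boldsymbol{\mathrm x}_\infty$ is nondegenerate). First I would fix a small neighborhood $\mathcal{O}$ in the natural function space of the set of $k$-spike solutions concentrating at $\boldsymbol{\mathrm x}_\infty$; by the concentration analysis of \cite{DIP2017-1,DGIP2018,GILY2021}, all such solutions lie in a bounded region and, after rescaling, look like a sum of $k$ Liouville bubbles centered near $\boldsymbol{\mathrm x}_\infty$. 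Writing problem \eqref{1.1} as $u = (-\Delta)^{-1}(u_+^p) =: T_p(u)$ with $T_p$ compact on $C(\overline\Omega)$ (or on $H_0^1$), the total Leray-Schauder degree in question is $\deg(\mathrm{Id}-T_p,\mathcal{O},0)$, which is well-defined and stable for $p\geq\hat p$ since there are no solutions on $\partial\mathcal{O}$.

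Next I would invoke the excision/sum property of the degree: if one can show that for large $p$ the only solutions in $\mathcal{O}$ are the (finitely many, in fact, by local uniqueness) genuine $k$-spike solutions $u_p$, and each is nondegenerate, then $\deg(\mathrm{Id}-T_p,\mathcal{O},0)=\sum (-1)^{\mu(u_p)}$ where $\mu(u_p)$ is the number of eigenvalues of $\mathrm{Id}-T_p'(u_p)$ in $(-\infty,0)$. The standard spectral computation identifies $\mu(u_p)$ with the Morse index $m(u_p)$ via the eigenvalue correspondence \eqref{link}: an eigenvalue $\mu$ of $T_p'(u_p)=p(-\Delta)^{-1}(u_p^{p-1}\,\cdot\,)$ exceeds $1$ precisely when the corresponding $\lambda_{p,m}=1/\mu<1$, so $\mu(u_p)=\sharp\{m:\lambda_{p,m}<1\}=m(u_p)$. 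By Theorem \ref{th1.1}, under nondegeneracy of $\boldsymbol{\mathrm x}_\infty$ this equals $k+m(\boldsymbol{\mathrm x}_\infty,\Psi_k)$, which is the \emph{same} value for every $k$-spike solution concentrating at $\boldsymbol{\mathrm x}_\infty$. Hence each solution contributes $(-1)^{k+m(\boldsymbol{\mathrm x}_\infty,\Psi_k)}$ with the same sign.

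It then remains to show the total count is $(-1)^{k+m(\boldsymbol{\mathrm x}_\infty,\Psi_k)}$ rather than an odd multiple of it, i.e. that the \emph{signed} sum collapses to a single $\pm1$. The cleanest route is a Lyapunov-Schmidt reduction: decompose $H_0^1(\Omega)=\mathcal{K}_p\oplus\mathcal{K}_p^\perp$ where $\mathcal{K}_p$ is the $2k$-dimensional approximate kernel spanned by the translation-type functions $\partial_{a}PU_{\delta_i,a_i}$, solve the infinite-dimensional part uniquely (using the nondegeneracy of the limit problem and the invertibility of the linearized operator on $\mathcal{K}_p^\perp$, established in \cite{GILY2021}), and reduce the problem to finding critical points of a reduced energy $J_p:\mathcal{O}_p\subset\Omega^k\to\mathbb R$ whose expansion is, up to lower-order terms and a positive multiplicative constant, governed by $\Psi_k$. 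By the homotopy invariance of the degree and the standard fact that the Leray-Schauder degree of $\mathrm{Id}-T_p$ on $\mathcal{O}$ equals the Brouwer degree of $\nabla J_p$ on $\mathcal{O}_p$ (this is the content of the reduction, cf. the arguments in \cite{CKL16} adapted to $N=2$), one gets
\begin{equation*}
\deg(\mathrm{Id}-T_p,\mathcal{O},0)=\deg_{\mathrm{Brouwer}}(\nabla J_p,\mathcal{O}_p,0)=\deg_{\mathrm{Brouwer}}(\nabla \Psi_k,\,B_\rho(\boldsymbol{\mathrm x}_\infty),0)=(-1)^{m(\boldsymbol{\mathrm x}_\infty,\Psi_k)+?},
\end{equation*}
and a careful bookkeeping of the $2k$ vs. $k$ dimensional reductions (the $k$ extra ``scaling/height'' directions each contribute a definite sign $-1$, matching the $+k$ in the exponent) produces the factor $(-1)^k$. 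I expect the main obstacle to be precisely this last bookkeeping: one must verify that the reduced map on the full $(2k+k)$-dimensional parameter space (positions \emph{and} concentration parameters) has the stable sign asserted, i.e. that the ``non-position'' block of the Hessian of the reduced energy is negative definite with exactly $k$ negative eigenvalues for $p$ large, and that no spurious solutions of $\mathrm{Id}-T_p=0$ enter $\mathcal{O}$ from outside the concentration regime — both of which rely essentially on the sharp higher-order expansions of \cite{GILY2021} and on the Morse index identity of Theorem \ref{th1.1}.
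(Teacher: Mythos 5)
Your overall picture (reduce to a finite-dimensional problem, deform, count signs, with the $k$ amplitude directions contributing $(-1)^k$ and the position directions contributing $(-1)^{m(\boldsymbol{\mathrm x}_{\infty},\Psi_k)}$) matches the paper's, but the specific mechanism you propose for the reduction is exactly the one the authors identify as failing here. The standard Lyapunov--Schmidt identity $\deg(\mathrm{Id}-T_p,\mathcal O,0)=\deg_{\mathrm{Brouwer}}(\nabla J_p,\mathcal O_p,0)$ requires controlling $\partial_{\xi}\omega_p$, because $\nabla_\xi J_p$ pairs the residual $I_p'(\bar W_\xi+\omega_p(\xi))$ against $\partial_\xi \bar W_\xi+\partial_\xi\omega_p(\xi)$, and the residual contains components along $\Delta\frac{\partial P\bar W_{p,j}}{\partial\xi_{j,h}}$, whose norm is of order $e^{p/4}/\sqrt p$ since $\varepsilon_{p,j}\sim e^{-p/4}$. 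Because of this exponential smallness of the concentration scale, the derivatives of the error term are, as the introduction puts it, ``uncontrollable'', and your reduced-energy route stalls precisely there — not at the final sign bookkeeping, which is the easy part. The paper's workaround is twofold: (i) it first proves (Propositions \ref{prop:summaryImporvedAsympt} and \ref{prop:upHasTheFormInEMP}, requiring the new radial correctors $w_0,w_1$ and an expansion of $w_{p,j}$ to order $p^{-3}$) that \emph{every} $k$-spike solution lies in the explicit set $S_p$ of \eqref{35-30-8}; (ii) it then computes the degree of a map $A$ defined on the full product space $S_p\subset\mathbb R^k\times\Omega^k\times H^1_0$, whose second block uses the test functions $\eta_j\,\partial_{y_h}(\bar W_{\boldsymbol{\mathrm\alpha},p}+\omega)$ rather than $\frac{\partial P\bar W_{p,j}}{\partial\xi_{j,h}}$; integrating by parts against the cut-off $\eta_j$ moves all estimates away from the concentration points and avoids $\partial_\xi\omega$ entirely. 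The map $A$ is then deformed to a decoupled map $B$ (Proposition \ref{p20-7-9}), and the degree factorizes.

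There is also a structural problem with your first two paragraphs: summing $(-1)^{m(u_p)}$ over the solutions only shows that the total degree equals $q\,(-1)^{k+m(\boldsymbol{\mathrm x}_{\infty},\Psi_k)}$ where $q$ is the (unknown) number of solutions, so it does not advance the proof of Theorem \ref{theorem:topological-degree-formula} at all; determining that the total is a single $(-1)^{k+m(\boldsymbol{\mathrm x}_{\infty},\Psi_k)}$ is the entire content of the theorem and must come from an independent computation. Worse, you invoke ``local uniqueness'' to get finiteness, but Theorem \ref{theorem:local-uniqueness} is deduced \emph{from} the degree formula (together with Theorem \ref{th1.1}), so this is circular as written; finiteness should instead be extracted from nondegeneracy plus compactness of the solution set, as in the paper's proof of Theorem \ref{theorem:local-uniqueness}. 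In short, the Morse-index count belongs to the proof of local uniqueness, not to the proof of the degree formula, and the degree formula itself needs the $S_p$-localization and the modified test functions to get off the ground.
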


Last, as a consequence of Theorem \ref{th1.1} and of Theorem \ref{theorem:topological-degree-formula}, we derive the following local uniqueness result for multi-spike solutions of problem \eqref{1.1}:
\begin{Thm}
 \label{theorem:local-uniqueness}
 Let $k\in\mathbb N$, and  $\boldsymbol{\mathrm x}_{\infty}:=(x_{\infty,1},\cdots,x_{\infty,k})\in \Omega^{k}$, $x_{\infty,i}\neq x_{\infty,j}$ if $i\neq j$ be
a critical point of the Kirchhoff-Routh function $\Psi_{k}$ (see \eqref{stts} for the definition).
If $\boldsymbol{\mathrm x}_{\infty}$ is nondegenerate then the $k$-spike solution of \eqref{1.1}  concentrating at  $\boldsymbol{\mathrm x}_{\infty}$ is unique (and nondegenerate) for $p$ large.
\end{Thm}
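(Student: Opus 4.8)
The plan is to deduce Theorem~\ref{theorem:local-uniqueness} by combining the Morse index computation of Theorem~\ref{th1.1} with the degree counting formula of Theorem~\ref{theorem:topological-degree-formula}, via the standard bootstrap that \emph{a global degree equal to the local degree of a single nondegenerate solution forces uniqueness}. Concretely, suppose by contradiction that for a sequence $p_n\to+\infty$ there exist two distinct $k$-spike solutions $u_{p_n}$ and $\widetilde u_{p_n}$ of \eqref{1.1}, both concentrating at the same nondegenerate critical point $\boldsymbol{\mathrm x}_{\infty}$ of $\Psi_k$. By Theorem~\ref{th1.1}, for $n$ large each of $u_{p_n},\widetilde u_{p_n}$ is nondegenerate in the sense that $\lambda=1$ is not an eigenvalue of the linearized problem \eqref{03-16-1}, and moreover the Morse index of each equals $k+m(\boldsymbol{\mathrm x}_{\infty},\Psi_k)$. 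Hence by the standard identification of the Leray--Schauder index of a nondegenerate solution $v$ with $(-1)^{m(v)}$ (which holds because the compact operator $K_p(v):=(-\Delta)^{-1}(v^{p})$ is Fréchet differentiable and $I-K_p'$ is invertible exactly when $\lambda=1$ is not an eigenvalue of \eqref{03-16-1}), each of the two solutions contributes $(-1)^{k+m(\boldsymbol{\mathrm x}_{\infty},\Psi_k)}$ to the degree, and so do any further solutions concentrating at $\boldsymbol{\mathrm x}_{\infty}$; all these contributions have the \emph{same sign}.

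The second step is to isolate the family of $k$-spike solutions concentrating at $\boldsymbol{\mathrm x}_{\infty}$ inside a fixed open set on which the degree is computed, so that Theorem~\ref{theorem:topological-degree-formula} applies. Using the sharp asymptotic characterization from \cite{DIP2017-1,DGIP2018,GILY2021}, one knows that any $k$-spike solution concentrating at $\boldsymbol{\mathrm x}_{\infty}$ lies, for $p$ large, in an explicit small neighbourhood $\mathcal N_p$ (in the relevant function space, e.g. $C(\overline\Omega)$ or $H_0^1(\Omega)$, after the usual rescaling/parametrization used to set up the degree in the proof of Theorem~\ref{theorem:topological-degree-formula}), and that no solutions lie on $\partial \mathcal N_p$; thus the Leray--Schauder degree $\deg(I-K_{p},\mathcal N_p,0)$ is well defined and, by Theorem~\ref{theorem:topological-degree-formula}, equals $(-1)^{k+m(\boldsymbol{\mathrm x}_{\infty},\Psi_k)}$.

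The third step is the counting contradiction. On one hand, by excision/additivity of the degree, $\deg(I-K_{p_n},\mathcal N_{p_n},0)$ equals the sum of the local indices of the finitely many $k$-spike solutions of \eqref{1.1} concentrating at $\boldsymbol{\mathrm x}_{\infty}$ contained in $\mathcal N_{p_n}$ (finiteness follows from nondegeneracy: nondegenerate solutions are isolated, and $\mathcal N_{p_n}$ can be taken with compact closure containing no boundary solutions). On the other hand, each such local index equals $(-1)^{k+m(\boldsymbol{\mathrm x}_{\infty},\Psi_k)}$ by Step~1. Therefore the number of these solutions, call it $\ell_n\ge 1$, satisfies $\ell_n\cdot(-1)^{k+m(\boldsymbol{\mathrm x}_{\infty},\Psi_k)}=(-1)^{k+m(\boldsymbol{\mathrm x}_{\infty},\Psi_k)}$, forcing $\ell_n=1$. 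This contradicts the existence of the two distinct solutions $u_{p_n}\neq\widetilde u_{p_n}$, and proves uniqueness for $p$ large; nondegeneracy is then part of what was already established via Theorem~\ref{th1.1}.

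The main obstacle I expect is not the degree bookkeeping but the \emph{a priori localization} in Step~2: one must be certain that \emph{every} $k$-spike solution concentrating at $\boldsymbol{\mathrm x}_{\infty}$ is captured inside the open set $\mathcal N_p$ on which Theorem~\ref{theorem:topological-degree-formula} computes the degree, with no solutions escaping to $\partial\mathcal N_p$. This is exactly where the refined asymptotic expansions of \cite{GILY2021} are needed: they pin down the rescaled profile near each $x_{\infty,j}$ and the location of the concentration points to sufficiently high order that the solution set near $\boldsymbol{\mathrm x}_{\infty}$ coincides, for large $p$, with the one parametrized in the construction underlying Theorem~\ref{theorem:topological-degree-formula}. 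A secondary technical point is verifying that the Leray--Schauder index of each nondegenerate solution is indeed $(-1)^{m(u_p)}$ with $m(u_p)$ as in \eqref{link}; this is standard but requires checking that the linearization $p\,u_p^{p-1}$ defines a compact perturbation and that the count of eigenvalues $<1$ of \eqref{03-16-1} equals the number of negative eigenvalues of $I-K_{p}'(u_p)$, which follows from \eqref{link} together with Theorem~\ref{th1.1}.
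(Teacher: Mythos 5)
Your proposal is correct and follows essentially the same route as the paper: combine the Morse index computation (Theorem~\ref{th1.1}), which together with the standard identification of the Leray--Schauder index of a nondegenerate solution with $(-1)^{\text{Morse index}}$ shows each $k$-spike solution concentrating at $\boldsymbol{\mathrm x}_{\infty}$ contributes $(-1)^{k+m(\boldsymbol{\mathrm x}_{\infty},\Psi_k)}$, with the total degree formula of Theorem~\ref{theorem:topological-degree-formula}, so the number of such solutions must equal~$1$. The localization issue you correctly flag as the main obstacle is exactly what Proposition~\ref{prop:upHasTheFormInEMP} (via the refined expansion of Proposition~\ref{prop:summaryImporvedAsympt}) resolves, by showing that \emph{every} $k$-spike solution concentrating at $\boldsymbol{\mathrm x}_{\infty}$ lies in the interior of the set $S_p$ in~\eqref{35-30-8} on which the degree is computed.
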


The proof of Theorem  \ref{theorem:topological-degree-formula} consists of reducing the problem  to a finite-dimensional one and then applying classical properties of degree theory to derive the conclusion, as in \cite{cl1,cl2} for the mean field equation.
To achieve this goal, the crucial step is to find a suitable set $S_p$ (see \eqref{35-30-8}), which contains all the $k$-spike solutions with
concentration point $\boldsymbol{\mathrm x}_{\infty}$, so that the Leray-Schauder degree can be computed in $S_p$.
We observe that the construction of multiple concentrating solutions in \cite{EMP2006} relies on a Lyapunov-Schmidt finite dimensional reduction, which provides detailed information on their structure. In particular, these solutions can be expressed in terms of a finite number of parameters, $\xi_{p,j}\in\Omega$ and $\alpha_{p,j}>0$ for $j=1,\cdots,k$. Indeed, the solutions constructed in \cite{EMP2006}  are given by
\begin{equation}\label{forma} \sum_{j=1}^k  \alpha_{p,j} P\bar W_{p, \xi_{p,j}}+\omega_{p},\end{equation}
where  $\bar W_{p, \xi_{p,j}}$ represents  the sum of suitably rescaled profile functions centered at $\xi_{p,j}$ (see \eqref{12-30-8}), while  $\omega_p$ is a reminder term. Furthermore, as $p\rightarrow +\infty$, we have
$\xi_{p,j}\rightarrow x_{\infty,j}$, $\alpha_{p,j}\rightarrow 1$,  $\omega_{p}\rightarrow 0$ with explicit convergence rates.
Our strategy for the proof of Theorem \ref{theorem:topological-degree-formula} is  first to demonstrate that \emph{any} $k$-spike solution
 $u_{p}$ of problem \eqref{1.1} takes the form \eqref{forma} (see Proposition \ref{prop:upHasTheFormInEMP}). To establish this, we have
  to refine again the asymptotic analysis of $u_p$ as $p\rightarrow +\infty$, improving upon the results in \cite{GILY2021} (see Proposition \ref{prop:summaryImporvedAsympt}).
%%%%
%
%
%

\vskip 0.1cm

The uniqueness result in Theorem \ref{theorem:local-uniqueness} is \emph{local} and in general does not
coincide with the uniqueness of solutions to problem \eqref{1.1}, since there can be many different critical points of the Kirchhoff-Routh function $\Psi_{k}$, and there can be also $k$-spike solutions for different $k$'s (as in the existence results in \cite{EMP2006}).

\vskip 0.1cm

Local uniqueness results were proved  either by computing  the Morse index
of the reduced finite dimensional function as in \cite{G100,CNY1,NY100},
or by using the Pohozaev identities \cite{GM100,CGPY2019,DLY100,GILY2021}.
Both methods fail in the study of local uniqueness of $k$-spike solutions
for problem \eqref{1.1} if $k\ge 2$. In fact,
when computing the Morse index
of the reduced finite dimensional function, one needs to
estimate the derivatives of the error term $\omega_p$ in \eqref{forma}.
Due to the exponential smallness of the scalar $\varepsilon_{p, j}$ (see
\eqref{nn3-29-03}), the derivatives of the error term $\omega_p$
is uncontrollable. On the other hand, the starting point to use the
Pohozaev identities is to prove that $x_{p, j}$ (see \eqref{def:xpj})
satisfies $|x_{p, j}-x_{\infty, j}|=o(\varepsilon_{p, j})$, which is
very difficult to verify if $k\ge 2$.
This is the main reason that in \cite{GILY2021}, local uniqueness result can
be proved only for the
 case $k=1$. The novelty here to prove the local uniqueness result for
    the cases $k\geq 2$ is that
    we compute the Morse index of each $k$-spike solution concentrating at $\bf x_\infty$, and the degree of all the $k$-spike solutions concentrating at $\bf x_\infty$. The first step can be achieved by using the local
    Pohozaev identities, while in the second step, it
    is essential to deform
    the corresponding functional suitably, so one can avoid
    the estimates of the derivatives of the error term $\omega_p$.

\vskip 0.1cm

 Some condition on the critical point $\boldsymbol{\mathrm x}_{\infty}$ is needed
 to obtain the local uniqueness result, as evidenced by the example given
 in \cite{GM100} for the singularly perturbed Schr\"odinger equations.

%In dimension $N\geq 7$ an analog of Theorem \ref{theorem:local-uniqueness} for blowing-up solutions has been proved in \cite{LiWeiZou2023}  when $p$ is close to the critical exponent $\frac{N+2}{N-2}$. In dimension $3\leq N\leq 6$ only the case when the blow-up is single has been treated so far  \cite{LiWeiZou2023}, and a general local uniqueness result is still missing. \edz{ \textcolor{red} {It is better to
%delete this paragraph.   I don't think there is any local
%uniqueness result for $k$-bubbling solutions ($ k\ge 2$) under some reasonable conditions,
%because the equations for the scaling are too complicated to analyse.
%\cite{LiWeiZou2023}  has a condition which is un-checkable. }  }

\vskip 0.1cm

We describe the organization of the paper and the structure of the proofs.
\begin{itemize}
\item In Section \ref{section:preliminary} we collect a number of results about the asymptotic characterization of the multi-spike solutions of problem \eqref{1.1} when $N=2$, collected from  \cite{DIP2017-1,DGIP2018,GILY2021}. We also introduce a limit eigenvalue problem and gather its main properties. Furthermore we recall some useful bilinear integral identities concerning the Green and Robin function and their derivatives, previously obtained in  \cite{GILY2021}. Also we derive local Pohozaev identities involving general solutions $u_{p}$ of problem \eqref{1.1} and general eigenfunctions $v_{p,l}$ of problem \eqref{03-16-1} in this section.
\vskip 0.1cm
\item
Section \ref{section:preliminaryEigen} contains starting asymptotic expansions for the eigenfunctions $v_{p,l}$ when their eigenvalues $\lambda_{p,l}$ converge either to $0$ or to $1$. Indeed, due to the properties of the limit eigenvalue problem, these turn out to be the two only relevant cases for computing the Morse index. The main result of the section is stated in Proposition \ref{lemma:autofunz2casi}. In order to get it, we scale  properly each eigenfunction around the concentration points of $u_{p}$, pass to the limit into the rescaled eigenvalue problem, thus deriving the asymptotics for the rescaled eigenfunctions from the knowledge of the limit eigenvalue problem (see Proposition \ref{prop3-3}). When we pass to the limit we  strongly exploit the asymptotic behavior of the multi-spike solutions $u_{p}$ (see Section \ref{subsection:asympt}), furthermore we  first have to exclude that all the rescaled eigenfunctions vanish in the limit (see Lemma \ref{prop3-3prima}).
\vskip 0.1cm
\item
Section \ref{section:Morse} contains the proof of Theorem \ref{th1.1}. In order to compute the Morse index of $u_p$ we first study the eigenpairs $(\lambda_{p,l},v_{p,l})$, for $l=1,\cdots,3k+1$, as $p\rightarrow +\infty$.
We shall first deal with the case $1\leq l\leq k$
 and estimate $\lambda_{p,l}$ (Proposition \ref{prop:ExpansionsFirstGroup}), then we treat the most important case $k+1\leq l\leq 3k$ and obtain
 and expansion for $(\lambda_{p,l},v_{p,l})$
 (Proposition \ref{propSecondoPezzo1} and Proposition \ref{propSecondoPezzo2}), and finally we estimate $\lambda_{p,3k+1}$ (Proposition \ref{prop:expansionLastGroup}).  Observe that the information on the previous pairs, due to the orthogonality condition,  is necessary to investigate the next pairs. In each case the proof follows the same scheme:\vskip 0.1cm
\begin{itemize}
\item we estimate the eigenvalue, using the variational characterization and choosing suitable test functions;\vskip 0.1cm
\item we characterize the limit of the rescaled eigenfunction, using the properties of the limit eigenvalue problem;\vskip 0.1cm
\item we use the bilinear local integral identities  (see Lemma \ref{lem2-1} and the Pohozaev identities in Lemma \ref{lemma:pohoz}) to refine the information on the eigenvalue and also deduce the expansion of the eigenfunction.
\end{itemize}\vskip 0.1cm
One of the main difficulties of our proof is due to the delicate interaction between the  different regions where the $k$-spike solution concentrates. In particular we need to control both the rescalings $w_{p,j}$ of the solution $u_{p}$  and the rescalings $\widetilde v_{p,l,j}$ of the eigenfunction $v_{p,l}$, this is essentially done in Lemma \ref{llma} and Lemma \ref{prop3-3prima} in the previous sections.\vskip 0.1cm
\item In Section \ref{Section:ImprovedAsymptoticup} we refine the asymptotic expansion of the $k$-spike solutions $u_{p}$ of problem \eqref{1.1}, proving that they must have the form of the solutions constructed in
\cite{EMP2006} (see Proposition \ref{prop:upHasTheFormInEMP}). This property will be used to compute the topological degree.\vskip 0.1cm
\item  Section \ref{section:approxPWproblems} contains other useful estimates for the profiles approximating $u_{p}$.\vskip 0.1cm
\item Section \ref{section:degreeAndUniqueness} is  devoted to the proof of Theorem \ref{theorem:topological-degree-formula}. In order to compute the topological degree of multi-spike solutions we reduce the problem  to a finite-dimensional one and apply  classical properties of the degree theory. The reduction is possible thanks to the theory developed in Section \ref{Section:ImprovedAsymptoticup} and Section \ref{section:approxPWproblems}.\vskip 0.1cm
\item Finally, as an application of the degree counting formula and of the Morse index computation, in Section \ref{section:localUniqueness} we deduce the local uniqueness result  stated in Theorem \ref{theorem:local-uniqueness}.
\end{itemize}

\
Throughout this paper, we use the same $C$ to denote various generic positive constants independent with $p$, $\|\cdot\|$  to denote the basic norm in the Sobolev space $H^1_0(\Omega)$,  $x\cdot y$ to mean the scalar product of $x$ and $y$ in $\R^2$ and $\langle\cdot,\cdot\rangle$
to mean the corresponding inner product. We will use $\partial$  to denote the partial derivative for any function $h(y,x)$ with respect to $y$, while we will use $D$ to denote the partial derivative for any function $h(y,x)$ with respect to $x$.

\section{Basic results}\label{section:preliminary}

\subsection{Known asymptotic results for the Lane-Emdem problem} \label{subsection:asympt}
\begin{Thm}[c.f. \cite{DGIP2018,DIP2017-1, GILY2021}]\label{thm:asymptoticup}
%\noindent\textbf{Theorem A}~(\cite{DGIP2018,DIP2017-1})\textbf{.}
Let $u_p$ be a family of solutions to problem \eqref{1.1} satisfying  the energy uniform bound
\begin{equation}\label{11-11-01}
\sup_{p}p\|\nabla u_p\|^2_2<\infty.
\end{equation}
Then there exist a finite number of $k$ of distinct points $x_{\infty,j}\in \Omega$, $j=1,\cdots,k$ and a subsequence of $p$ (still denoted by $p$)   such that setting
$\mathcal{S}:=\big\{x_{\infty,1},\cdots, x_{\infty,k}\big\}$,
one has
\begin{equation}\label{11-14-03N}
\lim_{p\rightarrow +\infty} p u_{p}=8\pi \sqrt{e}\sum^k_{j=1}G(x,x_{\infty,j})\,\,~\mbox{in} ~ C^2_{loc}(\Omega\backslash \mathcal{S}),
\end{equation}
the energy satisfies
\begin{equation*}
\lim_{p\rightarrow +\infty} p \int_{\Omega}|\nabla u_{p}(x)|^2dx=8\pi e\cdot k,
\end{equation*}
and the concentrated points $x_{\infty,j}$, $j=1,\cdots,k$, fulfill the system
\begin{equation*}
\nabla_x \Psi_k\big(x_{\infty,1},\cdots,x_{\infty,k}\big)=0.
\end{equation*}
Moreover, for some small fixed $r>0$, let $x_{p,j}\in \overline{B_{2r}(x_{\infty,j})}\subset\Omega$ be the sequence defined as
\begin{equation}\label{def:xpj}
u_{p}(x_{p,j})=\max_{\overline{B_{2r}(x_{\infty,j})}}u_{p}(x),
\end{equation}
 and
 \begin{equation}\label{defepsilonpj}
 \varepsilon_{p,j}:=\Big(p\big(u_{p}(x_{p,j})\big)^{p-1}\Big)^{-1/2},\end{equation}
 then for any $j=1,\cdots,k$ and fixed small  constant $\delta>0$, it holds
\begin{equation*}
\lim_{p\rightarrow +\infty}x_{p,j}=x_{\infty,j}, ~~~\quad\lim_{p\rightarrow +\infty}u_{p}(x_{p,j})=\sqrt{e},
\end{equation*}
\begin{equation}\label{nn3-29-03}
\e_{p,j}= e^{-\frac{p}4}\Bigl(
 e^{-\big( 2\pi \Psi_{k,j}(\boldsymbol{\mathrm x}_{\infty})+\frac{3\log 2}{2}+\frac{3}{4}\big) }+O\big(\frac{1}{p^{1-\delta}}\big)\Bigr),
\end{equation}
and
\begin{equation}\label{3-29-03}
\frac{\e_{p,j}}{\e_{p,s}}=e^{2\pi \big(\Psi_{k,s}(\boldsymbol{\mathrm x}_{\infty})-\Psi_{k,j}(\boldsymbol{\mathrm x}_{\infty})\big)}
 +O\big(\frac{1}{p^{1-\delta}}\big)\,\, ~\mbox{for}~1\leq j,s\leq k,
\end{equation}
where $\Psi_{k,j}$ is the function in \eqref{stts} and $\boldsymbol{\mathrm x}_{\infty}:=\big(x_{\infty,1},\cdots,x_{\infty,k}\big)$.
And setting
\begin{equation}
\label{defwpj}
w_{p,j}(y):=\frac{p}{u_{p}(x_{p,j})}\Big(u_{p}(x_{p,j}+\varepsilon_{p,j}y)-
u_{p}(x_{p,j})\Big),~y\in \Omega_{p,j}:=\frac{\Omega-x_{p,j}}{\varepsilon_{p,j}},
\end{equation}
one has
\begin{equation}\label{5-8-2}
w_{p,j}= U +\frac{w_{0}}{p}+ O\left(\frac{1}{p^{2}}  \right) \mbox{ in}~C^2_{loc}(\R^2),
\end{equation}
where \begin{equation}\label{def:U}U(x):=-2\log \Big(1+\frac{|y|^2}{8}\Big)\end{equation} is the \emph{unique} positive solution of the Liouville equation
\begin{equation*}
\begin{cases}
-\Delta U=e^U \,\,~\mbox{in}~\R^2,\\[1mm]
\displaystyle\int_{\R^2}e^Udx=8\pi,
\end{cases}
\end{equation*}
and $w_0$ solves the non-homogeneous linear equation
\begin{equation}\label{1-2-4}
-\Delta w_0-e^{U}w_0=-\frac{U^2}{2}e^{U}~\mbox{in}~\R^2
.
\end{equation}
Moreover there exists $C>0$ such that
\begin{equation*}
p \min_{j=1,\cdots,k}|x-x_{p,j}|^2u_p^{p-1}(x)\leq C,~~~\mbox{for any $x\in \Omega$ and for any $p>1$}
\end{equation*}
and
\begin{equation}\label{luoluo2}
p \min_{j=1,\cdots,k}|x-x_{p,j}|\cdot |\nabla u_p(x)|\leq C,~~~\mbox{for any $x\in \Omega$ and for any $p>1$}.
\end{equation}
\end{Thm}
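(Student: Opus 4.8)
This theorem is a \emph{compilation} of results, so the ``proof'' I would give is an assembly of the asymptotic analysis of \cite{DIP2017-1}, of the sharp-constant refinements of \cite{DGIP2018} (see also \cite{T2019}), and of the higher-order expansions of \cite{GILY2021}; below I outline how each block of the statement is produced. \textbf{Concentration structure.} First I would fix a subsequence along which $p\|\nabla u_p\|_2^2$ converges and run a Brezis--Merle / Li--Shafrir type blow-up analysis for the $p$-dependent nonlinearity $u_p^p$: the energy bound forces the measures $p\,u_p^{p}\,dx$ to concentrate, up to a further subsequence, on a finite set $\mathcal S=\{x_{\infty,1},\dots,x_{\infty,k}\}\subset\Omega$; excluding boundary concentration and residual mass (interior elliptic estimates plus the global Pohozaev identity on $\Omega$) yields the $C^2_{loc}(\Omega\setminus\mathcal S)$ convergence \eqref{11-14-03N} to a multiple of the Green function together with the quantization $p\int_\Omega|\nabla u_p|^2\to 8\pi e\,k$. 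Localizing the Pohozaev identity on small balls $B_\rho(x_{\infty,j})$ and passing to the limit then gives the balancing system $\nabla_x\Psi_k(\boldsymbol{\mathrm x}_\infty)=0$.

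\textbf{First-order profile.} Next I would zoom in at each point: with $x_{p,j}$ as in \eqref{def:xpj} and $\varepsilon_{p,j}$ as in \eqref{defepsilonpj}, the functions $w_{p,j}$ of \eqref{defwpj} satisfy $-\Delta w_{p,j}=(1+w_{p,j}/p)_+^{p}$ in $\Omega_{p,j}$ with $w_{p,j}(0)=0=\nabla w_{p,j}(0)$; since $u_p(x_{p,j})$ stays bounded away from $0$ and $+\infty$, $(1+w_{p,j}/p)^p\to e^{w}$ and the limit profile solves $-\Delta U=e^{U}$ on $\R^2$ with $\int_{\R^2}e^{U}\le 8\pi$, so the Chen--Li classification forces $U=-2\log(1+|y|^2/8)$ and $\int_{\R^2}e^U=8\pi$. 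The limit $u_p(x_{p,j})\to\sqrt e$ is then a more delicate point: it is obtained in \cite{DIP2017-1} by combining the local mass $8\pi$ with the global quantization and a sharp inner/outer matching, and it is pinned down quantitatively, through the relation $\varepsilon_{p,j}^{-2}=p\,u_p(x_{p,j})^{p-1}$ together with the rate of $\varepsilon_{p,j}$ obtained in \cite{DGIP2018,T2019}.

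\textbf{Higher-order expansions.} Finally, to get \eqref{5-8-2}--\eqref{3-29-03} and the uniform bounds I would push the rescaled equation one order in $1/p$: since $(1+w/p)^p_+=e^{w}\bigl(1-\tfrac{w^2}{2p}+O(p^{-2})\bigr)$ and $w_{p,j}=U+O(1/p)$, matching the $O(1/p)$ terms produces the linear equation \eqref{1-2-4} for $w_0$; solvability of \eqref{1-2-4} in a class with mild growth and invertibility of the linearized Liouville operator $-\Delta-e^{U}$ transversally to its kernel --- the translation modes $\partial_{y_i}U$ being eliminated by $\nabla w_{p,j}(0)=0$ and the dilation mode by the normalization $w_{p,j}(0)=0$ --- give \eqref{5-8-2} with an $O(1/p^2)$ remainder. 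Matching this inner expansion with the outer expansion $p\,u_p\approx 8\pi\sqrt e\sum_m G(\cdot,x_{\infty,m})$ on an intermediate annulus and inserting the matched profile into the local Pohozaev identity on $B_\rho(x_{p,j})$ isolates the scale $\varepsilon_{p,j}$ and yields \eqref{nn3-29-03}, hence \eqref{3-29-03}; this is the refinement carried out in \cite{GILY2021}. The two uniform pointwise bounds follow from Harnack and interior gradient estimates for $-\Delta u_p=u_p^p$, combined with the bubble decay near each $x_{p,j}$ and the convergence \eqref{11-14-03N} away from $\mathcal S$.

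\textbf{Main obstacle.} The hard part --- the reason \cite{DGIP2018,GILY2021} are needed and not just \cite{DIP2017-1} --- is the passage from the leading Liouville profile to the sharp constant $\sqrt e$ and, above all, to the \emph{exponentially} small scale $\varepsilon_{p,j}$: the power nonlinearity $u^p$ is only close to $e^U$ after rescaling and the discrepancy already enters at order $1/p$, so one must control this error uniformly; moreover, for $k\ge 2$ the $k$ bubbles interact through the Green function, and the inner/outer matching together with the family of local Pohozaev identities has to be carried out simultaneously at all $k$ points with carefully localized cut-offs. Since all of this is already established in \cite{DIP2017-1,DGIP2018,GILY2021}, the proof of the theorem amounts to quoting and assembling those results.
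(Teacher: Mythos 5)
The paper states this as a citation theorem (``c.f.\ \cite{DGIP2018,DIP2017-1,GILY2021}'') and gives no proof of its own, so the correct move is exactly what you do: recognize it as a compilation and sketch how the cited works establish each block. Your outline is accurate in its essentials --- the concentration-compactness/blow-up step, the Chen--Li classification of the Liouville profile, the local Pohozaev balancing condition, the $(1+w/p)^p_+=e^w(1-\tfrac{w^2}{2p}+O(p^{-2}))$ expansion producing \eqref{1-2-4}, and the matched asymptotics pinning down $\varepsilon_{p,j}$ --- and correctly attributes the pieces to \cite{DIP2017-1}, \cite{DGIP2018,T2019} and \cite{GILY2021} respectively.
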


Taking \begin{equation}
\label{defepsilon}
\e_p:=\e_{p,1}e^{2\pi \Psi_{k,1}(\boldsymbol{\mathrm x}_{\infty})},
\end{equation} then
we can deduce from \eqref{nn3-29-03} and \eqref{3-29-03} that
\begin{equation}\label{6-26-32}
\lim_{p\to \infty}
\frac{\e_{p,j}e^{2\pi \Psi_{k,j}(\boldsymbol{\mathrm x}_{\infty})}}{\e_p}=1,~~\mbox{for}~~j=1,2\cdots,k.
\end{equation}

\

Let $w_{p,j}$ be the rescaled solution as defined in \eqref{defwpj}, from \cite{DIP2017-1} we know that the following hold:
\begin{Lem}\label{llma}
For any small fixed $\delta,d>0$, there exist $R_\delta>1$ and $p_\delta> 1$ such that
\begin{equation*}
w_{p,j} \leq \big(4-\delta\big)\log \frac{1}{|y|}+C_\delta,~~\mbox{for}~j=1,\cdots,k,
\end{equation*}
for some $C_\delta>0$, provided $R_\delta\leq |y|\leq \frac{d}{\e_{p,j}}$ and $p\geq p_\delta$.
\end{Lem}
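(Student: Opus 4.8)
\emph{Proof sketch.} The plan is to establish the bound first for the spherical averages of $w_{p,j}$ by an elementary one–dimensional argument, and then to upgrade it to a pointwise estimate using the gradient bound \eqref{luoluo2}. Substituting $x=x_{p,j}+\varepsilon_{p,j}y$ into $-\Delta u_p=u_p^p$ and using \eqref{defwpj} and \eqref{defepsilonpj}, the rescaled function solves
\[
-\Delta w_{p,j}=f_{p,j}:=\Bigl(1+\tfrac{w_{p,j}}{p}\Bigr)^{p}\quad\text{in }\Omega_{p,j},\qquad f_{p,j}>0,
\]
the positivity being a consequence of $u_p>0$. A change of variables gives $\int_{B_\rho(0)}f_{p,j}\,dy=\frac{p}{u_p(x_{p,j})}\int_{B_{\varepsilon_{p,j}\rho}(x_{p,j})}u_p^p\,dx$, and by \eqref{5-8-2} one has $f_{p,j}\to e^{U}$ uniformly on each fixed ball $B_R(0)$; since $\int_{\R^2}e^{U}=8\pi$, for every $\delta>0$ I may fix $R_\delta>1$ large and then $p_\delta>1$ so that $\int_{B_{R_\delta}(0)}f_{p,j}\,dy\geq 8\pi-2\pi\delta$ for all $p\geq p_\delta$.

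Next, let $\bar w_{p,j}(r):=\frac{1}{2\pi}\int_0^{2\pi}w_{p,j}(re^{\mathrm{i}\theta})\,d\theta$ denote the spherical average. Differentiating and using the divergence theorem, $r\,\bar w_{p,j}'(r)=\frac{1}{2\pi}\int_{B_r(0)}\Delta w_{p,j}\,dy=-\frac{1}{2\pi}\int_{B_r(0)}f_{p,j}\,dy$, so for $r\geq R_\delta$ (where $\int_{B_r(0)}f_{p,j}\geq\int_{B_{R_\delta}(0)}f_{p,j}\geq 8\pi-2\pi\delta$) we obtain $\bar w_{p,j}'(r)\leq -(4-\delta)/r$. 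Integrating from $R_\delta$ to $r$, together with $\bar w_{p,j}(R_\delta)\to U(R_\delta)$ (a fixed finite number, again by \eqref{5-8-2}), yields
\[
\bar w_{p,j}(r)\ \leq\ (4-\delta)\log\tfrac1r+C_\delta\qquad\text{for }r\geq R_\delta,\ p\geq p_\delta,
\]
after possibly enlarging $p_\delta$ and $C_\delta$ (note $C_\delta>0$ since $R_\delta>1$).

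Finally I would pass from the average to the pointwise bound. Choosing $d$ smaller than one third of the pairwise distances between $x_{\infty,1},\dots,x_{\infty,k}$ and than their distances to $\partial\Omega$, for $p$ large and $|x-x_{p,j}|\leq d$ the nearest concentration point to $x$ is $x_{p,j}$, so \eqref{luoluo2} rescales to $|y|\,|\nabla_y w_{p,j}(y)|\leq C/u_p(x_{p,j})\leq C'$ whenever $R_\delta\leq|y|\leq d/\varepsilon_{p,j}$. Consequently the oscillation of $w_{p,j}$ on each circle $\{|y|=r\}$ is at most $2\pi C'$, hence $w_{p,j}(y)\leq\bar w_{p,j}(|y|)+2\pi C'$, and combining with the previous display gives the asserted estimate (after renaming $C_\delta$).

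The main obstacle is exactly this last step: a bound on spherical averages does not, by itself, control a superharmonic function from above, and it is the gradient estimate \eqref{luoluo2} — available only in the region where $x_{p,j}$ is the closest bubble centre, which is precisely what forces the restriction $|y|\leq d/\varepsilon_{p,j}$ with $d$ small — that closes the gap. One subtlety to keep track of is that the constants $R_\delta$, $p_\delta$ must be chosen so that the local mass $\int_{B_{R_\delta}(0)}f_{p,j}$ exceeds $8\pi-2\pi\delta$ uniformly, which is where the sharp value $\int_{\R^2}e^U=8\pi$ and the $C^2_{loc}$ convergence \eqref{5-8-2} enter. (Alternatively, one may simply invoke the corresponding statement proved in \cite{DIP2017-1}.)
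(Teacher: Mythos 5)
The paper does not supply a proof of Lemma~\ref{llma}: it is simply quoted from \cite{DIP2017-1}, so there is no in-house argument to compare against. Your proposal is nevertheless a correct, self-contained derivation from the other facts recorded in Theorem~\ref{thm:asymptoticup}. The rescaled equation $-\Delta w_{p,j}=(1+w_{p,j}/p)^p$ is exact (because $p\,\varepsilon_{p,j}^2\,u_p(x_{p,j})^{p-1}=1$); the radial-average identity $r\,\bar w_{p,j}'(r)=-\frac{1}{2\pi}\int_{B_r(0)}(1+w_{p,j}/p)^p\,dy$ together with the threshold mass $8\pi-2\pi\delta$ on a fixed ball $B_{R_\delta}(0)$ (which follows from \eqref{5-8-2} and $\int_{\R^2}e^{U}=8\pi$) gives $\bar w_{p,j}(r)\le(4-\delta)\log\frac1r+C_\delta$ for $r\ge R_\delta$ and $p\ge p_\delta$; and the rescaled gradient bound $|y|\,|\nabla_y w_{p,j}(y)|=p\,|x-x_{p,j}|\,|\nabla u_p(x)|/u_p(x_{p,j})\le C'$ --- valid exactly where $x_{p,j}$ realizes the minimum in \eqref{luoluo2}, hence the restriction $|y|\le d/\varepsilon_{p,j}$ with $d$ small, and using $u_p(x_{p,j})\to\sqrt e$ --- caps the oscillation on each circle $\{|y|=r\}$ by a constant and upgrades the average bound to the stated pointwise one. (The remark that $C_\delta>0$ is immaterial; one can always enlarge it.)

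One caveat worth noting, not a defect of the argument as written: in the source papers \cite{DIP2017-1,GILY2021} the gradient estimate \eqref{luoluo2} is obtained downstream of, and with the help of, decay estimates of the kind in Lemma~\ref{llma} (they are needed to control the near-bubble part of the Green representation of $\nabla u_p$). So your route is perfectly sound within this paper, where the whole block of Theorem~\ref{thm:asymptoticup} plus Lemma~\ref{llma} is taken as given, but it would be circular as a ground-up proof. A circularity-free way to pass from the spherical-average bound to a pointwise one would replace the use of \eqref{luoluo2} by a Harnack-type oscillation estimate for the rescaled equation on dyadic annuli; otherwise, invoking \cite{DIP2017-1} directly, as you also suggest, is the cleanest option.
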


We will also need the following result from \cite[Lemma 3.2 and Lemma 3.3]{GILY2021}, concerning the asymptotic behavior of the positive solutions $u_p$ to problem \eqref{1.1} with \eqref{11-11-01}.

\vskip 0.2cm

\begin{Lem}
For any fixed small $d>0$, it holds
\begin{equation}\label{luo-1}
u_p(x)= \sum^k_{j=1}C_{p,j}G(x_{p,j},x)+
o\Big(\sum^k_{j=1}\frac{\varepsilon_{p,j}}{p}\Big)\,\,
~\mbox{in}~C^1\Big(\Omega\backslash \displaystyle\bigcup^k_{j=1} B_{2d}(x_{p,j})\Big),
\end{equation}
as $p\rightarrow +\infty$, where $x_{p,j}$ and $\varepsilon_{p,j}$ are defined in \eqref{def:xpj} and \eqref{defepsilonpj} respectively and
\begin{equation}\label{luoluo1}
C_{p,j}:= \displaystyle\int_{B_d(x_{p,j})}
u_{p}^{p}(y)dy=\frac{1}{p} \Big(8\pi \sqrt{e}+o(1)\Big).
\end{equation}
\end{Lem}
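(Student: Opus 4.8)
The plan is to start from the Green representation formula. Since $-\Delta u_p=u_p^{p}$ in $\Omega$ and $u_p=0$ on $\partial\Omega$, one has $u_p(x)=\int_{\Omega}G(x,y)u_p^{p}(y)\,dy$ and, differentiating in $x$, $\nabla u_p(x)=\int_{\Omega}\nabla_x G(x,y)u_p^{p}(y)\,dy$, so it suffices to analyse these two integrals for $x\in\Omega\setminus\bigcup_{j}B_{2d}(x_{p,j})$. I would split the domain of integration into the balls $B_d(x_{p,j})$, $j=1,\dots,k$, and the exterior region $\Omega\setminus\bigcup_j B_d(x_{p,j})$. On the exterior region \eqref{luoluo2} gives $|\nabla u_p|\le C/(pd)$, and since $u_p=0$ on $\partial\Omega$, integrating this bound along a short path to the boundary yields $u_p=O(1/p)$ there, hence $u_p^{p}\le (C/p)^{p}$; as $G(x,\cdot)$ and $\nabla_x G(x,\cdot)$ lie in $L^{1}(\Omega)$ uniformly in $x$, the exterior contributes $O((C/p)^{p})$, which by \eqref{nn3-29-03} (the $\e_{p,j}$ are only exponentially small in $p$) is $o\big(\sum_j \e_{p,j}/p\big)$ and may be discarded.

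Near each $x_{p,j}$ the kernels $G(x,\cdot)$ and $\nabla_x G(x,\cdot)$ are smooth on $B_d(x_{p,j})$ because $|x-y|\ge d$, so I would Taylor-expand them at $x_{p,j}$. The zeroth-order terms give exactly $C_{p,j}G(x_{p,j},x)$ and $C_{p,j}\nabla_x G(x_{p,j},x)$, with $C_{p,j}:=\int_{B_d(x_{p,j})}u_p^{p}(y)\,dy$. To evaluate $C_{p,j}$ I would rescale $y=x_{p,j}+\e_{p,j}z$; using $\e_{p,j}^{2}=\big(p\,u_p(x_{p,j})^{p-1}\big)^{-1}$ and the definition \eqref{defwpj} of $w_{p,j}$,
\[
\e_{p,j}^{2}\,u_p^{p}(x_{p,j}+\e_{p,j}z)=\frac{u_p(x_{p,j})}{p}\Big(1+\frac{w_{p,j}(z)}{p}\Big)^{p},
\]
which by \eqref{5-8-2} converges locally to $\tfrac{\sqrt e}{p}e^{U(z)}$ and, thanks to Lemma \ref{llma}, is bounded by $\tfrac{C}{p}|z|^{-(4-\delta)}\in L^{1}(\R^{2})$; dominated convergence then gives $C_{p,j}=\tfrac{1}{p}\big(8\pi\sqrt e+o(1)\big)$, establishing \eqref{luoluo1}.

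It remains to show that the first- and higher-order Taylor terms contribute only $o\big(\sum_j\e_{p,j}/p\big)$. The quadratic remainder is harmless: rescaling and using Lemma \ref{llma},
\[
\int_{B_d(x_{p,j})}|y-x_{p,j}|^{2}u_p^{p}(y)\,dy\le \frac{C\,\e_{p,j}^{2}}{p}\int_{|z|\le d/\e_{p,j}}|z|^{2}e^{w_{p,j}(z)}\,dz=O\Big(\frac{\e_{p,j}^{2-\delta}}{p}\Big)=o\Big(\frac{\e_{p,j}}{p}\Big).
\]
The genuinely delicate point, and the main obstacle, is the first moment: one must show
\[
\int_{B_d(x_{p,j})}(y-x_{p,j})\,u_p^{p}(y)\,dy=\frac{\e_{p,j}\,u_p(x_{p,j})}{p}\int_{|z|\le d/\e_{p,j}}z\Big(1+\frac{w_{p,j}(z)}{p}\Big)^{p}dz=o\Big(\frac{\e_{p,j}}{p}\Big),
\]
i.e. that the inner integral is $o(1)$. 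I would prove this by splitting at $|z|=R$ for a large fixed $R\ge R_\delta$: on $\{|z|\le R\}$ the integrand converges uniformly to $z\,e^{U(z)}$, whose integral over the symmetric ball vanishes because $U$ is \emph{radial}, so this part is $o(1)$ as $p\to\infty$ for each $R$; on $\{R\le|z|\le d/\e_{p,j}\}$ Lemma \ref{llma} bounds the integrand by $C|z|^{\delta-3}$, whose integral is $O(R^{\delta-1})\to0$ as $R\to\infty$. Sending first $p\to\infty$ and then $R\to\infty$ gives the claim. Combining the three contributions, together with the analogous bounds with $\nabla_x G$ in place of $G$ for the $C^{1}$ part, yields \eqref{luo-1}. (A sharper, more robust form of the moment estimate, tracking also the next-order term $w_0/p$ in \eqref{5-8-2}, is what the refined analysis of \cite{GILY2021} ultimately provides.)
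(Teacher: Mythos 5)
Your proof is correct and follows the same route (Green's representation, splitting off the balls $B_d(x_{p,j})$, Taylor-expanding the kernel, and controlling the moments by rescaling and Lemma \ref{llma}) that the paper delegates to \cite[Lemmas~3.2--3.3]{GILY2021}. The one genuinely delicate step — that the rescaled first moment $\int z\big(1+\tfrac{w_{p,j}}{p}\big)^{p}dz$ is $o(1)$ — is handled correctly by combining the radial symmetry of $e^{U}$ on compacts with the uniform tail bound $(1+w_{p,j}/p)^p\le e^{w_{p,j}}\le C|z|^{-(4-\delta)}$ from Lemma~\ref{llma}, which is exactly the amount of precision the stated $o\big(\sum_j\e_{p,j}/p\big)$ error requires.
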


\vskip 0.1cm

\subsection{Limit eigenvalue problem}%\label{section:limiteigenvalueproblem}
\begin{Lem}[c.f. \cite{DGIP2019}]\label{lemma:limitEigenvalueProblem}
Let $U$ be the function defined in \eqref{5-8-2}, $\lambda\in [0,1]$ and $v\in C^2(\R^2)$ be a solution of the following problem
\[
\begin{cases}
-\Delta v=\lambda e^Uv\,\,~\mbox{in}~\R^2,\\[1mm]
\displaystyle\int_{\R^2}|\nabla v|^2dx<\infty.
\end{cases}
\]
Then either $\lambda=0$ or $\lambda=1$.
\end{Lem}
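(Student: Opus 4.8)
The plan is to transplant the equation onto a round sphere and invoke its spectral theory, exploiting that the Dirichlet integral is conformally invariant in dimension two. First I would record the conformal geometry of the standard bubble: the metric $g:=e^{U}g_{0}$ on $\R^{2}$, with $g_{0}$ the Euclidean metric, has Gauss curvature $K_{g}=-\tfrac12 e^{-U}\Delta U=\tfrac12$ and total area $\int_{\R^{2}}e^{U}\,dx=8\pi$. An explicit (inverse) stereographic projection $\pi$ then identifies $(\R^{2},g)$ isometrically with the round sphere $\S^{2}_{\sqrt2}$ of radius $\sqrt 2$ minus one point $N$: this amounts to matching the conformal factor $e^{U}=64/(8+|x|^{2})^{2}$ with the stereographic conformal factor after the standard dilation that normalizes $\int e^{U}=8\pi$. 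Under this identification the equation $-\Delta v=\lambda e^{U}v$ on $\R^{2}$ becomes $-\Delta_{g}\widetilde v=\lambda\,\widetilde v$ on $\S^{2}_{\sqrt2}\setminus\{N\}$, where $\widetilde v:=v\circ\pi^{-1}$.

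Next I would upgrade this to an equation on the whole closed sphere. By conformal invariance of the Dirichlet integral in two dimensions,
\[
\int_{\S^{2}_{\sqrt2}\setminus\{N\}}|\n_{g}\widetilde v|^{2}\,dV_{g}=\int_{\R^{2}}|\n v|^{2}\,dx<\infty .
\]
Since $v\in C^{2}(\R^2)$ solves an elliptic equation whose coefficient $\lambda e^{U}$ decays like $|x|^{-4}$, $v$ grows at most logarithmically at infinity, hence $\widetilde v\in L^{2}(\S^{2}_{\sqrt2})$ and therefore $\widetilde v\in H^{1}(\S^{2}_{\sqrt2})$; alternatively, one invokes a standard removable-singularity argument using the finiteness of the Dirichlet energy together with local boundedness of solutions. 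A point has zero $H^{1}$-capacity, so $C^{\infty}_{c}(\S^{2}_{\sqrt2}\setminus\{N\})$ is dense in $H^{1}(\S^{2}_{\sqrt2})$; hence the weak formulation of $-\Delta_{g}\widetilde v=\lambda\widetilde v$, valid a priori against test functions supported away from $N$, extends to all of $H^{1}(\S^{2}_{\sqrt2})$. Thus $\widetilde v$ is a genuine (and, by elliptic regularity, smooth) eigenfunction of $-\Delta_{g}$ on $\S^{2}_{\sqrt2}$. In the interesting case $v\not\equiv 0$ (otherwise the statement is vacuous), $\lambda$ is therefore an eigenvalue of the Laplace--Beltrami operator on $\S^{2}_{\sqrt2}$, whose spectrum is $\{\ell(\ell+1)/2:\ \ell=0,1,2,\dots\}=\{0,1,3,6,\dots\}$; the constraint $\lambda\in[0,1]$ then forces $\lambda\in\{0,1\}$.

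A more hands-on alternative avoiding the geometric dictionary is the following. Expand $v=\sum_{n\ge 0}\big(a_{n}(r)\cos n\theta+b_{n}(r)\sin n\theta\big)$; each radial profile $\phi_{n}$ inherits finite Dirichlet energy and solves $-(r\phi_{n}')'+\tfrac{n^{2}}{r}\phi_{n}=\lambda r e^{U}\phi_{n}$, which after the change of variable $t=\tfrac{r^{2}-8}{r^{2}+8}\in(-1,1)$ turns into an associated Legendre equation; a finite-energy solution exists precisely when $\lambda=\ell(\ell+1)/2$ with an integer $\ell\ge n$, and one concludes exactly as above. In either route the genuinely technical point is the removability/regularity step: in the geometric argument, that $\widetilde v$ belongs to $H^{1}$ of the \emph{closed} sphere and that the eigenvalue equation survives across $N$; equivalently, in the ODE argument, the analysis of the $r\to+\infty$ asymptotics of the mode equations for a general value of $\lambda$, ruling out finite-energy solutions unless $\lambda$ is a sphere eigenvalue. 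Everything else is routine.
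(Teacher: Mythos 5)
Your argument is correct and it is, in essence, the same conformal transplantation used in the cited reference \cite{DGIP2019}: the Gauss curvature computation $K_g=-\tfrac12 e^{-U}\Delta U=\tfrac12$, the identification of $(\R^2,e^Ug_0)$ with $\S^2_{\sqrt 2}\setminus\{N\}$, the conformal invariance of the Dirichlet energy in two dimensions, the zero $H^1$-capacity of a point to remove the singularity at $N$, and the spectrum $\ell(\ell+1)/2$ then close the argument. Two small points worth making explicit: the conclusion presupposes $v\not\equiv 0$ (otherwise $\lambda$ is unconstrained, as you note parenthetically), and the claim ``$v$ grows at most logarithmically'' deserves one line, e.g., from finite Dirichlet energy one gets $|\partial_r \bar v(r)|\lesssim r^{-1}$ for the angular average and the oscillation on circles is controlled by $\bigl(\int_{\partial B_r}|\nabla v|^2\bigr)^{1/2}r^{1/2}$, so $\int_{\R^2}v^2e^U<\infty$ follows since $e^U\sim |x|^{-4}$.
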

\begin{Lem}[c.f. \cite{DGIP2019}]\label{llm}
Let $U$ be the function defined in \eqref{5-8-2} and $v\in C^2(\R^2)$ be a solution of the following problem
\begin{equation}\label{kernelEq}
\begin{cases}
-\Delta v=e^Uv\,\,~\mbox{in}~\R^2,\\[1mm]
\displaystyle\int_{\R^2}|\nabla v|^2dx<\infty.
\end{cases}
\end{equation}
Then it holds
\begin{equation*}
v(x)\in \mbox{span}\left\{\frac{\partial U(x)}{\partial x_1},
\frac{\partial U(x)}{\partial x_2},\frac{8-|x|^2}{8+|x|^2}\right\}.
\end{equation*}
\end{Lem}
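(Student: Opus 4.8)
The final statement to prove is Lemma \ref{llm}, the characterization of the kernel of the linearized Liouville operator $-\Delta - e^U$ (with finite Dirichlet energy) as the $3$-dimensional span of the two translation modes $\partial_{x_i} U$ and the dilation mode $\frac{8-|x|^2}{8+|x|^2}$.

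\medskip

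My plan is as follows. First I would verify directly that the three proposed functions indeed lie in the kernel: the functions $\partial_{x_1}U$, $\partial_{x_2}U$ arise by differentiating the Liouville equation $-\Delta U = e^U$ with respect to translations, and $\frac{8-|x|^2}{8+|x|^2}$ (a multiple of $x\cdot\nabla U + 2$, the generator of the scaling $U_\lambda(x) = U(\lambda x)+2\log\lambda$) arises from differentiating with respect to dilations; each manifestly has finite Dirichlet energy since $\partial_{x_i}U = O(|x|^{-3})$ and the dilation mode is bounded with gradient $O(|x|^{-3})$. The substance of the lemma is the reverse inclusion, that there are no other solutions. The standard route is separation of variables in polar coordinates: writing $v(r,\theta) = \sum_{n\geq 0} \big(a_n(r)\cos n\theta + b_n(r)\sin n\theta\big)$, each Fourier mode $\phi_n(r)$ solves the ODE
\[
-\phi_n'' - \frac{1}{r}\phi_n' + \frac{n^2}{r^2}\phi_n = e^{U(r)}\phi_n, \qquad e^{U(r)} = \frac{1}{(1+r^2/8)^2}\cdot\frac{1}{(\text{const})}\ \text{(normalized)},
\]
and one must show that: for $n=0$ the only finite-energy solution is a multiple of $\frac{8-r^2}{8+r^2}$; for $n=1$ the solution space of finite-energy solutions is spanned by the radial part of $\partial_{x_i}U$ (which is $\propto \frac{r}{8+r^2}$); and for $n\geq 2$ there is no nontrivial finite-energy solution.

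\medskip

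The key steps, in order: (1) For each fixed $n$, the ODE has a two-dimensional solution space; near $r=0$ one solution behaves like $r^n$ and the other like $r^{-n}$ (or $\log r$ when $n=0$), and finiteness of $\int |\nabla v|^2 < \infty$ near the origin forces us to discard the singular branch, leaving a one-dimensional space of candidates for each $n$. (2) For $n=1$, one checks $\frac{r}{8+r^2}$ is the regular-at-origin solution and computes that it also decays at infinity, so it is admissible — giving exactly the span of $\partial_{x_1}U,\partial_{x_2}U$. (3) For $n=0$, one exhibits $\frac{8-r^2}{8+r^2}$ as the bounded (regular) solution, and notes the second solution blows up logarithmically at infinity or at the origin, so the kernel in this mode is one-dimensional. (4) For $n\geq 2$, the remaining task is to show the regular-at-origin solution $\phi_n$ grows at infinity (indeed like $r^n$), hence fails the energy condition — so these modes contribute nothing. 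A clean way to handle (4) is a comparison/monotonicity argument: one can use the known $n=1$ solution $\psi(r) = \frac{r}{8+r^2} > 0$ as a test function and argue, via the substitution $\phi_n = \psi\, h$ and integration by parts (a Picone-type identity), that for $n\geq 2$ any solution which is $o(r^n)$ at infinity and regular at $0$ must vanish identically; alternatively, one invokes the nondegeneracy results for the Liouville equation already in the literature, which is presumably how \cite{DGIP2019} proceeds.

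\medskip

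The main obstacle is step (4): ruling out the higher Fourier modes $n\geq 2$. The $n=0$ and $n=1$ cases are essentially explicit because we already know the solutions, but for $n\geq 2$ one has no closed form and must instead extract the asymptotic behaviour of $\phi_n$ at infinity from the ODE — showing that the regular-at-origin branch necessarily matches onto the growing branch $r^n$ at infinity (with no "miraculous" cancellation leaving a decaying solution). This is exactly the kind of Sturm–Liouville / shooting argument that requires care, and it is where the finite-energy hypothesis does the real work. Since this lemma is cited from \cite{DGIP2019} ("c.f."), the cleanest exposition is to invoke that reference for the classification and merely record here that the three displayed functions exhaust the kernel; but if a self-contained proof is wanted, the Fourier-decomposition argument above, with the Picone inequality handling $n\geq 2$, is the way I would write it.
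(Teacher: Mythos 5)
The paper does not prove this lemma at all: it is stated with ``c.f.~\cite{DGIP2019}'' and no proof is given, so there is nothing in the text to compare your argument against line by line. Your outline is the standard (and correct) proof of nondegeneracy for the Liouville linearization, and it is essentially what the cited literature does: verify that the translation modes $\partial_{x_i}U$ and the dilation mode $x\cdot\nabla U+2=2\,\frac{8-|x|^2}{8+|x|^2}$ solve \eqref{kernelEq}, then decompose $v$ into Fourier modes in $\theta$, discard the singular-at-origin branch for each $n$ using local finiteness of the energy, identify the $n=0$ and $n=1$ kernels explicitly, and kill the modes $n\ge 2$ by a Picone-type comparison with the positive $n=1$ solution $\psi(r)=\frac{r}{8+r^2}$ (the quadratic form for mode $n$ dominates the nonnegative $n=1$ form plus $(n^2-1)\int \phi^2 r^{-1}\,dr>0$). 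One small correction: $\partial_{x_i}U=-\frac{4x_i}{8+|x|^2}=O(|x|^{-1})$ at infinity, not $O(|x|^{-3})$; what makes the Dirichlet energy finite is that $\nabla\partial_{x_i}U=O(|x|^{-2})$, so $|\nabla\partial_{x_i}U|^2$ is integrable in $\R^2$. This slip does not affect the structure or validity of the argument.
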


\subsection{Quadratic forms $P_{j}$ and $Q_{j}$}$\;$\\

For each point $x_{p,j},$ $j=1,\cdots, k$, defined in \eqref{def:xpj}, let
us set the following  quadric forms \begin{equation}\label{07-08-20}
\begin{split}
P_{j}(u,v):=&- 2\theta\int_{\partial B_\theta(x_{p,j})}
\big( \nabla u \cdot \nu\big)
\big( \nabla v\cdot\nu\big)\,d\sigma
+  \theta  \int_{\partial B_\theta(x_{p,j})}
\nabla u \cdot \nabla v \,d\sigma,
\end{split}
\end{equation}
and
\begin{equation}\label{abd}
Q_{j}(u,v):=- \int_{\partial B_\theta(x_{p,j})}\frac{\partial v}{\partial \nu}\frac{\partial u}{\partial x_i}\,d\sigma-
 \int_{\partial B_\theta(x_{p,j})}\frac{\partial u}{\partial \nu}\frac{\partial v}{\partial x_i}\,d\sigma
+ \int_{\partial B_\theta(x_{p,j})}\big( \nabla u\cdot\nabla v \big) \nu_i\,d\sigma,\,\,\,\,i=1,2,
\end{equation}
where $u,v\in C^{2}(\overline{\Omega})$ and  $\theta>0$ is such that $B_{2\theta}(x_{p,j})\subset\Omega$.
Then we have following results, which can be found in \cite{CGPY2019,GILY2021}.

\begin{Lem}\label{lem2-1}
It holds
\begin{equation}\label{1-1}
P_{j}\Big(G(x_{p,s},x), G(x_{p,m},x)\Big)=
\begin{cases}
-\frac{1}{2\pi} ~&\mbox{for}~s=m=j,\\[1mm]
0~&\mbox{for}~s\neq j ~{or}~m \neq j,
\end{cases}
\end{equation}
and
 \begin{equation}\label{abb1-1}
P_{j}\Big(G(x_{p,s},x),\partial_hG(x_{p,m},x)\Big)=
\begin{cases}
\frac{1}{2}\frac{\partial R(x_{p,j})}{\partial x_h} ~&\mbox{for}~s=m=j,\\[1mm]
 -D_{h}G \big(x_{p,s},x_{p,j}\big) ~&\mbox{for} ~m=j,~s\neq j,\\[1mm]
0 ~&\mbox{for}~m\neq j.
\end{cases}
\end{equation}
Moreover
\begin{equation}\label{aluo1}
Q_{j}\Big(G(x_{p,m},x),G(x_{p,s},x)\Big)=
\begin{cases}
-\frac{\partial R(x_{p,j})}{\partial{x_i}} ~&\mbox{for}~s=m=j,\\[1mm]
D_{i}G(x_{p,m},x_{p,j})
 ~&\mbox{for}~m\neq j,~s=j,\\[1mm]
D_{i}G(x_{p,s},x_{p,j})
~&\mbox{for}~m=j,~s\neq j,\\[1mm]
0 ~&\mbox{for}~s,m\neq j,
\end{cases}
\end{equation}
and
\begin{equation}\label{aluo41}
Q_{j}\Big(G(x_{p,m},x),\partial_h G(x_{p,s},x)\Big)=
\begin{cases}
-\frac{1}{2}\frac{\partial^2 R(x_{p,j})}{\partial{x_ix_h}} ~&\mbox{for}~s=m=j,\\[1mm]
D_{i}\partial_h G(x_{p,s},x_{p,j})
 ~&\mbox{for}~m= j,~s\neq j,\\[1mm]
D^2_{ih} G(x_{p,m},x_{p,j})
~&\mbox{for}~m\neq j,~s=j,\\[1mm]
0 ~&\mbox{for}~s,m\neq j,
\end{cases}
\end{equation}
where $G(x,y)$ and $R(x,y)$ are the Green and Robin function, $\partial_iG(y,x):=\frac{\partial G(y,x)}{\partial y_i}$ and
$D_{i}G(y,x):=\frac{\partial G(y,x)}{\partial x_i}$.
\end{Lem}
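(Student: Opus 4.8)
The four identities are all instances of a single residue principle, and the plan is to set it up once and then obtain them by reassembly. Both $P_j$ and $Q_j$ are boundary integrals of Pohozaev--Rellich type: $P_j$ is the flux of the energy--momentum tensor associated with the dilation field $X(x)=x-x_{p,j}$, while $Q_j$ is the one associated with the constant translations $e_i$. The elementary fact I would use is that if $u,v$ are \emph{harmonic} in a spherical shell centred at $x_{p,j}$, then the vector fields
\[
(X\!\cdot\!\nabla u)\nabla v+(X\!\cdot\!\nabla v)\nabla u-(\nabla u\!\cdot\!\nabla v)X
\qquad\text{and}\qquad
(\partial_i u)\nabla v+(\partial_i v)\nabla u-(\nabla u\!\cdot\!\nabla v)e_i
\]
are divergence-free. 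Integrating them over a shell $B_\theta(x_{p,j})\setminus\overline{B_r(x_{p,j})}$ gives: $P_j(u,v)$ and $Q_j(u,v)$ are \emph{independent of} $\theta$ whenever $u,v$ are harmonic in a punctured neighbourhood of $x_{p,j}$; and they \emph{vanish} if $u,v$ are harmonic in the full ball $B_{2\theta}(x_{p,j})$, since then the integrands are $O(\theta)$ on $\partial B_\theta(x_{p,j})$ and one lets $\theta\to0$.

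Next I would record the local structure near $x_{p,j}$ of the functions in the statement. Write $G(y,x)=\Gamma(x-y)+H(y,x)$ with $\Gamma(z)=-\tfrac1{2\pi}\log|z|$ and $H$ the symmetric regular part. For $s\neq j$ the map $x\mapsto G(x_{p,s},x)$ is harmonic in $B_{2\theta}(x_{p,j})$, with value $G(x_{p,s},x_{p,j})$, gradient $\bigl(D_1G,D_2G\bigr)(x_{p,s},x_{p,j})$ and Hessian $\bigl(D^2_{ih}G\bigr)(x_{p,s},x_{p,j})$ at $x_{p,j}$; the map $x\mapsto G(x_{p,j},x)=\Gamma(x-x_{p,j})+H_j(x)$ is a monopole plus a function $H_j$ harmonic near $x_{p,j}$ with $H_j(x_{p,j})=R(x_{p,j})$ and, by the symmetry of $H$, $\nabla H_j(x_{p,j})=\tfrac12\nabla R(x_{p,j})$, the Hessian of $H_j$ at $x_{p,j}$ being likewise expressible through $D^2R(x_{p,j})$; a source-derivative turns $x\mapsto\partial_hG(x_{p,j},x)$ into a dipole $\Phi_h(x)=\tfrac1{2\pi}\,(x-x_{p,j})_h/|x-x_{p,j}|^2$ plus a function harmonic near $x_{p,j}$ with prescribed value and gradient there, and for $s\neq j$ leaves $\partial_hG(x_{p,s},x)$ harmonic near $x_{p,j}$; a second source-derivative produces a quadrupole $\partial_h\Phi_l$. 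I would then expand each of $P_j$, $Q_j$ by bilinearity into (monopole/dipole/quadrupole)$\times$(locally harmonic) pieces. All (harmonic)$\times$(harmonic) pieces vanish by the previous paragraph, so only a few genuinely singular pairings survive, and each is evaluated by sending $\theta\to0$ with the elementary spherical integrals $\int_{\partial B_\theta}\nu_i\,d\sigma=0$, $\int_{\partial B_\theta}\nu_i\nu_h\,d\sigma=\pi\theta\,\delta_{ih}$, $\int_{\partial B_\theta}\nu_i\nu_h\nu_l\,d\sigma=0$ and the fourth-order analogue. For instance $P_j(\Gamma(\cdot-x_{p,j}),\Gamma(\cdot-x_{p,j}))=-\tfrac1{2\pi}$ follows at once from $\nabla\Gamma\cdot\nu\equiv-\tfrac1{2\pi\theta}$ on $\partial B_\theta$, and $P_j(\Gamma(\cdot-x_{p,j}),h)=0$ for every locally harmonic $h$ because $\nabla\Gamma\cdot\nabla h=-\tfrac1{2\pi\theta}\partial_\nu h$ there while $\int_{\partial B_\theta}\partial_\nu h=0$; together these give \eqref{1-1}. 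The surviving pairings against $H_j$ contribute derivatives of $R$, and those against $G(x_{p,s},\cdot)$ with $s\neq j$ contribute derivatives of $G$ evaluated at $(x_{p,s},x_{p,j})$ — exactly the pattern of \eqref{abb1-1}, \eqref{aluo1}, \eqref{aluo41}.

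The only step requiring genuine care is the evaluation of the mixed pairings against the dipole $\Phi_h$ and the quadrupole $\partial_h\Phi_l$ that occur in \eqref{abb1-1} and \eqref{aluo41}: there the smooth factor must be Taylor-expanded to first, resp.\ second, order at $x_{p,j}$, since the $O(1)$ limit comes from the matching Taylor order; a handful of angular integrals then conspire so that precisely the stated combination of Robin/Green derivatives remains and every other term cancels. This computation — together with the standard reduction of mixed derivatives of $H$ along the diagonal to derivatives of $R$ using $H(y,x)=H(x,y)$ — is the one carried out in \cite{CGPY2019,GILY2021}, to which I refer for the routine details.
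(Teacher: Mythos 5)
The paper itself does not prove this lemma but simply cites \cite{CGPY2019,GILY2021}; your sketch correctly reconstructs the argument one finds there. The key structural observations are right: for $u,v$ harmonic near $x_{p,j}$ the two vector fields you write down are divergence-free (in dimension two their divergence equals $(2-N)\nabla u\cdot\nabla v$, which vanishes), $P_j$ and $Q_j$ are exactly the fluxes of these fields through $\partial B_\theta(x_{p,j})$, hence are $\theta$-independent and vanish when both arguments are smooth across the center; decomposing $G$ and its source-derivatives into singular (monopole, dipole, quadrupole) plus locally harmonic parts and using bilinearity reduces everything to a few explicit residue-type computations. Your evaluations $P_j(\Gamma,\Gamma)=-\tfrac1{2\pi}$ and $P_j(\Gamma,h)=0$ for $h$ harmonic (via $\int_{\partial B_\theta}\partial_\nu h=0$ and $\nabla\Gamma=-\tfrac{\nu}{2\pi\theta}$ on $\partial B_\theta$) are correct and do give \eqref{1-1}.

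One bookkeeping caution you should fix before executing the remaining cases: the paper's convention is $G(y,x)=\tfrac{1}{2\pi}\log\tfrac1{|x-y|}-H(y,x)$ with $R(y)=H(y,y)$ (this is visible in the proof of Lemma~\ref{l1-29-8}, where $PU=U-\log(64\e^4)-8\pi H+\dots$), whereas you write $G=\Gamma+H$. With your sign of $H$ the identities $H_j(x_{p,j})=R(x_{p,j})$ and $\nabla H_j(x_{p,j})=\tfrac12\nabla R(x_{p,j})$ hold with the opposite sign, and the Robin-function entries in \eqref{abb1-1}, \eqref{aluo1}, \eqref{aluo41} would then come out with the wrong sign. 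Aligning the conventions resolves this. Also, the statement that ``the Hessian of $H_j$ at $x_{p,j}$ is expressible through $D^2R$'' is looser than what you actually need: differentiating $R(y)=H(y,y)$ twice along the diagonal produces both $\partial^2_{22}H$ and the mixed $\partial_1\partial_2 H$ terms, and in \eqref{aluo41} it is precisely the combination of the $Q_j(\Gamma,\partial_h H_j)$-type and $Q_j(\Phi_h,H_j)$-type contributions that recombines into $-\tfrac12\partial^2_{ih}R$. You correctly flag this as the ``conspiracy'' to be checked, and defer it to the references; that is acceptable, since the paper does the same.
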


\vskip 0.2cm

\subsection{Local identities} ~~

\vskip 0.2cm

 We will need the following basic formula involving
the solution $u_p$ and the eigenfunction $v_{p,l}$ of the linearized problem at $u_{p}$, which solve
\begin{equation}\label{5-20-1}
-\Delta u_p=u^p_p\quad\mbox{ and }\quad\Delta v_{p,l}=\lambda_{p,l} pu^{p-1}_pv_{p,l},\,\,\,\,\,\mbox{in }\Omega.
\end{equation}
\begin{Lem}[c.f. \cite{GT1983}] It holds
\label{lem2-5}
\begin{equation}\label{6-25-21}
\begin{split}
 \Big(p\lambda_{p,l}-1\Big)\int_{B_{d}(x_{p,m})} u_p^p  v_{p,l}=&
\int_{\partial B_{d}(x_{p,m})} \left(\frac{\partial u_p}{\partial \nu}  v_{p,l} - \frac{\partial v_{p,l}}{\partial \nu}  u_p\right).
\end{split}\end{equation}
\end{Lem}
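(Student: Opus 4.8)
The plan is to recognize \eqref{6-25-21} as a direct consequence of Green's second identity applied to the pair $(u_p,v_{p,l})$ on the ball $B_d(x_{p,m})$. Recall from \eqref{1.1} and \eqref{03-16-1} that, in $\Omega$,
\[
-\Delta u_p=u_p^{p},\qquad -\Delta v_{p,l}=\lambda_{p,l}\,p\,u_p^{p-1}v_{p,l}.
\]
Since $B_{2d}(x_{p,m})\subset\Omega$ and both $u_p$ and $v_{p,l}$ are $C^2$ up to $\overline{B_d(x_{p,m})}$ by standard elliptic regularity, Green's identity gives
\[
\int_{B_d(x_{p,m})}\big(u_p\,\Delta v_{p,l}-v_{p,l}\,\Delta u_p\big)\,dx
=\int_{\partial B_d(x_{p,m})}\Big(u_p\,\frac{\partial v_{p,l}}{\partial\nu}-v_{p,l}\,\frac{\partial u_p}{\partial\nu}\Big)\,d\sigma.
\]

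The second step is to substitute the two equations into the left-hand side:
\[
\int_{B_d(x_{p,m})}\big(u_p\,\Delta v_{p,l}-v_{p,l}\,\Delta u_p\big)\,dx
=\int_{B_d(x_{p,m})}\big(-\lambda_{p,l}\,p\,u_p^{p}v_{p,l}+u_p^{p}v_{p,l}\big)\,dx
=-\big(p\lambda_{p,l}-1\big)\int_{B_d(x_{p,m})}u_p^{p}v_{p,l}\,dx .
\]
Equating this with the boundary term above and multiplying through by $-1$ rearranges the boundary integrand into $\frac{\partial u_p}{\partial\nu}v_{p,l}-\frac{\partial v_{p,l}}{\partial\nu}u_p$, which is exactly \eqref{6-25-21}.

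There is no substantive obstacle here; the identity is purely an integration-by-parts bookkeeping step. The only points that require a little care are keeping the sign conventions in \eqref{5-20-1}–\eqref{03-16-1} consistent (so that the factor $p\lambda_{p,l}-1$ appears with the correct sign), and noting that the ball $B_d(x_{p,m})$ is compactly contained in $\Omega$, so that no contribution from $\partial\Omega$ enters and the regularity hypotheses of Green's identity are met.
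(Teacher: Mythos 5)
Your proof is correct and coincides with the paper's: both apply Green's second identity on the ball $B_d(x_{p,m})$ with $(u,v)=(u_p,v_{p,l})$, substitute the equations from \eqref{5-20-1}, and rearrange signs.
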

\begin{proof}
Observe that for any smooth bounded domain $\Omega'\subset \mathbb R^{2}$ and any  $u,v\in C^2(\overline{\Omega'})$ one has (c.f. \cite{GT1983})
\begin{equation}\label{prec6-25-21}
\int_{\Omega'}\Big(u\Delta v-v\Delta u\Big)dx=\int_{\partial \Omega'}\Big(u\frac{\partial v}{\partial \nu}-v\frac{\partial u}{\partial \nu} \Big)d\sigma,
\end{equation}
where $\nu$ is the outward unit normal of $\partial \Omega'$.
Putting $u=u_p$, $v=v_{p,l}$ and  $\Omega'=B_{d}(x_{p,j})$, for $j=1,\cdots,k$ in \eqref{prec6-25-21}, and using \eqref{5-20-1} one gets \eqref{6-25-21}.
\end{proof}
The following local Pohozaev identities involving
$u_p$ and  $v_{p,l}$ will be also crucial.
\begin{Lem} \label{lemma:pohoz} Let $P_j$ and $Q_{j}$ be the quadratic forms in \eqref{07-08-20} and \eqref{abd}, respectively.
It holds

\begin{equation}\label{3-12-04}
\begin{split}
P_j\big(u_p,v_{p,l}\big)=&
\int_{\partial B_{d}(x_{p,j})} u_p^p  v_{p,l} ( x-x_{p,j})\cdot\nu d\sigma- 2\int_{B_{d}(x_{p,j})}u_p^p  v_{p,l}dx\\&
+(\lambda_{p,l}-1)p \int_{B_{d}(x_{p,j})}
( x-x_{p,j})\cdot\nabla u_p  v_{p,l}u_p^{p-1} dx,
\end{split}
\end{equation}
and
\begin{equation}\label{3-12-04a}
\begin{split}
Q_j\big(u_p,v_{p,l}\big)=&\int_{\partial B_{d}(x_{p,j})} u_p^p  v_{p,l} \nu_id\sigma
+(\lambda_{p,l}-1)p \int_{B_{d}(x_{p,j})}u_p^{p-1}  v_{p,l} \frac{\partial u_p}{\partial x_i}d\sigma.
\end{split}
\end{equation}
\end{Lem}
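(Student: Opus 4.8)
The final statement to prove is Lemma \ref{lemma:pohoz}, the two local Pohozaev identities for $u_p$ and $v_{p,l}$.

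\medskip

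The plan is to derive \eqref{3-12-04} and \eqref{3-12-04a} by the standard Pohozaev technique adapted to the \emph{pair} $(u_p, v_{p,l})$, working on the ball $B_d(x_{p,j})$ and integrating by parts carefully. For \eqref{3-12-04}, the natural multiplier is $(x-x_{p,j})\cdot\nabla v_{p,l}$ tested against $-\Delta u_p = u_p^p$, together with the symmetric term $(x-x_{p,j})\cdot\nabla u_p$ tested against $-\Delta v_{p,l} = -\lambda_{p,l}\, p\, u_p^{p-1} v_{p,l}$ (recall the sign convention in \eqref{5-20-1}, where $\Delta v_{p,l} = \lambda_{p,l} p u_p^{p-1}v_{p,l}$). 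First I would recall the elementary identity, valid for any $u,v\in C^2(\overline{B_d(x_{p,j})})$ and $z:=x-x_{p,j}$,
\[
\int_{B_d}\big[(z\cdot\nabla v)\,\Delta u + (z\cdot\nabla u)\,\Delta v\big]\,dx
= P_j(u,v) + \text{(no interior remainder)},
\]
which is obtained by integrating $\int (z\cdot\nabla v)\Delta u$ by parts once (moving one derivative off $\Delta u$), using $\partial_\ell(z_m \partial_m v) = \partial_\ell v + z_m\partial_{\ell m}v$, and symmetrizing in $u\leftrightarrow v$; the divergence terms collect precisely into the boundary expression \eqref{07-08-20} (the factor $-2\theta$ on the normal-normal term and $+\theta$ on the full gradient term arise from the standard Rellich-Pohozaev boundary bookkeeping, with $\theta = d$ here). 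Then I substitute $\Delta u_p = -u_p^p$ and $\Delta v_{p,l} = \lambda_{p,l} p u_p^{p-1} v_{p,l}$ into the left side:
\[
-\int_{B_d}(z\cdot\nabla v_{p,l})\,u_p^p\,dx + \lambda_{p,l}\,p\int_{B_d}(z\cdot\nabla u_p)\,u_p^{p-1}v_{p,l}\,dx = P_j(u_p,v_{p,l}).
\]
The first integral is rewritten by noting $(z\cdot\nabla v_{p,l})\,u_p^p = \tfrac1{p+1}\,z\cdot\nabla(v_{p,l}^{?})$ — more precisely, since only $v_{p,l}$ carries the gradient, I instead write $u_p^p\,(z\cdot\nabla v_{p,l}) = z\cdot\nabla(u_p^p v_{p,l}) - v_{p,l}\, z\cdot\nabla(u_p^p) = z\cdot\nabla(u_p^p v_{p,l}) - p\, v_{p,l} u_p^{p-1}\, z\cdot\nabla u_p$, and apply the divergence theorem to the exact term, $\int_{B_d} z\cdot\nabla(u_p^p v_{p,l}) = \int_{\partial B_d} u_p^p v_{p,l}\,(z\cdot\nu) - 2\int_{B_d} u_p^p v_{p,l}$ (the $2$ being the dimension). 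Collecting the $z\cdot\nabla u_p$ terms produces the coefficient $(\lambda_{p,l}-1)p$ on $\int_{B_d}(z\cdot\nabla u_p) v_{p,l} u_p^{p-1}$, which is exactly \eqref{3-12-04}.

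\medskip

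For \eqref{3-12-04a} the argument is the same with the translation multiplier in place of the dilation multiplier: test $-\Delta u_p$ against $\partial_{x_i} v_{p,l}$ and $-\Delta v_{p,l}$ against $\partial_{x_i} u_p$, and use the elementary identity
\[
\int_{B_d}\big[\partial_{x_i} v\,\Delta u + \partial_{x_i} u\,\Delta v\big]\,dx = Q_j(u,v),
\]
whose boundary terms assemble into \eqref{abd} after integrating by parts once and using $\partial_\ell\partial_{x_i}v\cdot\partial_\ell u + \partial_\ell\partial_{x_i}u\cdot\partial_\ell v = \partial_{x_i}(\nabla u\cdot\nabla v)$ together with the divergence theorem $\int_{B_d}\partial_{x_i}(\nabla u\cdot\nabla v) = \int_{\partial B_d}(\nabla u\cdot\nabla v)\nu_i$. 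Substituting the equations \eqref{5-20-1}, writing $u_p^p\,\partial_{x_i} v_{p,l} = \partial_{x_i}(u_p^p v_{p,l}) - p\,u_p^{p-1} v_{p,l}\,\partial_{x_i} u_p$ and applying the divergence theorem to $\int_{B_d}\partial_{x_i}(u_p^p v_{p,l}) = \int_{\partial B_d} u_p^p v_{p,l}\,\nu_i$, then collecting the remaining $\partial_{x_i} u_p$ terms with the factor $(\lambda_{p,l}-1)p$, yields \eqref{3-12-04a}. (One should note the last integral there is genuinely over $\partial B_d$ as written, matching the fact that $Q_j$ itself is a boundary object.)

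\medskip

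I do not expect a serious obstacle here, as this is a bookkeeping computation; the only points requiring care are (i) the sign convention in \eqref{5-20-1}, where $v_{p,l}$ satisfies $\Delta v_{p,l} = +\lambda_{p,l} p u_p^{p-1} v_{p,l}$ rather than the more common $-\Delta v = \lambda(\cdots)$, so that the $\lambda_{p,l}$-term enters with the opposite sign and produces the clean factor $(\lambda_{p,l}-1)p$ after combining with the $u_p^p$-contribution; (ii) verifying that the purely boundary integrals left over from the integration by parts of $\int(z\cdot\nabla v)\Delta u + (z\cdot\nabla u)\Delta v$ (resp.\ the $\partial_{x_i}$-analogue) collapse exactly into $P_j$ (resp.\ $Q_j$) with no interior remainder — this is where the precise coefficients $-2\theta$ and $\theta$ in \eqref{07-08-20} matter, and is most safely checked by doing the Rellich computation on the model term $\int_{B_d}(z\cdot\nabla v)\Delta u$ alone and symmetrizing. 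Since $u_p$ is smooth up to $\partial\Omega$ and $B_d(x_{p,j})\Subset\Omega$, all integrations by parts are justified without regularity concerns.
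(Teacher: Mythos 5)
Your overall strategy is exactly the paper's: test $-\Delta u_p=u_p^p$ against the dilation (resp.\ translation) derivative of $v_{p,l}$, symmetrize with the eigenfunction equation, and integrate by parts so that the bulk terms collapse into $P_j$ (resp.\ $Q_j$). However, as written the derivation does not produce \eqref{3-12-04}, because of two sign errors that do not cancel. First, the Rellich identity goes the other way: with $z:=x-x_{p,j}$ one has
\[
\int_{B_\theta}\big[(z\cdot\nabla v)\,\Delta u+(z\cdot\nabla u)\,\Delta v\big]\,dx
=2\theta\int_{\partial B_\theta}(\nabla u\cdot\nu)(\nabla v\cdot\nu)\,d\sigma-\theta\int_{\partial B_\theta}\nabla u\cdot\nabla v\,d\sigma
=-P_j(u,v),
\]
not $+P_j(u,v)$ as you claim (and likewise $\int_{B_\theta}[\partial_{x_i}v\,\Delta u+\partial_{x_i}u\,\Delta v]=-Q_j(u,v)$). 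Second, the eigenfunction satisfies $-\Delta v_{p,l}=\lambda_{p,l}\,p\,u_p^{p-1}v_{p,l}$, as in \eqref{03-16-1} and \eqref{6-21-1}; the sign you read off from \eqref{5-20-1} is a misprint there, and you build your bookkeeping on it. With your two conventions, your own displayed intermediate identity
\[
-\int_{B_d}(z\cdot\nabla v_{p,l})\,u_p^p+\lambda_{p,l}\,p\int_{B_d}(z\cdot\nabla u_p)\,u_p^{p-1}v_{p,l}=P_j(u_p,v_{p,l})
\]
leads, after the (correct) product-rule and divergence steps, to
\[
P_j(u_p,v_{p,l})=-\int_{\partial B_d}u_p^pv_{p,l}\,(z\cdot\nu)\,d\sigma+2\int_{B_d}u_p^pv_{p,l}\,dx+(\lambda_{p,l}+1)\,p\int_{B_d}(z\cdot\nabla u_p)\,v_{p,l}\,u_p^{p-1}\,dx,
\]
which is not \eqref{3-12-04}; the assertion that "collecting the $z\cdot\nabla u_p$ terms produces the coefficient $(\lambda_{p,l}-1)p$" does not follow from what precedes it. Only the combination "Rellich identity with the minus sign" together with "$-\Delta v_{p,l}=\lambda_{p,l}pu_p^{p-1}v_{p,l}$" yields the stated coefficient $(\lambda_{p,l}-1)p$, and that coefficient is the whole point of the lemma (it is what later extracts $\lambda_{p,l}-1\sim 6/p$ in \eqref{1-25-01}). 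The same correction is needed for \eqref{3-12-04a}.

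A smaller point: your parenthetical remark that the last integral in \eqref{3-12-04a} "is genuinely over $\partial B_d$" misreads the statement. That term is the bulk integral $\int_{B_d(x_{p,j})}u_p^{p-1}v_{p,l}\,\partial_{x_i}u_p$ (the trailing $d\sigma$ is a typo for $dx$), as is clear from how it is scaled and estimated in \eqref{6-26-28}. Once the two signs above are fixed, the rest of your computation (the product rule $u_p^p\,\partial_{x_i}v_{p,l}=\partial_{x_i}(u_p^pv_{p,l})-p\,u_p^{p-1}v_{p,l}\,\partial_{x_i}u_p$ and the divergence theorem) is correct and the lemma follows.
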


\begin{proof}

Firstly, for $y\in\mathbb R^{2}$, from \eqref{5-20-1}, we get
\begin{equation}\label{5-20-3}
\begin{split}
\int_{ \Omega'}&\Big(- (x-y)\cdot\nabla v_{p,l}\Delta u_p
-\big( x-y\cdot\nabla u_p\big) \Delta v_{p,l} \Big)dx\\=&
\int_{ \Omega'} \Big( (x-y)\cdot\nabla v_{p,l}u^p_p
+\lambda_{p,l}pu^{p-1}_pv_{p,l}(x-y)\cdot\nabla u_p\Big)dx.
\end{split}\end{equation}
Then, we can use the integration by parts on both sides of
\eqref{5-20-3} to prove  \eqref{3-12-04} by taking $\Omega'=B_{d}(x_{p,j})$ and $y=x_{p,j}$.

\vskip 0.1cm

Also from \eqref{5-20-1} we find
\begin{equation}\label{5-20-2}
-\frac{\partial v_{p,l}}{\partial x_i} \Delta u_p
-\frac{\partial u_p}{\partial x_i} \Delta v_{p,l} =u^p_p \frac{\partial v_{p,l}}{\partial x_i}
+pu^{p-1}_pv_{p,l} \frac{\partial u_p}{\partial x_i}+\big(
\lambda_{p,l}-1\big) pu^{p-1}_pv_{p,l}\frac{\partial u_p}{\partial x_i}.
\end{equation}
Then integrating in  $B_{d}(x_{p,j})$ on both sides of \eqref{5-20-2}, we get  \eqref{3-12-04a}.
\vskip 0.1cm

The readers can refer to the proof of Proposition~2.6 in \cite{GILY2021}
for details.

\end{proof}

\section{Preliminary results for the eigenfunctions}\label{section:preliminaryEigen}
In this section we obtain starting asymptotic expansions, as $p\rightarrow +\infty$, for the eigenfunctions $v_{p,l}$ associated to the eigenvalues $\lambda_{p,l}$ of the linearized problem:
\begin{equation}\label{6-21-1}
\begin{cases}
-\Delta v_{p,l}=\lambda_{p,l} \, p\, u^{p-1}_p\, v_{p,l}~&\mbox{in}~\Omega,\\[2mm]
v_{p,l}=0~&\mbox{on}~\partial \Omega.
\end{cases}
\end{equation}
In particular, in order to compute the Morse index of the solution $u_{p}$ (see Section \ref{section:Morse}), we need to understand the behavior of $v_{p,l}$ in the two occurrences when the associated eigenvalue $\lambda_{p,l}$ converges to either $0$ or $1$, these results are collected in  Proposition \ref{prop3-3} and Proposition \ref{lemma:autofunz2casi} below.
\vskip 0.2cm
W.l.g. we may assume that the eigenfunctions $v_{p,l}$ satisfy
\begin{equation}\label{normalization}
\|v_{p,l}\|_{L^{\infty}(\Omega)}=1.
\end{equation}
Letting $l\in \N$, we take for any $j=1,\cdots, k$,
\begin{equation}\label{6-21-2}
\widetilde{v}_{p,l,j}(x):= v_{p,l}\big(x_{p,j}+\e_{p,j}x\big),~~x\in \Omega_{p,j}:=\Big\{x: x_{p,j}+\e_{p,j}x\in \Omega\Big\}.
\end{equation}
From \eqref{6-21-1} and \eqref{normalization}, it follows that
\begin{equation*}
\begin{cases}
-\Delta \widetilde{v}_{p,l,j} =\lambda_{p,l}\widetilde{V}_{p,j}
\widetilde{v}_{p,l,j},&~\mbox{in}~~\Omega_{p,j},\\[1mm]
\widetilde{v}_{p,l,j}=0,&~\mbox{in}~~\partial\Omega_{p,j},\\[1mm]
\|\widetilde{v}_{p,l,j}\|_{L^{\infty}(\Omega_{p,j})}=1,
\end{cases}\end{equation*}
where $\widetilde{V}_{p,j}:=
\Big(1+\frac{w_{p,j}}{p}\Big)^{p-1}$
with $w_{p,j}$ the rescaled function in \eqref{defwpj}.

\begin{Lem}\label{lem2-6}
For any $l\in \N$, if $\lambda_{p,l}$ is bounded independent of $p$,  then for any $j=1,\cdots,k$, there exists
 $V_{l,j}(x)\in C^{2,\alpha}_{loc}\big(\R^2\big)$ with some $\alpha \in (0,1)$ satisfying
\begin{equation}\label{convRisc}
\widetilde{v}_{p,l,j}\to V_{l,j}~~\mbox{in}~~C^{2,\alpha}_{loc}\big(\R^2\big),
~\mbox{as}~p\to \infty,
\end{equation}
 and
 \begin{equation}\label{3-23-1}
\begin{cases}
-\Delta V_{l,j}=\lambda_{\infty,l} e^{U} V_{l,j}\,\,~\mbox{in}~\R^2,\\[2mm]
\|V_{l,j}\|_{L^{\infty}(\R^2)}\leq 1,
\end{cases}
\end{equation}
where $\displaystyle\lambda_{\infty,l}:=\lim_{p\to \infty}\lambda_{p,l}$ by taking a subsequence.
\end{Lem}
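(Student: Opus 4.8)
The plan is to prove Lemma \ref{lem2-6} by a standard compactness/elliptic regularity argument applied to the rescaled eigenfunctions $\widetilde v_{p,l,j}$, using the asymptotic profile of $u_p$ encoded in \eqref{5-8-2} together with the $L^\infty$-normalization \eqref{normalization}. First I would fix $j\in\{1,\dots,k\}$ and $l\in\N$. By hypothesis $\lambda_{p,l}$ is bounded, so along a subsequence $\lambda_{p,l}\to\lambda_{\infty,l}\in[0,\infty)$; set $\lambda_{\infty,l}:=\lim_p\lambda_{p,l}$. The function $\widetilde v_{p,l,j}$ solves $-\Delta\widetilde v_{p,l,j}=\lambda_{p,l}\widetilde V_{p,j}\widetilde v_{p,l,j}$ on $\Omega_{p,j}$ with $\|\widetilde v_{p,l,j}\|_{L^\infty(\Omega_{p,j})}=1$, where $\widetilde V_{p,j}=\big(1+\tfrac{w_{p,j}}{p}\big)^{p-1}$. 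Since $\Omega_{p,j}=(\Omega-x_{p,j})/\e_{p,j}$ exhausts $\R^2$ as $p\to+\infty$ (because $\e_{p,j}\to0$ and $x_{p,j}\to x_{\infty,j}\in\Omega$), for any fixed $R>0$ the ball $B_R(0)$ is contained in $\Omega_{p,j}$ for $p$ large.

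The key analytic input is the convergence of the potentials: from \eqref{5-8-2} we have $w_{p,j}\to U$ in $C^2_{loc}(\R^2)$, hence $\widetilde V_{p,j}=\exp\!\big((p-1)\log(1+\tfrac{w_{p,j}}{p})\big)\to e^{U}$ uniformly on compact subsets of $\R^2$; in particular $\widetilde V_{p,j}$ is uniformly bounded on each $B_R(0)$ for $p$ large, since $U$ is bounded above and $w_{p,j}\to U$ locally uniformly. Combined with $|\widetilde v_{p,l,j}|\le1$ and $\lambda_{p,l}$ bounded, the right-hand side $\lambda_{p,l}\widetilde V_{p,j}\widetilde v_{p,l,j}$ is bounded in $L^\infty(B_R(0))$ uniformly in $p$. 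By standard interior elliptic estimates (Calderón–Zygmund in $W^{2,q}$ for every $q<\infty$, then Schauder bootstrap using that the right-hand side is in $C^{0,\alpha}_{loc}$ once $\widetilde v_{p,l,j}\in C^{1,\alpha}_{loc}$), the sequence $\widetilde v_{p,l,j}$ is bounded in $C^{2,\alpha}_{loc}(\R^2)$ for some $\alpha\in(0,1)$. A diagonal argument over an exhaustion $B_1\subset B_2\subset\cdots$ then extracts a further subsequence converging in $C^{2,\alpha}_{loc}(\R^2)$ to a limit $V_{l,j}\in C^{2,\alpha}_{loc}(\R^2)$.

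It remains to identify the equation and the bound satisfied by $V_{l,j}$. Passing to the limit in $-\Delta\widetilde v_{p,l,j}=\lambda_{p,l}\widetilde V_{p,j}\widetilde v_{p,l,j}$ on each $B_R(0)$, using $\lambda_{p,l}\to\lambda_{\infty,l}$, $\widetilde V_{p,j}\to e^{U}$ locally uniformly, and $\widetilde v_{p,l,j}\to V_{l,j}$ in $C^2_{loc}$, we obtain $-\Delta V_{l,j}=\lambda_{\infty,l}e^{U}V_{l,j}$ in $\R^2$. The $L^\infty$-bound $\|V_{l,j}\|_{L^\infty(\R^2)}\le1$ is immediate from $\|\widetilde v_{p,l,j}\|_{L^\infty(\Omega_{p,j})}=1$ and pointwise convergence. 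This yields \eqref{convRisc} and \eqref{3-23-1}, completing the proof.

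The main (mild) obstacle is making precise that the potentials $\widetilde V_{p,j}$ are locally uniformly bounded and converge to $e^{U}$: this needs the locally uniform convergence $w_{p,j}\to U$ from \eqref{5-8-2} together with an elementary estimate on $(1+t/p)^{p-1}$, but once that is in hand the rest is a routine compactness argument; one should also take care that the subsequence can be chosen simultaneously for all $j=1,\dots,k$ (finitely many indices, so no difficulty). No uniform control away from the concentration points is needed here, since the statement is purely local in the rescaled variable.
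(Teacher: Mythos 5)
Your proof is correct and follows essentially the same route as the paper, which simply invokes boundedness of $\widetilde V_{p,j}$ on compacta together with $L^p$ and Schauder estimates; you spell out the Calder\'on--Zygmund and Schauder bootstrap, the exhaustion of $\mathbb R^2$ by $\Omega_{p,j}$, the local uniform convergence $\widetilde V_{p,j}\to e^U$, and the passage to the limit, all of which are the intended (and correct) details behind the paper's one-line argument.
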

\begin{proof}

We observe from \eqref{5-8-2}
that $\widetilde{V}_{p,j}$ is bounded in $B_R(0)$ for any $R>0$.
Thus  the result follows from the $L^p$ estimates
and Schauder estimates for the elliptic equations.

\end{proof}

To examine the Morse index of $u_p$, we will need to study the case $\lambda_{\infty,l}=0$ and $\lambda_{\infty,l}=1$. W.l.g we further assume the orthogonality of the eigenfunctions in Dirichlet norm as follows:
\begin{equation}\label{5-25-31}
\int_{\Omega}\nabla v_{p,l}\cdot \nabla v_{p,l'}dx =0~~~\mbox{if}~~l\neq l'.
\end{equation}
In fact, if $\lambda_{\infty,l}\neq \lambda_{\infty,l'}$, then \eqref{5-25-31} is direct.
If $\lambda_{\infty,l}= \lambda_{\infty,l'}$, by standard Schimidt orthogonalization, we find
 new functions (still denoted by $v_{p,l}$) satisfying \eqref{5-25-31}.

\begin{Lem}\label{prop3-3prima}
Let $l\in \N$, if $\lambda_{\infty,l}\in [0,\infty)$, then there exists $j\in\{1,2,\cdots,k\}$ such that it holds
\begin{equation}\label{5-25-10}
V_{l,j}\not\equiv 0,
\end{equation}
where $V_{l.j}$ is the limit function in \eqref{convRisc}.
\end{Lem}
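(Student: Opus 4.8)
The plan is to argue by contradiction: suppose that $V_{l,j}\equiv 0$ for every $j=1,\dots,k$, i.e. every rescaled eigenfunction $\widetilde v_{p,l,j}$ vanishes in $C^2_{loc}(\R^2)$ as $p\to\infty$. The normalization $\|v_{p,l}\|_{L^\infty(\Omega)}=1$ forces the existence of a maximum point $z_p\in\overline\Omega$ with $|v_{p,l}(z_p)|=1$; the idea is to show that this maximum value $1$ cannot be sustained, deriving a contradiction. First I would note that $z_p$ must stay away from $\partial\Omega$ (else $v_{p,l}(z_p)=0$), and argue that $z_p$ cannot converge to any point of $\Omega\setminus\mathcal S$: on such a region $pu_p^{p-1}\to 0$ uniformly (by \eqref{11-14-03N} and the uniform boundedness of $u_p$), so $v_{p,l}$ is asymptotically harmonic there with bounded $L^\infty$ norm, and elliptic estimates plus the maximum principle would push the maximum to the boundary of such a region, i.e.\ toward $\mathcal S$ or $\partial\Omega$. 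Hence, up to a subsequence, $z_p\to x_{\infty,j_0}$ for some $j_0$.

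The next step is to control the scale at which $z_p$ approaches $x_{\infty,j_0}$. Write $z_p=x_{p,j_0}+\e_{p,j_0}\zeta_p$. If $\zeta_p$ stays bounded, then by \eqref{convRisc} we would get $|V_{l,j_0}(\lim\zeta_p)|=1$, contradicting $V_{l,j_0}\equiv 0$. So $|\zeta_p|\to\infty$, meaning the maximum occurs in the intermediate/outer region $R_\delta\le|y|\le d/\e_{p,j_0}$ (and similarly at scale $\gg\e_{p,s}$ around every other concentration point). Here I would exploit the decay estimate in Lemma \ref{llma}: on the annular region $R_\delta\le|y|\le d/\e_{p,j}$ one has $w_{p,j}\le(4-\delta)\log\frac1{|y|}+C_\delta$, hence
\[
p u_p^{p-1}(x_{p,j}+\e_{p,j}y)=\frac{1}{\e_{p,j}^2}\widetilde V_{p,j}(y)=\frac{1}{\e_{p,j}^2}\Big(1+\frac{w_{p,j}}{p}\Big)^{p-1}\le \frac{C}{\e_{p,j}^2}|y|^{-4+\delta},
\]
which after undoing the scaling gives $pu_p^{p-1}(x)\le C|x-x_{p,j}|^{-4+\delta}\e_{p,j}^{2-\delta}$, an integrable and (since $\e_{p,j}\to 0$) small weight on that region. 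Combined with the analogous bound near the other $x_{p,s}$ and the pointwise bound $p\min_j|x-x_{p,j}|^2u_p^{p-1}\le C$ from Theorem \ref{thm:asymptoticup}, I get that the full potential $\lambda_{p,l}pu_p^{p-1}$ has small $L^q(\Omega)$ norm for some $q>1$ on $\Omega\setminus\bigcup_j B_{R_\delta\e_{p,j}}(x_{p,j})$, uniformly in $p$.

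With this in hand, the contradiction comes from a Green's representation / potential estimate: since $-\Delta v_{p,l}=\lambda_{p,l}pu_p^{p-1}v_{p,l}$ in $\Omega$ with $v_{p,l}=0$ on $\partial\Omega$, we have $v_{p,l}(x)=\lambda_{p,l}\int_\Omega G(x,y)\,pu_p^{p-1}(y)v_{p,l}(y)\,dy$. Splitting the integral into the inner balls $B_{R_\delta\e_{p,j}}(x_{p,j})$ — where $\int G\,pu_p^{p-1}\,dy$ is $O(\e_{p,j}^2\log(1/\e_{p,j}))\cdot\e_{p,j}^{-2}=o(1)$ using $\|v_{p,l}\|_\infty=1$ and $\int_{B_{R\e}(x_p)}pu_p^{p-1}=O(1)$ together with the $\log$ from $G$, or more carefully using the rescaled picture and $V_{l,j}\equiv 0$ to make the inner contribution $o(1)$ — and the outer region, where the small-$L^q$ bound above combined with $G\in L^{q'}$ gives $o(1)$ as well, we conclude $\|v_{p,l}\|_{L^\infty(\Omega)}=o(1)$, contradicting \eqref{normalization}.

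The main obstacle I anticipate is making the inner-region contribution rigorously $o(1)$: near each $x_{p,j}$ the weight $pu_p^{p-1}$ is of size $\e_{p,j}^{-2}$, so $\int_{B_{R_\delta\e_{p,j}}(x_{p,j})}G(x,y)pu_p^{p-1}(y)v_{p,l}(y)\,dy$ is not obviously small just from $\|v_{p,l}\|_\infty\le 1$ — one needs the hypothesis $V_{l,j}\equiv 0$ to say that $v_{p,l}=o(1)$ on each $B_{R_\delta\e_{p,j}}(x_{p,j})$ (which follows from \eqref{convRisc} once $R_\delta$ is fixed, since convergence is in $C^2_{loc}$ and $V_{l,j}\equiv 0$), and then combine it with $\int_{B_{R_\delta\e_{p,j}}(x_{p,j})}G(x,\cdot)pu_p^{p-1}=O(1)+O(\log(1/\e_{p,j}))$ carefully — i.e.\ one must track the logarithmic singularity of $G$ against the concentrating mass. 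Once the inner contribution is handled via $V_{l,j}\equiv 0$ and the outer one via Lemma \ref{llma}, the contradiction with the normalization closes the argument.
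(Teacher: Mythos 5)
Your overall skeleton matches the paper's: argue by contradiction assuming $V_{l,j}\equiv 0$ for all $j$, observe that $v_{p,l}\to 0$ locally uniformly away from the concentration points (since there $pu_p^{p-1}=O(C^{p-1}/p^{p-2})$), and then the whole issue is to propagate smallness through the intermediate annuli $R\e_{p,j}\le|x-x_{p,j}|\le\tau$ using Lemma \ref{llma}. However, the mechanism you propose for that intermediate region — the Green representation $v_{p,l}(x)=\lambda_{p,l}\int_\Omega G(x,y)pu_p^{p-1}(y)v_{p,l}(y)\,dy$ — has a genuine gap, and it is exactly the one you flag without resolving. For $x$ in the annulus around $x_{p,j}$, the inner-ball contribution behaves like
\[
\int_{B_{R\e_{p,j}}(x_{p,j})}G(x,y)\,pu_p^{p-1}v_{p,l}\,dy\approx\frac{1}{2\pi}\log\frac{1}{|x-x_{p,j}|}\int_{B_{R\e_{p,j}}(x_{p,j})}pu_p^{p-1}v_{p,l}\,dy,
\]
and $\log\frac{1}{|x-x_{p,j}|}$ can be as large as $\log\frac{1}{\e_{p,j}}\sim\frac p4$, while the mass factor is only known to be $o(1)$ at this stage (a quantitative rate such as $O(1/p^2)$ is obtained only much later and uses $\lambda_{\infty,l}=1$). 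So the product is $o(1)\cdot p$, not $o(1)$. The same factor $\log\frac{1}{\e_{p,j}}$ multiplies the annular contribution, whose mass $\int pu_p^{p-1}=O(R^{-2+\delta})$ is small in $R$ but not in $p$. Moreover, your claim that the potential has small $L^q$ norm for some $q>1$ on the outer region is false: from $pu_p^{p-1}\le C\e_{p,j}^{2-\delta}|x-x_{p,j}|^{-4+\delta}$ one computes $\|pu_p^{p-1}\|^q_{L^q}\sim\e_{p,j}^{2-2q}\to\infty$ for every $q>1$; only the $L^1$ norm over the annulus is controlled, and pairing an $L^1$ density against the unbounded kernel $G$ reintroduces the logarithm.

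The paper closes the argument by a different and simpler device: it works directly in the rescaled annulus $B_{\e_{p,j}^{-1}\tau}(0)\setminus B_R(0)$, where $|\Delta\widetilde v_{p,l,j}|\le C|y|^{-(4-\delta)}$ by Lemma \ref{llma}, the boundary data on $|y|=R$ is $o(1)$ by the contradiction hypothesis, and the boundary data on $|y|=\e_{p,j}^{-1}\tau$ is $o(1)$ by the convergence away from $\mathcal S$. It then writes down the explicit radial solution $w$ of $-\Delta w=C|y|^{-(4-\delta)}$ with these boundary values, checks $w=o(1)$, and concludes $|\widetilde v_{p,l,j}|\le w$ by the maximum principle. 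This barrier argument never touches the Green function, so the logarithm never appears. If you insist on a representation-formula route you would need an a priori rate $o(1/p)$ on $\int_{B_{R\e_{p,j}}}pu_p^{p-1}v_{p,l}$, which is not available at this point of the argument; replacing your potential-theoretic step by the comparison-function argument is the way to repair the proof.
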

\begin{proof}
Firstly,  using \eqref{11-14-03N} and the fact $\|v_{p,l}\|_{L^{\infty}(\Omega)}=1$, it holds
\begin{equation*}
\begin{split}
\lambda_{p,l} pu^{p-1}_pv_{p,l}
= O\Big( \frac{C^{p-1}}{p^{p-2}}\Big)
 \to 0~~\mbox{locally uniformly in}~~\Omega\backslash \big\{x_{\infty,1},\cdots,x_{\infty,k}\big\} ~~\mbox{as}~~ p\to \infty.
\end{split}
\end{equation*}
Then from the standard elliptic regularity estimates, we can deduce that
\begin{equation}\label{zero}
\begin{split}
 {v}_{p,l}(x)
 \to 0~~\mbox{local uniformly in}~~\Omega\backslash \big\{x_{\infty,1},\cdots,x_{\infty,k}\big\} ~~\mbox{as}~~ p\to \infty.
\end{split}
\end{equation}

Assume by contradiction that $V_{l,j}\equiv 0$ for some $l\in \N$ and
all $j=1,\cdots,k$, namely that
\begin{equation*}
\begin{split}
 \widetilde{v}_{p,l,j}(x) \to 0~~\mbox{local uniformly in}~~\R^2 ~~\mbox{as}~~ p\to \infty.
\end{split}
\end{equation*}
We will prove that for large $R>0$, it holds
\begin{equation*}
\widetilde{v}_{p,l,j}(x) =o_R(1)~~\mbox{ in}~~B_{\varepsilon_{p,j}^{-1}\tau}(0)\setminus B_R(0),
\end{equation*}
which, together with \eqref{zero}, will give a contradiction to
$\|v_{p,l}\|_{L^{\infty}(\Omega)}=1$.

\vskip 0.1cm

In fact, let $\alpha_{p, j}=\max_{ |y|= R }|v_{p,l}(y)|$ and
$\beta_{p, j}=\max_{ |y|= \varepsilon_{p,j}^{-1}\tau }|v_{p,l}(y)|$.
Then $\alpha_{p, j}=o(1)$ and $\beta_{p, j}=o(1)$. In view of Lemma~\ref{llma},
we have
\[
|\lambda_{p,l}\widetilde{V}_{p,j}
\widetilde{v}_{p,l,j}|\le \frac{C}{|y|^{4-\delta}},\quad
R\leq |y|\leq \frac{\tau}{\e_{p,j}}.
\]
Let $w$ be the solution of the following problem
\[
\begin{cases}
-\Delta w =\frac{C}{|y|^{4-\delta}},\;\;\text{in}\; B_{\varepsilon_{p,j}^{-1}\tau}(0)\setminus B_R(0),\\[1mm]
w(R)= \alpha_{p, j},\;\;
w(\e_{p,j}^{-1}\tau)= \beta_{p, j}.
\end{cases}
\]
Then, we have $w>0$ and $|\widetilde{v}_{p,l,j}|\le w$. On the other hand,

\[
\begin{split}
w(y)=&\alpha_{p, j}+\frac C{(2-\delta)^2}\frac1{R^{2-\delta}}+\Bigl(\beta_{p, j}+
\frac C{(2-\delta)^2}\frac{\e_{p,j}^{2-\delta}}{\tau^{2-\delta}}-
\alpha_{p, j}-\frac C{(2-\delta)^2}\frac1{R^{2-\delta}}
\Bigr)\frac{\ln\frac{|y|}{R}}{ \ln\frac{\tau}{\e_{p,j}R} }\\
&-\frac{C}{(2-\delta)^2}\frac1{|y|^{2-\delta}},
\end{split}
\]
which gives $w=o(1)$. Thus the claim follows.

\end{proof}

\begin{Prop}\label{prop3-3}
Let $l\in \N$, we have the following results:

 \vskip 0.2cm

\noindent \textup{(1)}~ If $\lambda_{\infty,l}=0$, then for any  $j=1,2,\cdots,k$, there exists $c_{l,j}\in\mathbb R$ such that
\begin{equation*}
	\widetilde{v}_{p,l,j}= c_{l,j} +o(1),
\end{equation*}
where $\widetilde{v}_{p,l,j}$ is the rescaled eigenfunction in \eqref{6-21-2}.
Let $\textbf{c}_{l}:=\big(c_{l,1},\cdots, c_{l,k}\big)$, then $$\textbf{c}_{l}\neq \textbf{0}.$$
 Moreover, if $\lambda_{\infty,l}=\lambda_{\infty,l'}=0$ for $l\neq l'$, then we have
\begin{equation}\label{6-26-01}
c_{l,j}c_{l',j}=0\ ~~\,\,~\forall~~ j\in \big\{1,2,\cdots,k\big\}.
\end{equation}
 \vskip 0.2cm

\noindent \textup{(2)}~ If $\lambda_{\infty,l}=1$, then for any $j=1,\cdots, k$ there exists  $(a_{1,l,j},a_{2,l,j},b_{l,j})\in\mathbb R^{3}$ such that
\begin{equation}\label{6-21-4}
\widetilde{v}_{p,l,j}(y)
=\sum^2_{q=1}a_{q,l,j}\frac{y_q}{8+|y|^2}+b_{l,j}\frac{8-|y|^2}{8+|y|^2}+o(1),\quad~\mbox{in}~C^1_{loc}(\R^2),
~~\mbox{as}~p\to +\infty,
\end{equation}
and there exists $j\in \{1,\cdots, k\}$ such that $(a_{1,l,j},a_{2,l,j},b_{l,j})\neq (0,0,0)$.

\vskip 0.1cm

Moreover, if $\lambda_{\infty,l}=\lambda_{\infty,l'}=1$ for $l\neq l'$, then we have
\begin{equation}\label{5-25-40}
\textbf{a}_{l,j}\cdot \textbf{a}_{l',j}+32b_{l,j}b_{l',j}=0,~~\mbox{for any}~j=1,\cdots,k,
\end{equation}
where  $\textbf{a}_{l,j}:=\big(a_{1,l,j},a_{2,l,j}\big)\in \R^{2}$.
\end{Prop}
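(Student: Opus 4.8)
\textbf{Proof proposal for Proposition \ref{prop3-3}.}

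The plan is to treat the two cases $\lambda_{\infty,l}=0$ and $\lambda_{\infty,l}=1$ separately, in each case first identifying the limit profile $V_{l,j}$ via Lemma \ref{lem2-6} and the classification in Lemma \ref{lemma:limitEigenvalueProblem}--Lemma \ref{llm}, then upgrading the mere $C^2_{loc}$ convergence to the asserted $o(1)$ (resp. $C^1_{loc}$) expansion, and finally establishing the orthogonality relations \eqref{6-26-01} and \eqref{5-25-40} by passing to the limit in the global Dirichlet-orthogonality condition \eqref{5-25-31}. For case (1): when $\lambda_{\infty,l}=0$, Lemma \ref{lem2-6} gives $\widetilde v_{p,l,j}\to V_{l,j}$ in $C^{2,\alpha}_{loc}$ with $-\Delta V_{l,j}=0$ and $\|V_{l,j}\|_\infty\le 1$, so $V_{l,j}$ is a bounded harmonic function on $\mathbb R^2$, hence a constant $c_{l,j}$; this yields $\widetilde v_{p,l,j}=c_{l,j}+o(1)$ locally uniformly. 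The nontriviality $\mathbf c_l\neq\mathbf 0$ is exactly Lemma \ref{prop3-3prima} (which guarantees $V_{l,j}\not\equiv 0$ for some $j$, i.e. $c_{l,j}\neq 0$ for that $j$, since a nonzero constant is the only way a constant is nontrivial). For case (2): when $\lambda_{\infty,l}=1$, Lemma \ref{lem2-6} gives $-\Delta V_{l,j}=e^U V_{l,j}$ with finite Dirichlet energy (the energy bound coming from \eqref{5-25-31}-type normalization together with $\|v_{p,l}\|_\infty=1$ and the asymptotics of $u_p$), so by Lemma \ref{llm}, $V_{l,j}\in\mathrm{span}\{\partial_1 U,\partial_2 U,(8-|x|^2)/(8+|x|^2)\}$; writing $\partial_q U(y)=-\tfrac{y_q/2}{1+|y|^2/8}=-\tfrac{4y_q}{8+|y|^2}$ puts $V_{l,j}$ in the form $\sum_q a_{q,l,j}\frac{y_q}{8+|y|^2}+b_{l,j}\frac{8-|y|^2}{8+|y|^2}$, and again Lemma \ref{prop3-3prima} supplies the index $j$ with $(a_{1,l,j},a_{2,l,j},b_{l,j})\neq(0,0,0)$. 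The $C^1_{loc}$ (rather than just $C^0_{loc}$) convergence follows from bootstrapping the rescaled equation together with $\widetilde V_{p,j}\to e^U$ in $C^2_{loc}$ from \eqref{5-8-2}.

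The heart of the argument is the orthogonality relations. For \eqref{6-26-01}: suppose $\lambda_{\infty,l}=\lambda_{\infty,l'}=0$ with $l\neq l'$. The idea is to localize the Dirichlet integral $0=\int_\Omega \nabla v_{p,l}\cdot\nabla v_{p,l'}$ near the concentration set: away from $\mathcal S$ both $v_{p,l}$ and $v_{p,l'}$ (and their gradients) tend to $0$ by \eqref{zero} and elliptic estimates, so the integral is asymptotically $\sum_j \int_{B_d(x_{p,j})}\nabla v_{p,l}\cdot\nabla v_{p,l'}$. Integrating by parts and using the equation \eqref{6-21-1}, $\int_{B_d(x_{p,j})}\nabla v_{p,l}\cdot\nabla v_{p,l'}=\lambda_{p,l}p\int_{B_d(x_{p,j})}u_p^{p-1}v_{p,l}v_{p,l'}+\text{(boundary terms)}$; rescaling by $y=(x-x_{p,j})/\e_{p,j}$ turns the bulk term into $\lambda_{p,l}\int \widetilde V_{p,j}\widetilde v_{p,l,j}\widetilde v_{p,l',j}$, which converges to $\lambda_{\infty,l}\int_{\mathbb R^2}e^U c_{l,j}c_{l',j}=0$ once $\lambda_{\infty,l}=0$ — so this route alone gives $0=0$ and must be sharpened. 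The correct normalization is to divide the whole identity by $\lambda_{p,l}$ (or, symmetrically, to use whichever of $\lambda_{p,l},\lambda_{p,l'}$ one needs) \emph{before} taking limits; then the bulk integral contributes $8\pi\, c_{l,j}c_{l',j}$ in the limit (using $\int_{\mathbb R^2}e^U=8\pi$), the boundary terms are shown to be negligible after this rescaling via the decay estimates of Lemma \ref{llma}, and summing over $j$ forces $\sum_j c_{l,j}c_{l',j}=0$. To get the stronger pointwise statement \eqref{6-26-01} one repeats this with a cutoff supported near a single $x_{p,j}$ — or observes that the whole $o(1)$-analysis is local to each $B_d(x_{p,j})$ — yielding $c_{l,j}c_{l',j}=0$ for each $j$ individually. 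For \eqref{5-25-40} the scheme is identical with $\lambda_{\infty,l}=\lambda_{\infty,l'}=1$: the localized Dirichlet product becomes, after rescaling, $\int_{\mathbb R^2}e^U V_{l,j}V_{l',j}$, and a direct computation of the Gram matrix of $\{\partial_1 U,\partial_2 U,(8-|x|^2)/(8+|x|^2)\}$ with weight $e^U$ — which is diagonal, with $\int e^U(\partial_q U)^2$ equal for $q=1,2$ and $\int e^U\big(\tfrac{8-|x|^2}{8+|x|^2}\big)^2$ a fixed multiple thereof — produces precisely the combination $\mathbf a_{l,j}\cdot\mathbf a_{l',j}+32 b_{l,j}b_{l',j}=0$, the constant $32$ being the ratio of those two weighted norms.

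The main obstacle I anticipate is the control of the boundary terms $\int_{\partial B_d(x_{p,j})}(\partial_\nu v_{p,l})v_{p,l'}$ arising from the integration by parts, and more importantly the justification that, after dividing by $\lambda_{p,l}\to 0$, the rescaled bulk integrals genuinely converge (dominated convergence over the expanding domains $\Omega_{p,j}$), for which the pointwise bound $|\widetilde V_{p,j}\widetilde v_{p,l,j}|\le C|y|^{-(4-\delta)}$ from Lemma \ref{llma} on the annular region $R\le |y|\le \tau/\e_{p,j}$ is exactly what is needed to dominate $e^U$-tails uniformly. A secondary subtlety is the clean separation of the Dirichlet integral into the $k$ local pieces plus a negligible exterior piece; here one uses \eqref{zero} for $v_{p,l}$ itself and the gradient estimate \eqref{luoluo2}-type bounds (or interior elliptic estimates on $\Omega\setminus\bigcup_j B_d(x_{p,j})$) to kill $\int_{\Omega\setminus\bigcup B_d}\nabla v_{p,l}\cdot\nabla v_{p,l'}$. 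Once these estimates are in place, the passage to the limit and the finite-dimensional linear-algebra computation of the weighted Gram matrices are routine.
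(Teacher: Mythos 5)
Your proposal follows the same strategy as the paper: Lemma \ref{lem2-6} together with the classification Lemmas \ref{lemma:limitEigenvalueProblem}--\ref{llm} identifies the limit profiles, Lemma \ref{prop3-3prima} gives the nontriviality, and the relations \eqref{6-26-01}, \eqref{5-25-40} come from passing to the limit in the orthogonality \eqref{5-25-31}; your Gram--matrix computation, with the constant $32=\frac{8\pi/3}{\pi/12}$, is exactly the paper's \eqref{luo-1-2}--\eqref{luo-1-3}. The one substantive divergence is the order of operations in converting \eqref{5-25-31} into a weighted $L^2$ relation: you localize to the balls $B_d(x_{p,j})$ first and then integrate by parts, which creates boundary terms and forces you to divide an identity whose error terms are only $o(1)$ by $\lambda_{p,l'}\to 0$ --- your self-identified ``main obstacle''. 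The paper integrates by parts globally first: since $v_{p,l'}=0$ on $\partial\Omega$ and $\lambda_{p,l'}>0$ exactly, the orthogonality is equivalent to $\int_\Omega p\,u_p^{p-1}v_{p,l}v_{p,l'}=0$ with no boundary terms and no division by a vanishing quantity; only afterwards does one discard the exterior contribution (exponentially small by \eqref{11-14-03N}) and rescale each ball, using Lemma \ref{llma} to dominate the tails. This dissolves your obstacle entirely, whereas making your localized version airtight would require showing that the boundary terms on $\partial B_d(x_{p,j})$ are $o(\lambda_{p,l'})$, which needs the quantitative outer expansion of Lemma \ref{lem2-9} that is not yet available at this point.

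One genuine soft spot remains. From the single scalar identity one only obtains, in the limit, the summed relations $\sum_j c_{l,j}c_{l',j}=0$ and $\sum_j\big(\mathbf a_{l,j}\cdot \mathbf a_{l',j}+32\,b_{l,j}b_{l',j}\big)=0$, and you correctly notice this; but your proposed upgrade to the per-$j$ statements --- ``repeat the argument with a cutoff supported near a single $x_{p,j}$'' --- does not work as stated, because $\int_\Omega\nabla v_{p,l}\cdot\nabla(\phi_j v_{p,l'})$ is not zero and the cutoff terms are not obviously negligible relative to the quantity you are extracting. Since \eqref{6-26-01} and \eqref{5-25-40} are stated (and later used, e.g.\ in the proof of Proposition \ref{prop:expansionLastGroup}) pointwise in $j$, this upgrade needs a genuine localization argument rather than a remark; the paper itself is terse here, asserting that \eqref{5-25-31} is ``equivalent to'' the per-$j$ conditions, but your specific cutoff device would fail, so as written your proof establishes only the summed version. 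Everything else --- the identification of the constants/kernel elements via $\partial_q U(y)=-4y_q/(8+|y|^2)$, the use of Lemma \ref{prop3-3prima}, and the weighted Gram computation --- is correct and matches the paper.
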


\begin{proof}
\underline{Case (1):} Let $V_{l,j}$ be the limit function in  \eqref{convRisc}, then it satisfies
\eqref{3-23-1} which  reduces to  \[
\begin{cases}
\Delta V_{l,j}=0\,\,~\mbox{in}~\R^2,\\[2mm]
\|V_{l,j}\|_{L^{\infty}(\R^2)}\leq 1.
\end{cases}
\]
Hence the function $V_{l,j}$ is a constant.  Furthermore Lemma \ref{prop3-3prima} implies that $\textbf{c}_{l}\neq \textbf{0}$.
Next we prove \eqref{6-26-01}. Let us observe that  the orthogonality of the eigenfunctions \eqref{5-25-31} is equivalent to the condition
\[\int_{\Omega_{p,j}}\Big(1+\frac{w_{p,j}}{p} \Big)^{p-1}\widetilde{v}_{p,l,j}\widetilde{v}_{p,l',j}=0, \quad\mbox{ for any }j=1,\cdots, k.\]
By the convergences in \eqref{5-8-2} and in \eqref{convRisc}  and the estimate in Lemma \ref{llma} one can pass to the limit as $p\rightarrow +\infty$ using Lebesgue's Theorem and obtain
\[c_{l,j}c_{l',j}\int_{\mathbb R^{2}}e^{U(y)}dy=0.\]
Hence  $c_{l,j} c_{l',j}=0$, for any $j=1,\cdots, k$, from which we deduce \eqref{6-26-01}.

\vskip 0.2cm

\noindent\underline{Case (2):}
By Lemma \ref{lem2-6} and the assumption $\lambda_{\infty,l}=1$ we have that the limit function $V_{l,j}(x)$ in \eqref{convRisc}
solves problem \eqref{kernelEq}. Then from Lemma \ref{llm} we can deduce \eqref{6-21-4}, for some $(a_{1,l,j},a_{2,l,j},b_{l,j})\in \R^3$ and \eqref{5-25-10} implies that there exists $j\in\{1,\cdots,k\}$ such that
\begin{equation*}
(a_{1,l,j},a_{2,l,j},b_{l,j})\neq (0,0,0).
\end{equation*}
By the orthogonality of different eigenfunctions as stated in
\eqref{5-25-31}, arguing as in Case (1) we deduce that
\[
\int_{\R^2}e^{U(y)}V_{l,j}(y) V_{l',j}(y)dy=0, \quad \forall j=1,\cdots, k.
\]
Using the expression of $V_{l,j}, V_{l',j}$ in \eqref{6-21-4} one deduces that
\begin{equation}\label{luo-1-1}
\sum^2_{q=1}a_{q,l,j}a_{q,l',j}\int_{\R^2}e^{U(y)} \frac{y_q^{2}}{(8+|y|^2)^{2}}dy+b_{l,j}b_{l',j}\int_{\R^2}e^{U(y)} \left(\frac{8-|y|^2}{8+|y|^2}\right)^{2}dy=0, \quad \forall j=1,\cdots, k.
\end{equation}
Also by direct computations, we know
\begin{equation}\label{luo-1-2}
\begin{split}
 \int_{\R^2}e^{U(y)} \frac{y_q^{2}}{(8+|y|^2)^{2}}dy =32 \int_{\R^2} \frac{|y|^{2}}{(8+|y|^2)^{4}}dy
 =64\pi \int^{\infty}_{0} \frac{r^{3}}{(8+r^2)^{4}}dr=\frac{\pi}{12},
\end{split}
\end{equation}
and
\begin{equation}\label{luo-1-3}
\begin{split}
\int_{\R^2}e^{U(y)} \left(\frac{8-|y|^2}{8+|y|^2}\right)^{2}dy =64 \int_{\R^2} \frac{ (8-|y|^2)^2}{(8+|y|^2)^{4}}dy
 =128\pi \int^{\infty}_{0} \frac{(8-r^2)^2r}{(8+r^2)^{4}}dr=\frac{8\pi}{3}.
\end{split}
\end{equation}
Then \eqref{5-25-40} follows by \eqref{luo-1-1}, \eqref{luo-1-2} and \eqref{luo-1-3}.
\end{proof}

\begin{Lem}\label{lem2-9}
Let $l\in \mathbb{N}$, then
\begin{equation}\label{3-12-03d}
\begin{split}
\frac{v_{p,l}(x)}{\lambda_{p,l}}=& p\sum^k_{m=1} G(x_{p,m},x) A_{p,l,m}
+ p\sum^k_{m=1}\sum^2_{q=1} \frac{\partial G(x_{p,m},x) }{\partial x_q} B_{p,l,m,q}
\\&+o\Big(\e_p\Big)\,\,\,\,\,\,\,\,\,\,\, \mbox{in}~
C^1_{loc}\Big(\overline{\Omega}\backslash \big\{x_{\infty,1},\cdots,x_{\infty,k}\big\}\Big),
\end{split}
\end{equation}
where $\varepsilon_{p}$ is defined in \eqref{defepsilon},
\begin{equation}\label{6-21-21}
A_{p,l,m}:=\int_{B_d(x_{p,m})}
u^{p-1}_{p}(y) v_{p,l}(y)dy,
\end{equation}
and
\begin{equation}\label{6-21-22}
B_{p,l,m,q}:= \int_{B_d(x_{p,m})}\big(y-x_{p,m}\big)_q
u^{p-1}_{p}(y) v_{p,l}(y)dy.
\end{equation}
\end{Lem}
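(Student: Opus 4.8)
\textbf{Proof strategy for Lemma \ref{lem2-9}.}
The plan is to represent $v_{p,l}$ via the Green function using the equation $-\Delta v_{p,l}=\lambda_{p,l}\,p\,u_p^{p-1}v_{p,l}$ in $\Omega$ with $v_{p,l}=0$ on $\partial\Omega$, so that for $x\in\Omega$
\[
\frac{v_{p,l}(x)}{\lambda_{p,l}}=p\int_{\Omega}G(y,x)\,u_p^{p-1}(y)\,v_{p,l}(y)\,dy
=p\sum_{m=1}^{k}\int_{B_d(x_{p,m})}G(y,x)\,u_p^{p-1}(y)\,v_{p,l}(y)\,dy+\text{(outer part)}.
\]
The outer part is over $\Omega\setminus\bigcup_m B_d(x_{p,m})$; there $u_p^{p-1}(y)=O(C^{p-1}/p^{p-1})$ by \eqref{11-14-03N} and $|v_{p,l}|\le1$, so after multiplying by $p$ this contributes $O(C^{p-1}/p^{p-2})$, which is $o(\e_p)$ since $\e_p$ is only exponentially small of order $e^{-p/4}$ (see \eqref{nn3-29-03}); moreover, since $x$ ranges in a compact subset of $\overline\Omega\setminus\{x_{\infty,1},\dots,x_{\infty,k}\}$, differentiating $G(\cdot,x)$ in $x$ costs only a bounded factor, so the same bound holds in $C^1_{loc}$. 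Hence only the $k$ local integrals matter.

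For each $m$, fix $x$ away from the concentration set so that $y\mapsto G(y,x)$ is smooth on $B_d(x_{p,m})$; Taylor expand it around $y=x_{p,m}$:
\[
G(y,x)=G(x_{p,m},x)+\sum_{q=1}^{2}\partial_q G(x_{p,m},x)\,(y-x_{p,m})_q+\frac12\sum_{q,r}\partial_q\partial_r G(x_{p,m},x)\,(y-x_{p,m})_q(y-x_{p,m})_r+\cdots.
\]
Integrating against $p\,u_p^{p-1}v_{p,l}$ over $B_d(x_{p,m})$ produces exactly $p\,G(x_{p,m},x)A_{p,l,m}+p\sum_q \partial_q G(x_{p,m},x)B_{p,l,m,q}$ as the first two terms, with definitions \eqref{6-21-21}--\eqref{6-21-22}. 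It remains to show the quadratic (and higher) remainder is $o(\e_p)$ in $C^1_{loc}$. The key estimate for this is the scaling: changing variables $y=x_{p,m}+\e_{p,m}z$ gives $\int_{B_d}|y-x_{p,m}|^2 u_p^{p-1}|v_{p,l}|\,dy \le \e_{p,m}^{2}\int_{|z|\le d/\e_{p,m}}|z|^2\,\widetilde V_{p,m}(z)\,|\widetilde v_{p,l,m}(z)|\,dz$; by the expansion \eqref{5-8-2}, $\widetilde V_{p,m}(z)=(1+w_{p,m}/p)^{p-1}\sim e^{U(z)}$ pointwise, and by Lemma \ref{llma} it is dominated by $C|z|^{-(4-\delta)}$ for $|z|\ge R_\delta$, so $\int |z|^2\widetilde V_{p,m}|\widetilde v_{p,l,m}|\,dz$ converges (the integrand decays like $|z|^{-2+\delta}$ at infinity and $|\widetilde v_{p,l,m}|\le1$), giving a bound $O(\e_{p,m}^{2})$. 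Since $p\,\e_{p,m}^{2}=o(\e_p)$ (indeed $\e_{p,m}$ and $\e_p$ are comparable by \eqref{6-26-32}, and $\e_{p,m}$ itself is $o(1)$, so $p\,\e_{p,m}^{2}=o(\e_{p,m})=o(\e_p)$), the remainder is $o(\e_p)$; differentiation in $x$ again only costs a bounded factor on compact subsets away from the concentration points, so the estimate holds in $C^1_{loc}$.

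\textbf{Main obstacle.}
The delicate point is controlling the second-order remainder $p\int_{B_d(x_{p,m})}|y-x_{p,m}|^2 u_p^{p-1}|v_{p,l}|\,dy$ uniformly, and in particular showing it is smaller than $\e_p$ rather than merely $O(\e_{p,m})$. This requires combining the pointwise limit $\widetilde V_{p,m}\to e^U$ from \eqref{5-8-2} with the global decay bound of Lemma \ref{llma} to pass to the limit in the rescaled integral (dominated convergence), and then invoking the comparability \eqref{6-26-32} of the various $\e_{p,m}$ with $\e_p$ together with $\e_{p,m}\to0$ to absorb the extra factor of $p$; the boundary term in the $C^1$ statement — i.e., that differentiating $G(\cdot,x)$ in $x$ on $\overline\Omega\setminus\bigcup_m B_{d'}(x_{\infty,m})$ is harmless — follows from elliptic regularity for $G$ away from its singularity, but must be tracked carefully since the statement is in $C^1_{loc}$ up to the boundary.
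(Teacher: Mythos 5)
Your route — Green's representation of $v_{p,l}$, discarding the outer region where $u_p^{p-1}=O(C^{p-1}/p^{p-1})$, and Taylor-expanding $G(\cdot,x)$ about $x_{p,m}$ so that the zeroth- and first-order terms produce exactly $A_{p,l,m}$ and $B_{p,l,m,q}$ — is the same as the paper's (which defers the details to Proposition~4.2 of \cite{GILY2021}). However, your justification of the key remainder estimate contains a genuine error: you assert that $\int_{|z|\le d/\e_{p,m}}|z|^2\,\widetilde V_{p,m}|\widetilde v_{p,l,m}|\,dz$ \emph{converges} because the integrand decays like $|z|^{-2+\delta}$. In dimension two such a decay is not integrable at infinity, since $\int_R^{\infty}r^{-2+\delta}\,r\,dr=\int_R^{\infty}r^{-1+\delta}\,dr=+\infty$; indeed even the limiting integral $\int_{\R^2}|z|^2e^{U(z)}\,dz$ diverges logarithmically because $e^{U(z)}\sim 64|z|^{-4}$. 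What Lemma \ref{llma} actually yields is $\widetilde V_{p,m}(z)\le C|z|^{-(4-\delta)(p-1)/p}\le C|z|^{-4+2\delta}$ for $R_\delta\le|z|\le d/\e_{p,m}$ and $p$ large, whence
\[
\int_{|z|\le d/\e_{p,m}}|z|^2\,\widetilde V_{p,m}|\widetilde v_{p,l,m}|\,dz\le C+C\int_{R_\delta}^{d/\e_{p,m}}r^{-1+2\delta}\,dr=O\big(\e_{p,m}^{-2\delta}\big),
\]
a quantity that blows up as $p\to+\infty$. In particular there is no integrable dominating function here, so the dominated-convergence step you invoke for this term is also unavailable.

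The estimate is repairable and your conclusion survives. Carrying out the change of variables exactly (note $u_p^{p-1}(x_{p,m}+\e_{p,m}z)=\tfrac{1}{p\,\e_{p,m}^2}\widetilde V_{p,m}(z)$, so the correct identity is $\int_{B_d}|y-x_{p,m}|^2u_p^{p-1}|v_{p,l}|\,dy=\tfrac{\e_{p,m}^2}{p}\int|z|^2\widetilde V_{p,m}|\widetilde v_{p,l,m}|\,dz$; your inequality silently discards this extra $1/p$), the quadratic remainder in the Green representation is bounded by
\[
p\cdot\frac{\e_{p,m}^2}{p}\cdot O\big(\e_{p,m}^{-2\delta}\big)=O\big(\e_{p,m}^{2-2\delta}\big)=o(\e_p)
\]
for any fixed $\delta<1/2$, using the comparability \eqref{6-26-32} of $\e_{p,m}$ and $\e_p$. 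So you should replace the false claim ``the integral converges, hence the remainder is $O(p\,\e_{p,m}^2)$'' by the bound $O(\e_{p,m}^{-2\delta})$ and then verify that the surviving power $\e_{p,m}^{2-2\delta}$ still beats $\e_p$. The remaining parts of your argument (the super-exponential smallness of the outer integral and the harmlessness of the $x$-derivatives of $G$ away from the concentration set) are fine.
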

\begin{proof}
For $x\in \Omega\backslash  \bigcup^k_{j=1} B_{2d}(x_{\infty,j})$, we have
\begin{equation}\label{5-8-11}
\begin{split}
v_{p,l}(x)=& p \lambda_{p,l} \int_{\Omega}G(y,x)
 u^{p-1}_{p}(y) v_{p,l}(y) dy\\=&  p \lambda_{p,l} \sum^k_{m=1}
 \int_{B_d(x_{p,m})}G(y,x)
 u^{p-1}_{p}(y) v_{p,l}(y) dy
\\& + p \lambda_{p,l} \int_{ \Omega\backslash  \displaystyle\bigcup^k_{m=1} B_{d}(x_{p,m})}G(y,x)
 u^{p-1}_{p}(y) v_{p,l}(y) dy.
\end{split}
\end{equation}
 From \eqref{5-8-11}, we can proceed in exactly the same way
 as in the proof of Proposition~4.2 in \cite{GILY2021}
 to verify \eqref{3-12-03d}.

\end{proof}

\begin{Prop}\label{lemma:autofunz2casi}
Let $l\in \mathbb{N}$.
  If $\lambda_{\infty,l}=1$, then
\begin{equation}\label{3-12-03}
v_{p,l}(x)=-\frac{8\pi}{p} \Big(\sum^k_{j=1}G\big(x,x_{p,j}\big)b_{l,j}+o(1)\Big)~\,~\mbox{in}~
C^1_{ {loc}}\Big(\overline{\Omega}\backslash \big\{x_{\infty,1},\cdots,x_{\infty,k}\big\}\Big),
\end{equation}
where $b_{l,j}\in\mathbb R$, $j=1,\cdots, k$ are the constants in \eqref{6-21-4}.
Moreover, if $b_{l,j}\neq 0$ for some $j\in \{1,\cdots,k\}$, then it holds
\begin{equation}\label{1-25-01}
\lambda_{p,l}=1+\frac{6}{p}\big(1+o(1)\big)~\,\mbox{as}~~p\to +\infty.
\end{equation}
\end{Prop}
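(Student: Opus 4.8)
The plan is to prove Proposition \ref{lemma:autofunz2casi} in two steps, first establishing the far-field expansion \eqref{3-12-03} of $v_{p,l}$ and then extracting the eigenvalue expansion \eqref{1-25-01} from a local Pohozaev identity.

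\textbf{Step 1: the far-field expansion.} I would start from Lemma \ref{lem2-9}, which already gives
\[
\frac{v_{p,l}(x)}{\lambda_{p,l}}= p\sum^k_{m=1} G(x_{p,m},x) A_{p,l,m}
+ p\sum^k_{m=1}\sum^2_{q=1} \partial_{x_q} G(x_{p,m},x)\, B_{p,l,m,q}+o(\e_p)
\]
in $C^1_{loc}(\overline\Omega\setminus\mathcal S)$, with $A_{p,l,m}$ and $B_{p,l,m,q}$ as in \eqref{6-21-21}--\eqref{6-21-22}. Since $\lambda_{p,l}\to 1$, it suffices to compute the leading order of $pA_{p,l,m}$ and $pB_{p,l,m,q}$. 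Rescaling $y=x_{p,m}+\e_{p,m}z$ in \eqref{6-21-21} and using $u_p^{p-1}(x_{p,m}+\e_{p,m}z) = \e_{p,m}^{-2}p^{-1}\widetilde V_{p,m}(z)$ together with $\widetilde V_{p,m}\to e^{U}$ (from \eqref{5-8-2}) and $\widetilde v_{p,l,m}\to V_{l,m}$ (from \eqref{convRisc}, Proposition \ref{prop3-3}(2)), one gets
\[
pA_{p,l,m}= \int_{\mathbb R^2} e^{U(z)}V_{l,m}(z)\,dz + o(1),
\]
and I would plug in the expression \eqref{6-21-4} for $V_{l,m}$. The two $a_{q,l,m}$-terms are odd in $z_q$ and integrate to zero, while a direct computation gives $\int_{\mathbb R^2}e^{U}\frac{8-|z|^2}{8+|z|^2}\,dz = -8\pi$ (this is the same integral pattern as in \eqref{luo-1-3}, or simply $\int e^U(\frac{8-|z|^2}{8+|z|^2}) = -\int \Delta U \cdot \frac{8-|z|^2}{8+|z|^2}$, which one evaluates to $-8\pi$); hence $pA_{p,l,m}= -8\pi b_{l,m}+o(1)$. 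Similarly, for $B_{p,l,m,q}$ the extra factor $(y-x_{p,m})_q = \e_{p,m}z_q$ produces an overall factor $\e_{p,m}$, so $pB_{p,l,m,q}= \e_{p,m}\big(\int_{\mathbb R^2}z_q e^{U}V_{l,m}\,dz+o(1)\big) = O(\e_p)$ by \eqref{6-26-32}; this term is absorbed into the $o(\e_p)$ error. Substituting back into Lemma \ref{lem2-9} and using $\lambda_{p,l}=1+o(1)$ yields exactly \eqref{3-12-03}.

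\textbf{Step 2: the eigenvalue expansion.} Assume now $b_{l,j}\neq 0$ for some $j$. I would apply the local Pohozaev identity \eqref{3-12-04} of Lemma \ref{lemma:pohoz} on $B_d(x_{p,j})$, i.e.
\[
P_j(u_p,v_{p,l}) = \int_{\partial B_d(x_{p,j})}u_p^p v_{p,l}(x-x_{p,j})\cdot\nu\,d\sigma - 2\int_{B_d(x_{p,j})}u_p^p v_{p,l}\,dx + (\lambda_{p,l}-1)p\int_{B_d(x_{p,j})}(x-x_{p,j})\cdot\nabla u_p\, v_{p,l}\,u_p^{p-1}\,dx,
\]
and estimate each term. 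On the boundary $\partial B_d(x_{p,j})$ one uses \eqref{11-14-03N} (so $pu_p \to 8\pi\sqrt e\sum_m G(\cdot,x_{\infty,m})$ and $u_p^p = (u_p)^{p}$ with $pu_p = O(1)$, giving $u_p^p = O(p^{-p}\cdot(\text{bdd})^p)$, exponentially small) together with the expansion \eqref{3-12-03} for $v_{p,l}$, so the boundary integral and $\int_{B_d}u_p^p v_{p,l}$ are of lower order than the main terms. For the quadratic form $P_j(u_p,v_{p,l})$ I would insert the leading profiles: $u_p = \frac1p(8\pi\sqrt e)\sum_m G(x_{p,m},\cdot) + \text{l.o.t.}$ near $\partial B_\theta(x_{p,j})$ and $v_{p,l} = -\frac{8\pi}{p}\sum_m G(\cdot,x_{p,m})b_{l,m}+\text{l.o.t.}$, and then use the bilinear identities of Lemma \ref{lem2-1}: the relevant combination $P_j(G(x_{p,j},\cdot),G(x_{p,j},\cdot)) = -\frac1{2\pi}$ from \eqref{1-1}, while all cross terms with $m\neq j$ vanish. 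This gives $P_j(u_p,v_{p,l}) = \frac{1}{p^2}(8\pi\sqrt e)(-8\pi b_{l,j})\cdot(-\frac1{2\pi}) + \text{l.o.t.} = \frac{32\pi e\, b_{l,j}}{p^2}\sqrt e/\sqrt e\cdots$ — more precisely a nonzero constant times $b_{l,j}/p^2$ — wait, I must be careful with the $\sqrt e$ factors: $pu_p\to 8\pi\sqrt e\, G$-sum but $v_{p,l}$ is genuinely $O(1/p)$, so $P_j(u_p,v_{p,l})$ is $O(1/p^2)$ with an explicit coefficient proportional to $b_{l,j}$. Meanwhile the last term, after rescaling $x = x_{p,j}+\e_{p,j}z$ and using $\int_{B_d}(x-x_{p,j})\cdot\nabla u_p\,v_{p,l}u_p^{p-1}\,dx = \frac{1}{p^2}\big(\int_{\mathbb R^2}z\cdot\nabla U\, e^U V_{l,j}\,dz + o(1)\big)$, contributes $(\lambda_{p,l}-1)\cdot\frac{1}{p}\big(c_0 b_{l,j}+o(1)\big)$ where $c_0 := \int_{\mathbb R^2}(z\cdot\nabla U)\,e^U\,\frac{8-|z|^2}{8+|z|^2}\,dz \neq 0$ (again the odd terms in $V_{l,j}$ drop out). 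Equating the two sides and solving for $\lambda_{p,l}-1$ gives $\lambda_{p,l}-1 = \frac{C_1}{p}(1+o(1))$ for an explicit constant $C_1$; carrying out the integral evaluations carefully (this is where the constant $6$ comes from) yields \eqref{1-25-01}.

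\textbf{Main obstacle.} The delicate point is Step 2: one needs the expansions of $u_p$ and $v_{p,l}$ on $\partial B_\theta(x_{p,j})$ to \emph{one order beyond leading}, because the naive leading-order substitution into $P_j$ could a priori produce cancellations, and one must verify that the error terms in \eqref{3-12-03} and in Theorem \ref{thm:asymptoticup} are genuinely $o$ of the $1/p^2$ main term — this requires the sharp asymptotics of \cite{GILY2021} and careful bookkeeping of the $\e_{p,j}$ versus $1/p$ scales. The other technical nuisance is evaluating the two explicit Liouville-type integrals $c_0$ and the coefficient on the $P_j$ side and checking that their ratio is exactly $6$; this is a routine but unforgiving computation with the radial profile $U$.
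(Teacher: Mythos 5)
Your overall architecture is the same as the paper's (Lemma \ref{lem2-9} for the far-field expansion, then the Pohozaev identity \eqref{3-12-04} together with Lemma \ref{lem2-1} to extract $\lambda_{p,l}-1$), but Step 1 contains a genuine error that hides the actual difficulty of the proof. The integral you evaluate as $-8\pi$ is in fact zero:
\[
\int_{\R^2}e^{U}\,\frac{8-|z|^2}{8+|z|^2}\,dz
=64\pi\int_0^\infty\frac{(8-s)}{(8+s)^3}\,ds
=64\pi\Bigl(\frac{16}{2\cdot 64}-\frac18\Bigr)=0 ,
\]
and your integration-by-parts shortcut $\int e^U Z_0=-\int \Delta U\cdot Z_0$ does not rescue it, because $U\sim -4\log|y|$ and $Z_0\to -1$ produce non-vanishing boundary terms at infinity. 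Consequently the naive rescaled limit gives only $A_{p,l,m}=o(1/p)$ (this is exactly \eqref{firstApproxAp} in the paper), not $pA_{p,l,m}=-8\pi b_{l,m}+o(1)$. As written, your Step 1 would yield $v_{p,l}=-8\pi\sum_m G(\cdot,x_{p,m})b_{l,m}+o(1)$, i.e.\ an $O(1)$ quantity, which is off by a factor of $p$ from \eqref{3-12-03}. The coefficient $-8\pi b_{l,m}$ actually lives at order $p^{-2}$ in $A_{p,l,m}$ and is produced by the \emph{next} term in the expansion, namely $\int_{\R^2}e^{U}U\,V_{l,m}\,dz=8\pi b_{l,m}$, which enters through $u_p(x_{p,m})-u_p(x)=-\frac{u_p(x_{p,m})}{p}w_{p,m}+\dots$. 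The paper extracts it by combining the Green-type identity of Lemma \ref{lem2-5} (which forces $\int_{B_d(x_{p,m})}u_p^p v_{p,l}=o(p^{-2})$) with the difference computation \eqref{3-16-4}; this two-identity step is the heart of the proof of \eqref{3-12-03} and is entirely missing from your proposal.

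The same cancellation resurfaces in your Step 2: the term $-2\int_{B_d(x_{p,j})}u_p^p v_{p,l}$ in \eqref{3-12-04} is \emph{a priori} of order $p^{-2}$, i.e.\ the same order as $P_j(u_p,v_{p,l})$, since the naive rescaling only gives $\frac{\sqrt e}{p}\int e^U V_{l,j}+\dots=o(1/p)$. Declaring it ``of lower order'' is not justified by inspection; one again needs the Lemma \ref{lem2-5} identity together with the refined estimate of $A_{p,l,m}$ to conclude it is $O(p^{-3})$ (this is \eqref{6-21-63}). With these two repairs your Step 2 becomes the paper's argument: $P_j(u_p,v_{p,l})=\frac{32\pi\sqrt e\,b_{l,j}}{p^2}+o(p^{-2})$ from \eqref{1-1}, the volume term equals $(\lambda_{p,l}-1)\frac{16\pi\sqrt e}{3p}(b_{l,j}+o(1))$, and the ratio gives the constant $6$.
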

\begin{proof}

We will use Lemma~\ref{lem2-9} to prove this proposition. Firstly, we estimate
 $A_{p,l,m}$ and  $B_{p,l,m,i}$ defined in
\eqref{6-21-21} and \eqref{6-21-22}.

\vskip 0.1cm

 Since $\lambda_{\infty,l}=1$,  using  \eqref{6-21-4}, we have
 \begin{equation}\label{firstApproxAp}
	A_{p,l,m}=\frac{1}{p} \left( b_{l,m} \int_{\R^2} e^{U(z)}\frac{8-|z|^2}{8+|z|^2}  dz+o(1)\right)=0+o\Big(\frac{1 }{p}\Big)=o\Big(\frac{1 }{p}\Big).
	\end{equation}
Also, by scaling, we find
\begin{equation}\label{3-16-4t}
\begin{split}
B_{p,l,m,i}=&\frac{\e_{p,m}}{p} \int_{B_{\frac{d}{\e_{p,m}}(0)}} z_i \Big(1+\frac{w_{p,m}(z)}{p}\Big)^{p-1}   \widetilde{v}_{p,l,m}(z)dz
\\=&
\frac{\e_{p,m}}{p} \left( \int_{\R^2} z_i e^{U(z)}V_{l,m}(z)   dz+o(1)\right)
\\=&  \frac{\e_{p,m}}{p} \left( a_{i,l,m}\int_{\R^2}\frac{z_i^2}{8+|z|^2} e^{U(z)}   dz+o(1)\right)
\\
=&  a_{i,l,m}2\pi    \frac{\e_{p,m}}{p}+o\Big(\frac{\e_{p,m}}{p} \Big)
=O\Big(\frac{\e_{p,m}}{p} \Big).
\end{split}
\end{equation}
We  show that:
\begin{equation}\label{3-16-4s}
	A_{p,l,m} =
	-\frac{8\pi }{p^2} \Big( b_{l,m}
	+o(1)\Big).
\end{equation}
Observe that by \eqref{3-16-4t} and  Lemma \ref{lem2-9}  one has
	\begin{equation}\label{260bis}v_{p,l}(x)= p\lambda_{p,l} \left(\sum^k_{m=1} G(x_{p,m},x) A_{p,l,m}
		+O\Big(\frac{\e_p}{p}\Big)\right)\,\,\,\,\,\,\,\,\,\,\, \mbox{in}~
		C^1_{loc}\Big(\overline{\Omega}\backslash \big\{x_{\infty,1},\cdots,x_{\infty,k}\big\}\Big).
	\end{equation}
Also by Lemma \ref{lem2-5}, we  get
\begin{equation}\label{6-21-41}
\begin{split}
 \Big(p\lambda_{p,l}-1\Big)\int_{B_{d}(x_{p,m})} u_p^p  v_{p,l}=&
\int_{\partial B_{d}(x_{p,m})} \left(\frac{\partial u_p}{\partial \nu}  v_{p,l} - \frac{\partial v_{p,l}}{\partial \nu}  u_p\right).
\end{split}\end{equation}
From which,  using \eqref{luo-1}, \eqref{firstApproxAp} and \eqref{260bis}, we get
\begin{equation}\label{6-21-35}
\begin{split}
 \int_{B_{d}(x_{p,m})} u_p^p  v_{p,l}& =
O\Big(\frac{1}{p}\sum^k_{m=1}\big|A_{p,l,m}\big|\Big)+o\left(\frac{1}{p}\sum^k_{m=1}\big|A_{p,l,m}\big|\right)+ o\left( \frac{\varepsilon_p}{p}\right)
=o\Big(\frac{1}{p^2}\Big).
\end{split}\end{equation}

On the other hand, we know
\begin{equation}\label{3-16-4}
\begin{split}
A_{p,l,m}& u_p(x_{p,m})- \int_{B_{d}(x_{p,m})} u_p^p  v_{p,l}dx\\=&
\int_{B_{d}(x_{p,m})} u_p^{p-1}(x)\Big(
u_p(x_{p,m})-u_p(x)\Big)  v_{p,l}(x)dx\\=&
- \frac{u_p(x_{p,m})}{p^2} \int_{B_{\frac{d}{\e_{p,m}}(0)}}  w_{p,m} \Big(1+\frac{w_{p,m}(z)}{p}\Big)^{p-1}   \widetilde{v}_{p,l,m}(z)dz\\=&
-\frac{\sqrt{e}}{p^2} \left(
\int_{\R^2} e^{U(z)} U(z)\Big(\sum^2_{q=1}\frac{a_{q,l,m}z_q}
{8+|z|^2}+b_{l,m}\frac{8-|z|^2}{8+|z|^2}\Big)
 dz+o(1)\right)\\=&
 -\frac{8\pi \sqrt{e}}{p^2} \Big( b_{l,m}
 +o(1)\Big).
\end{split}
\end{equation}
Now by \eqref{6-21-35} and \eqref{3-16-4}, we deduce
\begin{equation*}
A_{p,l,m} \Big(\sqrt{e}
 +o(1)\Big)+  o\Big(\frac{1}{p^2}\Big)
=
 -\frac{8\pi \sqrt{e}}{p^2} \Big( b_{l,m}
 +o(1)\Big),
\end{equation*}
which gives us \eqref{3-16-4s}.\\

In conclusion  we deduce \eqref{3-12-03} by  \eqref{3-16-4s} and \eqref{260bis}.

\vskip 0.2cm

In the following we prove \eqref{1-25-01}. We will use the Pohozaev identity \eqref{3-12-04}. Let $P_{j}$ be the quadratic form defined in \eqref{07-08-20}, using the expansions of $u_{p}$ and $v_{p,l}$ in  \eqref{luo-1} and \eqref{3-12-03} respectively, we obtain
\begin{equation}\label{6-21-61}
\begin{split}
P_{j}(u_{p},v_{p,l})
=&
-\frac{64\pi^2\sqrt{e}}{p^2} \sum^k_{s=1}\sum^k_{m=1} b_{l,m}P_j\big(G(x_{p,s},x),G(x_{p,m},x)\big)+
o\Big(\frac{1}{p^2}\Big)\\=&
 \frac{32\pi\sqrt{e}b_{l,j}}{p^2} +
o\Big(\frac{1}{p^2}\Big)~\quad~\mbox{ for any}~~~~j=1,\cdots,k,
\end{split}
\end{equation}
where the last equality follows from \eqref{1-1}.
Next, using \eqref{11-14-03N}, we obtain
\begin{equation}\label{6-21-62}
\begin{split}
\int_{\partial B_{d}(x_{p,j})} u_p^p  v_{p,l} ( x-x_{p,j})\cdot\nu
=O\Big(\frac{C^{p}}{p^{p}}\Big).
\end{split}\end{equation}
From \eqref{3-16-4s} and \eqref{6-21-35}, we find
\begin{equation}\label{6-21-63}
\begin{split}
 \int_{B_{d}(x_{p,j})} u_p^p  v_{p,l}= O\Big(\frac{1}{p^3}\Big).
\end{split}\end{equation}
We define $\widetilde{u}_{p,j}(x):=u_{p}(x_{p,j}+\varepsilon_{p,j}x)$, then $\nabla \widetilde{u}_{p,j}=\frac{u_{p}(x_{p,j})}{p}\nabla w_{p,j}$, where $w_{p,j}$ is the rescaled function in \eqref{defwpj} and so by \eqref{5-8-2}, one has
\begin{equation}
\label{convUtilde}
\nabla\widetilde{u}_{p,j}=\frac{\sqrt e}{p}\big(\nabla U+ o(1)\big)\ \mbox{ in }C_{loc}(\mathbb R^{2}) \ \mbox{  as }p\rightarrow +\infty.
\end{equation}
After scaling, using \eqref{convUtilde},  the convergence  in \eqref{5-8-2} for $w_{p,m}$, the convergence in  \eqref{convRisc} and \eqref{6-21-4}
for $\widetilde{v}_{p,l,m}$ and the estimate in Lemma \ref{llma},   passsing to the limit as $p\rightarrow +\infty$ by  Lebesgue's Theorem and \eqref{luoluo2}, we obtain
\begin{equation}\label{6-21-64}
\begin{split}
\int_{B_{d}(x_{p,j})} &u_p^{p-1} v_{p,l} ( x-x_{p,j})\cdot\nabla u_p  \\=&
\frac{1}{p}
\int_{B_{\frac{d}{\e_{p,j}}(0)}} \Big(1+\frac{w_{p,j}(z)}{p}\Big)^{p-1}
 \widetilde{v}_{p,l,j}(z) \ z\cdot\nabla \widetilde{u}_{p,j}(z)  dz\\=&
\frac{\sqrt{e}}{p^2} \left(
\int_{\R^2} e^{U(z)}\Big(\sum^2_{q=1}\frac{a_{q,l,j}z_q}{8+|z|^2}+b_{l,j}\frac{8-|z|^2}{8+|z|^2}\Big)
 \ z\cdot\nabla U(z)  dz+o(1)\right)\\=&
\frac{16 \pi \sqrt{e}}{3 p^2} \Big( b_{l,j} +o(1)\Big).
\end{split}\end{equation}
Finally, substituting \eqref{6-21-61},
\eqref{6-21-62}, \eqref{6-21-63} and \eqref{6-21-64} into the Pohozaev identity \eqref{3-12-04}, we get
\begin{equation}\label{3-12-05}
 \frac{32\pi\sqrt{e}b_{l,j}}{p^2} +
o\Big(\frac{1}{p^2}\Big) = (\lambda_{p,l}-1) \frac{16 \pi \sqrt{e}}{3 p} \Big( b_{l,j} +o(1)\Big).
\end{equation}
Since by assumption $b_{l,j}\neq 0$ for some $j\in \{1,\cdots,k\}$, then
 \eqref{1-25-01} follows by  \eqref{3-12-05}.
\end{proof}
\section{The computation of the Morse index of $u_p$}\label{section:Morse}

In this section,  we study  the eigenvalues $\lambda_{p,l}$ and
 the eigenfunctions $v_{p,l}$ of the linearized problem \eqref{03-16-1}, for $l=1,\cdots,3k+1$, as $p\rightarrow +\infty$.
 Our main results are collected in:
 \begin{itemize}
 \item Proposition \ref{prop:ExpansionsFirstGroup}, for the case $1\leq l\leq k$;\vskip 0.1cm
\item Proposition \ref{propSecondoPezzo1} and Proposition \ref{propSecondoPezzo2}, for the case $k+1\leq l\leq 3k$;\vskip 0.1cm
\item Proposition \ref{prop:expansionLastGroup}, for the case $l=3k+1$.
\end{itemize}
 Theorem \ref{th1.1} can be deduced from them. This is done at the end of the section, where we compute the Morse index of the solution $u_{p}$, for $p$ large.

\vskip 0.2cm

Fixed  $r>0$ small such that, for $1\leq i\neq j\leq k$,
$$B_{4r}(x_{\infty,i})\subset \Omega~~\mbox{ and }~~B_{4r}(x_{\infty,i}) \bigcap B_{4r}(x_{\infty,j})=\emptyset,$$
where  $\big\{x_{\infty,1},\cdots,x_{\infty,k}\big\}\subset\R^{2k}$ is the set of concentration points as in Theorem A.
For each $1\leq i\leq k$, we set $\phi_i(x)=\phi(x-x_{{\infty,}i})$, for $\phi\in C^{\infty}_0\big(B_{3r}(0)\big)$ satisfying $\phi=1$ in $B_{2r}(0)$,
$\phi=0$ in $\mathbb R^2 \backslash B_{3r}(0)$, and $0\le \phi\le 1$.

\vskip 0.1cm

We define
\begin{eqnarray}
&&u_{p,i}:=\phi_iu_p,\qquad\psi_{p,i,q}:=\phi_i\frac{\partial u_p}{\partial x_q}\quad (q=1,2) \label{defpsipi}\label{defupi}.
\end{eqnarray}
Then we have following identities.
\begin{Lem}
It holds
\begin{equation}\label{6-25-43}
\begin{split}
 \int_{\Omega}\big|\nabla u_{p,i}\big|^2dx
=  \int_{\Omega}u_p^2|\nabla \phi_i|^2dx+\int_{\Omega}u^{p+1}_p\phi^2_idx.
\end{split}
\end{equation}
Furthermore, for any $\beta_{p,i},\gamma_{p,i}\in\mathbb R$,  setting $z_{p,i}:=\beta_{p,i}\psi_{p,i,1}+\gamma_{p,i}\psi_{p,i,2}$, one has
\begin{equation}\label{6-25-51}
\begin{split}
 \int_{\Omega}\big|\nabla z_{p,i}\big|^2dx
=  \int_{\Omega}s_{p,i}^2|\nabla \phi_i|^2dx+p \int_{\Omega}u^{p-1}_ps_{p,i}^2\phi^2_idx,
\end{split}
\end{equation}
and
\begin{equation}\label{6-25-52}
\begin{split}
 \int_{\Omega} \nabla  u_{p,i} \cdot \nabla z_{p,i} dx
=  \int_{\Omega}u_ps_{p,i}|\nabla \phi_i|^2dx
+\frac{p+1}{2}\int_{\Omega}u^{p}_ps_{p,i}\phi^2_idx,
\end{split}
\end{equation}
where $s_{p,i}:=\beta_{p,i}\frac{\partial u_p}{\partial x_1}+\gamma_{p,i}\frac{\partial u_p}{\partial x_2}$.

\end{Lem}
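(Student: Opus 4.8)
The plan is to obtain all three identities from elementary integration by parts, using two facts only: each cutoff $\phi_i$ is supported in a compact subset of $\Omega$, so that no boundary term ever appears; and $u_p$ solves $-\Delta u_p=u_p^p$ while the directional derivative entering $z_{p,i}$ solves the linearized equation. No hard analysis is involved; the content is purely algebraic bookkeeping.

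For \eqref{6-25-43} I would expand $\nabla u_{p,i}=u_p\nabla\phi_i+\phi_i\nabla u_p$, so that
\[
\int_{\Omega}|\nabla u_{p,i}|^2=\int_{\Omega}u_p^2|\nabla\phi_i|^2+\int_{\Omega}\phi_i^2|\nabla u_p|^2+2\int_{\Omega}u_p\phi_i\,\nabla u_p\cdot\nabla\phi_i .
\]
The sum of the last two integrals is precisely $\int_{\Omega}\nabla u_p\cdot\nabla(\phi_i^2u_p)$, and since $\phi_i^2u_p$ is smooth and compactly supported in $\Omega$, an integration by parts turns this into $-\int_{\Omega}\Delta u_p\,\phi_i^2u_p=\int_{\Omega}u_p^{p+1}\phi_i^2$, which is \eqref{6-25-43}.

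The key observation for the remaining two identities is that, differentiating $-\Delta u_p=u_p^p$ in $x_1$ and in $x_2$ and forming the linear combination with coefficients $\beta_{p,i},\gamma_{p,i}$, the function $s_{p,i}=\beta_{p,i}\tfrac{\partial u_p}{\partial x_1}+\gamma_{p,i}\tfrac{\partial u_p}{\partial x_2}$ solves $-\Delta s_{p,i}=p\,u_p^{p-1}s_{p,i}$ in $\Omega$. Since $z_{p,i}=\phi_i s_{p,i}$, identity \eqref{6-25-51} follows verbatim from the argument for \eqref{6-25-43}, with $u_p$ replaced by $s_{p,i}$ and the right-hand side $\int\nabla s_{p,i}\cdot\nabla(\phi_i^2s_{p,i})=-\int\Delta s_{p,i}\,\phi_i^2s_{p,i}=p\int_{\Omega}u_p^{p-1}s_{p,i}^2\phi_i^2$. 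For the mixed identity \eqref{6-25-52} I would use the two integration-by-parts relations
\[
\int_{\Omega}\nabla u_p\cdot\nabla(\phi_i^2s_{p,i})=\int_{\Omega}u_p^p\phi_i^2s_{p,i},\qquad
\int_{\Omega}\nabla s_{p,i}\cdot\nabla(\phi_i^2u_p)=p\int_{\Omega}u_p^p\phi_i^2s_{p,i},
\]
coming from $-\Delta u_p=u_p^p$ and $-\Delta s_{p,i}=p\,u_p^{p-1}s_{p,i}$ respectively. Expanding $\nabla(\phi_i^2s_{p,i})$ and $\nabla(\phi_i^2u_p)$ by the product rule, adding the two relations, and dividing by $2$ gives
\[
\int_{\Omega}\phi_i^2\nabla u_p\cdot\nabla s_{p,i}+\int_{\Omega}\phi_i s_{p,i}\nabla u_p\cdot\nabla\phi_i+\int_{\Omega}\phi_i u_p\nabla s_{p,i}\cdot\nabla\phi_i=\frac{p+1}{2}\int_{\Omega}u_p^ps_{p,i}\phi_i^2 .
\]
On the other hand, expanding $\nabla u_{p,i}\cdot\nabla z_{p,i}=(u_p\nabla\phi_i+\phi_i\nabla u_p)\cdot(s_{p,i}\nabla\phi_i+\phi_i\nabla s_{p,i})$ and integrating shows that $\int_{\Omega}\nabla u_{p,i}\cdot\nabla z_{p,i}$ equals $\int_{\Omega}u_ps_{p,i}|\nabla\phi_i|^2$ plus exactly the left-hand side of the last display, which yields \eqref{6-25-52}.

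I do not expect any genuine obstacle. The only points demanding a little care are verifying that $s_{p,i}$ solves the linearized equation (a straightforward differentiation of the equation for $u_p$, using that $u_p\in C^\infty(\Omega)$) and organizing the product-rule expansions in the last step so that the \emph{symmetrization} of the two integration-by-parts relations produces the precise constant $\tfrac{p+1}{2}$, rather than $p$ or $1$. Every boundary contribution is zero because $\nabla\phi_i$ is supported in the annulus $B_{3r}(x_{\infty,i})\setminus B_{2r}(x_{\infty,i})$, which lies in a compact subset of $\Omega$.
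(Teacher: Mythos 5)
Your proof is correct and follows essentially the same route as the paper: all three identities are obtained by testing $-\Delta u_p=u_p^p$ and the linearized equation $-\Delta s_{p,i}=pu_p^{p-1}s_{p,i}$ against $\phi_i^2u_p$ and $\phi_i^2s_{p,i}$, and then matching the result with the product-rule expansions of $|\nabla(\phi_iu_p)|^2$, $|\nabla(\phi_is_{p,i})|^2$ and $\nabla(\phi_iu_p)\cdot\nabla(\phi_is_{p,i})$; in particular your symmetrized average of the two mixed relations is exactly how the paper produces the constant $\frac{p+1}{2}$.
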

\begin{proof}
Multiplying $-\Delta u_p=u_p^p$ in $\Omega$ by $\phi^2_iu_{p}$ and integrating, we have
\[
 \int_{\Omega}  \phi^2_i  u_p^{p+1}dx=
 \int_{\Omega}\phi_i^2 \big|\nabla u_p\big|^{2}dx+ 2  \int_{\Omega}  \phi_i u_{p}\nabla \phi_i\cdot\nabla u_p  dx.
\]
Then  \eqref{6-25-43} follows observing that $|\nabla (\phi_{i}u_{p})|^{2}=u_{p}^{2}|\nabla \phi_{i}|^{2} + \phi_{i}^{2}|\nabla u_{p}|^{2} +2 \phi_{i}u_{p}\nabla\phi_{i}\cdot\nabla u_{p}$.
Observe that $s_{p,i}$ solves \begin{equation}\label{03-05-2}
-\Delta s_{p,i}=pu^{p-1}_ps_{p,i}~~\mbox{in}~~\Omega.
\end{equation}
Multiplying it by $\phi^2_is_{p,i}$ and integrating, we have
\[
\int_{\Omega} \phi^2_i \big|\nabla s_{p,i}\big|^2dx+
2 \int_{\Omega}\phi_i s_{p,i} \nabla \phi_i\cdot\nabla s_{p,i} dx=
 p \int_{\Omega} u^{p-1}_p \phi_i^2 s_{p,i}^2dx,
\]
then \eqref{6-25-51}  follows observing that $z_{p,i}=\phi_{i}s_{p,i}$ and that $$|\nabla (\phi_{i}s_{p,i})|^{2}=s_{p,i}^{2}|\nabla \phi_{i}|^{2} + \phi_{i}^{2}|\nabla s_{p,i}|^{2} +2 \phi_{i}s_{p,i}\nabla\phi_{i}\cdot\nabla s_{p,i}.$$
Multiplying \eqref{03-05-2} by $\phi^2_iu_p$ and integrating, we have
\[
p \int_{\Omega} \phi^2_i u_p^p s_{p,i}   dx= \int_{\Omega}\phi_i^2 \nabla u_p\cdot\nabla s_{p,i} dx+ 2  \int_{\Omega} u_p \phi_i \nabla \phi_i\cdot\nabla s_{p,i} dx.
\]
Also multiplying $-\Delta u_p=u_p^p$ in $\Omega$ by $\phi^2_is_{p,i}$ and integrating, we have
\[
 \int_{\Omega} \phi^2_i u_p^p s_{p,i}   dx=
 \int_{\Omega}\phi_i^2 \nabla u_p\cdot \nabla s_{p,i} dx+ 2  \int_{\Omega} s_{p,i} \phi_i \nabla \phi_i\cdot \nabla u_p  dx.
\]
Then it follows
\[
\begin{split}
\frac{p+1}{2} \int_{\Omega} \phi^2_i u_p^p s_{p,i}  dx= &\int_{\Omega}\phi_i^2 \nabla u_p\cdot\nabla s_{p,i} dx+    \int_{\Omega} u_p \phi_i \nabla \phi_i\cdot \nabla s_{p,i}dx
 + \int_{\Omega} s_{p,i} \phi_i \nabla \phi_i\cdot \nabla u_p  dx.
\end{split}\]
Since $\nabla (\phi_i u_p)\cdot\nabla (\phi_i s_{p,i}) =|\nabla \phi_i|^2u_ps_{p,i}+\phi_i^2 \nabla u_p\cdot \nabla s_{p,i}+
u_{p}\phi_i\nabla \phi_i\cdot\nabla s_{p,i} +s_{p,i} \phi_i \nabla \phi_i\cdot\nabla u_p
$, then one gets \eqref{6-25-52}.

\end{proof}

\subsection{The estimate of $\lambda_{p,l}$   in the case $1\leq l\leq k$.}
\begin{Prop}\label{prop:ExpansionsFirstGroup}
For $1\leq l\leq k$ it holds:
\begin{equation}\label{6-20-1}
\lambda_{p,l}\le \frac{1}{p}+O\Big(\frac{1}{p^2}\Big).
\end{equation}

\end{Prop}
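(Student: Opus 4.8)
The plan is to use the variational (min–max) characterization of the eigenvalues $\lambda_{p,l}$ of the linearized problem \eqref{03-16-1}, namely
\[
\lambda_{p,l}=\min_{\substack{W\subset H^1_0(\Omega)\\ \dim W=l}}\ \max_{v\in W\setminus\{0\}}\ \frac{\displaystyle\int_{\Omega}|\nabla v|^2\,dx}{\displaystyle p\int_{\Omega}u_p^{p-1}v^2\,dx},
\]
and to exhibit an explicit $k$-dimensional subspace $W_p\subset H^1_0(\Omega)$ on which the Rayleigh quotient is bounded above by $\frac1p+O(\frac1{p^2})$; since $l\le k$, this bounds $\lambda_{p,l}\le\lambda_{p,k}$ by the quotient computed on $W_p$. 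The natural choice, in the spirit of Grossi–Pacella and Choi–Kim–Lee, is $W_p:=\mathrm{span}\{u_{p,1},\dots,u_{p,k}\}$, where $u_{p,i}=\phi_i u_p$ are the localized solutions defined in \eqref{defupi}; these have essentially disjoint supports (the $B_{3r}(x_{\infty,i})$ are pairwise disjoint), so any element $v=\sum_i c_i u_{p,i}$ satisfies $\int_\Omega|\nabla v|^2=\sum_i c_i^2\int_\Omega|\nabla u_{p,i}|^2$ and $\int_\Omega u_p^{p-1}v^2=\sum_i c_i^2\int_\Omega u_p^{p-1}u_{p,i}^2$, reducing the Rayleigh quotient on $W_p$ to the maximum over $i$ of the single-bump quotients $\int|\nabla u_{p,i}|^2\big/ p\int u_p^{p-1}u_{p,i}^2$.

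The core of the argument is therefore to estimate, for each fixed $i$,
\[
\frac{\displaystyle\int_{\Omega}|\nabla u_{p,i}|^2\,dx}{\displaystyle p\int_{\Omega}u_p^{p-1}u_{p,i}^2\,dx}.
\]
For the numerator I would invoke identity \eqref{6-25-43}, which gives $\int_\Omega|\nabla u_{p,i}|^2=\int_\Omega u_p^2|\nabla\phi_i|^2+\int_\Omega u_p^{p+1}\phi_i^2$. The first term is supported in the annulus $B_{3r}\setminus B_{2r}$ where, by \eqref{11-14-03N}, $u_p=O(1/p)$ uniformly, hence it is $O(1/p^2)$, which is negligible compared to the leading term. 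The second term is, after rescaling around $x_{p,i}$ via \eqref{defwpj} and using the expansion \eqref{5-8-2} together with $u_p(x_{p,i})\to\sqrt e$ and the definition \eqref{defepsilonpj} of $\e_{p,i}$, asymptotic to $\frac{e}{p}\int_{\R^2}e^{U}\,dy\cdot(1+o(1))=\frac{8\pi e}{p}(1+o(1))$ — more precisely, using $u^{p+1}_p\phi_i^2 = u_p^{p-1}\phi_i^2 u_p^2$, it matches $u_p(x_{p,i})^2$ times the integral of $u_p^{p-1}$. For the denominator, the same rescaling gives $p\int_\Omega u_p^{p-1}u_{p,i}^2\,dx=p\int_\Omega u_p^{p+1}\phi_i^2\,dx\,(1+o(1))$, which is exactly (up to lower order) the leading part of the numerator. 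Thus the quotient is $\frac1p(1+o(1))$; to get the sharper $\frac1p+O(\frac1{p^2})$ one tracks the $O(1/p^2)$ boundary term in \eqref{6-25-43} and the next-order term $w_0/p$ in \eqref{5-8-2}, which only perturb the ratio at order $1/p^2$. Taking the max over $i=1,\dots,k$ preserves this bound, and the min–max principle yields \eqref{6-20-1} for all $l\le k$.

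The main obstacle is the bookkeeping in the denominator: one must be sure that $p\int u_p^{p-1}u_{p,i}^2$ and $\int u_p^{p+1}\phi_i^2$ agree to the precision needed (i.e. $u_p^{p-1}u_{p,i}^2=u_p^{p+1}\phi_i^2$ identically, which is immediate since $u_{p,i}=\phi_i u_p$), and then that the rescaled integral $\int_{\Omega_{p,i}}(1+w_{p,i}/p)^{p+1}\phi_i^2$ converges, with rate $O(1/p)$, to $\int_{\R^2}e^{U}$; here the decay estimate in Lemma \ref{llma} is what makes the tail $|y|\ge R_\delta$ integrable and controls the error uniformly, so that dominated convergence applies and the correction is genuinely $O(1/p^2)$ in the ratio rather than merely $o(1/p)$. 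Everything else is routine localization and rescaling.
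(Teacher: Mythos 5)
Your proposal follows essentially the same route as the paper: min--max with the test space $\mathrm{span}\{u_{p,1},\dots,u_{p,k}\}$, the disjoint-support reduction to single-bump Rayleigh quotients, identity \eqref{6-25-43}, and the two estimates $\int_\Omega u_p^2|\nabla\phi_i|^2=O(1/p^2)$ and $\int_\Omega \phi_i^2 u_p^{p+1}\ge c_0/p$. The one simplification you miss is that, since $u_p^{p-1}u_{p,i}^2=u_p^{p+1}\phi_i^2$ exactly, \eqref{6-25-43} makes the single-bump quotient \emph{identically} equal to $\frac1p+\frac{\int_\Omega u_p^2|\nabla\phi_i|^2}{p\int_\Omega\phi_i^2 u_p^{p+1}}$, so the $O(1/p^2)$ bound follows at once from those two estimates; neither the $w_0/p$ correction in \eqref{5-8-2} nor the quantitative $O(1/p)$ convergence rate for $\int_{\Omega_{p,i}}\bigl(1+w_{p,i}/p\bigr)^{p+1}$ that you flag as the ``main obstacle'' is actually needed.
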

\begin{proof}

Firstly, let $V$ be a vector space whose basis consists of $\{u_{p,i}:1\leq i\leq k\}$, where the functions $u_{p,i}$ have been defined in \eqref{defupi}, then we have $dim~V=k$. By the Courant-Fischer-Weyl min-max principle, we have
\begin{equation}\label{6-25-11}
\lambda_{p,k}=\min_{\begin{subarray}{lr}
W\subset H^1_0(\Omega)
\\
\dim W=k
\end{subarray}}
\max_{f\in W\backslash\{0\}}\frac{\displaystyle\int_{\Omega}|\nabla f|^2dx}{p\displaystyle\int_{\Omega}f^2u^{p-1}_pdx}
\leq \max_{f\in V\backslash\{0\}}\frac{\displaystyle\int_{\Omega}|\nabla f|^2dx}{p\displaystyle\int_{\Omega}f^2u^{p-1}_pdx}.
\end{equation}
Taking $f=\displaystyle\sum^k_{i=1}a_iu_{p,i}$ for some $(a_1,\cdots,a_k)\neq (0,\cdots,0)$, then the fact that $u_{p,i}$ and $u_{p,j}$ have disjoint supports for any $1\leq i\neq j\leq k$ implies
\begin{equation}\label{6-25-12}
\begin{split}
\frac{\displaystyle\int_{\Omega}|\nabla f|^2dx}{p \displaystyle\int_{\Omega}f^2u^{p-1}_pdx}
= &\frac{\displaystyle\sum^k_{i=1}\displaystyle\int_{\Omega}\big|\nabla (a_iu_{p,i})\big|^2dx}{p\displaystyle\sum^k_{i=1} \displaystyle\int_{\Omega}(a_iu_{p,i})^2u^{p-1}_pdx}
 \leq
\max_{1\leq i\leq k} \frac{ \displaystyle\int_{\Omega}\big|\nabla u_{p,i}\big|^2dx}{p \displaystyle\int_{\Omega}u_{p,i}^2u^{p-1}_pdx}
\\
= &
\max_{1\leq i\leq k} \frac{ \displaystyle\int_{\Omega}\big|\nabla u_{p,i}\big|^2dx}{p \displaystyle\int_{\Omega}\phi_{i}^2u^{p+1}_pdx}=
\frac{1}{p}+
\max_{1\leq i\leq k} \frac{ \displaystyle\int_{\Omega}|\nabla \phi_i|^2u_p^2dx}{p \displaystyle\int_{\Omega}\phi_{i}^2u^{p+1}_pdx},
\end{split}
\end{equation}
where we have used \eqref{6-25-43} to get the last equality.
On the other hand,  we have
\begin{equation}\label{6-25-13}
\begin{split}
\int_{\Omega}\phi_i^2u^{p+1}_p
 \ge \int_{B_{r}(x_{p,i})}u_{p}^{p+1}\ge \frac{c_0}p.
\end{split}
\end{equation}
Furthermore, by the definition of $\phi_i$ and \eqref{11-14-03N}, it holds
\begin{equation}\label{6-25-14}
\int_{\Omega}|\nabla \phi_i|^2u_p^2\leq C\int_{B_{3r}(x_{\infty,i})\backslash B_{2r}( x_{\infty,i})}u^2_p =O\Big(\frac{1}{p^2}\Big).
\end{equation}
Hence  from \eqref{6-25-11}, \eqref{6-25-12}, \eqref{6-25-13} and \eqref{6-25-14}, we deduce \eqref{6-20-1}.

\end{proof}

\vskip 0.3cm

\subsection{Asymptotic behavior of $\lambda_{p,l}$ and $v_{p,l}$  in the case $k+1\leq l\leq 3k$.}~

\vskip 0.2cm

We study the asymptotic behavior of the eigenpairs $(\lambda_{p,l},v_{p,l})$, for $k+1\leq l\leq 3k$. The main results of this section are collected in Proposition \ref{propSecondoPezzo1} and Proposition \ref{propSecondoPezzo2} below.

\begin{Lem}\label{lemma:primo}
For $k+1\leq l\leq 3k$, one has
\begin{equation}\label{newlambdalim}
\lambda_{\infty,l}\geq 1
\end{equation}
and \begin{equation}\label{03-05-1}
\lambda_{p,l}\leq 1+O\Big(\e^2_p\Big).
\end{equation}
\end{Lem}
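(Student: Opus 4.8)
The plan is to mimic the min-max argument of Proposition \ref{prop:ExpansionsFirstGroup}, but now using a $3k$-dimensional test space built from the approximate kernel of the linearized operator at each spike. For \eqref{newlambdalim}, I would argue that $\lambda_{\infty,l}$ cannot lie in $(0,1)$: by Lemma \ref{lem2-6} the rescaled eigenfunctions $\widetilde v_{p,l,j}$ converge to solutions $V_{l,j}$ of $-\Delta V_{l,j}=\lambda_{\infty,l}e^{U}V_{l,j}$ with finite Dirichlet energy, and Lemma \ref{lemma:limitEigenvalueProblem} forces $\lambda_{\infty,l}\in\{0,1\}$; so it suffices to exclude $\lambda_{\infty,l}=0$ for $l\geq k+1$. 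If instead $\lambda_{\infty,l_0}=0$ for some $l_0\geq k+1$, then by Proposition \ref{prop3-3}(1) each of $v_{p,1},\dots,v_{p,l_0}$ has a rescaled limit which is a nonzero constant vector $\mathbf c_l\in\mathbb R^k$, and by \eqref{6-26-01} the supports of these vectors are pairwise disjoint in $\{1,\dots,k\}$. That gives $l_0$ pairwise-disjoint nonempty subsets of a $k$-element set, hence $l_0\leq k$, a contradiction. This proves \eqref{newlambdalim}.

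For the upper bound \eqref{03-05-1}, I would use the Courant–Fischer characterization
\[
\lambda_{p,3k}=\min_{\substack{W\subset H^1_0(\Omega)\\ \dim W=3k}}\ \max_{f\in W\setminus\{0\}}\frac{\int_\Omega|\nabla f|^2\,dx}{p\int_\Omega f^2 u_p^{p-1}\,dx},
\]
and test with the $3k$-dimensional space $V$ spanned by $\{u_{p,i},\psi_{p,i,1},\psi_{p,i,2}:1\leq i\leq k\}$ defined in \eqref{defpsipi}; here $u_{p,i}=\phi_i u_p$ captures the "dilation" direction $\tfrac{8-|y|^2}{8+|y|^2}$ and $\psi_{p,i,q}=\phi_i\partial_{x_q}u_p$ the two "translation" directions $\partial_q U$ of the limit kernel in Lemma \ref{llm}. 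Since the cutoffs $\phi_i$ have disjoint supports, the quadratic form is block-diagonal over $i$, so it is enough to bound, for each $i$ and each $(a_i,\beta_i,\gamma_i)\neq(0,0,0)$, the ratio for $f=a_i u_{p,i}+z_{p,i}$ with $z_{p,i}=\beta_i\psi_{p,i,1}+\gamma_i\psi_{p,i,2}$. Using the identities \eqref{6-25-43}, \eqref{6-25-51}, \eqref{6-25-52} and the scaling $x=x_{p,i}+\e_{p,i}y$ together with the expansion \eqref{5-8-2} of $w_{p,i}$ and the convergence \eqref{convRisc}, the numerator and $p\times$(denominator) both converge, after multiplication by the correct power of $p$, to the quadratic forms associated with the inner products of $\big(-\tfrac12 U e^{U/2}$-type combinations$\big)$; more concretely the leading terms reduce to integrals against $e^U$ of $a_i\tfrac{8-|y|^2}{8+|y|^2}+\beta_i\partial_1 U+\gamma_i\partial_2 U$ and its Laplacian, and one checks the limiting Rayleigh quotient on $\mathrm{span}\{\tfrac{8-|y|^2}{8+|y|^2},\partial_1U,\partial_2U\}$ equals exactly $1$ (these are precisely the finite-energy solutions of \eqref{kernelEq} with eigenvalue $1$). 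Hence $\max_{f\in V}(\cdots)\le 1+o(1)$, which already gives $\lambda_{p,l}\le 1+o(1)$ for $l\le 3k$; to upgrade $o(1)$ to $O(\e_p^2)$ one retains the next order terms in \eqref{5-8-2} (the $w_0/p$ correction), the boundary contributions from $|\nabla\phi_i|^2$ which by \eqref{11-14-03N} are $O(p^{-2})$, and the error between $u_p^{p\pm1}$ and its rescaled profile on $B_{2r}(x_{\infty,i})$ controlled by Lemma \ref{llma}; the dominant remaining term is of size $\e_p^2$, coming from the mismatch on the annulus $R_\delta\e_{p,i}\le|x-x_{p,i}|\le d$ where $u_p$ is no longer well approximated by the Liouville bubble.

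The main obstacle I anticipate is precisely this last quantitative step: showing the error in the Rayleigh quotient is $O(\e_p^2)$ rather than merely $o(1)$. This requires carefully tracking the tail behavior of $w_{p,i}$ via Lemma \ref{llma}, the precise decay rate \eqref{nn3-29-03} of $\e_{p,i}$, and the expansion \eqref{luo-1} of $u_p$ away from the spikes, and then arguing that no first-order-in-$\e_p$ term survives — it should vanish by the symmetry (oddness) of $\partial_q U$ against the radial weight $e^U$ and by the fact that $\boldsymbol{\mathrm x}_\infty$ is a critical point of $\Psi_k$, so that the linear-in-displacement contribution drops out. A secondary technical point is justifying the passage to the limit in all the rescaled integrals uniformly in the coefficients $(a_i,\beta_i,\gamma_i)$ on the unit sphere, which follows from the local $C^2$ convergences \eqref{5-8-2} and \eqref{convRisc} together with the dominated-convergence bound supplied by Lemma \ref{llma}.
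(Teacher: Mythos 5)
Your argument for \eqref{newlambdalim} is essentially the paper's: ordering of eigenvalues plus Lemma \ref{lemma:limitEigenvalueProblem} reduces to excluding $\lambda_{\infty,l}=0$, and then Proposition \ref{prop3-3}(1) with \eqref{6-26-01} gives $l$ nonzero vectors in $\mathbb R^k$ with pairwise disjoint supports, forcing $l\le k$. That part is fine.

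For \eqref{03-05-1} you choose the correct $3k$-dimensional test space, but you misidentify both the role of $u_{p,i}$ and the mechanism producing the $O(\e_p^2)$ error, and the step you flag as ``the main obstacle'' is exactly where your proposed resolution would not work. First, $u_{p,i}=\phi_i u_p$ does \emph{not} capture the dilation mode $\frac{8-|y|^2}{8+|y|^2}$: after rescaling, $u_p(x_{p,i}+\e_{p,i}y)\to\sqrt e$ is the \emph{constant} (i.e.\ $\lambda_\infty=0$) mode, and its Rayleigh quotient is $\approx 1/p$ (this is Proposition \ref{prop:ExpansionsFirstGroup}), not $1$. Second, the $O(\e_p^2)$ does not come from the mismatch between $u_p$ and the Liouville bubble on an annulus, nor from the criticality of $\Psi_k$ killing a first-order term. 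The actual mechanism is structural: $s_{p,i}=\beta_{p,i}\partial_1 u_p+\gamma_{p,i}\partial_2 u_p$ solves the linearized equation \eqref{03-05-2} \emph{exactly}, so in the decomposition \eqref{6-25-61}--\eqref{6-25-62} the Rayleigh quotient equals $1+\frac{(1-p)J_1+(1-p)J_2+J_4}{pJ_1+2pJ_2+pJ_3}$, where the only genuine error in the numerator is the cutoff term $J_4=\int|\nabla\phi_i|^2(\alpha_{p,i}u_p+s_{p,i})^2=O(p^{-2})$ (by \eqref{11-14-03N}, since everything there lives away from the spikes), while the denominator contains $pJ_3\gtrsim p\int u_p^{p-1}s_{p,i}^2\sim (\beta^2+\gamma^2)\,p^{-2}\e_{p,i}^{-2}$ because $\nabla u_p$ concentrates at scale $\e_{p,i}$. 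The quotient of these two is $O(\e_p^2)$; no cancellation by oddness or by $\nabla\Psi_k(\boldsymbol{\mathrm x}_\infty)=0$ is used (the cross term $J_2$ is exponentially small by the divergence theorem, not by symmetry). Finally, because $J_1$ and $J_4$ couple the two kinds of generators, the maximum over the span is not simply the maximum over the generators: one needs the dichotomy of the paper's Step 3 (whether $p^2\sum_i\alpha_{p,i}^2\to\infty$ or stays bounded) to conclude that the correction term is either $\le 0$ or $\le C\e_p^2$. Without that case analysis, your ``block-diagonal'' reduction within each spike is incomplete.
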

\begin{proof}
 In order to obtain  \eqref{newlambdalim}, it is enough to prove that
\begin{equation}\label{tesilambda}
\lambda_{\infty,k+1}:=\lim_{p\to \infty}\lambda_{p,k+1}\geq 1.
\end{equation}
Suppose by contradiction  that
\begin{equation*}
\lambda_{\infty,k+1}<1.
\end{equation*}
Then by Lemma \ref{lem2-6}, the rescaled eigenfunctions $\widetilde v_{p,k+1,j}$, for  $j=1,\cdots, k$, converge to  solutions $V_{k+1,j}$ of the limit eigenvalue problem \eqref{3-23-1}.
Furthermore by Lemma \ref{lemma:limitEigenvalueProblem} we have $\lambda_{\infty,k+1}=0$.
Hence from Proposition \ref{prop3-3}-(1), we get  that $V_{k+1,j}= c_{k+1,j}\in \mathbb R$,  with  $\mathbb R^{k}\ni\textbf{c}_{k+1}:=(c_{k+1,1},\cdots, c_{k+1,k})\neq \textbf{0}$. Furthermore one has
\begin{equation*}
\textbf{c}_{k+1} \ \bot\ span \{\textbf{c}_{1},\cdots,\textbf{c}_{k}\}.
\end{equation*}
 Since $dim~~span \{\textbf{c}_{1},\cdots,\textbf{c}_{k}\}=k$, hence $span \{\textbf{c}_{1},\cdots,\textbf{c}_{k}\}=\mathbb R^{k}$ and so  we have a contradiction.
As a consequence we get \eqref{tesilambda}, and so  \eqref{newlambdalim} follows.

\vskip 0.3cm

The rest of the proof is devote to get \eqref{03-05-1}. In fact, it is enough to prove that \begin{equation}\label{317:vera}\lambda_{p,3k}\leq 1+O\big(\e^2_p\big).\end{equation}
We basically follow the proof of Proposition 3.1 in \cite{DGIP2019}, where the single-spike solution is considered. For reader's convenience, we give the proof of \eqref{317:vera} here (the multi-spike case). Specifically, we divide the proof of \eqref{317:vera} into three steps.

\vskip 0.3cm

\noindent {\sl Step 1. Using the variational characterization}, we get some basic estimate on $\lambda_{p,3k}$.

\vskip 0.3cm

By the variational characterization of the eigenvalues of the linearized problem, we have
\begin{equation*}
\lambda_{p,3k}=\min_{\begin{subarray}{lr}W\subset H^1_0(\Omega)\\\dim W=3k
\end{subarray}}\max_{v\in W\backslash\{0\}}\frac{\displaystyle\int_{\Omega}|\nabla v|^2dx}{p\displaystyle\int_{\Omega}u^{p-1}_pv^2dx}.
\end{equation*}
We define
\begin{equation*}
W:=span\Big\{u_{p,i},\psi_{p,i,1},\psi_{p,i,2},~i=1,\cdots,k\Big\},
\end{equation*}
where the functions $u_{p,i}, \psi_{p,i,1},\psi_{p,i,2}$ have been introduced in \eqref{defupi}.
Then we can verify that $dim ~W=3k$ and  any $v_{p}\in W$ can be written as
\begin{equation*}
v_p=f_{p} +g_{p},
\end{equation*}
where $f_p:=\displaystyle\sum^k_{i=1}\alpha_{p,i}u_{p,i}$, for  $\alpha_{p,i}\in \mathbb R$ and $g_p:=
\displaystyle\sum^k_{i=1}z_{p,i}$, with $z_{p,i}:=\beta_{p,i}\psi_{p,i,1}+\gamma_{p,i}\psi_{p,i,2}$, for  $\beta_{p,i},\gamma_{p,i}\in\mathbb R$. As a consequence
\begin{equation}\label{6-25-41}
\lambda_{p,3k}\leq \max_{\mathfrak{L}} \frac{\displaystyle\int_{\Omega}\big|\nabla (f_p+g_p)\big|^2dx}{p\displaystyle\int_{\Omega}u^{p-1}_p(f_p+g_p)^2dx},
\end{equation}
where $$\mathfrak{L}:=\Big\{\big(\alpha_{p,1},\cdots,\alpha_{p,k},\beta_{p,1},\cdots,\beta_{p,k},
\gamma_{p,1},\cdots,\gamma_{p,k}\big)\in \R^{3k},~~~\displaystyle\sum^k_{i=1}
\big(\alpha^2_{p,i}+\beta^2_{p,i}+\gamma^2_{p,i}\big)=1\Big\}.$$

Using \eqref{6-25-43}, we obtain
\begin{equation}\label{6-25-55}
\begin{split}
\int_{\Omega}\big|\nabla f_p \big|^2dx=&
\int_{\Omega}\big|\sum^k_{i=1}\alpha_{p,i}\nabla u_{p,i}\big|^2dx
=
\sum^k_{i=1}\alpha^2_{p,i} \int_{\Omega}\big|\nabla u_{p,i} \big|^2dx\\=&
\sum^k_{i=1}\alpha^2_{p,i} \int_{\Omega}\big|\nabla  \phi_i \big|^2u_p^2dx+\sum^k_{i=1}\alpha^2_{p,i}\int_{\Omega}\phi_i^2u_p^{p+1}dx.
\end{split}
\end{equation}
Setting  $s_{p,i}:=\beta_{p,i}\frac{\partial u_{p}}{\partial x_{1}}+\gamma_{p,i}\frac{\partial u_{p}}{\partial x_{2}}$, from \eqref{6-25-51}, we can find
\begin{equation}\label{6-25-56}
\int_{\Omega}  \big|\nabla g_{p}\big|^2dx= \sum^k_{i=1}  \Big( \int_{\Omega} \big|\nabla \phi_i\big|^2 s_{p,i}^2dx+
 p \int_{\Omega} \phi_i^2 u^{p-1}_p  s_{p,i}^2dx\Big).\end{equation}
Similarly, using \eqref{6-25-52}, we have
\begin{equation}\label{6-20-2}
\begin{split}
 \int_{\Omega} \nabla f_p \cdot \nabla g_p dx=&
 \sum^k_{i=1} \alpha_{p,i} \int_{\Omega} \nabla u_{p,i} \cdot \nabla z_{p,i} dx \\=
 &
 \sum^k_{i=1}\alpha_{p,i}\int_{\Omega} \big|\nabla \phi_i \big|^2
u_p  s_{p,i} dx+ \frac{p+1}{2}  \sum^k_{i=1}\alpha_{p,i}\int_{\Omega} \phi^2_i u_p^p s_{p,i}   dx .
\end{split}\end{equation}
From  \eqref{6-25-55}, \eqref{6-25-56} and  \eqref{6-20-2}, we then find
\begin{equation}\label{6-25-61}
\begin{split}
\int_{\Omega}\big|\nabla (f_p+g_p)\big|^2dx=&
 \underbrace{\sum^k_{i=1}\alpha^2_{p,i} \int_{\Omega}\phi_i^2u_p^{p+1}dx}_{:=J_1}
 +(p+1)\underbrace{\sum^k_{i=1}\alpha_{p,i}\int_{\Omega} u_p^p s_{p,i} \phi^2_i  dx}_{:=J_2}+p\underbrace{ \sum^k_{i=1}\int_{\Omega} u^{p-1}_p \phi_i^2 s_{p,i}^2dx}_{:=J_3}\\&
 +
 \underbrace{\sum^k_{i=1}\Big(\alpha^2_{p,i} \int_{\Omega}\big|\nabla  \phi_i \big|^2u_p^2dx+2\alpha_{p,i}\int_{\Omega} s_{p,i} u_p \big|\nabla \phi_i \big|^2 dx + \int_{\Omega} \big|\nabla \phi_i\big|^2 s_{p,i}^2dx\Big)}_{:=J_4}\\=&
 J_1+(p+1)J_2+pJ_3+J_4.
\end{split}\end{equation}
Furthermore, by direct computations, we have
\begin{equation}\label{6-25-62}
\begin{split}
\displaystyle\int_{\Omega}u^{p-1}_p(f_p+g_p)^2dx=&
\displaystyle\int_{\Omega}u^{p-1}_p\big(f^2_p+g^2_p+2f_pg_p\big)dx\\=&
\sum^k_{i=1}\alpha_{p,i}^2\int_{\Omega}u^{p+1}_p\phi_i^2dx+2\sum^k_{i=1}\alpha_{p,i}\int_{\Omega} u_p^p s_{p,i} \phi^2_i  dx+\sum^k_{i=1}\int_{\Omega} u^{p-1}_p \phi_i^2 s_{p,i}^2dx\\[1mm]=&
J_1+2J_2+J_3.
\end{split}\end{equation}
Hence from \eqref{6-25-41},  \eqref{6-25-61} and \eqref{6-25-62}, we obtain
\begin{equation}\label{6-25-87}
\begin{split}
 \lambda_{p,3k}\leq \max_{\mathfrak{L}} \frac{\displaystyle\int_{\Omega}\big|\nabla (f_p+g_p)\big|^2dx}{p\displaystyle\int_{\Omega}u^{p-1}_p(f_p+g_p)^2dx}=
 & \max_{\mathfrak{L}} \left(
 \frac{J_1+(p+1)J_2+pJ_3+J_4}{pJ_1+2pJ_2+pJ_3}\right)
 \\=&1+\max_{\mathfrak{L}} \left( \frac{(1-p)J_1+(1-p)J_2+J_4}{pJ_1+2pJ_2+pJ_3}\right).
\end{split}\end{equation}
\noindent {\sl Step 2. We compute the terms $J_1,\cdots,J_4$ in \eqref{6-25-87}.}

\vskip 0.2cm

Firstly, as in  \eqref{6-25-13} we have
\begin{equation}\label{6-25-81}
\begin{split}
J_1= &{\sum^k_{i=1}\alpha^2_{p,i} \int_{\Omega}\phi_i^2 u_p^{p+1}dx}
=
\frac{8\pi e}{p} \sum^k_{i=1} \alpha^2_{p,i} \Big(1+o(1)\Big).
\end{split}\end{equation}
Moreover,  using  \eqref{11-14-03N} and the divergence Theorem, we get
\begin{equation}\label{6-25-82}
\begin{split}
J_2= &\sum_{i=1}^{k} \alpha_{p,i} \int_{\Omega}\phi_i^2 u_p^{p}s_{p,i}dx
\\=
&\sum^k_{i=1}\alpha_{p,i}\int_{B_r(x_{p,i})}   u_p^p
 \Big(\beta_{p,i}\frac{\partial u_{p}}{\partial x_1}+\gamma_{p,i}\frac{\partial u_{p}}{\partial x_2} \Big)dx+ O\Big(\frac{C^{p+1}}{p^{p+1}}\Big)\\=&
-\frac{1}{p+1}\sum^k_{i=1}\alpha_{p,i}\int_{\partial B_r(x_{p,i})}
 \Big(\beta_{p,i}  u_p^{p+1}\nu_1 +\gamma_{p,i} u_p^{p+1}\nu_2 \Big)d\sigma+ O\Big(\frac{C^{p+1}}{p^{p+1}}\Big)\\=&
 O\Big(\frac{C^{p+1}}{p^{p+1}}\Big),
\end{split}\end{equation}
where $(\nu_1,\nu_2)$ is the unit outer normal vector to $\partial B_r(x_{p,i})$.
Next, from \eqref{11-14-03N}, \eqref{defwpj} and Fatou's lemma, it holds
\begin{equation}\label{6-25-83}
\begin{split}
J_3=
&
\sum_{i=1}^{k}  \int_{\Omega}\phi_i^2 u_p^{p-1}s_{p,i}^{2}dx
\\=
&\sum^k_{i=1} \int_{B_r(x_{p,i})}   u_p^{p-1}
 \Big(\beta_{p,i}\frac{\partial u_{p}}{\partial x_1}+\gamma_{p,i}\frac{\partial u_{p}}{\partial x_2} \Big)^2dx+ O\Big(\frac{C^{p+1}}{p^{p+1}}\Big)\\=&
 \sum^k_{i=1} \frac{u_{p}(x_{p,i})}{p^3\e_{p,i}^2}\int_{B_{\frac{r}{\e_{p,i}}}(0)}   \Big(1+\frac{w_{p,i}}{p}\Big)^{p-1}
 \Big(\beta_{p,i}\frac{\partial w_{p,i}}{\partial x_1}+\gamma_{p,i}\frac{\partial w_{p,i}}{\partial x_2} \Big)^2dx+ O\Big(\frac{C^{p+1}}{p^{p+1}}\Big)\\ \geq &
 \sum^k_{i=1} \frac{u_{p}(x_{p,i})}{p^3\e_{p,i}^2}\int_{\R^2}
 e^{U(x)}
 \Big(\beta_{p,i}\frac{\partial U(x)}{\partial x_1}+\gamma_{p,i}\frac{\partial U(x)}{\partial x_2} \Big)^2dx+ O\Big(\frac{C^{p+1}}{p^{p+1}}\Big)
 \\=&
  \sum^k_{i=1} \frac{u_{p}(x_{p,i})}{p^3\e_{p,i}^2} \left(\beta^2_{p,i}  \int_{\R^2}
 e^{U(x)} \Big|\frac{\partial U(x)}{\partial x_1}\Big|^2+
 \gamma^2_{p,i} \int_{\R^2}
 e^{U(x)} \Big|\frac{\partial U(x)}{\partial x_2}\Big|^2\right)
 + O\Big(\frac{C^{p+1}}{p^{p+1}}\Big)
 \\=&
\sum^k_{i=1} \frac{e\Big(\beta^2_{p,i} +\gamma^2_{p,i} \Big)}{2p^3\e_{p,i}^2}\left( \int_{\R^2} e^{U(x)}\big|\nabla U(x)\big|^2 dx +o(1)\right)+ O\Big(\frac{C^{p+1}}{p^{p+1}}\Big).
\end{split}
\end{equation}
Now we estimate $J_4$. Firstly, we have
\begin{equation}\label{6-25-71}
J_4= \sum^k_{i=1}  \int_{\Omega}\big|\nabla  \phi_i \big|^2\big(\alpha_{p,i} u_p+s_{p,i}\big)^2dx\geq 0.
\end{equation}
Also, by the definition of $\phi_i$ and \eqref{11-14-03N}, it holds
\begin{equation}\label{6-25-72}
\begin{split}
J_4= &\sum^k_{i=1}  \int_{\Omega}\big|\nabla  \phi_i \big|^2\big(\alpha_{p,i} u_p+s_{p,i}\big)^2dx\\
=&\sum^k_{i=1}  \int_{B_{3r}(x_{\infty,i})\backslash B_{2r}(x_{\infty,i})}\big|\nabla  \phi_i \big|^2\left(\alpha_{p,i} u_p+\beta_{p,i}\frac{\partial u_{p}}{\partial x_1}+\gamma_{p,i}\frac{\partial u_{p}}{\partial x_2}\right)^2dx =O\Big(\frac{1}{p^2}\Big).
\end{split}
\end{equation}
Hence from \eqref{6-25-71} and \eqref{6-25-72}, we have
\begin{equation}\label{6-25-84}
0\leq J_4\leq \frac{C}{p^2}.
\end{equation}

\vskip 0.2cm

\noindent {\sl Step 3. We complete the proof of \eqref{317:vera}.}

\vskip 0.2cm

Here we divide into two cases:

\vskip 0.2cm

\noindent\textup{(1)}
If $\Big(\displaystyle\sum^k_{i=1}\alpha^2_{p,i}\Big)p^2\rightarrow +\infty$, then from \eqref{6-25-81}, \eqref{6-25-82} and \eqref{6-25-84}, we have
\begin{equation*}
p^2\Big((1-p)J_1+(1-p)J_2+J_4\Big) \to -\infty,~~\mbox{as}~~p\to \infty,
\end{equation*}
which implies
\begin{equation}\label{6-25-85}
 (1-p)J_1+(1-p)J_2+J_4<0~~\mbox{for large}~~p.
\end{equation}
Also from \eqref{6-25-81}, \eqref{6-25-82} and \eqref{6-25-83}, we have
\begin{equation}\label{6-25-86}
\begin{split}
pJ_1+2pJ_2+p{J_3}\geq  &\sum^k_{i=1} \frac{e\Big(\beta^2_{p,i} +\gamma^2_{p,i} \Big)}{2p^2\e_{p,i}^2}\Big(\int_{\R^2} e^{U(x)}\big|\nabla U(x)\big|^2 dx +o(1)\Big)\\&+ 8\pi e \sum^k_{i=1} \alpha^2_{p,i} \Big(1+o(1)\Big)
+ O\Big(\frac{C^{p+1}}{p^{p}}\Big)>0.
\end{split}
\end{equation}
Hence from \eqref{6-25-85} and \eqref{6-25-86}, we can get
\begin{equation}\label{6-25-93}
\begin{split}
\frac{(1-p)J_1+(1-p)J_2+J_4}{pJ_1+2pJ_2+pJ_3}\leq 0.
\end{split}\end{equation}

\vskip 0.1cm

\noindent\textup{(2)}
If $\Big(\displaystyle\sum^k_{i=1}\alpha^2_{p,i}\Big)p^2\leq C$, then from
$\displaystyle\sum^k_{i=1}
\big(\alpha^2_{p,i}+\beta^2_{p,i}+\gamma^2_{p,i}\big)=1$, we have
\begin{equation*}
\sum^k_{i=1} \Big(\beta^2_{p,i} +\gamma^2_{p,i} \Big)\to 1,~~\mbox{as}~~p\to \infty.
\end{equation*}
So in this case, from \eqref{6-25-81}, \eqref{6-25-82}, \eqref{6-25-83} and \eqref{6-25-84}, we have
\begin{equation}\label{6-25-91}
 (1-p)J_1+(1-p)J_2+J_4=O\Big(\frac{1}{p^2}\Big),
\end{equation}
and
\begin{equation}\label{6-25-92}
pJ_1+2pJ_2+p{J_3}\geq \frac{C_1}
{p^2\e_{p}^2},~~\mbox{for some}~~C_1>0.
\end{equation}
Hence \eqref{6-25-91} and \eqref{6-25-92} imply
\begin{equation}\label{6-25-94}
\begin{split}
\frac{(1-p)J_1+(1-p)J_2+J_4}{pJ_1+2pJ_2+pJ_3}\leq C_0\varepsilon^2_p~~\mbox{for some}~~C_0>0.
\end{split}\end{equation}
Finally, from \eqref{6-25-87}, \eqref{6-25-93} and \eqref{6-25-94}, we can obtain
\eqref{317:vera}.

\end{proof}

\begin{Lem}\label{lemma:tildevo}
For $k+1\leq l\leq 3k$, it holds
\begin{equation}\label{6-24-1}
\lambda_{\infty,l}=1
\end{equation}
and
\begin{equation}\label{3-12-1}
\widetilde{v}_{p,l,j}(y)
=\sum^2_{q=1}\frac{a_{q,l,j}y_q}{8+|y|^2}+o(1),\,~\mbox{in}~C^1_{loc}(\R^2),
~~\mbox{as}~p\to +\infty,
\end{equation}
for some vectors $\textbf{a}_{l,j}:= \big(a_{1,l,j},a_{2,l,j}\big)$ in $\R^2$, for any $j=1,\cdots, k$, and
\begin{equation}\label{6-26-11xx}
\textbf{a}_{l,j}\cdot \textbf{a}_{l',j}=0,\qquad ~~\mbox{ for }~~ k+1\leq l,l'\leq 3k,\ l\neq l'.
\end{equation}
Furthermore for any $l=k+1,\cdots, 3k$ there exists $j\in\{1,\cdots,k\}$ such that
\begin{equation}\label{anozero}
\textbf{a}_{l,j} \neq \textbf{0}.\end{equation}
Moreover for any fixed $j=1,\cdots, k$ one can find $\ell_{1},\ell_{2}\in\{k+1,\cdots, 3k\}$, $\ell_{1}\neq\ell_{2}$ such that
\begin{equation}\label{eachPointGivesContribution2}
\begin{split}
&\textbf{a}_{\ell_{1},j}\neq 0,\,\,\,\textbf{a}_{\ell_{2},j}\neq 0.
\end{split}
\end{equation}
\end{Lem}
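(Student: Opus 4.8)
The plan is to obtain the five assertions in the order they are stated; the first four follow quickly from results already proved, and the real work is \eqref{eachPointGivesContribution2}.

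\emph{Proof of \eqref{6-24-1}--\eqref{anozero}.} Equality \eqref{6-24-1} is immediate: Lemma~\ref{lemma:primo} gives $\lambda_{\infty,l}\geq 1$ via \eqref{newlambdalim} and $\lambda_{p,l}\leq 1+O(\e_p^2)$ via \eqref{03-05-1}, and $\e_p\to0$. Since $\lambda_{\infty,l}=1$, Proposition~\ref{prop3-3}(2) applies and produces the expansion \eqref{6-21-4} for $\widetilde v_{p,l,j}$ with coefficients $(a_{1,l,j},a_{2,l,j},b_{l,j})$; to reach \eqref{3-12-1} it remains to check $b_{l,j}=0$ for every $j$. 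If some $b_{l,j}\neq 0$, Proposition~\ref{lemma:autofunz2casi} would force $\lambda_{p,l}=1+\tfrac6p(1+o(1))$, which contradicts $\lambda_{p,l}\leq 1+O(\e_p^2)$ because $\e_p$ is exponentially small in $p$ by \eqref{nn3-29-03} and \eqref{defepsilon}, so that $\e_p^2=o(1/p)$. Hence $b_{l,j}=0$ and \eqref{6-21-4} reduces to \eqref{3-12-1}. With all the $b$'s vanishing, relation \eqref{5-25-40} (valid since $\lambda_{\infty,l}=\lambda_{\infty,l'}=1$) becomes $\textbf{a}_{l,j}\cdot\textbf{a}_{l',j}=0$, i.e. \eqref{6-26-11xx}, and the last part of Proposition~\ref{prop3-3}(2) gives some $j$ with $(a_{1,l,j},a_{2,l,j},b_{l,j})\neq(0,0,0)$, hence $\textbf{a}_{l,j}\neq\textbf{0}$; this is \eqref{anozero}.

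\emph{Reduction of \eqref{eachPointGivesContribution2} to a linear independence.} Put $T_l:=(\textbf{a}_{l,1},\dots,\textbf{a}_{l,k})\in\R^{2k}$ for $l=k+1,\dots,3k$. By \eqref{6-26-11xx}, for each fixed $j$ the vectors $\textbf{a}_{k+1,j},\dots,\textbf{a}_{3k,j}$ are pairwise orthogonal in $\R^2$, so at most two of them are nonzero. Therefore it suffices to prove that the $2k$ vectors $T_{k+1},\dots,T_{3k}$ are linearly independent: they then span $\R^{2k}$, so for each $j$ the block projections $\textbf{a}_{k+1,j},\dots,\textbf{a}_{3k,j}$ span $\R^2$, which forces at least two of them to be nonzero (exactly two, in view of the previous sentence). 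This gives \eqref{eachPointGivesContribution2}.

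\emph{Proof of the linear independence.} Suppose $\sum_{l=k+1}^{3k}c_lT_l=0$ and set $V_p:=\sum_{l=k+1}^{3k}c_lv_{p,l}$. Then $\sum_l c_la_{q,l,j}=0$ for all $j,q$, so by \eqref{3-12-1} the rescalings $V_p(x_{p,j}+\e_{p,j}\cdot)=\sum_l c_l\widetilde v_{p,l,j}$ converge to $0$ in $C^1_{loc}(\R^2)$ for every $j$, while, as in the proof of Lemma~\ref{prop3-3prima}, $V_p\to0$ uniformly on $\overline\Omega\setminus\bigcup_jB_\tau(x_{\infty,j})$ for small $\tau>0$. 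Running the barrier argument of Lemma~\ref{prop3-3prima} on each annulus $B_{\tau/\e_{p,j}}(0)\setminus B_R(0)$—using $\|v_{p,l}\|_{L^\infty}=1$, the boundedness of $\lambda_{p,l}$, and the decay estimate of Lemma~\ref{llma} to bound the Laplacian of $V_p(x_{p,j}+\e_{p,j}\cdot)$ by $C|y|^{-(4-\delta)}$, together with the two convergences just recorded for the boundary data—shows $V_p(x_{p,j}+\e_{p,j}\cdot)=o(1)$ on these annuli as well, whence $\|V_p\|_{L^\infty(\Omega)}\to0$. Now write $\sum_l c_l\lambda_{p,l}v_{p,l}=V_p+E_p$ with $\|E_p\|_{L^\infty}\leq\big(\max_{k+1\leq l\leq 3k}|\lambda_{p,l}-1|\big)\sum_l|c_l|\to0$ by \eqref{6-24-1}; using $p\int_\Omega u_p^{p-1}\leq C$ (a rescaling consequence of Theorem~\ref{thm:asymptoticup}) we get
\begin{equation*}
\|V_p\|^2=\int_\Omega V_p(-\Delta V_p)=p\int_\Omega u_p^{p-1}V_p^2+p\int_\Omega u_p^{p-1}V_pE_p=O\big(\|V_p\|_{L^\infty}^2+\|V_p\|_{L^\infty}\|E_p\|_{L^\infty}\big)\to0.
\end{equation*}
On the other hand, by the orthogonality \eqref{5-25-31}, $\|V_p\|^2=\sum_l c_l^2\|v_{p,l}\|^2$, and a rescaling together with \eqref{5-8-2}, \eqref{convRisc} and the domination of Lemma~\ref{llma} give $\|v_{p,l}\|^2=\lambda_{p,l}\,p\int_\Omega u_p^{p-1}v_{p,l}^2\to\sum_{m=1}^k\int_{\R^2}e^UV_{l,m}^2\,dy>0$, the positivity coming from Lemma~\ref{prop3-3prima}. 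Hence $\|V_p\|^2$ is bounded away from $0$ unless all $c_l=0$, a contradiction; this proves the linear independence and with it \eqref{eachPointGivesContribution2}.

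\emph{Main obstacle.} The assertions \eqref{6-24-1}--\eqref{anozero} are essentially bookkeeping with facts already at hand; the crux is the linear independence, and within it the upgrade from ``$V_p\to0$ near and away from the spikes'' to ``$\|V_p\|_{L^\infty(\Omega)}\to0$'', i.e. controlling $V_p$ on the neck regions, where the decay estimate of Lemma~\ref{llma} is essential and where one must deal with a combination of eigenfunctions rather than a single one. Once $\|V_p\|_{L^\infty}\to0$ is secured, turning it into $\|V_p\|\to0$ and closing the contradiction is routine.
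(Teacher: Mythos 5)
Your treatment of \eqref{6-24-1}--\eqref{anozero} coincides with the paper's: $\lambda_{\infty,l}=1$ from Lemma~\ref{lemma:primo}, then Proposition~\ref{prop3-3}(2) plus the incompatibility of \eqref{1-25-01} with \eqref{03-05-1} to kill the $b_{l,j}$'s, and \eqref{5-25-40} to get \eqref{6-26-11xx}. That part is fine.

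For \eqref{eachPointGivesContribution2} you genuinely diverge from the paper. The paper's proof is a three-line pigeonhole count: if some $\widehat j$ carried at most one nonzero $\textbf{a}_{\ell,\widehat j}$, then the $2k$ indices $\ell$ (each of which, by \eqref{anozero}, must be ``hosted'' by some $j$) would force some other point $j_*$ to host at least three nonzero vectors $\textbf{a}_{\ell,j_*}$, which \eqref{6-26-11xx} forbids in $\R^2$. Your route --- reduce to linear independence of the concatenated vectors $T_l\in\R^{2k}$ and deduce \eqref{eachPointGivesContribution2} from the surjectivity of the block projections --- is correct, and your analytic proof of the independence (rescaled limits of $V_p=\sum_l c_lv_{p,l}$ vanish, the barrier argument of Lemma~\ref{prop3-3prima} upgrades this to $\|V_p\|_{L^\infty(\Omega)}\to0$, hence $\|V_p\|^2\to0$, against $\|V_p\|^2=\sum_lc_l^2\|v_{p,l}\|^2\geq c\sum_lc_l^2$) goes through with the dominations you cite. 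But the long analytic detour is unnecessary: \eqref{6-26-11xx} summed over $j$ gives $T_l\cdot T_{l'}=\sum_{j}\textbf{a}_{l,j}\cdot\textbf{a}_{l',j}=0$ for $l\neq l'$, and \eqref{anozero} gives $T_l\neq 0$, so the $T_l$ are $2k$ pairwise orthogonal nonzero vectors in $\R^{2k}$ and hence a basis --- one line. What your analytic argument buys is a reusable fact (any combination of these eigenfunctions whose rescaled limits all vanish must vanish uniformly), but for the statement at hand both the paper's counting argument and the orthogonality observation are far more economical.
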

\begin{proof}
Firstly, \eqref{6-24-1} is a consequence of Lemma \ref{lemma:primo}. Next we derive \eqref{3-12-1} and \eqref{6-26-11xx}.
By \eqref{6-24-1} one can apply Proposition \ref{prop3-3}-(2) for any $l=k+1,\cdots,3k$, namely  there exists $(a_{1,l,j},a_{2,l,j},b_{l,j})\in\mathbb R^{3}$ such that
\[
\widetilde{v}_{p,l,j}(y)
=\sum^2_{q=1}\frac{a_{q,l,j}y_q}{8+|y|^2}+b_{l,j}\frac{8-|y|^2}{8+|y|^2}+o(1),\,~\mbox{in}~C^1_{loc}(\R^2),
~~\mbox{as}~p\to +\infty.
\]
and $\textbf{a}_{l,j}\cdot \textbf{a}_{l',j}+32b_{l,j}b_{l',j}=0$,
for any  $j=1,\cdots,k$, and $(a_{1,l,j},a_{2,l,j},b_{l,j})\neq (0,0,0)$ for some $j$. Next we show that \begin{equation}
\label{bzero}b_{l,j}=0,\quad\mbox{ for any }j=1,\cdots, k,\end{equation}
from which, \eqref{3-12-1} and \eqref{6-26-11xx} follow.

\vskip 0.1cm

Assume by contradiction that $b_{l,j_0}\neq 0$, then Proposition \ref{lemma:autofunz2casi} applies by  \eqref{6-24-1}, so we have from \eqref{1-25-01} that for large $p$,
\begin{equation*}
\lambda_{p,l}\geq 1+\frac{3}{p},
\end{equation*}
which is a contradiction with \eqref{03-05-1}. Hence \eqref{bzero} holds.\\

Lastly, we prove \eqref{eachPointGivesContribution2} assuming by contradiction that there exists $\widehat j\in\{1,\cdots, k\}$ such that \eqref{eachPointGivesContribution2} is not true. Then \eqref{anozero} implies that there must be (at least) an index $j_{\ast}\in\{1,\cdots, k\}$, $j_{\ast}\neq \widehat j$ and (at least) $3$ indexes  $l_{1},l_{2}, l_{3}\in \{k+1,\cdots, 3k\}$, $l_{i}\neq l_{h}$ for $i\neq h$, $i,h=1,2,3$, such that
\[\textbf{a}_{l_{1},j_{\ast}}\neq 0,\quad \textbf{a}_{l_{2},j_{\ast}}\neq 0, \quad \textbf{a}_{l_{3},j_{\ast}}\neq 0,\]
but this is not possible because of \eqref{6-26-11xx}.
\end{proof}

\vskip 0.4cm

Next we derive the asymptotic behavior of the eigenfunctions:
\begin{Prop}\label{propSecondoPezzo1}
For $k+1\leq l\leq 3k$, one has
\begin{equation}\label{1-25-02}
{v}_{p,l}(x)
=2\pi\sum^k_{m=1}\sum^2_{q=1} a_{q,l,m}\frac{\partial G}{\partial x_q} \big(x,x_{p,m}\big) \e_{p,m} +o\Big(\e_{p}\Big)~\,~\mbox{in}~
C^1_{loc}\Big(\overline{\Omega}\backslash \{x_{\infty,1},\cdots,x_{\infty,k}\}\Big),
\end{equation}
where the constants $a_{q,l,m}$  are the same as in Lemma \ref{lemma:tildevo} and $\varepsilon_{p}$ is defined in \eqref{defepsilon}.
\end{Prop}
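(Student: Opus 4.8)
The starting point is Lemma~\ref{lem2-9}, which expresses
\[
\frac{v_{p,l}}{\lambda_{p,l}}=p\sum_{m=1}^{k}G(x_{p,m},x)A_{p,l,m}+p\sum_{m=1}^{k}\sum_{q=1}^{2}\frac{\partial G(x_{p,m},x)}{\partial x_q}B_{p,l,m,q}+o(\varepsilon_p)
\]
in $C^1_{loc}(\overline\Omega\setminus\{x_{\infty,1},\dots,x_{\infty,k}\})$, so everything reduces to estimating the two families $A_{p,l,m}$ and $B_{p,l,m,q}$. The coefficients $B_{p,l,m,q}$ are the easy ones: rescaling around $x_{p,m}$, using the convergence $\widetilde v_{p,l,m}\to V_{l,m}$ of Lemma~\ref{lemma:tildevo} together with \eqref{5-8-2}, and controlling the tail of the integral via Lemma~\ref{llma} so as to pass to the limit by dominated convergence, one obtains exactly as in the derivation of \eqref{3-16-4t} — now with $b_{l,m}=0$ — that $pB_{p,l,m,q}=a_{q,l,m}\bigl(\int_{\R^2}\tfrac{z_q^2}{8+|z|^2}e^{U}\,dz\bigr)\varepsilon_{p,m}+o(\varepsilon_{p,m})=2\pi a_{q,l,m}\varepsilon_{p,m}+o(\varepsilon_p)$, where \eqref{6-26-32} lets me replace $o(\varepsilon_{p,m})$ by $o(\varepsilon_p)$.

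The core is to prove that the $A$-contribution is negligible, i.e. $pA_{p,l,m}=o(\varepsilon_p)$ for every $m$, which is exactly what makes the $\partial_q G$-terms dominant. Put $T_m:=p\lambda_{p,l}A_{p,l,m}$ and $|T|_\infty:=\max_m|T_m|$; since $b_{l,m}=0$ (Lemma~\ref{lemma:tildevo}), \eqref{3-12-03} already gives the rough bound $T_m=o(1/p)$, so that $v_{p,l}=\sum_m T_mG(x_{p,m},\cdot)+2\pi(1+o(1))\sum_{m,q}a_{q,l,m}\varepsilon_{p,m}\frac{\partial G(x_{p,m},\cdot)}{\partial x_q}+o(\varepsilon_p)$ in $C^1_{loc}$ off the concentration set. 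I would then feed this expansion and the expansion \eqref{luo-1}--\eqref{luoluo1} of $u_p$ into the local Pohozaev identity \eqref{3-12-04} with $j=m$. Evaluating $P_j(u_p,v_{p,l})$ directly by bilinearity and the identities \eqref{1-1}, \eqref{abb1-1} of Lemma~\ref{lem2-1}, only the diagonal term $P_j\bigl(G(x_{p,j},\cdot),G(x_{p,j},\cdot)\bigr)=-\tfrac1{2\pi}$ survives at top order, giving
\[
P_j(u_p,v_{p,l})=-\frac{4\sqrt e}{p}\,(1+o(1))\,T_j+o\!\left(\frac{\varepsilon_p}{p}\right),
\]
since the $\partial_q G$-piece of $v_{p,l}$ contributes only $\tfrac{8\pi^2\sqrt e}{p}(1+o(1))\sum_q a_{q,l,j}\varepsilon_{p,j}\bigl[\nabla_{a_j}\Psi_k\bigr]_q(x_{p,1},\dots,x_{p,k})=o(\varepsilon_p/p)$, the bracket being $o(1)$ because $\boldsymbol{\mathrm x}_{\infty}$ is a critical point of $\Psi_k$.

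On the other side of \eqref{3-12-04} the term $\int_{\partial B_d(x_{p,j})}u_p^pv_{p,l}(x-x_{p,j})\cdot\nu$ is exponentially small, and $-2\int_{B_d(x_{p,j})}u_p^pv_{p,l}$ is estimated via Lemma~\ref{lem2-5}: the right-hand side of \eqref{6-25-21}, namely $\int_{\partial B_d(x_{p,j})}(\partial_\nu u_p\,v_{p,l}-\partial_\nu v_{p,l}\,u_p)$, evaluated again by bilinearity from the expansions of $u_p$ and $v_{p,l}$ (the $G$--$G$ boundary integrals being $O(1)$), equals $O(|T|_\infty/p)+O(\varepsilon_p/p)$, so $\int_{B_d(x_{p,j})}u_p^pv_{p,l}=O(|T|_\infty/p^2)+O(\varepsilon_p/p^2)$. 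Finally $(\lambda_{p,l}-1)p\int_{B_d(x_{p,j})}(x-x_{p,j})\cdot\nabla u_p\,v_{p,l}u_p^{p-1}$ is bounded, using the gradient estimate \eqref{luoluo2} and $\|v_{p,l}\|_\infty=1$, by $|\lambda_{p,l}-1|\cdot\tfrac Cp\int_{B_d}u_p^{p-1}=|\lambda_{p,l}-1|\,O(1/p)$. Comparing the two expressions for $P_j(u_p,v_{p,l})$ gives $|T_j|\le\tfrac Cp|T|_\infty+o(\varepsilon_p)+C'|\lambda_{p,l}-1|$; taking the maximum over $j$ and absorbing $\tfrac Cp|T|_\infty$ yields $|T|_\infty=o(\varepsilon_p)$, hence $pA_{p,l,m}=o(\varepsilon_p)$ — provided $|\lambda_{p,l}-1|=o(\varepsilon_p)$. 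Plugging $pA_{p,l,m}=o(\varepsilon_p)$ and $pB_{p,l,m,q}=2\pi a_{q,l,m}\varepsilon_{p,m}+o(\varepsilon_p)$ back into Lemma~\ref{lem2-9} and using $\lambda_{p,l}\to1$ then gives \eqref{1-25-02}. The step I expect to be the real obstacle is precisely the last proviso: one needs the sharp two-sided bound $|\lambda_{p,l}-1|=O(\varepsilon_p^2)$ (the upper half being Lemma~\ref{lemma:primo}, the matching lower bound coming either from a variational argument with test functions orthogonal to the $u_{p,i}$, or by bootstrapping from a weaker form of the above expansion), since the cruder estimate $|\lambda_{p,l}-1|=o(1)$ only reproduces $T_m=o(1)$ and closes nothing; a secondary, more routine difficulty is that all error terms proportional to $|T|_\infty$ must be kept genuinely proportional to it (rather than estimated crudely) for the absorption to be legitimate, and that the limits in the rescaled integrals over $B_{d/\varepsilon_{p,m}}(0)$ be justified uniformly via Lemma~\ref{llma}.
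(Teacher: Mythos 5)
Your strategy matches the paper's through the decisive turn: you reduce everything to showing $pA_{p,l,m}=o(\varepsilon_p)$, you feed the expansions of $u_p$ and $v_{p,l}$ into the $P_j$-Pohozaev identity \eqref{3-12-04}, and you arrive (modulo a small imprecision, see below) at the same coupled estimate as \eqref{3-14-3}, namely a bound of the form $A_{p,l,j}=o(\varepsilon_p/p)+o(|1-\lambda_{p,l}|/p)$. But then you stop and flag the needed lower bound $|\lambda_{p,l}-1|\le C\varepsilon_p^2$ as a "provision," offering only a variational argument or an unspecified bootstrap. This is a genuine gap: min--max/variational arguments with finite-dimensional test spaces give one-sided \emph{upper} bounds on eigenvalues (that is how Lemma~\ref{lemma:primo} produces $\lambda_{p,l}\le 1+O(\varepsilon_p^2)$), while the only lower-side information available from Lemma~\ref{lemma:primo} is $\lambda_{\infty,l}\ge 1$, i.e.\ $\lambda_{p,l}-1\ge -o(1)$, which is hopelessly far from $-O(\varepsilon_p^2)$. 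Your "bootstrapping" hint is in fact the right instinct, but you never supply the extra equation needed to close the loop.

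What you are missing is the \emph{second} local Pohozaev identity, the one associated with the vector-valued form $Q_j$ in \eqref{3-12-04a}. Applied to $(u_p,v_{p,l})$ with the same expansions, and using \eqref{aluo1}--\eqref{aluo41} together with the sharp smallness of $\nabla\Psi_k(x_{p,1},\dots,x_{p,k})$ (Proposition~3.12 of \cite{GILY2021}, not merely $\nabla\Psi_k(\boldsymbol{\mathrm x}_\infty)=0$), it yields the relation \eqref{3-14-1}, which expresses $(\lambda_{p,l}-1)/\varepsilon_{p,[\cdot]}$ in terms of $D^2\Psi_k(\boldsymbol{\mathrm x}_\infty)$ applied to the vector of $a_{q,l,m}$'s, up to errors $o(\varepsilon_p)+o(p\varepsilon_p\sum_m|A_{p,l,m}|)$. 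Since some $\alpha_{t_0}\ne 0$, one can solve this for $\lambda_{p,l}-1=O(\varepsilon_p^2)+o(p\varepsilon_p^2\sum_m|A_{p,l,m}|)$ (\eqref{3-14-2}). Substituting back into your $P_j$-bound makes the $|\lambda_{p,l}-1|$-term absorbable and closes the coupled system, giving simultaneously $A_{p,l,m}=o(\varepsilon_p/p)$ and $\lambda_{p,l}-1=O(\varepsilon_p^2)$ --- without having to prove the eigenvalue bound independently. One secondary remark: your estimate $(\lambda_{p,l}-1)\,p\int_{B_d}u_p^{p-1}v_{p,l}(x-x_{p,j})\cdot\nabla u_p=|\lambda_{p,l}-1|\,O(1/p)$ should be sharpened to $|\lambda_{p,l}-1|\,o(1/p)$: the inner integral is $o(1/p^2)$, not $O(1/p^2)$, because $b_{l,j}=0$ kills the leading rescaled contribution. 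This is why the paper's \eqref{3-14-3} carries $o(|1-\lambda_{p,l}|/p)$ rather than $O(|1-\lambda_{p,l}|/p)$; keeping the $o(\cdot)$ is important for the final absorption.
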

\begin{proof}
Let us observe that  $\lambda_{\infty,l}=1$ by \eqref{6-24-1}, hence Proposition \ref{lemma:autofunz2casi} could be applied but, since $b_{l,j}=0$ for any $j=1,\cdots, k$ (see the proof of Lemma \ref{lemma:tildevo}), then  \eqref{3-12-03}  would reduce to
	\[v_{p,l}=o(\frac{1}{p})
	~\,~\mbox{in}~
	C^1_{loc}\Big(\overline{\Omega}\backslash \{x_{\infty,1},\cdots,x_{\infty,k}\}\Big),
\]
which is not enough to deduce \eqref{1-25-02}.  For this reason, differently from the case $1\leq l\leq k$, we need now
to go back to  the proof of \eqref{3-12-03} in order to improve it and  get a better expansion of $v_{p,l}$.

\vskip 0.3cm

We divide the proof of \eqref{1-25-02} into five steps.

\vskip 0.3cm

\noindent \emph{Step 1.  We prove the following preliminary expansion  of $v_{p,l}$:
\begin{equation}\label{3-12-03e}
v_{p,l}(x)= p \lambda_{p,l}\sum^k_{m=1}A_{p,l,m} G(x_{p,m},x)+ 2\pi \lambda_{p,l}\sum^k_{m=1}\sum^2_{q=1} a_{q,l,m}\frac{\partial G}{\partial x_q} \big(x_{p,m},x\big) \e_{p,m} +o\big(\e_{p}\big),
\end{equation}
in $C^1_{loc}\Big(\overline{\Omega}\backslash \{x_{\infty,1},\cdots,x_{\infty,k}\}\Big)$.}

\vskip 0.2cm

From Lemma \ref{lem2-9} and \eqref{6-24-1}, we have
\begin{equation}\label{6-26-11}
v_{p,l}(x)= p\lambda_{p,l}\sum^k_{m=1}A_{p,l,m} G(x_{p,m},x)+ p\lambda_{p,l} \sum^k_{m=1}\sum^2_{q=1} \frac{\partial G(x_{p,m},x) }{\partial x_q} B_{p,l,m,q}+
o\big( \varepsilon_{p} \big),
\end{equation}
in $C^1\Big(\overline{\Omega}\backslash \{x_{\infty,1},\cdots,x_{\infty,k}\}\Big)$.
From \eqref{3-16-4t} it follows that
\begin{equation}\label{5-8-14}
\begin{split}
B_{p,l,m,q}=&
\frac{2\pi}{p} a_{q,l,m}  \e_{p,m}+o\Big(\frac{\varepsilon_{p}  }{p}\Big).
\end{split}
\end{equation}
Hence from \eqref{6-26-11} and \eqref{5-8-14} we deduce \eqref{3-12-03e}.
\\\\
\noindent
\emph{Step 2. We show that
that for any $j=1, \cdots, k$,
\begin{equation}\label{3-14-3}
A_{p,l,j}= o\Big(\frac{\varepsilon_{p}}{p}\Big)+o\Big(\frac{|1-\lambda_{p,l}|}{p}\Big).
\end{equation}}We will use the Pohozaev identity \eqref{3-12-04}. For any $j=1,\cdots,k$ let $P_{j}$ be the quadratic form defined in \eqref{07-08-20}, using the expansions of $u_{p}$ and $v_{p,l}$ in  \eqref{luo-1} and \eqref{3-12-03e} respectively, and recalling that $A_{p,l,m}=o(\frac{1}{p^2})$ by \eqref{3-16-4s}    (since $b_{l,m}=0$ for any $m=1,\cdots, k$, see the proof of Lemma \ref{lemma:tildevo}) and that $C_{p,s}=\frac{8\pi\sqrt e}{p}+o(\frac{1}{p})$ (see \eqref{luoluo1}),
	we obtain
			\[\begin{split}
				P_{j}(u_{p},v_{p,l})
				=&  p\lambda_{p,l}
				\sum_{s=1}^k\sum_{m=1}^kC_{p,s}A_{p,l,m}P_{j}\Big(G(x_{p,s},x), G(x_{p,m},x) \Big)\\
				&+\frac{16\pi^2\sqrt e}{p} \lambda_{p,l} \sum^k_{s=1} \sum^k_{m=1}\sum^2_{q=1} a_{q,l,m}
				P_{j}\Big(G(x_{p,s},x), \partial_q G \big(x_{p,m},x\big)\Big) \e_{p,m} +o\Big(\frac{\e_p}{p}\Big).
			\end{split}
	\]
Hence from \eqref{1-1} and \eqref{abb1-1}, and recalling the expression of the derivatives of the Kirchhoff-Routh function $\Psi_{k}$ (see the Appendix), one has
\begin{equation}\label{6-26-21}
\begin{split}
P_{j}(u_{p},v_{p,l})
				=&
\Big(-\frac{C_{p,j}p\lambda_{p,l}}{2\pi}A_{p,l,j}\Big)+o\Big(\frac{\varepsilon_{p}}{p}\Big)
\\
&
+\frac{16\pi^2\sqrt e}{p} \lambda_{p,l}\sum^2_{q=1} a_{q,l,j} \left(\frac{1}{2}\frac{\partial \Psi_{k}\big(x_{p,1},\cdots,x_{p,k}\big)}{\partial y_{2j-2+q}} \e_{p,j}\right)+o\Big(\frac{\varepsilon_{p}}{p}\Big)
\\
=&
-\Big(4\sqrt e+o\big(1\big)\Big)A_{p,l,j}+o\Big(\frac{\varepsilon_{p}}{p}\Big),
\end{split}\end{equation}
where the last equality follows from \eqref{luoluo1}, the fact that $\lambda_{p,l}=1+o(1)$ and
Proposition 3.12 in \cite{GILY2021}.
\vskip 0.2cm

Now using \eqref{11-14-03N} and \eqref{3-12-03e}, we obtain
\begin{equation}\label{lostpiece}
\begin{split}
\int_{\partial B_{d}(x_{p,j})} u_p^p  v_{p,l} (x-x_{p,j})\cdot\nu
=O\Big(\frac{C^{p}}{p^{p-1}} (\sum_{m=1}^{k}|A_{p,l,m}|+\frac{\varepsilon_{p}}{p})\Big)=O\Big(\frac{1}{p}\sum^k_{m=1}\big|A_{p,l,m}\big|+\frac{\e_p}{p^2}\Big).
\end{split}\end{equation}
Also from \eqref{luo-1}, \eqref{6-21-41} and \eqref{3-12-03e}, we get
\begin{equation}\label{6-26-22}
\begin{split}
 \int_{B_{d}(x_{p,j})} u_p^p  v_{p,l}=&
 \frac{1}{p\lambda_{p,l}-1}\left(
\int_{\partial B_{d}(x_{p,j})} \frac{\partial u_p}{\partial \nu}  v_{p,l} -\int_{\partial B_{d}(x_{p,j})} \frac{\partial v_{p,l}}{\partial \nu}  u_p\right)\\=&
O\Big(\frac{1}{p}\sum^k_{m=1}\big|A_{p,l,m}\big|+\frac{\e_p}{p^2}\Big).
\end{split}\end{equation}
With the same computation as in \eqref{6-21-64} (and recalling that now  $b_{l,j}=0$), we have
\begin{equation}\label{6-26-23}
\int_{B_{d}(x_{p,j})} u_p^{p-1}v_{p,l} ( x-x_{p,j})\cdot\nabla u_p =
o\Big(\frac{1}{p^2}\Big).
\end{equation}
Hence substituting  \eqref{6-26-21}, \eqref{lostpiece}, \eqref{6-26-22} and \eqref{6-26-23} into the Pohozaev identity \eqref{3-12-04}, we find \eqref{3-14-3} and conclude the proof of {\sl Step 2.}\\\\
%
%
%
%\begin{equation}
%\begin{split}
%-\Big(4\canc{\pi}\Isa{\sqrt e}+o\big(1\big)\Big)A_{p,l,j}+o\Big(\frac{\varepsilon_{p}}{p}\Big)
%=&
%O\Big(\frac{1}{p}\sum^k_{m=1}\big|A_{p,l,m}\big|+\frac{\e_p}{p^2}\Big)
%+(\lambda_{p,l}-1) o\Big(\frac{1}{p}\Big),
%\end{split}
%\end{equation}
%
%

\noindent
\emph{Step 3. We get the following identity
\begin{equation}\label{3-14-1}
\begin{split}
 \frac{  (\lambda_{p,l}-1)}{\e_{p,[\frac{t+1}{2}]}}\Big(\frac{\sqrt{e}\pi}{3}{\alpha_t}+o(1)\Big) = 8\pi^2\sqrt{e} \sum^{2k}_{m=1}\frac{\partial^2\Psi_{k}(\boldsymbol{\mathrm x}_{\infty})}{\partial y_t\partial y_m} {\alpha_m}\e_{p,[\frac{m+1}{2}]}
+o\big( \e_p \big)+o\Big({p} \e_p\sum^k_{m=1}|A_{p,l,m}|\Big),
\end{split}
\end{equation}
 for  $t=1,2,\cdots,2k$, where
$\alpha_m:=
\begin{cases}
a_{1,l,\frac{m+1}{2}},~~&\mbox{if}~~m~\mbox{is odd},\\
a_{2,l,\frac{m}{2}},~~&\mbox{if}~~m~\mbox{is even}.
\end{cases}
$}
\vskip 0.4cm

\noindent
We will use the Pohozaev identity \eqref{3-12-04a}. For any $j=1,\cdots,k$ let $Q_{j}$ be the quadratic forms defined in \eqref{abd}. Using the expansions of $u_{p}$ and $v_{p,l}$ in  \eqref{luo-1} and \eqref{3-12-03e} respectively, and recalling that $C_{p,s}=\frac{8\pi\sqrt e}{p}+o(\frac{1}{p})$ (see \eqref{luoluo1}) and that $\lambda_{p,l}=1+o(1)$ by \eqref{6-24-1},
 we have
\begin{equation}\label{6-26-25}
\begin{split}
Q_j(u_p,v_{p,l})=&p\lambda_{p,l}\sum^k_{s,m=1}A_{p,l,m}Q_j\Big(G(x_{p,s},x),G(x_{p,m},x)\Big)C_{p,s}
\\&
+ \frac{16\pi^2\sqrt e}{p} \sum^k_{s,m=1}\sum^2_{q=1}a_{q,l,m}Q_j\Big(G(x_{p,s},x),\partial_qG(x_{p,m},x)\Big)\e_{p,m}
\\&+o\big(\frac{\e_p}{p}\big)+o\Big(\e_p\sum^k_{m=1}|A_{p,l,m}|\Big)
.
\end{split}\end{equation}
Using also \eqref{aluo1}, recalling the expression of the derivatives of $\Psi_{k}$ (see the Appendix)  and that $A_{p,l,m}=o(\frac{1}{p^2})$ (by \eqref{3-16-4s}) one can compute
\begin{equation}\label{Explanation}
\begin{split}
 & p\lambda_{p,l}\sum^k_{s,m=1}A_{p,l,m}Q_j\Big(G(x_{p,s},x),G(x_{p,m},x)\Big)C_{p,s}
\\=& o\Big(\frac{1}{p^2}\Big)\sum^k_{s,m=1}Q_j\Big(G(x_{p,s},x),G(x_{p,m},x)\Big)
=
o\Big(\frac{1}{p^2}\Big)\frac{\partial \Psi_{k} (x_{p,1},\cdots, x_{p,k})}{\partial y_{2j-2+i}}=o\Big(\frac{\varepsilon_p}{p}\Big),
\end{split}\end{equation}
where the last equality follows from Proposition 3.12 in \cite{GILY2021}, similarly as in
\eqref{6-26-21}.

\vskip 0.2cm

Next, by \eqref{aluo41} and recalling the expression of the second derivatives of $\Psi_{k}$ (see \eqref{SecondDer} in the Appendix), we can compute
\begin{equation}\label{6-26-26}
\begin{split}
& \sum^k_{s,m=1}\sum^2_{q=1}a_{q,l,m}Q_j\Big(G(x_{p,s},x),\partial_qG(x_{p,m},x)\Big) \e_{p,m}\\=&
\sum^2_{q=1} a_{q,l,j}\Big(-\frac{1}{2}\frac{\partial^2R(x_{p,j})}{\partial x_i\partial x_q} +\sum_{s\neq j}D^2_{x_ix_q}
 G(x_{p,s},x_{p,j}) \Big) \e_{p,j}
 +\sum^2_{q=1}\sum_{s\neq j}
 a_{q,l,s} D_{x_i}\partial_{q}
 G(x_{p,s},x_{p,j}) \e_{p,s}
 \\=&
 -\frac{1}{2}\sum^2_{q=1}
  \sum_{s=1}^{k}
 a_{q,l,s}  \frac{\partial^{2}\Psi_{k}(x_{p,1},\cdots, x_{p,k})}{\partial y_{2j-2+i}\partial y_{2s-2+q}}
 \e_{p,s}
 \\
 =&
- \frac{1}{2}\sum^{2k}_{m=1}\frac{\partial^2\Psi_{k}(\boldsymbol{\mathrm x}_{\infty})}{\partial y_t\partial y_m}\alpha_m \e_{p,[\frac{m+1}{2}]}+o\big(\e_{p}\big),
\end{split}
\end{equation}
where we have defined $t=2j+i {-2}$ and
$\alpha_m=
\begin{cases}
a_{1,l,\frac{m+1}{2}},~~&\mbox{if}~~m~\mbox{is odd},\\
a_{2,l,\frac{m}{2}},~~&\mbox{if}~~m~\mbox{is even},
\end{cases}
$ for $m=1,2,\cdots,2k$.

\vskip 0.2cm

Hence from \eqref{6-26-25}, \eqref{Explanation} and \eqref{6-26-26}, we have
\begin{equation}\label{6-26-27}
Q_j(u_p,v_{p,l})=
-\frac{8\pi^2\sqrt{e}}{p} \sum^{2k}_{m=1}\frac{\partial^2\Psi_{k}(\boldsymbol{\mathrm x}_{\infty})}{\partial y_t\partial y_m} \alpha_m\e_{p,[\frac{m+1}{2}]}
+o\big(\frac{\e_p}{p}\big)+o\Big(\e_p\sum^k_{m=1}|A_{p,l,m}|\Big).
\end{equation}
On the other hand, scaling and passing to the limit similarly as in \eqref{6-21-64},
we obtain
\begin{equation}\label{6-26-28}
\begin{split}
(\lambda_{p,l}-1)p\int_{B_r(x_{p,j})}u_p^{p-1}  v_{p,l} \frac{\partial u_p}{\partial x_i}\, dx
&=
(\lambda_{p,l}-1)
\frac{u_{p}(x_{p,j})}{\varepsilon_{p,j}p}\int_{ B_{\frac{r}{\varepsilon_j}(0)}}
\!\!\!\!\left( 1+\frac{w_{p,j}}{p}\right)^{p-1}\!\!\!\!\!\!\!\!\widetilde v_{p,l,j}\frac{\partial w_{p,j}}{\partial y_{i}}\, dy
 \\
&=
(\lambda_{p,l}-1)\frac{\sqrt e}{\varepsilon_{p,j}p}
\left(\int_{\mathbb R^{2}} e^{U}\sum_{q=1}^{2}\frac{a_{q,l,j}y_{q}}{8+|y|^{2}}\frac{\partial U}{\partial x_{i}} +o(1)\right)
 \\
&=
\frac{(\lambda_{p,l}-1)}{p\e_{p,j}}\Big(- \frac{\sqrt{e}\pi}{3}a_{i,l,j}+o(1)\Big),
\end{split}
\end{equation}
and, similarly as in \eqref{lostpiece}, using \eqref{11-14-03N}, \eqref{3-12-03e}, \eqref{3-14-3} and Step 2., we obtain
\begin{equation}\label{otherPiece}
\int_{\partial B_{r}(x_{p,j})} u_p^p  v_{p,l} \nu_id\sigma=O\Big(\frac{1}{p}\sum^k_{m=1}\big|A_{p,l,m}\big|\Big)
+o\Big(\frac{\e_p}{p}\Big)= o\Big(\frac{|1-\lambda_{p,l}|}{p^{2}}\Big)+o\Big(\frac{\e_p}{p}\Big).
\end{equation}
Thus, substituting \eqref{6-26-27}, \eqref{6-26-28} and \eqref{otherPiece} into the Pohozaev identity \eqref{3-12-04a}, we find
\eqref{3-14-1}, concluding the proof of {\sl Step 3.}

\vskip 0.4cm

\noindent \emph{Step 4. We show that
\begin{equation}\label{3-14-2}
	\lambda_{p,l}-1=O\Big(\e^2_p\Big)+o\Big(p\e_p^2\sum^k_{m=1}|A_{p,l,m}|\Big).
\end{equation}}

The proof follows from Step 3, observing that since there exists some $t_0\in \{1,2,\cdots,2k\}$ such that $\alpha_{t_0}\neq 0$, then \eqref{3-14-1} implies \eqref{3-14-2}.\\

\vskip 0.2cm

\noindent \emph{Step 5. We complete the proof of \eqref{1-25-02}.}

\vskip 0.2cm

Firstly, from \eqref{3-14-3}, using \eqref{3-14-2}, we have
	\begin{equation}\label{6-26-29}
		A_{p,l,m}= o\Big(\frac{\varepsilon_{p}}{p}\Big),~~\mbox{for}~~m=1,\cdots,k.
	\end{equation}
Then we  can deduce \eqref{1-25-02} by \eqref{3-12-03e} and \eqref{6-26-29}.

\end{proof}

\vskip 0.4cm
Next we derive the asymptotic expansion of the eigenvalues.

\begin{Prop} \label{propSecondoPezzo2}
For $k+1\leq l\leq 3k$, it holds
\begin{equation}\label{1-25-03}
\lambda_{p,l}=1+24\pi \e_{p}^{2}\theta_{l-k}+o\Big(\e^2_p\Big),
\end{equation}
where $\theta_1\leq \theta_2\leq \cdots\leq \theta_{2k}$ are the eigenvalues of the hessian matrix $\mathbf{H}  \Big( D^2 \Psi_k(\boldsymbol{\mathrm x}_{\infty}) \Big) \mathbf{H}$ with
\begin{equation*}
\mathbf{H}=diag
\Big(
e^{-2\pi\Psi_{k,1}(\boldsymbol{\mathrm x}_{\infty})},e^{-2\pi\Psi_{k,1}(\boldsymbol{\mathrm x}_{\infty})}, e^{-2\pi\Psi_{k,2}(\boldsymbol{\mathrm x}_{\infty})}, e^{-2\pi\Psi_{k,2}(\boldsymbol{\mathrm x}_{\infty})},\cdots ,\cdots,e^{-2\pi\Psi_{k,k}(\boldsymbol{\mathrm x}_{\infty})}, e^{-2\pi\Psi_{k,k}(\boldsymbol{\mathrm x}_{\infty})}
\Big),
\end{equation*} and $\varepsilon_{p}$ is defined in \eqref{defepsilon}.
\end{Prop}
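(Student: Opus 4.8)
The plan is to feed the identity \eqref{3-14-1} established in the proof of Proposition \ref{propSecondoPezzo1} into a short spectral argument, using the extra information $A_{p,l,m}=o(\e_p/p)$ obtained there in \eqref{6-26-29}. First, since $A_{p,l,m}=o(\e_p/p)$, the term $o\big(p\e_p^2\sum_{m}|A_{p,l,m}|\big)$ appearing in \eqref{3-14-2} is $o(\e_p^3)$, so that
\[
\lambda_{p,l}-1=O(\e_p^2)\qquad\text{for }k+1\le l\le 3k;
\]
hence, along a subsequence, $\mu_p:=(\lambda_{p,l}-1)/\e_p^2$ converges to some $\mu\in\R$. For the same reason the two error terms in \eqref{3-14-1} are both $o(\e_p)$, so \eqref{3-14-1} reduces to
\[
\frac{\lambda_{p,l}-1}{\e_{p,[\frac{t+1}{2}]}}\Big(\frac{\sqrt e\,\pi}{3}\alpha_t+o(1)\Big)=8\pi^2\sqrt e\sum_{m=1}^{2k}\frac{\partial^2\Psi_k(\boldsymbol{\mathrm x}_{\infty})}{\partial y_t\partial y_m}\,\alpha_m\,\e_{p,[\frac{m+1}{2}]}+o(\e_p),\qquad t=1,\dots,2k,
\]
where $\boldsymbol{\alpha}_l:=(a_{1,l,1},a_{2,l,1},\dots,a_{1,l,k},a_{2,l,k})\in\R^{2k}$ is the coefficient vector from \eqref{3-12-1}, which is nonzero by \eqref{anozero} and bounded (since $\|\widetilde v_{p,l,j}\|_{L^\infty}\le 1$).

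Next I would pass to the limit. By \eqref{6-26-32} one has $\e_{p,j}=\e_p\,e^{-2\pi\Psi_{k,j}(\boldsymbol{\mathrm x}_{\infty})}(1+o(1))$; dividing the last display by $\e_p$, substituting $\lambda_{p,l}-1=\mu_p\e_p^2$, and letting $p\to\infty$ along the chosen subsequence, one obtains, for every $t=1,\dots,2k$,
\[
\frac{\mu}{24\pi}\,\alpha_t=e^{-2\pi\Psi_{k,[\frac{t+1}{2}]}(\boldsymbol{\mathrm x}_{\infty})}\sum_{m=1}^{2k}\frac{\partial^2\Psi_k(\boldsymbol{\mathrm x}_{\infty})}{\partial y_t\partial y_m}\,e^{-2\pi\Psi_{k,[\frac{m+1}{2}]}(\boldsymbol{\mathrm x}_{\infty})}\,\alpha_m=\Big(\mathbf{H}\,D^2\Psi_k(\boldsymbol{\mathrm x}_{\infty})\,\mathbf{H}\,\boldsymbol{\alpha}_l\Big)_t,
\]
where I used that $\mathbf{H}$ is diagonal with $\mathbf{H}_{tt}=e^{-2\pi\Psi_{k,[\frac{t+1}{2}]}(\boldsymbol{\mathrm x}_{\infty})}$ and that $\frac{\pi}{3}\big/(8\pi^2)=\frac{1}{24\pi}$. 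Thus $\boldsymbol{\alpha}_l$ is an eigenvector of the symmetric matrix $\mathbf{H}\,D^2\Psi_k(\boldsymbol{\mathrm x}_{\infty})\,\mathbf{H}$ associated with the eigenvalue $\mu/(24\pi)$, so $\mu/(24\pi)\in\{\theta_1,\dots,\theta_{2k}\}$.

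Finally I would identify the index. By \eqref{5-25-31} together with \eqref{6-26-11xx}, the vectors $\boldsymbol{\alpha}_{k+1},\dots,\boldsymbol{\alpha}_{3k}$ are pairwise orthogonal in $\R^{2k}$; being $2k$ nonzero, mutually orthogonal eigenvectors of $\mathbf{H}\,D^2\Psi_k(\boldsymbol{\mathrm x}_{\infty})\,\mathbf{H}$, they form an orthogonal eigenbasis, and therefore the multiset $\{\mu_{k+1}/(24\pi),\dots,\mu_{3k}/(24\pi)\}$ coincides, with multiplicities, with the multiset $\{\theta_1,\dots,\theta_{2k}\}$ of all eigenvalues of $\mathbf{H}\,D^2\Psi_k(\boldsymbol{\mathrm x}_{\infty})\,\mathbf{H}$. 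Since $\lambda_{p,k+1}\le\cdots\le\lambda_{p,3k}$ forces $\mu_{k+1}\le\cdots\le\mu_{3k}$, and $\theta_1\le\cdots\le\theta_{2k}$ by definition, it follows that $\mu_l=24\pi\theta_{l-k}$ for each $l$; as this limit is independent of the subsequence, the whole sequence converges and \eqref{1-25-03} follows.

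Most of the work — the Pohozaev computations producing \eqref{3-14-1} — is already contained in Proposition \ref{propSecondoPezzo1}, so the genuinely new points are: (i) the bookkeeping showing that all remainders are truly $o(\e_p^2)$, which hinges on $A_{p,l,m}=o(\e_p/p)$; and (ii) the matching step, which I expect to be the main subtlety, since one must use the \emph{full} orthogonal system $\{\boldsymbol{\alpha}_l\}_{l=k+1}^{3k}$ (so that it remains an eigenbasis even when $\mathbf{H}\,D^2\Psi_k\,\mathbf{H}$ has repeated eigenvalues), combine it with the monotonicity of $l\mapsto\lambda_{p,l}$, and then upgrade from subsequential to full convergence.
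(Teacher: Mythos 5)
Your proof is correct and follows essentially the same route as the paper: both feed \eqref{3-14-1} and the improvement $A_{p,l,m}=o(\e_p/p)$ from \eqref{6-26-29} into the same rescaled eigenvalue relation (the paper's \eqref{6-26-31}) and then identify the limit with the spectrum of $\mathbf{H}\, D^2\Psi_k(\boldsymbol{\mathrm x}_\infty)\,\mathbf{H}$. The one place where you go beyond the paper's wording is the matching step at the end: the paper merely asserts that \eqref{1-25-03} follows from \eqref{6-26-31}, whereas you make explicit that this requires the full orthogonal system $\{\boldsymbol{\alpha}_l\}_{l=k+1}^{3k}$ from \eqref{6-26-11xx} and \eqref{anozero} together with the monotonicity of $l\mapsto\lambda_{p,l}$, which is exactly the right way to handle possible repeated eigenvalues of $\mathbf{H}\, D^2\Psi_k\,\mathbf{H}$ and to upgrade from subsequential to full convergence.
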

\begin{proof}It is a consequence of \emph{Step 3} and \emph{Step 5} of the proof of the previous Proposition \ref{propSecondoPezzo1}. Indeed
 from \eqref{3-14-1} and combining  with \eqref{6-26-29}, we have
\begin{equation}\label{6-26-30}
\begin{split}
\frac{  (\lambda_{p,l}-1)}{\e_{p,[\frac{t+1}{2}]}} \Big(\alpha_t+o(1)\Big)
 = 24\pi  \sum^{2k}_{m=1}\frac{\partial^2\Psi_{k}(\boldsymbol{\mathrm x}_{\infty})}{\partial y_t\partial y_m} \alpha_m\e_{p,[\frac{m+1}{2}]}
+o\Big( \e_p \Big),
~~\mbox{for}~~t=1,2,\cdots,2k,
\end{split}
\end{equation}
which implies
\begin{equation}\label{6-26-31}
\begin{split}
 (\lambda_{p,l}-1) \big(\alpha_1,\cdots,\alpha_{2k}\big)^T
 = 24\pi \mathbf{H}_p \Big( D^2 \Psi_k(\boldsymbol{\mathrm x}_{\infty}) \Big) \mathbf{H}_p \big(\alpha_1,\cdots,\alpha_{2k}\big)^T
+o\Big( \e_p^2 \Big),
\end{split}
\end{equation}
where the matrix $\mathbf{H}_p:= diag \big(\e_{p,1},\e_{p,1},\e_{p,2},\e_{p,2},\cdots,\cdots,\e_{p,k},\e_{p,k} \big)$.
\vskip 0.2cm
Hence using \eqref{6-26-32}, we can write $\mathbf{H}_p= \e_{p} \mathbf{H}+o(\e_{p})$  with $\mathbf{H}$ defined by \eqref{6-26-30}.
And then \eqref{1-25-03} follows by \eqref{6-26-31}.
\end{proof}

\subsection{The estimate of $\lambda_{p,3k+1}$.}

\begin{Prop}\label{prop:expansionLastGroup}
It holds
\begin{equation}\label{03-08-1}
  \lambda_{p,3k+1}\geq 1+\frac{6}{p}\big(1+o(1)\big).
\end{equation}
\end{Prop}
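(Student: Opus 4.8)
The plan is to argue by contradiction, exploiting the monotonicity $\lambda_{p,3k+1}\geq\lambda_{p,3k}$ together with the expansions already available for the eigenpairs with $k+1\leq l\leq 3k$, and the fact that the ``translation directions'' $\partial_1U,\partial_2U$ are exhausted by those eigenpairs.

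First I record two elementary facts. By Proposition~\ref{propSecondoPezzo2}, $\lambda_{p,3k}=1+O(\e_p^2)$, and since $\e_p$ is exponentially small in $p$ (see \eqref{nn3-29-03} and \eqref{defepsilon}) one has $p\e_p^2\to 0$; hence $p(\lambda_{p,3k+1}-1)\geq p(\lambda_{p,3k}-1)\to 0$, and in particular $\liminf_p p(\lambda_{p,3k+1}-1)\geq 0$. Now suppose \eqref{03-08-1} fails, i.e. $\liminf_p p(\lambda_{p,3k+1}-1)<6$; then along a subsequence (not relabeled) $p(\lambda_{p,3k+1}-1)\to c$ with $c\in[0,6)$, whence $\lambda_{p,3k+1}\to 1$, that is $\lambda_{\infty,3k+1}=1$. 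I then apply Proposition~\ref{prop3-3}-(2) with $l=3k+1$: the rescaled eigenfunctions $\widetilde v_{p,3k+1,j}$ converge as in \eqref{6-21-4} to limits described by a triple $(a_{1,3k+1,j},a_{2,3k+1,j},b_{3k+1,j})$ which is nonzero for at least one $j$, and, by \eqref{5-25-40} applied with $l'=k+1,\dots,3k$ (each of which satisfies $\lambda_{\infty,l'}=1$ by Lemma~\ref{lemma:tildevo}),
\[
\mathbf{a}_{3k+1,j}\cdot\mathbf{a}_{l',j}+32\,b_{3k+1,j}\,b_{l',j}=0\qquad\text{for all }j\in\{1,\dots,k\},\ k+1\leq l'\leq 3k.
\]

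The core of the argument is a dimension count forcing $b_{3k+1,j_0}\neq 0$ for some $j_0$. By Lemma~\ref{lemma:tildevo} we have $b_{l',j}=0$ for every $j$ and every $k+1\leq l'\leq 3k$, so the displayed relations reduce to $\mathbf{a}_{3k+1,j}\cdot\mathbf{a}_{l',j}=0$ for all $j$. Assembling over $j$, set $V_{l'}:=(\mathbf{a}_{l',1},\dots,\mathbf{a}_{l',k})\in\R^{2k}$: by \eqref{6-26-11xx} the vectors $V_{k+1},\dots,V_{3k}$ are mutually orthogonal, and by \eqref{anozero} each of them is nonzero, so these $2k$ vectors form a basis of $\R^{2k}$. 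Since $(\mathbf{a}_{3k+1,1},\dots,\mathbf{a}_{3k+1,k})$ is orthogonal to every $V_{l'}$, it must vanish, i.e. $\mathbf{a}_{3k+1,j}=\mathbf{0}$ for all $j$; as the triple $(a_{1,3k+1,j},a_{2,3k+1,j},b_{3k+1,j})$ is nonzero for some $j$, this forces $b_{3k+1,j_0}\neq 0$ for some $j_0\in\{1,\dots,k\}$.

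Finally, since $\lambda_{\infty,3k+1}=1$ and $b_{3k+1,j_0}\neq 0$, Proposition~\ref{lemma:autofunz2casi} gives $\lambda_{p,3k+1}=1+\frac 6p(1+o(1))$ along the subsequence, hence $p(\lambda_{p,3k+1}-1)\to 6$, contradicting $c<6$. Therefore $\liminf_p p(\lambda_{p,3k+1}-1)\geq 6$, which is exactly \eqref{03-08-1}. The use of Propositions~\ref{prop3-3} and~\ref{lemma:autofunz2casi} for the index $l=3k+1$ is legitimate because the eigenfunctions were normalized in \eqref{normalization} and made mutually Dirichlet-orthogonal in \eqref{5-25-31}. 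I expect the main obstacle to be the bookkeeping of the linear-algebra count — making sure that the $2k$ translation directions attached to the eigenpairs $k+1\leq l\leq 3k$ genuinely span $\R^{2k}$, so that $\lambda_{p,3k+1}$ is pushed onto the dilation direction $\tfrac{8-|y|^2}{8+|y|^2}$, which is precisely what produces the leading term $6/p$.
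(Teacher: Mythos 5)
Your proof is correct and follows the same overall skeleton as the paper's: reduce to the case $\lambda_{\infty,3k+1}=1$, apply Proposition~\ref{prop3-3}-(2), show that the $\mathbf{a}$-part of the limit profile must vanish so that $b_{3k+1,j_0}\neq 0$ for some $j_0$, and conclude via \eqref{1-25-01}. The one genuinely different ingredient is the dimension count that forces $\mathbf{a}_{3k+1,j}=\mathbf{0}$. The paper argues \emph{locally}: it fixes the $j_0$ for which the triple $(a_{1,3k+1,j_0},a_{2,3k+1,j_0},b_{3k+1,j_0})$ is nonzero and invokes \eqref{eachPointGivesContribution2}, a nontrivial combinatorial fact from Lemma~\ref{lemma:tildevo} guaranteeing that for \emph{that} $j_0$ there are two indices $\ell_1\neq\ell_2$ with $\mathbf{a}_{\ell_1,j_0},\mathbf{a}_{\ell_2,j_0}$ nonzero and orthogonal, hence spanning $\R^2$. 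You argue \emph{globally}: you assemble $V_{l'}=(\mathbf{a}_{l',1},\dots,\mathbf{a}_{l',k})\in\R^{2k}$, observe that \eqref{6-26-11xx} makes the $V_{l'}$ pairwise orthogonal and \eqref{anozero} makes each nonzero, so the $2k$ of them form a basis of $\R^{2k}$; the orthogonality relations from \eqref{5-25-40} (with $b_{l',j}=0$) then make $V_{3k+1}$ orthogonal to all of them, hence zero. Your route is a bit cleaner in that it only needs \eqref{anozero} and \eqref{6-26-11xx} and bypasses \eqref{eachPointGivesContribution2} entirely, at the mild cost of packaging the data in $\R^{2k}$ rather than keeping everything at a single concentration point. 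Both are valid; the paper's version is slightly more local, yours slightly more structural.
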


\begin{proof}
If $\displaystyle\limsup_{p\to \infty}\lambda_{p,3k+1}>1$, then \eqref{03-08-1} holds. Now we prove that
\eqref{03-08-1} holds when $\displaystyle\limsup_{p\to \infty}\lambda_{p,3k+1}=1$.

\vskip 0.1cm

Now if $\displaystyle\limsup_{p\to \infty}\lambda_{p,3k+1}=1$, then  Proposition \ref{prop3-3}-(2) applies, namely
there exists $j_0\in\{1,\cdots, k\}$ and $(a_{1,3k+1,j_0},a_{2,3k+1,j_0},b_{3k+1,j_0})\in\mathbb R^{3}\neq (0,0,0)$  such that
\begin{equation*}
\widetilde{v}_{p,3k+1,j_0}(y)
=\sum^2_{q=1}a_{q,3k+1,j_0}\frac{y_q}{8+|y|^2}+b_{3k+1,j_0}\frac{8-|y|^2}{8+|y|^2}+o(1),\quad~\mbox{in}~C^1_{loc}(\R^2),
~~\mbox{as}~p\to +\infty.
\end{equation*}
Next we show that
\begin{equation}\label{aisZero}(a_{1,3k+1,j_0},a_{2,3k+1,j_0})=(0,0),
\end{equation}
 which implies that $b_{3k+1,j_0}\neq 0$. Hence \eqref{03-08-1} follows from \eqref{1-25-01}.

\vskip 0.1cm
 Indeed let us
observe that, since $\lambda_{\infty,\ell}=1$ for any $k+1\leq \ell\leq 3k$ (see \eqref{6-24-1} in Lemma \ref{lemma:tildevo}), then  \eqref{5-25-40} in
Proposition \ref{prop3-3}
holds, namely
$\big(a_{1,3k+1,j_0},a_{2,3k+1,j_0}\big)\cdot \big(a_{1,\ell,j_0},a_{2,\ell,j_0}\big)+24b_{3k+1,j_0}b_{\ell,j_0}=0$.
But recall that $b_{\ell,j_0}=0$ (see \eqref{bzero} in the proof of Lemma \ref{lemma:tildevo}), hence it follows that
\begin{equation*}
\big(a_{1,3k+1,j_0},a_{2,3k+1,j_0}\big)\cdot \big(a_{1,\ell,j_0},a_{2,\ell,j_0}\big)=0 ,\qquad ~~\mbox{ for }~~ k+1\leq \ell\leq 3k.
\end{equation*}

On the other hand, by \eqref{eachPointGivesContribution2}, we can find
$\ell_{1}\neq \ell_{2}$, $k+1\leq \ell_{1},\ell_{2}\leq 3k$, such that
$\big(a_{1,\ell_{1},j_0},a_{2,\ell_{1},j_0}\big)\ne 0$,
$\big(a_{1,\ell_{2},j_0},a_{2,\ell_{2},j_0}\big)\ne 0$,  and

\begin{equation*}
\big(a_{1,\ell_{1},j_0},a_{2,\ell_{1},j_0}\big)\cdot\big(a_{1,\ell_{2},j_0},
a_{2,\ell_{2},j_0}
\big)=0.
\end{equation*}
This gives \eqref{aisZero}.
\end{proof}

\vskip 0.2cm

\subsection{The Proof of Theorem \ref{th1.1}}
\begin{proof}[\bf{Proof of Theorem \ref{th1.1}}]
From Proposition \ref{prop:ExpansionsFirstGroup} and Proposition \ref{prop:expansionLastGroup}, we have that
\begin{equation*}
\lambda_{p,k}<1~\mbox{and}~\lambda_{p,3k+1}>1.
\end{equation*}
Hence, using also Proposition \ref{propSecondoPezzo2} for $k+1 \leq l\leq 3k$, we have
\begin{equation*}
 k+
\sharp \Big\{
\begin{subarray}{c}\mbox{ the negative eigenvalues }\\[1mm]
\mbox{of $\ \mathbf{H}  \big( D^2 \Psi_k(\boldsymbol{\mathrm x}_{\infty}) \big) \mathbf{H}$}
\end{subarray}
\Big\}
\leq\ \sharp \{l,~\lambda_{p,l}<1\} \ \leq k+
\sharp \Big\{ \begin{subarray}{c}\mbox{ the non-positive eigenvalues }\\[1mm]
\mbox{of $\ \mathbf{H}  \big( D^2 \Psi_k(\boldsymbol{\mathrm x}_{\infty}) \big) \mathbf{H}$}
\end{subarray}\Big\}
\end{equation*}
and the conclusion follows observing that
\begin{equation*}
\sharp \Big\{
\begin{subarray}{c}\mbox{ the negative eigenvalues }\\[1mm]
\mbox{of $\ \mathbf{H}  \big( D^2 \Psi_k(\boldsymbol{\mathrm x}_{\infty}) \big)  \mathbf{H}$}
\end{subarray}
\Big\}
=
\sharp \Big\{
\begin{subarray}{c}\mbox{ the negative eigenvalues }\\[1mm]
\mbox{of $\  D^2 \Psi_k(\boldsymbol{\mathrm x}_{\infty}) $}
\end{subarray}
\Big\}
=
m\Big(\Psi_k(\boldsymbol{\mathrm x}_{\infty})\Big).
\end{equation*}
and that
\begin{equation*}
\sharp \Big\{
\begin{subarray}{c}\mbox{ the non-positive eigenvalues }\\[1mm]
\mbox{of $\ \mathbf{H}  \big( D^2 \Psi_k(\boldsymbol{\mathrm x}_{\infty}) \big)  \mathbf{H}$}
\end{subarray}
\Big\}
=
\sharp \Big\{
\begin{subarray}{c}\mbox{ the non-positive eigenvalues }\\[1mm]
\mbox{of $\  D^2 \Psi_k(\boldsymbol{\mathrm x}_{\infty}) $}
\end{subarray}
\Big\}
=
m_{0}\Big(\Psi_k(\boldsymbol{\mathrm x}_{\infty})\Big).
\end{equation*}
\end{proof}

\begin{Rem}
Proposition \ref{prop:ExpansionsFirstGroup} and Proposition \ref{prop:expansionLastGroup} could  be improved getting
\begin{equation*}
\lambda_{p,l}=
\begin{cases}
  \frac{1}{p}+O\big(\frac{1}{p^2}\big),~~&\mbox{for}~1\leq l\leq k,\\[2mm]
  1+\frac{6}{p}\big(1+o(1)\big),~~&\mbox{for}~l=3k+1,
\end{cases}
\end{equation*}
as $p\rightarrow +\infty$. Furthermore the asymptotic behavior of the corresponding eigenfunctions could be easily derived. We omit these results since they go beyond the scope of this work.
\end{Rem}

\section{Improved asymptotic expansion of $u_{p}$}\label{Section:ImprovedAsymptoticup}

Here we show that any  multi-spike solution $u_p$ of problem \eqref{1.1} must have the form of the solutions constructed in
\cite{EMP2006}.
Firstly, we improve the asymptotic expansion of $u_{p}$ around each point $x_{p,j}$ (see Proposition \ref{prop:summaryImporvedAsympt}, and Remark \ref{remak:expansion:up} below). Then, we exploit this analysis to get the main result, which is collected in Proposition \ref{prop:upHasTheFormInEMP} below.

\vskip 0.1cm

Let $u_p$ be a $k$-spike solution of problem \eqref{1.1} and $\boldsymbol{\mathrm x_{\infty}}=(x_{\infty,1},\cdots,x_{\infty,k})\in\Omega^{k}$ be its  concentration point.
Let $w_{p,j}$ be the rescaled function of $u_{p}$  around $x_{p,j}$ as  defined in \eqref{defwpj}.
From \eqref{5-8-2} we know that as $p\rightarrow + \infty$,
\[
w_{p,j}=U+\frac{w_{0}}{p}+O\left(\frac{1}{p^{2}}\right)
\,\,\mbox{ in }C^{2}_{loc}(\mathbb R^{2}),\]
where $U$ is given  in \eqref{def:U}, and $w_0$ is a solution of \eqref{1-2-4}.
In \cite{GILY2021}, it does not show that $w_0$ is radially symmetric. In this
section,
  we
improve this expansion as follows.
\begin{Prop}\label{prop:summaryImporvedAsympt}
Let    $w_0$ be the  radial solution of the non-homogeneous linear equation
\begin{equation}\label{6-4-2}
-\Delta w_0-e^{U}w_0=-\frac{U^2}{2}e^{U}\quad\mbox{ in}~\R^2
\end{equation} satisfying $w_0(0)=0$, $w_{1}$ be
 the radial solution of the non-homogeneous linear equation
\begin{equation}\label{10-19-8}
-\Delta w_1-e^{U}w_1=\Bigl(\frac12  w_0^2
-\frac12    w_0U^2-    w_0U
    +\frac13   U^3   +\frac18    U^4\Bigr)e^U,\quad\mbox{ in}~\R^2,
\end{equation}
satisfying $w_1(0)=0$. Let
 \[	
w_{p,j}=U+\frac{w_{0}}{p}+\frac{w_{1}}{p^2}+\frac{q_{p,j}}{p^3}
.\]
Then for any small fixed $d_0,\tau_1>0$, there exists $C>0$ such that
\begin{equation}\label{1-23-8}
|q_{p,j}(x)|\leq C(1+|x| )^{\tau_1}~\mbox{ in}~ B_{\frac{d_0}{\e_{p,j}}}(0).
\end{equation}
As a consequence
\begin{equation}	\label{refinedExpansion}
w_{p,j}=U+\frac{w_{0}}{p}+\frac{w_{1}}{p^2}+O\left(\frac{1}{p^{3}}\right)
\,\,\mbox{ in }C^{2}_{loc}(\mathbb R^{2}),\end{equation}
as $p\rightarrow +\infty$.
\end{Prop}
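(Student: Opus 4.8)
The plan is to carry out a higher-order version of the asymptotic expansion of $w_{p,j}$ already obtained in \eqref{5-8-2}, pushing it one term further. The starting point is the equation satisfied by $w_{p,j}$: since $-\Delta u_p = u_p^p$ and $w_{p,j}(y) = \frac{p}{u_p(x_{p,j})}\big(u_p(x_{p,j}+\varepsilon_{p,j}y) - u_p(x_{p,j})\big)$, a direct computation using $\varepsilon_{p,j}^2 = \big(p\,u_p(x_{p,j})^{p-1}\big)^{-1}$ gives
\[
-\Delta w_{p,j} = \Big(1 + \frac{w_{p,j}}{p}\Big)^p \quad\text{in } \Omega_{p,j}.
\]
First I would Taylor-expand the nonlinearity: $\big(1+\tfrac{t}{p}\big)^p = e^{t}\exp\big(p\log(1+\tfrac tp) - t\big) = e^{t}\big(1 - \tfrac{t^2}{2p} + \tfrac{1}{p^2}(\tfrac{t^3}{3} + \tfrac{t^4}{8}) + O(p^{-3})\big)$, valid on compact sets in $y$ once $w_{p,j}=U+o(1)$ is known. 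Substituting the ansatz $w_{p,j} = U + \frac{w_0}{p} + \frac{w_1}{p^2} + \frac{q_{p,j}}{p^3}$ and matching powers of $1/p$: the $p^0$ term reproduces $-\Delta U = e^U$; the $p^{-1}$ term gives $-\Delta w_0 - e^U w_0 = -\frac{U^2}{2}e^U$, i.e.\ \eqref{6-4-2}; and the $p^{-2}$ term, after collecting the contributions from $e^{U}\cdot\frac{w_0^2}{2p^2}$ (from expanding $e^{w_0/p + w_1/p^2}$), from $-\frac{(U+w_0/p)^2}{2p}e^{U}\cdot(\cdots)$, and from the $\frac1{p^2}(\frac{U^3}{3}+\frac{U^4}{8})e^U$ term, yields exactly the right-hand side of \eqref{10-19-8}. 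The existence of the radial solutions $w_0$, $w_1$ with $w_0(0)=w_1(0)=0$ follows because the right-hand sides are radial with suitable decay, the kernel of $-\Delta - e^U$ acting on the radial subspace with finite Dirichlet energy is spanned only by $\frac{8-|y|^2}{8+|y|^2}$ (by Lemma \ref{llm}), and a Fredholm/ODE argument provides a radial particular solution; normalizing at the origin fixes it uniquely up to that one-dimensional kernel, and one can pin it down further if needed.

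The core of the proposition is the bound \eqref{1-23-8} on the remainder $q_{p,j}$. Here I would derive the equation satisfied by $q_{p,j}$ by subtracting the three matched equations from the full equation for $w_{p,j}$: this gives
\[
-\Delta q_{p,j} - p^3\Big[\Big(1+\tfrac{w_{p,j}}{p}\Big)^p - e^U - \tfrac1p(\cdots) - \tfrac1{p^2}(\cdots)\Big] = 0,
\]
and linearizing the bracket around $U$ produces $-\Delta q_{p,j} - e^U q_{p,j} = g_{p,j}$, where $g_{p,j}$ collects all the error terms and is, on $B_{d_0/\varepsilon_{p,j}}(0)$, bounded by $C e^{U}(1+|y|)^{\sigma}$ for a small $\sigma$ plus terms controlled using the \emph{a priori} pointwise bounds on $w_{p,j}$. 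The estimate one has available on an expanding ball is the logarithmic upper bound from Lemma \ref{llma}: $w_{p,j} \le (4-\delta)\log\frac{1}{|y|} + C_\delta$ for $R_\delta \le |y| \le d/\varepsilon_{p,j}$, which translates into $(1+\tfrac{w_{p,j}}{p})^p \le C(1+|y|)^{-(4-\delta)}$ in that annulus, so the nonlinear term is integrable and small. Combining with interior elliptic estimates near the origin and a barrier/maximum-principle argument on the annulus (comparing $q_{p,j}$ with a multiple of $(1+|y|)^{\tau_1}$, using that $-\Delta (1+|y|)^{\tau_1} < 0$ for appropriate $\tau_1$ and that $e^U(1+|y|)^{\tau_1}$ decays), one propagates the polynomial bound $|q_{p,j}| \le C(1+|y|)^{\tau_1}$ from a fixed ball out to $B_{d_0/\varepsilon_{p,j}}(0)$. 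The consequence \eqref{refinedExpansion} in $C^2_{loc}$ is then immediate by Schauder estimates applied to the $q_{p,j}$ equation on compact sets, since $q_{p,j}$ is bounded there and solves an elliptic equation with bounded coefficients.

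The main obstacle I anticipate is the propagation of the bound \eqref{1-23-8} on the whole expanding region $B_{d_0/\varepsilon_{p,j}}(0)$ rather than merely on compact sets: near the origin everything is controlled by standard interior estimates, but on the outer annulus one must carefully track the interaction between the slow growth allowed for $q_{p,j}$ and the decay of the right-hand side $g_{p,j}$, making essential use of the sharp logarithmic bound of Lemma \ref{llma} and of the global information $\|v\|_{L^\infty}$-type controls on $u_p$ from Theorem \ref{thm:asymptoticup} (in particular \eqref{luoluo2}). Constructing the correct barrier — one that dominates $|q_{p,j}|$ on the inner boundary $|y|=R$, stays above it on the outer boundary $|y| = d_0/\varepsilon_{p,j}$ using the decay of $u_p$ there, and is a supersolution throughout — is the delicate point, and it mirrors the barrier arguments in \cite{DIP2017-1, GILY2021} but at one higher order, where the error terms are more intricate to estimate.
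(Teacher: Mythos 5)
Your overall strategy — iterative higher-order expansion of $w_{p,j}$, with the $p^{-1}$ and $p^{-2}$ terms of the Taylor expansion of $(1+t/p)^p$ producing exactly \eqref{6-4-2} and \eqref{10-19-8}, followed by a growth bound on the remainder on the expanding ball — is the same architecture as the paper's, which carries out the expansion through the chain $v_{p,j}=p(w_{p,j}-U)$, $k_{p,j}=p(v_{p,j}-w_0)$, $q_{p,j}=p(k_{p,j}-w_1)$ (Lemmas~\ref{prop3-2}, \ref{blem3-1a}, \ref{p1-23-8}). However, there are two genuine gaps.

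First, you construct radial $w_0$, $w_1$ by a Fredholm/ODE argument and normalize at the origin, but you never prove that the limits of $v_{p,j}$ and $k_{p,j}$ are radial. This is not automatic: the kernel of $-\Delta-e^U$ among functions of slow polynomial growth also contains the non-radial functions $\partial_1 U$, $\partial_2 U$, which vanish at the origin, so the normalizations $w_0(0)=w_1(0)=0$ do not rule out a non-radial component in the limit. This is precisely the issue the paper flags as the novelty over \cite{GILY2021} (``it does not show that $w_0$ is radially symmetric. In this section, we improve this expansion''), and it is handled there by proving that the oscillation quantity $N_p^*=\max_{|x'|=|x|}\frac{|v_{p,j}(x)-v_{p,j}(x')|}{(1+|x|)^\tau}$ tends to $0$, via a blow-up/contradiction argument on the antisymmetrized remainder (and similarly $M_p^*\to 0$ for $k_{p,j}$). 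Without this step the statement of the proposition — that $w_0$ and $w_1$ \emph{are} the radial solutions — is not established.

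Second, the comparison argument you propose for the bound \eqref{1-23-8} is broken at the sign level. In two dimensions, $\Delta(1+r)^{\tau_1}=\tau_1(1+r)^{\tau_1-2}\bigl(\tau_1+\frac1r\bigr)>0$ for all $r>0$, so $-\Delta(1+|y|)^{\tau_1}<0$ everywhere, and then $-\Delta\phi-e^U\phi<0$ for $\phi=A(1+|y|)^{\tau_1}$; such a $\phi$ cannot serve as a supersolution dominating a right-hand side which is not everywhere negative. The paper (following the scheme of \cite[Proposition~3.5]{GILY2021}) instead argues by contradiction: one assumes $N_p:=\max\frac{|q_{p,j}(x)|}{(1+|x|)^{\tau_1}}$ is unbounded, normalizes, locates the maximum (showing it lies in a compact region using the sharp logarithmic decay of Lemma~\ref{llma}), and passes to a limit equation whose solutions with that normalization are excluded. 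Your anticipation that the expanding-annulus estimate is ``the delicate point'' is correct, but the remedy is this contradiction/blow-up mechanism rather than a $(1+|y|)^{\tau_1}$-barrier.
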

\begin{proof} The proof of Proposition \ref{prop:summaryImporvedAsympt} is postponed to the next subsection. \end{proof}
\begin{Rem}\label{remak:expansion:up}
Recall that
\[
\e^{-2}_{p,j}=p \big(u_p(x_{p,j})\big)^{p-1},
\]
thus
\begin{equation}\label{nn1-26-8}
\frac{u_p(x_{p,j})}p=\frac1{ p^{\frac p{p-1}}\e_{p,j}^{\frac2{p-1}}}.
\end{equation}
So by the definition of $w_{p,j}$, together with \eqref{1-23-8}, \eqref{refinedExpansion} translates into the following expansion for $u_{p}$:
\begin{equation}\label{1-26-8}
u_p(x)=W_{p, j}(x)+O\left(\frac{1+\bigl( \frac{x-x_{p,j}}{\varepsilon_{p,j}}  \bigr)^{\tau_1}}{p^{4}}\right)
\,\,\mbox{ in }\; C^{2}_{loc} \big(B_{d_0}(x_{p,j})\big),
\end{equation}
for some small fixed $\tau_1>0$, as $p\rightarrow +\infty$, where
\begin{equation}\label{2-26-8}
W_{p, j}(x):=\frac1{ p^{\frac p{p-1}}\e_{p,j}^{\frac2{p-1}}} \left[p+U\bigl(\frac{x-x_{p,j}}{\e_{p,j} }\bigr)+\frac{w_{0}\bigl(\frac{x-x_{p,j}}{\e_{p,j} }\bigr)}{p}+\frac{w_{1}\bigl(\frac{x-x_{p,j}}{\e_{p,j} }\bigr)}{p^2}\right].
\end{equation}
\end{Rem}

\begin{Rem}\label{remarkEstimateswi}
 It is easy to prove (see for instance Lemma~2.1 in \cite{EMP2006}) that there are constants $C_0$ and $C_1$, such that
as $r\to +\infty$,
\begin{equation}\label{estimatewi}
w_i(r)= C_i \log r +O\bigl(\frac{\log^2 r }{r^2}\bigr),
\end{equation}
and
\begin{equation}\label{0-29-8}
w'_i(r)= \frac{C_i }r  +O\bigl(\frac{\log^2 r }{r^3}\bigr).
\end{equation}
This will be helpful in the next sections.
\end{Rem}

\

In order to state the main result of this section we need to introduce some notations.

\vskip 0.1cm

For any given $\boldsymbol{\mathrm \xi}=(\xi_1,\cdots, \xi_k)\in\Omega^{k}$ close to $\boldsymbol{\mathrm x}_{\infty}=(x_{\infty,1},\cdots,x_{\infty,k})$,
we solve the following equations to obtain $\boldsymbol{\bar\mu}_{p}=(\bar \mu_{p,1},\cdots,\bar\mu_{p,k})\in\mathbb R^{k}   $:
\begin{equation}\label{10-30-8}
\begin{split}
F_{p,j}(\boldsymbol{\bar\mu},\boldsymbol{\xi}):=&-\log (64 \bar\mu_j^4) -8\pi\bigl(1-\frac{C_0}{4p}- \frac{C_1}{4p^2} \bigr) H(\xi_j,\xi_{j})+\bigl(C_0 +\frac{C_1}{p}\bigr)\frac{\log (\bar\mu_je^{-\frac p4})}{p}
\\
&\,\,\,\,\,+8\pi\sum_{l\ne j}\frac{ \bar\mu_j^{\frac2{p-1}} }{\bar\mu_l^{\frac2{p-1} } }
 \bigl(1-\frac{C_0}{4p}- \frac{C_1}{4p^2} \bigr) G(\xi_j,\xi_{l})=0, \qquad j=1,\cdots,k,
\end{split}
\end{equation}
where $C_{i}$, $i=0,1$ are the constants in Remark \ref{remarkEstimateswi}.
Note that for $p>0$ large, we can apply the implicit function theorem to
uniquely solve \eqref{10-30-8}. Moreover, $(\bar \mu_{p,1},\cdots,\bar\mu_{p,k})   $
is a $C^1$ function of $(\xi_1,\cdots, \xi_k)$. A direct computation shows that for $p$ large, $\bar \mu_{p,j}$ satisfies
\begin{equation}\label{MissingEstimateBarMu}
\bar \mu_{p,j}=e^{-(2\pi\Psi_{k,j}(\boldsymbol{\mathrm \xi})+\frac{3}{4})}\left(1+O(\frac{1}{p}) \right),
\end{equation}
where $\Psi_{k,j}$ is defined in \eqref{stts}. In particular, for any fixed  $\tau_{0}>0$ there exists $c>0$ such that
$0<\frac{1}{c}\leq\bar \mu_{p,j}\leq c $,
for every $\xi_j\in B_{\tau_{0}}(x_{\infty,j})$, for $p$ large.
\\

For $j=1,\cdots,k$, we set
\begin{equation}\label{def:barepsilon}\bar\e_{p,j}:=\bar \mu_{p,j} e^{-\frac p4},\end{equation}
and define
\begin{equation}\label{12-30-8}
\bar{ W}_{ p, j}(x):=\frac1{ p^{\frac p{p-1}}\bar\e_{p,j}^{\frac2{p-1}}} \left[p+U\bigl(\frac{x-\xi_{j}}{\bar \e_{p,j} }\bigr)+\frac{w_{0}\bigl(\frac{x-\xi_{j}}{\bar\e_{p,j} }\bigr)}{p}+\frac{w_{1}\bigl(\frac{x-\xi_{j}}{\bar\e_{p,j} }\bigr)}{p^2}\right].
\end{equation}
where $U$, $w_{0}$ and $w_{1}$ are as in Proposition \ref{prop:summaryImporvedAsympt}.
For $\boldsymbol{\mathrm\alpha}:=(\alpha_{1},\cdots,\alpha_{k})\in \mathbb R^{k}$ we also define
\begin{equation}\label{def:Wpalpha}
\bar W_{\boldsymbol{\mathrm\alpha},p}:=\sum_{j=1}^k  \alpha_j P\bar W_{p, j},
\end{equation}
where for $v\in C^2(\bar\Omega)$, we define its \emph{projection}
 $P v\in H_0^1(\Omega)$ as the solution of
\begin{equation}\label{defProjection}
\Delta (P v)=\Delta  v\quad \text{in}\;\Omega.
\end{equation}
We consider the set

\begin{equation}\label{35-30-8}
S_{p}=\left\{ (\boldsymbol{\mathrm\alpha}, \boldsymbol{\xi}, \omega)\in \mathbb R^{k}\times \Omega^{k}\times \Big( H^{1}_{0}(\Omega)\cap L^{\infty}(\Omega)\Big) \ \ : \ \
\begin{split}
&|\alpha_j-1|\le \frac{1}{p^{\frac52}},\; |\xi_j-x_{\infty, j}|\le \tau_0 ,\\
&
\|\omega\|\le \frac1{p^{2+\tau_0}},~~~\|\omega\|_{L^{\infty}(\Omega)}\le \frac1{p^{2+ \tau_0}},
\\
&\bigl\langle  P\bar W_{p, j}, \omega\bigr\rangle = \bigl\langle  \frac{
\partial  P\bar W_{p, j}}{\partial {\xi_{j,h}}}, \omega\bigr\rangle=0,
\\ & j=1,\cdots, k,\; h=1,2,
\end{split}
\right\},
\end{equation}
where $\tau_0>0$ is any fixed small constant. Then, we have
\begin{Prop}\label{prop:upHasTheFormInEMP}
Let $u_p$ be a $k$-spike solution of problem \eqref{1.1} and let  $\boldsymbol{\mathrm x_{\infty}}:=(x_{\infty,1},\cdots,x_{\infty,k})\in\Omega^{k}$, $x_{\infty,i}\neq x_{\infty,j}$ for $i\neq j$, be its  concentration point.
Then for $p$ sufficiently large,
there exists $(\boldsymbol{\mathrm\alpha}_{p}, \boldsymbol{\xi}_{p}, \omega_{p})$  in the interior of $  S_{p}$,
 such that
\begin{equation}\label{36-30-8}
u_p=\bar W_{\boldsymbol{\mathrm\alpha_{p}},p}+\omega_{p}.
\end{equation}

\end{Prop}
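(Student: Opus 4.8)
The plan is to run a finite-dimensional Lyapunov--Schmidt reduction \emph{in reverse}: since $u_p$ is already known to concentrate at $\boldsymbol{\mathrm x}_{\infty}$ with the sharp profile of Theorem \ref{thm:asymptoticup} and its refinement Proposition \ref{prop:summaryImporvedAsympt}, it suffices to produce parameters $(\boldsymbol{\mathrm\alpha}_{p},\boldsymbol{\xi}_{p})$ and a remainder $\omega_{p}$ orthogonal to the $3k$--dimensional space
\[
E_{p}:=\bigoplus_{j=1}^{k}\mathrm{span}\big\{P\bar W_{p,j},\ \partial_{\xi_{j,1}}P\bar W_{p,j},\ \partial_{\xi_{j,2}}P\bar W_{p,j}\big\}\subset H^{1}_{0}(\Omega),
\]
so that \eqref{36-30-8} holds. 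I would fix as first guess the centers $\xi_{p,j}^{0}:=x_{p,j}$, the scales $\bar\e_{p,j}^{0}$ coming from the associated solution of \eqref{10-30-8}, and $\alpha_{j}^{0}:=1$, and then prove the a priori bound
\[
\Big\|u_p-\sum_{j=1}^{k}P\bar W_{p,j}^{0}\Big\|_{H^{1}_{0}(\Omega)}+\Big\|u_p-\sum_{j=1}^{k}P\bar W_{p,j}^{0}\Big\|_{L^{\infty}(\Omega)}=o\big(p^{-3}\big).
\]
On the inner region $|x-x_{p,j}|\lesssim p^{a}\e_{p,j}$ this follows from the third--order expansion of Remark \ref{remak:expansion:up} (after matching $\bar\e_{p,j}^{0}$ with the intrinsic scale $\e_{p,j}$ of $u_p$); on $\Omega\setminus\bigcup_{j}B_{2d}(x_{p,j})$ it follows from the Green representation \eqref{luo-1}, \eqref{11-14-03N}; on the transition annuli it follows from Lemma \ref{llma}, the gradient bound \eqref{luoluo2}, the tail asymptotics \eqref{estimatewi}--\eqref{0-29-8}, and the profile estimates for $P\bar W_{p,j}$ from Section \ref{section:approxPWproblems}.

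Next I would reduce the problem to a system of $3k$ scalar equations. Membership in $S_{p}$ (see \eqref{35-30-8}) is equivalent to $\Pi_{E_{p}}\big(u_p-\bar W_{\boldsymbol{\mathrm\alpha},p}\big)=0$, where $\Pi_{E_{p}}$ is the $H^{1}_{0}$--orthogonal projection onto $E_{p}$ and $\bar W_{\boldsymbol{\mathrm\alpha},p}$ depends on $(\boldsymbol{\mathrm\alpha},\boldsymbol{\xi})$ through $\xi_{j}$ and through $\bar\mu_{p,j}(\boldsymbol{\xi})$; equivalently one must solve $G_{p}(\boldsymbol{\mathrm\alpha},\boldsymbol{\xi}):=\Pi_{E_{p}}\big(\sum_{j=1}^{k}\alpha_{j}P\bar W_{p,j}\big)-\Pi_{E_{p}}u_p=0$. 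The generators of $E_{p}$, once renormalized to unit $H^{1}_{0}$--norm, are almost orthonormal (distinct bubbles are far apart, and $P\bar W_{p,j}$ is $H^{1}_{0}$--orthogonal to its own $\xi$--derivatives to leading order), so the associated Gram matrix is invertible with controlled inverse; hence $(\boldsymbol{\mathrm\alpha},\boldsymbol{\xi})\mapsto\Pi_{E_{p}}\big(\sum_{j}\alpha_{j}P\bar W_{p,j}\big)$ is, after the evident coordinate rescaling (one unit per generator), a uniformly small perturbation of a linear isomorphism on a fixed--size box around $(\boldsymbol 1,\boldsymbol{\mathrm x}_{\infty})$. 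Combining this with the a priori estimate above, a quantitative implicit function theorem (or a Brouwer degree argument on the box) yields a solution $(\boldsymbol{\mathrm\alpha}_{p},\boldsymbol{\xi}_{p})$, unique near $(\boldsymbol 1,\boldsymbol{\mathrm x}_{\infty})$, with the quantitative bounds $|\alpha_{p,j}-1|<p^{-5/2}$ and $|\xi_{p,j}-x_{\infty,j}|<\tau_{0}$. The sharp order $o(p^{-3})$ — hence the full strength of Proposition \ref{prop:summaryImporvedAsympt}, not merely the first--order expansion of \cite{GILY2021} — is exactly what fits $\alpha_{p,j}$ inside the window prescribed by $S_{p}$, since $\|P\bar W_{p,j}\|^{2}\sim p^{-1}$ costs a factor $p$ when inverting.

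Finally I would set $\omega_{p}:=u_p-\bar W_{\boldsymbol{\mathrm\alpha}_{p},p}$. The orthogonality relations in \eqref{35-30-8} and the size constraints on $\boldsymbol{\mathrm\alpha}_{p},\boldsymbol{\xi}_{p}$ hold by construction. To bound $\omega_{p}$, write $\bar W_{\boldsymbol{\mathrm\alpha}_{p},p}=\sum_{j}P\bar W_{p,j}^{0}+\sum_{j}\big[(\alpha_{p,j}-1)P\bar W_{p,j}^{0}+\alpha_{p,j}\big(P\bar W_{p,j}(\cdot;\boldsymbol{\xi}_{p})-P\bar W_{p,j}^{0}\big)\big]$: the first correction is $O(p^{-5/2}\|P\bar W_{p,j}^{0}\|)=O(p^{-3})$ in $H^{1}_{0}$ and similarly small in $L^{\infty}$, while the second is bounded by $|\boldsymbol{\xi}_{p}-\boldsymbol{\xi}^{0}|$ times $\|\partial_{\xi}P\bar W_{p,j}\|$; since the $\xi$--component of $G_{p}$ has derivative of order $\e_{p,j}^{-2}$, the $\xi$--equation forces $|\boldsymbol{\xi}_{p}-\boldsymbol{\xi}^{0}|$ to be of order $\e_{p,j}$ times a negative power of $p$, making this term $o(p^{-3})$ as well. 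Hence $\|\omega_{p}\|,\|\omega_{p}\|_{L^{\infty}}=o(p^{-3})<p^{-2-\tau_{0}}$, so $(\boldsymbol{\mathrm\alpha}_{p},\boldsymbol{\xi}_{p},\omega_{p})$ lies in the interior of $S_{p}$ and \eqref{36-30-8} follows.

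The main obstacle is the a priori bound in the transition region $p^{a}\e_{p,j}\le|x-x_{p,j}|\le 2d$, where one must compare $u_p$ with $P\bar W_{p,j}$ in $L^{\infty}$ — not just in energy — and gain the sharp order $o(p^{-3})$. This is delicate precisely because of the slowly varying logarithmic corrections $w_{i}(r)\sim C_{i}\log r$: equation \eqref{10-30-8} carries the particular combination of $C_{0},C_{1},H,G$ it does so that these tails cancel against those of $u_p$, and verifying the cancellation to the required order requires simultaneously the refined inner expansion of Proposition \ref{prop:summaryImporvedAsympt}, the precise tail asymptotics \eqref{estimatewi}--\eqref{0-29-8} of $w_{0},w_{1}$, and the pointwise and gradient bounds of Lemma \ref{llma} and \eqref{luoluo2}. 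Once this estimate is secured, everything downstream is soft finite--dimensional linear algebra and a standard quantitative implicit function theorem.
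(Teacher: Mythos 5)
Your proposal is correct in outline but takes a genuinely different route from the paper, and it contains one quantitative slip that is worth flagging.

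\textbf{Method.} You solve the $3k$-dimensional system $\Pi_{E_p}\big(u_p-\bar W_{\boldsymbol{\mathrm\alpha},p}\big)=0$ directly, by a quantitative implicit function theorem (or local degree) using near-orthonormality of the generators and the sharp a priori bound as the small right-hand side. The paper instead sets up a \emph{minimization} problem \eqref{32-30-8}: it minimizes $\|u_p-\bar W_{\boldsymbol{\mathrm\alpha},p}\|^2$ over the tiny box $\{|\alpha_j-1|\le p^{-5/2},\ |\xi_j-x_{p,j}|\le p^{-2}e^{-p/4}\}$, uses Proposition \ref{prop10-30-8} to bound the infimum by $O(p^{-7+\tau})$, and then shows via \eqref{21-4-9}--\eqref{otherImprovement} that the minimizer lands strictly in the interior of the box; the orthogonality conditions in $S_p$ then come for free from first-order optimality. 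The variational argument is more economical, chiefly because $E_p$ depends on $\boldsymbol{\xi}$ both through the centers $\xi_j$ and through $\bar\mu_{p,j}(\boldsymbol\xi)$, so the Jacobian of your system $G_p$ involves $\partial_{\boldsymbol\xi}\Pi_{E_p(\boldsymbol\xi)}u_p$ — i.e.\ inner products of $u_p-\sum\alpha_iP\bar W_{p,i}$ against second derivatives of $P\bar W_{p,j}$ of norm $\sim e^{p/2}p^{-1/2}$ — which you gloss over; those terms must be shown subdominant to the block-diagonal leading part. This is doable but it is precisely the bookkeeping that the minimization argument avoids. You correctly identify the real crux — the sharp $L^\infty$ bound in the transition region (Proposition \ref{l1-7-9}, which relies on the third-order inner expansion of Proposition \ref{prop:summaryImporvedAsympt} and the cancellation built into \eqref{10-30-8}) — and that part of the work is identical in both approaches.

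\textbf{A quantitative slip.} At the end you assert $\|\omega_p\|_{L^\infty(\Omega)}=o(p^{-3})$ by claiming that $(\alpha_{p,j}-1)P\bar W_{p,j}$ is ``similarly small in $L^\infty$'' as in $H^1_0$. This is not right: $\|P\bar W_{p,j}\|_{L^\infty(\Omega)}\sim 1$ (not $p^{-1/2}$), so with $|\alpha_{p,j}-1|\le p^{-5/2}$ this term is only $O(p^{-5/2})$, as the paper finds in \eqref{luo-11}. The conclusion still holds because $S_p$ only requires $\|\omega\|_{L^\infty}\le p^{-2-\tau_0}$ and $p^{-5/2}<p^{-2-\tau_0}$ for $\tau_0<\tfrac12$, but the $o(p^{-3})$ claim in $L^\infty$ should be weakened to $O(p^{-5/2})$.
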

\begin{proof} The proof of Proposition \ref{prop:upHasTheFormInEMP} is postponed to  Section \ref{subsection:ProofImprovedFormInEMP}. \end{proof}
\begin{Rem}
\eqref{36-30-8} tells us that any multi-spike solution $u_{p}$ of problem \eqref{1.1} has the form of the solutions constructed in \cite{EMP2006}. We will use \eqref{36-30-8} to compute the degree of all the solutions of problem \eqref{1.1} concentrating at $(x_{\infty,1},\cdots, x_{\infty,k})$ (see Section \ref{section:degreeAndUniqueness}).
\end{Rem}

\

\subsection{The proof of Proposition \ref{prop:summaryImporvedAsympt}}\label{subsection:betterAsymptu}$\,$

\vskip 0.2cm

The proof of Proposition \ref{prop:summaryImporvedAsympt} is a direct consequence of Lemma \ref{prop3-2}, Lemma \ref{blem3-1a} and Lemma \ref{p1-23-8} below.
\begin{Lem}\label{prop3-2}
Let
\begin{equation}\label{dsa}
v_{p,j}:=p\big(w_{p,j}-U\big).
\end{equation}
Then for any small fixed $d_0,\tau>0$, there exists $C>0$ such that
\begin{equation}\label{bbbvpj}
|v_{p,j}(x)|\leq C(1+|x| )^{\tau}~\mbox{ in}~ B_{\frac{d_0}{\e_{p,j}}}(0),
\end{equation}
and it holds
\begin{equation}\label{convergencevpjtow0}
\lim_{p\rightarrow +\infty} v_{p,j}=w_0 ~\mbox{in}~C^2_{loc}(\R^2),
\end{equation}
where
 $w_0$ is as in Proposition \ref{prop:summaryImporvedAsympt}, furthermore
\begin{equation*}
| w_{0}(x)|\leq C(1+|x| )^\tau.
\end{equation*}
\end{Lem}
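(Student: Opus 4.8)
The plan is to combine the known local expansion \eqref{5-8-2} with an a~priori pointwise estimate of $v_{p,j}$ on the whole inner region, and then to pin down the limit profile as the \emph{radial} solution $w_0$ of Proposition \ref{prop:summaryImporvedAsympt} by a normalisation at the origin.

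\emph{The equation for $v_{p,j}$ and its $1$-jet at $0$.} From $-\Delta u_p=u_p^p$ and \eqref{defwpj}, using $\e_{p,j}^{-2}=p\,u_p(x_{p,j})^{p-1}$, one checks that $-\Delta w_{p,j}=(1+w_{p,j}/p)^p$ in $\Omega_{p,j}$, with $1+w_{p,j}/p=u_p(x_{p,j}+\e_{p,j}\,\cdot\,)/u_p(x_{p,j})>0$; subtracting $-\Delta U=e^U$ and multiplying by $p$ gives, by \eqref{dsa},
\[
-\Delta v_{p,j}=p\Big[\big(1+\tfrac{w_{p,j}}{p}\big)^p-e^U\Big]=:f_{p,j}\qquad\text{in }\Omega_{p,j}.
\]
Since $w_{p,j}(0)=0=U(0)$ we have $v_{p,j}(0)=0$; and since $x_{p,j}\to x_{\infty,j}\in\Omega$ makes $x_{p,j}$ an interior maximum of $u_p$ for $p$ large (by \eqref{def:xpj}), $\nabla u_p(x_{p,j})=0$, hence $\nabla w_{p,j}(0)=0=\nabla U(0)$ and $\nabla v_{p,j}(0)=0$.

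\emph{The a~priori bound \eqref{bbbvpj}.} For $R_\delta\le|y|\le d/\e_{p,j}$, Lemma \ref{llma} yields $(1+w_{p,j}/p)^p(y)\le C_\delta(1+|y|)^{-(4-\delta)}$, so on the genuinely outer annulus $e^{\beta p}\le|y|\le d_0/\e_{p,j}$ the crude bound $|f_{p,j}|\le p\big[(1+w_{p,j}/p)^p+e^U\big]\le C_\delta\,p\,(1+|y|)^{-(4-\delta)}$ is harmless, its contribution to a Green representation being negligible since the decay beats the factor $p$. On the complementary region one instead writes $w_{p,j}=U+v_{p,j}/p$ and Taylor-expands,
\[
f_{p,j}=e^U v_{p,j}-\tfrac{U^2}{2}e^U+e^U\,O\!\Big(\tfrac1p\big(1+|U|+|v_{p,j}|\big)^{4}\Big),
\]
valid wherever $|U|+|v_{p,j}|/p=o(p)$; the crucial point is that the dangerous term $p\,e^U$ has cancelled. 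One then controls $v_{p,j}$ on $B_{d_0/\e_{p,j}}(0)$ by a bootstrap on the Green representation, using \eqref{luo-1} and \eqref{luoluo1} to describe the boundary behaviour on $\partial B_{d_0/\e_{p,j}}(0)$ and the normalisation $v_{p,j}(0)=0$ to absorb its (a~priori $O(p)$) part, improving step by step the exponent in $|v_{p,j}(y)|\le C(1+|y|)^{\sigma}$ down to the prescribed $\tau>0$. A version of this scheme is available in \cite{DIP2017-1,GILY2021}.

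\emph{Passing to the limit and identification.} On each fixed $B_R(0)$, \eqref{bbbvpj} gives $|v_{p,j}|\le C_R$, and \eqref{5-8-2} gives $w_{p,j}=U+O(1/p)$ in $C^2(B_R)$, so $f_{p,j}=e^Uv_{p,j}-\tfrac{U^2}{2}e^U+O(1/p)$ is bounded in $C^{0,\alpha}(B_R)$; Schauder estimates then bound $v_{p,j}$ in $C^{2,\alpha}(B_R)$, and a diagonal extraction gives a subsequence with $v_{p,j}\to\bar w_0$ in $C^2_{loc}(\R^2)$, where $\bar w_0$ solves \eqref{1-2-4}. Let $w_0$ be as in Proposition \ref{prop:summaryImporvedAsympt} (the radial solution of \eqref{1-2-4} with $w_0(0)=0$); then $\phi:=\bar w_0-w_0$ solves $-\Delta\phi=e^U\phi$ in $\R^2$ and, by \eqref{bbbvpj} in the limit and Remark \ref{remarkEstimateswi}, has sublinear growth, so by the standard classification of the modes of the linearised Liouville equation $\phi=a\,\tfrac{8-|y|^2}{8+|y|^2}+b_1\partial_1U+b_2\partial_2U$ (cf.\ Lemma \ref{llm}). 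Now $\phi(0)=\bar w_0(0)-w_0(0)=0$, and since $\partial_iU(0)=0$ and $\big(\tfrac{8-|y|^2}{8+|y|^2}\big)\big|_{y=0}=1$, this forces $a=0$; likewise $\nabla\phi(0)=\nabla\bar w_0(0)-\nabla w_0(0)=0$, and since $\nabla w_0(0)=0$ (radial) while $\nabla(\partial_iU)(0)=-\tfrac12 e_i$, this forces $b_1=b_2=0$. Hence $\bar w_0=w_0$; as the limit is independent of the subsequence, $v_{p,j}\to w_0$ in $C^2_{loc}(\R^2)$, and letting $p\to\infty$ in \eqref{bbbvpj} (or invoking Remark \ref{remarkEstimateswi}) gives $|w_0(x)|\le C(1+|x|)^\tau$.

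\emph{Main obstacle.} Everything beyond \eqref{bbbvpj} is routine; the genuine difficulty is \eqref{bbbvpj} itself, i.e.\ a bound on $v_{p,j}$ uniform in $p$ on a ball of radius $\sim e^{p/4}$. The delicate point is that $f_{p,j}$ carries a spurious factor $p$ that cannot be handled by absolute values: one must use the Taylor cancellation of $p\,e^U$ where $|U|/p$ is small, patch it with the bare decay estimate of Lemma \ref{llma} on the genuinely outer part, and exploit the cancellation encoded in $v_{p,j}(0)=0$ in the representation formula.
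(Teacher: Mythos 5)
Your proposal is correct, and for the part of the lemma that is actually new here --- identifying the $C^2_{loc}$ limit of $v_{p,j}$ with the \emph{radial} solution $w_0$ normalized by $w_0(0)=0$ --- you take a genuinely different route from the paper. The paper treats the bound \eqref{bbbvpj} and the convergence to \emph{some} solution of \eqref{1-2-4} exactly as you do, by deferring to \cite{GILY2021} (Propositions 3.5--3.6 there); its novelty is a direct proof that the limit is radially symmetric, obtained by showing the quantitative anti-symmetry defect $N_p^*:=\max_{|x|\le d_0/\e_{p,j}}\max_{|x'|=|x|}|v_{p,j}(x)-v_{p,j}(x')|(1+|x|)^{-\tau}$ tends to $0$, via a contradiction/blow-up argument on the reflected difference $v_{p,j}(x)-v_{p,j}(x^-)$. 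You instead work only at the level of the limit: $\phi:=\bar w_0-w_0$ lies in the kernel of $-\Delta-e^U$, and the normalizations $v_{p,j}(0)=0$, $\nabla v_{p,j}(0)=0$ (the latter from $\nabla u_p(x_{p,j})=0$) kill all three kernel modes. Your argument is shorter and softer, and it would in fact suffice for the later use of this lemma (the proof of Lemma \ref{blem3-1a} only invokes the radial symmetry of the limit $w_0$, not the quantitative statement $N_p^*\to 0$); the paper's argument yields the stronger, uniform-in-$p$ symmetry information on $v_{p,j}$ itself up to scale $d_0/\e_{p,j}$, and is the template that gets iterated for $w_1$. Two small caveats on your version: (i) your classification step applies the kernel description to a $\phi$ that is only known to grow like $(1+|x|)^{\tau}$, whereas Lemma \ref{llm} as stated assumes finite Dirichlet energy; the classification of sublinearly growing solutions of $-\Delta\phi=e^U\phi$ is standard (mode-by-mode ODE analysis) but should be stated or cited explicitly rather than via ``cf.''; (ii) you should also record that the radial solution of \eqref{6-4-2} with $w_0(0)=0$ exists and is unique (the radial homogeneous solution regular at the origin is a multiple of $Z_0$ with $Z_0(0)=1$), so that the target $w_0$ is well defined independently of the limit procedure.
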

\begin{proof}
These results improve the ones in   \cite[Proposition~3.6]{GILY2021}, we keep the same notations as in \cite{GILY2021}. \\
In particular the uniform bound \eqref{bbbvpj} is equivalent to
\[
N_{p}:=\max_{|x|\leq \frac{d_0}{\e_{p,j}}}\frac{| v_{p,j}(x)|}{(1+|x|)^\tau}\le C,
\]
which is proved in \cite[Proposition 3.5]{GILY2021}. The convergence \eqref{convergencevpjtow0} to a solution  $w_0$ of \eqref{6-4-2} then  follows from it,  passing
to the limit into the equation satisfied by $v_{p,j}$ (see \cite[Proposition~3.6]{GILY2021}).
The fact that $w_{0}$ is radially symmetric is instead the novelty here, and it is a consequence of the fact that
\begin{equation}\label{novelty}
N_p^*:=\max_{|x|\leq \frac{d_0}{\e_{p,j}}}\max_{|x'|=|x|}\frac{|v_{p,j}(x)-v_{p,j}(x')|}{(1+|x|)^{\tau}}\to 0, \mbox{ as }p\to +\infty.
\end{equation}
%This result, together with
%\[
%\max_{|x|\leq \frac{d_0}{\e_{p,j}}}\frac{| v_{p,j}(x)|}{(1+|x|)^\tau}\le C,
%\]
%which is proved in \cite{GILY2021}, implies that
%
%\begin{equation*}
%\lim_{p\rightarrow +\infty} v_{p,j}=w_0 ~\mbox{in}~C^2_{loc}(\R^2),
%\end{equation*}
%and $w_0$ is radially symmetric.
Next we prove \eqref{novelty}. We adapt the arguments in  Step 1. in  the proof of \cite[Proposition~3.5]{GILY2021}.
We argue by contradiction, assuming that
there exists $c_0>0$ such that $N^*_p\geq c_0$.
Let $x'_p$ and
$x''_p$ satisfy $|x'_p|=|x''_p| {\leq\frac{d_{0}}{\varepsilon_{p,j}}}$ and
\begin{equation*}
N_p^*=  \frac{|v_{p,j}(x'_p)-v_{p,j}(x''_p)|}{(1+|x'_p|)^{\tau}}.
\end{equation*}
Without loss of generality, we may assume that $x'_p$ and
$x''_p$ are symmetric with respect to the $x_1$ axis. Set
\begin{equation*}
\omega^*_p(x):=v_{p,j}(x)-v_{p,j}(x^-),~\,~x^-:=(x_1,-x_2),\  \mbox{ for }x=(x_{1},x_{2}),  ~\,x_2>0.
\end{equation*}
 Hence $x^{''}_{p}={x^{'}_{p}}^{-}$ and
$N_p^*=\frac{ \omega^*_p(x'_{p}) }{ (1+|x'_{p}| )^{\tau} }$.
Let us define
\begin{equation*}
\omega_p(x):=\frac{\omega^*_p(x)}{(1+x_2)^{\tau}}.
\end{equation*}
Also let $x^{**}_p$ satisfy $|x^{**}_p|\leq\frac{d_{0}}{\varepsilon_{p,j}}$, $x^{**}_{p,2}\geq 0$ and
\begin{equation*}
|\omega_p(x^{**}_p)|= N^{**}_p:=\max_{|x|\leq \frac{d_0}{\e_{p,j}},~x_2\geq 0} |\omega_p(x)|.
\end{equation*}
Then it follows
\begin{equation*}
N^{**}_p\geq \frac{|v_{p,j}(x'_p)-v_{p,j}(x''_p)|}{(1+|x'_p|)^{\tau}}=N^*_p\ge c_0>0.
\end{equation*}
Hence, arguing in exactly the same way as in Step 1
of  the proof of \cite[Proposition~3.5]{GILY2021} (see Part 1 and Part 2),  we can prove that
\begin{equation}\label{1h}
|x^{**}_p|\leq C.
\end{equation}
%
%With \eqref{1h}, we are now ready to prove this proposition.
%
Now let \[\omega_p^{**}(x):=\frac{\omega_p(x)}{N_p^{**}}.\]
Then  $\omega_p^{**}(x)$ solves
\begin{equation*}
-\Delta \omega^{**}_p-\frac{2\tau}{1+x_2}\frac{\partial \omega^{**}_p}{\partial x_2}+
\frac{\tau(1-\tau)}{(1+x_2)^2}\omega^{**}_p=\frac{g_p(x)}{N_p^{**}(1+x_2)^\tau}+
\frac{\widetilde{{g}}_p(x)}{N_p^{**}(1+x_2)^\tau},
\end{equation*}
where
\begin{equation*}
\widetilde{g}_p(x):=
p\Big(e^{w_{p,j}(x)}-e^{w_{p,j}(x^-)}\Big),~\quad\mbox{for}~~|x|\leq \frac{d_0}{\e_{p,j}},
\end{equation*}
$g_p(x):=g^*_p(x)-g^*_p(x^-)$, and $g^*_p$ is defined as
\begin{equation}\label{St}
g^*_p(x):=p e^{w_{p,j}}
\Big(e^{p\log \big(1+\frac{w_{p,j}}{p}\big)-w_{p,j}}-1\Big),~\quad\mbox{for}~~|x|\leq \frac{d_0}{\e_{p,j}}.
\end{equation}
Moreover $|\omega^{**}_{p}(x)|\leq 1$ and
$|\omega^{**}_{p}(x^{**}_p)|=1$. Hence $\omega^{**}_p(x)\to \omega$ uniformly
in any compact subset of $\R^2$. And   $\omega\not\equiv0$ because $\omega_p^{**}(x_{p}^{**})=1$ by definition and  $|x_{p}^{**}|\le C$ by \eqref{1h}.
We now prove that
%({ \bf This is the only thing different from the proof of Proposition~3.5 in \cite{GILY2021}})
\begin{equation*}
\frac{g_p(x)}{N_p^{**}(1+x_2)^{\tau}}\to 0~\mbox{uniformly in any compact set of}~\big\{|x|\leq \frac{d_0}{\e_{p,j}}\big\}~\mbox{as}~p\to +\infty.
\end{equation*}
In fact, by \eqref{St} and using the Taylor expansion, we see
\begin{equation}\label{2-19-8}
g^*_p(x)=-\frac12 e^{ w_{p,j} } w^2_{p,j}+ O\bigl( \frac{ e^{ w^3_{p,j} } w_{p,j} } p \bigr).
\end{equation}
This gives that for $|x|\le R$,
\begin{equation}\label{3-19-8}
g_p(x)= O\bigl(| w_{p,j}(x)-w_{p,j}(x^-)|\bigr)+O\bigl(\frac{ 1 } p \bigr)
= O\bigl(\frac1p |\omega_p^*(x)|\bigr)+O\bigl(\frac{ 1 } p \bigr).
\end{equation}
So we see that
\begin{equation*}
\frac{|g_p(x)|}{N_p^{**}(1+x_2)^{\tau}}\le \frac Cp |\omega_p^{**}(x)|+\frac{ C } p\to 0.
\end{equation*}
Finally  we can argue exactly as at the end of the proof of Step 1 of  \cite[Proposition~3.5]{GILY2021} to get that $\bar \omega:=(1+x_{2})^{{\tau}} \omega=0$. Hence we obtain a contradiction (see page~164 in \cite{GILY2021} for more details). This concludes the proof of \eqref{novelty}.
\end{proof}

\begin{Lem}\label{blem3-1a}
Let \begin{equation}\label{sat}
k_{p,j}:=p\big(v_{p,j}-w_0\big),
\end{equation}
where $v_{p,j}$ is defined in \eqref{dsa} and $w_{0}$ is its limit function as $p\rightarrow +\infty$ (see \eqref{convergencevpjtow0}). Then for any small fixed $d_0,\tau_1>0$, there exists $C>0$ such that
\begin{equation}\label{blpy1}
|k_{p,j}(x)|\leq C(1+|x| )^{\tau_1}~\mbox{ in}~ B_{\frac{d_0}{\e_{p,j}}}(0),
\end{equation}
and it holds
\begin{equation}\label{convergencekpjtow1}
\lim_{p\rightarrow +\infty} k_{p,j}=w_1 ~\mbox{in}~C^2_{loc}(\R^2),
\end{equation}
where
 $w_1$  is as in Proposition \ref{prop:summaryImporvedAsympt}.
\end{Lem}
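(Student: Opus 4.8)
The plan is to follow the scheme of Lemma~\ref{prop3-2} (equivalently of \cite[Propositions~3.5 and~3.6]{GILY2021}), pushed one order further, in three steps. \emph{Step~1 (the equation for $k_{p,j}$).} First I would record the PDE solved by $k_{p,j}$. Since $w_{p,j}$ solves $-\Delta w_{p,j}=\bigl(1+\frac{w_{p,j}}{p}\bigr)^{p}$ and, by \eqref{dsa}--\eqref{sat}, $w_{p,j}=U+\frac{w_0}{p}+\frac{k_{p,j}}{p^{2}}$ \emph{identically}, using $-\Delta U=e^{U}$ and equation \eqref{6-4-2} one gets
\[
-\Delta k_{p,j}=p^{2}\Bigl(1+\tfrac{w_{p,j}}{p}\Bigr)^{p}-p^{2}e^{U}-p\Bigl(e^{U}w_0-\tfrac{U^{2}}{2}e^{U}\Bigr)\qquad\text{in }B_{d_0/\e_{p,j}}(0).
\]
A third-order Taylor expansion of $\bigl(1+\frac{w_{p,j}}{p}\bigr)^{p}=e^{p\log(1+w_{p,j}/p)}$ about $e^{U}$, using $w_{p,j}=U+\frac{w_0}{p}+O(p^{-2})$ locally and Lemma~\ref{llma}, \eqref{estimatewi}, \eqref{bbbvpj} to bound the remainders (by $C(1+|x|)^{-4+\delta}$ times powers of $\log|x|$ on $\{R\le|x|\le d_0/\e_{p,j}\}$, as in \cite{GILY2021}), then gives, on every compact set,
\[
-\Delta k_{p,j}=e^{U}k_{p,j}+\Bigl(\tfrac12 w_0^{2}-\tfrac12 w_0U^{2}-w_0U+\tfrac13 U^{3}+\tfrac18 U^{4}\Bigr)e^{U}+O\!\left(\tfrac1p\right),
\]
whose right-hand side is, in the limit $p\to+\infty$, exactly that of \eqref{10-19-8}.

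\emph{Step~2 (the weighted bound \eqref{blpy1}).} I would set $N_p:=\max_{|x|\le d_0/\e_{p,j}}\frac{|k_{p,j}(x)|}{(1+|x|)^{\tau_1}}$ and argue by contradiction, assuming $N_p\to+\infty$. Put $\widehat k_p:=k_{p,j}/N_p$, so $|\widehat k_p|\le(1+|x|)^{\tau_1}$ and $|\widehat k_p(x_p)|=1$ at some $x_p$. Dividing the first display of Step~1 by $N_p$ yields $-\Delta\widehat k_p=c_p\widehat k_p+h_p$ with $c_p\to e^{U}$ locally and $h_p\to0$ (the inhomogeneous part carries the factor $N_p^{-1}$ and is controlled in the outer region by Lemma~\ref{llma}). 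Then, exactly as in Parts~1--2 of Step~1 of the proof of \cite[Proposition~3.5]{GILY2021} (reproduced inside the proof of Lemma~\ref{prop3-2}), a comparison with a radial supersolution on $\{R\le|x|\le d_0/\e_{p,j}\}$ would force $|x_p|\le C$. Hence $\widehat k_p\to\widehat k$ in $C^{2}_{loc}(\R^{2})$, where $\widehat k\in C^{2}(\R^{2})$ solves $-\Delta\widehat k=e^{U}\widehat k$ and $|\widehat k(x)|\le(1+|x|)^{\tau_1}$; this growth bound forces $\int_{\R^{2}}|\nabla\widehat k|^{2}<\infty$ (cf.\ \cite{GILY2021}), so Lemma~\ref{llm} gives $\widehat k\in\mathrm{span}\bigl\{\frac{\partial U}{\partial x_1},\frac{\partial U}{\partial x_2},\frac{8-|x|^{2}}{8+|x|^{2}}\bigr\}$. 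Finally, $k_{p,j}(0)=p\bigl(v_{p,j}(0)-w_0(0)\bigr)=0$ and $\nabla k_{p,j}(0)=0$, because $w_{p,j}(0)=0$, $\nabla w_{p,j}(0)=0$ ($x_{p,j}$ being an interior maximum of $u_p$ for $p$ large) and $\nabla U(0)=\nabla w_0(0)=0$ by radiality; passing to the limit, $\widehat k(0)=0$ and $\nabla\widehat k(0)=0$. Since $\frac{\partial U}{\partial x_i}$ vanishes at the origin with non-degenerate gradient there and $\frac{8-|x|^{2}}{8+|x|^{2}}$ equals $1$ with vanishing gradient at the origin, these two conditions force $\widehat k\equiv0$, contradicting $|\widehat k(x_\infty)|=1$ with $x_\infty=\lim_p x_p$. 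This proves \eqref{blpy1}.

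\emph{Step~3 (convergence to $w_1$).} Given \eqref{blpy1}, $k_{p,j}$ is locally bounded, so by elliptic estimates a subsequence converges in $C^{2}_{loc}(\R^{2})$ to some $k_\infty$, which by the second display of Step~1 solves \eqref{10-19-8}. Then $k_\infty-w_1$ solves $-\Delta(\,\cdot\,)=e^{U}(\,\cdot\,)$ with finite Dirichlet energy, hence lies in the span of Lemma~\ref{llm}; since $k_\infty(0)=w_1(0)=0$ and $\nabla k_\infty(0)=\nabla w_1(0)=0$ ($w_1$ being radial with $w_1(0)=0$), the argument of Step~2 gives $k_\infty=w_1$. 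As the limit is independent of the subsequence, \eqref{convergencekpjtow1} follows.

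The main obstacle is \emph{Step~2}, and precisely the outer-region barrier estimate confining $x_p$ to a bounded set: this is where one must combine the sharp logarithmic decay of $w_{p,j}$ from Lemma~\ref{llma} with a carefully tuned radial comparison function, exactly as in \cite{GILY2021}. By contrast, Steps~1 and~3 are a routine bootstrap of the analysis already carried out for $v_{p,j}$ in Lemma~\ref{prop3-2}.
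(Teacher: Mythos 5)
Your proposal reaches the right conclusion but takes a genuinely different route for the crucial point, which the paper flags as its novelty: showing that the limit of $k_{p,j}$ is the \emph{radial} solution $w_1$.

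Your Step~1 (deriving the equation satisfied by $k_{p,j}$ and passing to the limit) and the boundedness part of Step~2 coincide in spirit with the paper, which simply cites \cite[Proposition~3.9]{GILY2021} for \eqref{blpy1} and then derives the equation \eqref{b5-7-33}. The real divergence is in how the limit is pinned down. The paper proves the quantitative estimate $M_p^*\to 0$ (a barrier argument controlling the anti-symmetric part of $k_{p,j}$ uniformly over the expanding domain), which forces the $C^2_{loc}$-limit to be radially symmetric. You instead observe that $k_{p,j}(0)=0$ and $\nabla k_{p,j}(0)=0$, both coming from $w_{p,j}(0)=U(0)=w_0(0)=0$ and $\nabla w_{p,j}(0)=\nabla U(0)=\nabla w_0(0)=0$ (the first since $x_{p,j}$ is an interior maximum of $u_p$, the others by radiality), pass these conditions to the limit, and then kill the homogeneous part of $k_\infty-w_1$ using the kernel of the linearized Liouville operator. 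Evaluating at the origin you correctly note that $Z_0(0)=1$ with $\nabla Z_0(0)=0$ while $\partial_{x_i}U(0)=0$ with $\nabla\partial_{x_i}U(0)$ nondegenerate, so the two vanishing conditions eliminate all three basis elements. This is a clean shortcut around the $M_p^*$ machinery at this order, and it would in fact also simplify the radiality argument in Lemma~\ref{prop3-2}.

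The one imprecise passage is ``this growth bound forces $\int_{\mathbb R^2}|\nabla\widehat k|^2<\infty$ $\ldots$ so Lemma~\ref{llm} gives $\widehat k\in\mathrm{span}\{\ldots\}$.'' As written this is slightly circular: the bound $|\widehat k|\le C(1+|x|)^{\tau_1}$ together with the Newtonian-potential representation only gives $|\nabla\widehat k|=O(|x|^{-1})$, which is borderline and does not by itself yield finite Dirichlet energy (that would require knowing $\int_{\mathbb R^2}e^U\widehat k=0$). The clean way to close this is to decompose $\widehat k$ (or $k_\infty-w_1$) into Fourier modes $\widehat k=\sum_n\phi_n(r)e^{in\theta}$ and use the ODE structure directly: the regular radial solution is spanned by $Z_0$ (bounded); in mode $\pm1$ the regular solution is proportional to $\partial_{x_i}U$ (decaying); and for $|n|\ge 2$ the regular-at-the-origin solution grows like $r^{|n|}$, which is incompatible with the bound $(1+|x|)^{\tau_1}$ since $\tau_1<2$. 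This gives the span conclusion directly, and the finite Dirichlet energy only \emph{a posteriori}, without invoking Lemma~\ref{llm}. With this adjustment the proposal is complete and correct, but it is a different argument from the paper's.
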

\begin{proof}
This result is an improvement of Proposition~3.9 in \cite{GILY2021}.
We keep the same notations as in \cite{GILY2021}. \\
In particular the uniform bound \eqref{blpy1} is already in \cite[Proposition~3.9]{GILY2021}. The convergence \eqref{convergencekpjtow1} to a solution  $w_1$ of \eqref{10-19-8} is instead the novelty here. This follows from
the bound \eqref{blpy1}, passing
to the limit into the equation satisfied by $k_{p,j}$ and using the convergences in \eqref{5-8-2} and in \eqref{convergencevpjtow0}.  Indeed
for
 any small fixed $d_0>0$, it holds
\begin{equation}
\label{b5-7-33}
\begin{split}
-\Delta k_{p,j}=  k_{p,j}e^{U}+  h^*_{1,p}+h^*_{2,p}~\,\,\,\mbox{ in}~~\Big\{|x|\leq \frac{d_0}{\e_{p,j}}\Big\},
\end{split}
\end{equation}
where
\begin{equation*}
h^*_{1,p}:=p\Big(p\big(e^{w_{p,j}}-e^{U} \big)-e^{U}w_0-\big( v_{p,j}-w_0\big) e^{U}\Big),
\,\,\,\,\,
h^*_{2,p}:=p\Big(g^*_p+\frac{U^2}{2}e^{U}\Big),
\end{equation*}
and $g^*_p$ is the same as in   \eqref{St}
(see the direct computations in the proof of \cite[Lemma~3.8]{GILY2021}).
Furthermore,  by Taylor expansion and \eqref{convergencevpjtow0} we have that
\begin{equation}\label{11-22-8}
\begin{split}
h^*_{1,p}
=& e^U\bigl(\frac12 v_{p,j}^2+O(\frac1p)\bigr)\to \frac12 e^U w^2_0,
\end{split}
\end{equation}
and
\begin{equation}\label{12-22-8}
\begin{split}
h^*_{2,p}
=&p\Big(pe^{ w_{p,j} }\bigl(e^{ p\log ( 1+\frac{  w_{p,j}  }p) - w_{p,j}  } -1   \bigr)+\frac{U^2}{2}e^{U}\Big)\\
=&p\Big(pe^{ w_{p,j} }\bigl(e^{-\frac12  \frac{  w^2_{p,j}  }p+\frac13  \frac{  w^3_{p,j}  }{p^2} +O(\frac1{p^3})} -1   \bigr)+\frac{U^2}{2}e^{U}\Big)\\
=&p\Big(pe^{ w_{p,j} }\bigl(-\frac12  \frac{  w^2_{p,j}  }p+\frac13  \frac{  w^3_{p,j}  }{p^2}+\frac12 (-\frac12  \frac{  w^2_{p,j}  }p+\frac13  \frac{  w^3_{p,j}  }{p^2})^2 + O(\frac1{p^3})
   \bigr)+\frac{U^2}{2}e^{U}\Big)\\
  =&p\Big(e^{ w_{p,j} }\bigl(-\frac12    w^2_{p,j}  +\frac13  \frac{  w^3_{p,j}  }{p}+\frac18  \frac{  w^4_{p,j}  }{p} + O(\frac1{p^2})
   \bigr)+\frac{U^2}{2}e^{U}\Big) \\
   =&p\Big(-\frac12e^{ w_{p,j} }    w^2_{p,j}
   \bigr)+\frac{U^2}{2}e^{U}\Big) +\frac13   w^3_{p,j}  +\frac18    w^4_{p,j}  + O(\frac1{p})\\
   =&-\frac12e^{ U}    v_{p,j}U^2-e^{ U}    v_{p,j}U
    +\frac13   w^3_{p,j}  +\frac18    w^4_{p,j}  + O(\frac1{p})\\
    \to & \Bigl(-\frac12    w_0U^2-    w_0U
    +\frac13   U^3  +\frac18    U^4\Bigr)e^{ U},
\end{split}
\end{equation}
where the last convergence follows from \eqref {5-8-2} and again from \eqref{convergencevpjtow0}.
Combining \eqref{b5-7-33}, \eqref{11-22-8} and \eqref{12-22-8}
we find that $w_1$ satisfies \eqref{10-19-8}.

\vskip 0.1cm

Finally we prove that $w_{1}$ is radially symmetric. This is a  consequence of the fact that
\begin{equation}\label{M_p^*to0}
M_p^*:=\max_{|x|\leq \frac{d_0}{\e_{p,j}}}\max_{|x'|=|x|}\frac{|k_{p,j}(x)-k_{p,j}(x')|}{(1+|x|)^{{\tau_1}}}\to 0, \quad\mbox{ as }p\to +\infty.
\end{equation}
%This, together with
%Proposition~3.9 in \cite{GILY2021}, shows that $w_1$ is radial.
Next we prove \eqref{M_p^*to0} by contradiction, similarly as in the proof of \eqref{novelty}.
Suppose that  there exists $c_0>0$ such that $M^*_p\geq c_0$. Let $x'_p$ and
$x''_p$ satisfy $|x'_p|=|x''_p| {\leq\frac{d_{0}}{\varepsilon_{p,j}}}$ and
\begin{equation*}
M_p^*=  \frac{|k_{p,j}(x'_p)-k_{p,j}(x''_p)|}{(1+|x'_p|)^{{\tau_1}}}.
\end{equation*}
Without loss of generality, we may assume that $x'_p$ and
$x''_p$ are symmetric with respect to the $x_1$ axis. Set
\begin{equation*}
l^*_p(x):=k_{p,j}(x)-k_{p,j}(x^-),~\,~x^-:=(x_1,-x_2),\  \mbox{ for }x=(x_{1},x_{2}),  ~\,x_2>0.
\end{equation*}
 Hence $x^{''}_{p}={x^{'}_{p}}^{-}$ and
$M_p^*=\frac{ l^*_p(x'_{p}) }{ (1+|x'_{p}| )^{\tau_1} }$.
Let us define
\begin{equation}\label{assfd}
l_p(x):=\frac{l^*_p(x)}{(1+x_2)^{\tau_1}}.
\end{equation}
Then direct calculations show that $l_p$ satisfies
\begin{equation}\label{fsd}
-\Delta l_p-\frac{2{\tau_1}}{1+x_2}\frac{\partial l_p}{\partial x_2}+
\frac{{\tau_1}(1-{\tau_1})}{(1+x_2)^2}l_p=\frac{h_p(x)}{(1+x_2)^{\tau_1}}+
e^{U(x)}l_p,
\end{equation}
where $h_p(x):=\big(h^*_{1,p}(x)-h^*_{1,p}(x^-)\big)+\big(h^*_{2,p}(x)-h^*_{2,p}(x^-)\big)$.

\vskip 0.1cm

Also let $y^{**}_p$ satisfy
\begin{equation}\label{assf1d}
\mbox{$|y^{**}_p|\leq\frac{d_{0}}{\varepsilon_{p,j}}$, $y^{**}_{p,2}\geq 0\ $ and }~~~ |l_p(y^{**}_p)|= M^{**}_p:=\max_{|x|\leq \frac{d_0}{\e_{p,j}},~x_2\geq 0} |l_p(x)|.
\end{equation}
Then it follows
\begin{equation*}
M^{**}_p\geq \frac{|k_{p,j}(x'_p)-k_{p,j}(x''_p)|}{(1+|x'_p|)^{\tau_1}}=M^*_p\ge c_0>0.
\end{equation*}
Using the radial symmetry of $w_0$, we can argue in exactly the same
way as in the proof of Proposition~3.9 in \cite{GILY2021} that
$|y^{**}_p|\le C$.

\vskip 0.1cm

Now let \[l_p^{**}(x):=\frac{l_p(x)}{M_p^{**}},\]
where $l_p$ is defined in \eqref{assfd} and $M_p^{**}$ is defined in \eqref{assf1d}.
From \eqref{fsd}, it follows that $l_p^{**}(x)$ solves
\begin{equation*}
-\Delta l^{**}_p-\frac{2\tau_1}{1+x_2}\frac{\partial l^{**}_p}{\partial x_2}+
\frac{\tau_1(1-\tau_1)}{(1+x_2)^2}l^{**}_p=\frac{h_p(x)}{M_p^{**}(1+x_2)^{\tau_1}}
+e^{U(x)}l^{**}_p.
\end{equation*}
Moreover $|l^{**}_{p}(x)|\leq 1$ and
$|l^{**}_{p}(y^{**}_p)|=1$, hence $l^{**}_p(x)\to \gamma(x)$ uniformly
in any compact subset of $\R^2$.
We now prove that% ({ \bf This is the only thing different from the proof of Proposition~3.9 in \cite{GILY2021}})
\begin{equation}\label{6-22-8}
\frac{h_p(x)}{M_p^{**}(1+x_2)^{\tau_1}}\to 0~\mbox{uniformly in any compact set of}~\big\{|x|\leq \frac{d_0}{\e_{p,j}}\big\}~\mbox{as}~p\to +\infty.
\end{equation}
In fact, using $w_{p,j}= U +\frac{v_{p,j}}p$, we find
\[
\begin{split}
h^*_{1,p}(x)-h^*_{1,p}(x^-)=&p\Big(p\big(e^{w_{p,j}(x)}- e^{w_{p,j}(x^-)}\big)-\big( v_{p,j}(x)-v_{p,j}(x^-)\big) e^{U}\Big)=O\Big(\frac{e^U v^2_{p,j}}p\Big).
\end{split}
\]
So we obtain
\begin{equation}\label{7-22-8}
\frac{h^*_{1,p}(x)-h^*_{1,p}(x^-)}{M_p^{**}(1+x_2)^{\tau_1}}\to 0~\mbox{uniformly in any compact set of}~\big\{|x|\leq \frac{d_0}{\e_{p,j}}\big\}~\mbox{as}~p\to +\infty.
\end{equation}
Similarly, using \eqref{2-19-8} and \eqref{3-19-8}, we find
\[
\begin{split}
h^*_{2,p}(x)-h^*_{2,p}(x^-)=p\Big(g^*_p(x)- g^*_p(x^-)\Big)
=O\Big(\frac1p\Big).
\end{split}
\]
So, it holds
\begin{equation}\label{8-22-8}
\frac{h^*_{2,p}(x)-h^*_{2,p}(x^-)}{M_p^{**}(1+x_2)^{\tau_1}}\to 0~\mbox{uniformly in any compact set of}~\big\{|x|\leq \frac{d_0}{\e_{p,j}}\big\}~\mbox{as}~p\to +\infty.
\end{equation}
Combining \eqref{7-22-8} and \eqref{8-22-8}, we prove \eqref{6-22-8}.
Now we can argue as in pages~207 and 208 in \cite{GILY2021} to obtain a contradiction. This concludes the proof of \eqref{M_p^*to0}.
\end{proof}
\begin{Lem}\label{p1-23-8}
Let
\[
q_{p,j}:=p\big(k_{p,j}-w_1\big),
\]
where $k_{p,j}$ is as in \eqref{sat} and $w_{1}$ is its limit profile as $p\rightarrow +\infty$ (see  \eqref{convergencekpjtow1}).
Then for any small fixed $d_0,\tau_1>0$, there exists $C>0$ such that
\begin{equation}\label{1-23-8lemma}
|q_{p,j}(x)|\leq C(1+|x| )^{\tau_1}~\mbox{ in}~ B_{\frac{d_0}{\e_{p,j}}}(0).
\end{equation}
\end{Lem}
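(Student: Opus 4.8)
The plan is to adapt the scheme used in \cite{GILY2021} for the lower order estimates on $v_{p,j}$ (Proposition~3.5 there, recalled in Lemma~\ref{prop3-2}) and on $k_{p,j}$ (Proposition~3.9, recalled in Lemma~\ref{blem3-1a}), carrying the expansion of $w_{p,j}$ one further order. \emph{Step 1 (the equation for $q_{p,j}$).} Since $w_{p,j}$ solves $-\Delta w_{p,j}=(1+w_{p,j}/p)^{p}$ on $\Omega_{p,j}$ and $w_{p,j}=U+\frac{w_0}{p}+\frac{w_1}{p^{2}}+\frac{q_{p,j}}{p^{3}}$, I would expand $(1+w_{p,j}/p)^{p}=\exp\!\bigl(p\log(1+w_{p,j}/p)\bigr)$ in powers of $\frac1p$ (using $p\log(1+t/p)=t-\frac{t^{2}}{2p}+\frac{t^{3}}{3p^{2}}-\cdots$ and then the exponential series) and substitute the decomposition above. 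By the very definitions of $U$ ($-\Delta U=e^{U}$), $w_0$ (equation~\eqref{6-4-2}) and $w_1$ (equation~\eqref{10-19-8}), the contributions of orders $1,\frac1p,\frac1{p^{2}}$ cancel against $-\Delta U-\frac1p\Delta w_0-\frac1{p^{2}}\Delta w_1$; multiplying what remains by $p^{3}$ gives
\begin{equation*}
-\Delta q_{p,j}-e^{U}q_{p,j}=E_{p,j}\qquad\text{in }B_{d_0/\e_{p,j}}(0),
\end{equation*}
where $E_{p,j}$ is a fixed polynomial in $U,w_0,w_1$ with an $e^{U}$ prefactor, plus a remainder of order $\frac1p$ involving $e^{w_{p,j}}$, $(1+w_{p,j}/p)^{p}$ and polynomials in $w_{p,j}$, in $v_{p,j}-w_0=\tfrac{k_{p,j}}{p}$ and in $k_{p,j}-w_1=\tfrac{q_{p,j}}{p}$ (in particular $E_{p,j}$ may carry a harmless factor $q_{p,j}$ with an $e^{U}$-decaying coefficient). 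As in \cite{GILY2021}, this identity is used on the region $\{|w_{p,j}|\le\theta p\}$ where the Taylor expansion around $e^{U}$ is licit, while on the complementary far part of $B_{d_0/\e_{p,j}}(0)$ the quantities $(1+w_{p,j}/p)^{p}$, $e^{w_{p,j}}$, $e^{U}$ are exponentially small and both $E_{p,j}$ and $q_{p,j}$ are estimated directly.

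\emph{Step 2 (the source is uniformly bounded).} The key point is that $|E_{p,j}(x)|\le C$ (in fact $E_{p,j}(x)=O(|x|^{-4+\sigma})$, any $\sigma>0$) for all $x\in B_{d_0/\e_{p,j}}(0)$. For this I would use: the bounds $|v_{p,j}(x)|\le C(1+|x|)^{\tau}$ and $|k_{p,j}(x)|\le C(1+|x|)^{\tau_1}$ from Lemma~\ref{prop3-2} and Lemma~\ref{blem3-1a}, together with $w_{p,j}=U+v_{p,j}/p$, so that every polynomial factor is bounded by powers of $\log(1+|x|)$; the decay estimates~\eqref{estimatewi} for $U,w_0,w_1$; and, decisively, Lemma~\ref{llma}, which gives $e^{w_{p,j}(x)}\le C|x|^{-4+\delta}$ for $R_\delta\le|x|\le d/\e_{p,j}$ and hence absorbs exactly those logarithmic powers that multiply exponential factors, so that the whole of $E_{p,j}$ decays.

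\emph{Step 3 (contradiction argument).} Suppose \eqref{1-23-8lemma} fails, i.e.\ $N_p:=\max_{|x|\le d_0/\e_{p,j}}\frac{|q_{p,j}(x)|}{(1+|x|)^{\tau_1}}\to+\infty$ along a subsequence, and set $\hat q_p:=q_{p,j}/N_p$, so $|\hat q_p(x)|\le(1+|x|)^{\tau_1}$ with equality at some $x_p$, and $-\Delta\hat q_p-e^{U}\hat q_p=E_{p,j}/N_p$ with the right-hand side tending to $0$ locally uniformly and in the weighted sup norm. Arguing exactly as in Part~1 and Part~2 of the proof of \cite[Proposition~3.5]{GILY2021} (using $e^{U}=O(|x|^{-4})$ and Lemma~\ref{llma} to rule out that the weighted maximum escapes to the far region) one obtains $|x_p|\le C$, hence $\hat q_p\to\hat q\not\equiv0$ in $C^{2}_{loc}(\R^{2})$, solving $-\Delta\hat q=e^{U}\hat q$ on $\R^{2}$ with $|\hat q(x)|\le C(1+|x|)^{\tau_1}$. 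Moreover $\hat q(0)=0$ and $\nabla\hat q(0)=0$, because $w_{p,j}(0)=U(0)=0$, $\nabla w_{p,j}(0)=\nabla U(0)=0$ ($x_{p,j}$ being a critical point of $u_p$ for $p$ large), and $w_0,w_1$ are radial with $w_0(0)=w_1(0)=0$, so that $q_{p,j}(0)=0$, $\nabla q_{p,j}(0)=0$. A solution of the linearized Liouville equation with sub-quadratic growth (as here, since $\tau_1<2$) belongs to $\mathrm{span}\{\partial_{x_1}U,\partial_{x_2}U,\tfrac{8-|x|^{2}}{8+|x|^{2}}\}$ --- the finite-energy kernel of Lemma~\ref{llm} --- so the condition $\hat q(0)=0$ removes the radial component and $\nabla\hat q(0)=0$ removes the two translation components, forcing $\hat q\equiv0$: a contradiction. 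This proves \eqref{1-23-8lemma}.

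\emph{Main obstacle.} The delicate part is Step~2 together with the region decomposition in Step~1: one has to control $e^{w_{p,j}}$ times arbitrary (logarithmic) powers of $w_{p,j}$ uniformly over the full range $-p<w_{p,j}\le(1-c)p$ on the \emph{growing} ball $B_{d_0/\e_{p,j}}(0)$, which forces the careful splitting into a near region (Taylor expansion of $(1+w_{p,j}/p)^{p}$ around $e^{U}$ valid), an intermediate annulus (controlled by Lemma~\ref{llma}), and a far region (all terms exponentially small) --- the same bookkeeping as in \cite{GILY2021}, now at one order higher. A secondary technical point is to check that the slow growth of the limit $\hat q$ does place it in the finite-energy kernel described by Lemma~\ref{llm}.
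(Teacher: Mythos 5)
Your proof fills in, at essentially the right level of detail, what the paper leaves implicit by referring to Proposition~3.5 of \cite{GILY2021}: expand the equation for $w_{p,j}$ one order further, write $-\Delta q_{p,j}-e^{U}q_{p,j}=E_{p,j}$ with a source controlled through Lemma~\ref{llma} and the estimates from Lemma~\ref{prop3-2} and Lemma~\ref{blem3-1a}, then run the weighted-maximum contradiction argument with the same region decomposition (near / annulus via Lemma~\ref{llma} / far). The scheme is the correct one and matches the paper's intent.

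Two remarks on Step~3. First, the normalizations $q_{p,j}(0)=0$, $\nabla q_{p,j}(0)=0$ do hold, because $w_{p,j}(0)=0$, $\nabla w_{p,j}(0)=0$ ($x_{p,j}$ is a local maximum of $u_p$), together with $w_0(0)=w_1(0)=0$ and the radiality of $w_0,w_1$ established in Lemma~\ref{prop3-2} and Lemma~\ref{blem3-1a}; these three scalar conditions are exactly enough to annihilate the three-dimensional kernel, since $\frac{8-|y|^2}{8+|y|^2}$ has value $1$ and zero gradient at the origin while $\partial_{x_1}U$, $\partial_{x_2}U$ vanish at the origin with linearly independent gradients. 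Second, be careful in citing Lemma~\ref{llm}: as stated it requires $\int_{\R^2}|\nabla v|^2<\infty$, which you do not know a priori for the limit $\hat q$; what you actually need (and implicitly use) is the classification of $C^2(\R^2)$ solutions of $-\Delta v=e^U v$ under the growth constraint $|v(x)|\le C(1+|x|)^{\tau_1}$ with $\tau_1<2$. This is correct — the indicial analysis in Fourier modes shows that for $|n|\ge 2$ the regular mode grows like $|x|^{|n|}$ and is excluded, while for $n=0$ and $|n|=1$ the second independent solution is singular at the origin and hence not admissible — but it is a distinct fact from Lemma~\ref{llm} and should be stated (or cited) as such rather than attributed to it.

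Finally, in Step~2 the phrase ``$E_{p,j}$ and $q_{p,j}$ are estimated directly'' in the far region is doing a lot of work: there one cannot Taylor-expand $(1+w_{p,j}/p)^p$ around $e^{w_{p,j}}$, and one must instead use a barrier/comparison function exactly as in Lemma~\ref{prop3-3prima} and in the proof of \cite[Proposition~3.5]{GILY2021}, with the weight $(1+|x|)^{\tau_1}$ absorbing the polynomial-in-$p$ size of $q_{p,j}$ on $|x|\sim e^{p/4}$. You flag this correctly as the main technical load; just be aware it is a genuine comparison-principle step, not a pointwise estimate.
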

\begin{proof} The proof of \eqref{1-23-8lemma} is similar to the one of Proposition~3.5
in \cite{GILY2021}, hence we omit it.
\end{proof}

\

\subsection{The proof of Proposition \ref{prop:upHasTheFormInEMP} }\label{subsection:ProofImprovedFormInEMP}$\,$

\vskip 0.2cm

The proof of Proposition \ref{prop:upHasTheFormInEMP} is postponed at the end of the subsection, after collecting some intermediate results. Let us denote
\begin{equation}\label{def:Wp}
W_p:=\sum_{j=1}^k P W_{p, j},
\end{equation}
where $W_{p, j}$ is defined in \eqref{2-26-8}. Moreover let
\begin{equation}\label{0-4-4}
\mu_{p,j}:=\e_{p,j} e^{\frac p4},
\end{equation}
then by \eqref{nn3-29-03},  $0<\frac{1}{c}\leq \mu_{p,j}\leq c$.

 \vskip 0.2cm

Recall that $W_{p, j}$ approximates $u_{p}$ near $x_{p,j}$ (see Remark \ref{remak:expansion:up}). We will show that  also $W_p$ approximates $u_{p}$.
 \vskip 0.1cm

In order to estimate $ W_{p}- u_p$, we first estimate $PU\bigl(\frac{x-x_{p,j}}{\e_{p,j} }\bigr)$,
$Pw_0\bigl(\frac{x-x_{p,j}}{\e_{p,j} }\bigr)$ and $Pw_1\bigl(\frac{x-x_{p,j}}{\e_{p,j} }\bigr)$, where $U$, $w_{0}$ and $w_{1}$ are the profiles in the definition of  $W_{p, j}$ (see \eqref{2-26-8}).
\begin{Lem}\label{l1-29-8}
It holds
\begin{equation}\label{1-29-8}
PU\bigl(\frac{x-x_{p,j}}{\e_{p,j} }\bigr)= U\bigl(\frac{x-x_{p,j}}{\e_{p,j} }\bigr)
-\log (64\e_{p,j}^4) -8\pi H(x,x_{p,j})+O(\e_{p,j}^2),
\end{equation}
\begin{equation}\label{2-29-8}
Pw_i\bigl(\frac{x-x_{p,j}}{\e_{p,j} }\bigr)= w_i\bigl(\frac{x-x_{p,j}}{\e_{p,j} }\bigr)
-C_i\log \frac1{\e_{p,j}} +2\pi C_iH(x,x_{p,j})+O(\e_{p,j}), \quad i=0,1,
\end{equation}
as $p\rightarrow +\infty$, where $C_{i}$, $i=0,1$ are the constants in Remark \ref{remarkEstimateswi}.
In particular, we have
\begin{equation}\label{tta}
PW_{p, j}= \frac{8\pi}{ p^{\frac p{p-1}}\e_{p,j}^{\frac2{p-1}}} \Big(1-\frac{C_0}{4p}-\frac{C_1}{4p^2}\Big)G(x,x_{p,j})+O\Big(\frac{\e_{p,j}}{p}\Big),~~\mbox{for any}~~
x\in \Omega\backslash B_d(x_{p,j}).
\end{equation}

\end{Lem}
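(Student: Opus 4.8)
The plan is to derive all three estimates from the single observation that, for $v\in C^{2}(\overline{\Omega})$, the projection error $Pv-v$ is harmonic in $\Omega$ with trace $-v$ on $\partial\Omega$, so it is completely determined by the boundary values of $v$; the task then reduces to expanding those boundary values and recognizing the resulting harmonic function. To prove \eqref{1-29-8}, set $\tilde U_{j}(x):=U\big(\frac{x-x_{p,j}}{\e_{p,j}}\big)$. From the explicit formula \eqref{def:U} one has $U(y)=-4\log|y|+\log 64+O(|y|^{-2})$ as $|y|\to\infty$, so on $\partial\Omega$, where $|x-x_{p,j}|$ stays bounded away from $0$,
\[
\tilde U_{j}(x)=-4\log|x-x_{p,j}|+\log(64\e_{p,j}^{4})+O(\e_{p,j}^{2}).
\]
The key identity I would invoke is that, by the normalization $G(x,y)=\frac1{2\pi}\log\frac1{|x-y|}-H(x,y)$ together with $G(\cdot,y)=0$ on $\partial\Omega$, one has $-4\log|x-x_{p,j}|=8\pi H(x,x_{p,j})$ for $x\in\partial\Omega$. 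Hence $P\tilde U_{j}-\tilde U_{j}$ is harmonic with trace $-8\pi H(\cdot,x_{p,j})-\log(64\e_{p,j}^{4})+O(\e_{p,j}^{2})$; since $H(\cdot,x_{p,j})$ and constants are harmonic, subtracting $-8\pi H(\cdot,x_{p,j})-\log(64\e_{p,j}^{4})$ leaves a harmonic function with $O(\e_{p,j}^{2})$ boundary data, which is $O(\e_{p,j}^{2})$ throughout $\Omega$ by the maximum principle — upgraded to a $C^{1}$ bound away from $x_{p,j}$ by interior elliptic estimates. This gives \eqref{1-29-8}.

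For \eqref{2-29-8} I would repeat the argument verbatim, using the tail expansion \eqref{estimatewi} of $w_{i}$ instead: on $\partial\Omega$ it yields $w_{i}\big(\frac{x-x_{p,j}}{\e_{p,j}}\big)=-2\pi C_{i}H(x,x_{p,j})+C_{i}\log\frac1{\e_{p,j}}+O(\e_{p,j})$, the $O\big(\e_{p,j}^{2}\log^{2}\frac1{\e_{p,j}}\big)$ remainder being absorbed into $O(\e_{p,j})$, and subtracting the harmonic function $2\pi C_{i}H(\cdot,x_{p,j})-C_{i}\log\frac1{\e_{p,j}}$ together with the maximum principle gives the claim. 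Finally, for \eqref{tta} I would use that $P$ is linear and that $P(\mathrm{const})=0$, so by \eqref{2-26-8} the projection of the leading constant $p$ in $W_{p,j}$ vanishes and $PW_{p,j}=\frac1{p^{p/(p-1)}\e_{p,j}^{2/(p-1)}}\big(P\tilde U_{j}+\frac1p P\tilde w_{0,j}+\frac1{p^{2}}P\tilde w_{1,j}\big)$ with $\tilde w_{i,j}(x):=w_{i}\big(\frac{x-x_{p,j}}{\e_{p,j}}\big)$. Evaluating \eqref{1-29-8} and \eqref{2-29-8} for $x\in\Omega\setminus B_{d}(x_{p,j})$ and using there the interior identities $-4\log|x-x_{p,j}|=8\pi(G+H)$ and $C_{i}\log|x-x_{p,j}|=-2\pi C_{i}(G+H)$, the regular parts $H$ cancel and one gets $P\tilde U_{j}=8\pi G(x,x_{p,j})+O(\e_{p,j}^{2})$ and $P\tilde w_{i,j}=-2\pi C_{i}G(x,x_{p,j})+O(\e_{p,j})$; collecting these, writing $-2\pi C_{i}=8\pi\cdot(-C_{i}/4)$, and recalling the prefactor is $O(1/p)$ (by \eqref{nn3-29-03}) produces \eqref{tta}.

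There is no real conceptual difficulty; the effort is purely in the bookkeeping of error orders. The points needing care are: distinguishing the $O(\e_{p,j}^{2})$ precision available for $U$ from the weaker $O(\e_{p,j})$ forced by the $O(\log^{2}r/r^{2})$ tail of $w_{i}$ in \eqref{estimatewi}; passing from the $L^{\infty}$ maximum-principle bound on the harmonic remainder to a $C^{1}$ bound in the region away from $x_{p,j}$ via interior gradient estimates; and keeping in mind that \eqref{tta} is asserted only on $\Omega\setminus B_{d}(x_{p,j})$, where $G(\cdot,x_{p,j})$ is smooth, whereas near $x_{p,j}$ the concentrated profile $U$ in $W_{p,j}$ is not negligible and the relevant expansion is instead the one in Remark \ref{remak:expansion:up}.
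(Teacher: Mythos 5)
Your proposal is correct and follows essentially the same route as the paper: write $\varphi = U(\frac{\cdot - x_{p,j}}{\e_{p,j}}) - PU(\frac{\cdot-x_{p,j}}{\e_{p,j}})$ (and similarly for $w_i$), note it is harmonic with boundary trace given by the far-field expansion of the profile, recognize the leading harmonic part as $-8\pi H(\cdot,x_{p,j})$ plus a constant via $G=0$ on $\partial\Omega$, and bound the remainder by the maximum principle; \eqref{tta} then follows from \eqref{1-29-8}, \eqref{2-29-8}, $P(\text{const})=0$ and the asymptotics of the prefactor via \eqref{nn1-26-8}. The only difference is cosmetic: the paper leaves the deduction of \eqref{tta} to a citation of Remark~2.1 in \cite{EMP2006}, while you carry out the short calculation in the region $\Omega\setminus B_d(x_{p,j})$; both give the same result.
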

\begin{proof}
Let
\[
\varphi(x)=U\bigl(\frac{x-x_{p,j}}{\e_{p,j} }\bigr)-PU\bigl(\frac{x-x_{p,j}}{\e_{p,j} }\bigr).
\]
Then, $\varphi$ is harmonic in $\Omega$ and on $\partial\Omega$,
\[
\varphi(x)=U\bigl(\frac{x-x_{p,j}}{\e_{p,j} }\bigr)
=\log (64\e_{p,j}^4) +\log\frac1{|x-x_{p,j}|^4} +O(\e_{p,j}^2).
\]
This gives
\[
\varphi(x)=\log (64\e_{p,j}^4) +8\pi H(x,x_{p,j})+O(\e_{p,j}^2),
\]
namely \eqref{1-29-8}.
Similarly, on $\partial\Omega$, it holds
\[
w_i\bigl(\frac{x-x_{p,j}}{\e_{p,j} }\bigr)= C_i\log \frac1{\e_{p,j}}-C_i\log
\frac1{|x-x_{p,j}|} +O(\e_{p,j}),
\]
and thus, \eqref{2-29-8} follows.

\vskip 0.1cm

Similar to Remark 2.1 of \cite{EMP2006}, \eqref{tta} can be deduced by \eqref{2-26-8}, \eqref{1-29-8} and \eqref{2-29-8}.
\end{proof}

\begin{Lem}  For  $x\in B_d(x_{p,j})$,
\begin{equation}\label{3-29-8}
\begin{split}
& W_{p}(x)- \sum_{j=1}^k W_{p, j}(x)\\
=& \frac1{ p^{\frac p{p-1}}\e_{p,j}^{\frac2{p-1}}} \Bigl[
-\log (64\mu_{p,j}^4) -8\pi\bigl(1-\frac{C_0}{4p}- \frac{C_1}{4p^2} \bigr) H(x,x_{p,j})+\bigl(C_0 +\frac{C_1}{p}\bigr)\frac{\log (\mu_{p,j}e^{-p/4})}{p}\Bigr]\\
&\qquad + \frac1{ p^{\frac p{p-1}}\e_{p,j}^{\frac2{p-1}}}8\pi\sum_{l\ne j}\left(\frac{\mu_{p,j}}{ \mu_{p,l}}\right)^{\frac2{p-1}}\bigl(1-\frac{C_0}{4p}- \frac{C_1}{4p^2} \bigr) G(x,x_{p,l})
+O(\frac{\e_{p,j}}p),\end{split}
\end{equation}
as $p\rightarrow +\infty$, where $W_{p, j}$ is defined in \eqref{2-26-8}.
\end{Lem}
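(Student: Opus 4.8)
The plan is to compute $W_p(x) - \sum_{j=1}^k W_{p,j}(x)$ directly for $x$ in a fixed small ball $B_d(x_{p,j})$ by splitting $W_p = \sum_{l=1}^k PW_{p,l}$ according to whether $l=j$ or $l\neq j$. First I would use Lemma \ref{l1-29-8}: for the index $l=j$, combining \eqref{1-29-8} and \eqref{2-29-8} with the definition \eqref{2-26-8} of $W_{p,j}$ gives
\[
PW_{p,j}(x)-W_{p,j}(x)=\frac{1}{p^{\frac{p}{p-1}}\e_{p,j}^{\frac{2}{p-1}}}\Bigl[-\log(64\e_{p,j}^4)-8\pi\bigl(1-\tfrac{C_0}{4p}-\tfrac{C_1}{4p^2}\bigr)H(x,x_{p,j})+\bigl(C_0+\tfrac{C_1}{p}\bigr)\tfrac{\log(1/\e_{p,j})}{p}\Bigr]+O\bigl(\tfrac{\e_{p,j}}{p}\bigr),
\]
where the prefactor bookkeeping is exactly the one already carried out in the proof of \eqref{tta}. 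Then substituting $\e_{p,j}=\mu_{p,j}e^{-p/4}$ from \eqref{0-4-4} converts $\log(64\e_{p,j}^4)$ into $\log(64\mu_{p,j}^4)-p$ and $\log(1/\e_{p,j})$ into $\log(\mu_{p,j}e^{-p/4})^{-1}=-\log(\mu_{p,j}e^{-p/4})$; here I must be careful to track the sign convention so that the coefficient of $\log(\mu_{p,j}e^{-p/4})/p$ comes out as $+\bigl(C_0+\tfrac{C_1}{p}\bigr)$ as claimed, absorbing the constant term $-p$ produced by $\log(64\e_{p,j}^4)$ correctly (it is the term that was already folded into the ``$p+U+\dots$'' bracket of $W_{p,j}$, so it does not reappear here).

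Next, for the indices $l\neq j$ I would use \eqref{tta} from Lemma \ref{l1-29-8}, which says
\[
PW_{p,l}(x)=\frac{8\pi}{p^{\frac{p}{p-1}}\e_{p,l}^{\frac{2}{p-1}}}\Bigl(1-\tfrac{C_0}{4p}-\tfrac{C_1}{4p^2}\Bigr)G(x,x_{p,l})+O\bigl(\tfrac{\e_{p,l}}{p}\bigr),\qquad x\in\Omega\setminus B_d(x_{p,l}),
\]
valid since $x\in B_d(x_{p,j})$ lies outside $B_d(x_{p,l})$ for $l\neq j$ (after shrinking $d$ so the balls are disjoint). Meanwhile on $B_d(x_{p,j})$ the ``self'' profile $W_{p,l}(x)$ with $l\neq j$ is exponentially small — more precisely, by \eqref{2-26-8} and the decay of $U,w_0,w_1$, $W_{p,l}(x)$ contributes only at the order absorbed into the $G(x,x_{p,l})$ terms or into $O(\e_{p,j}/p)$ — and in any case $\sum_{l\neq j}W_{p,l}(x)$ is being subtracted on the left-hand side, so it is precisely $PW_{p,l}(x)$ that survives. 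The remaining task is to rewrite the prefactor $\e_{p,l}^{-2/(p-1)}$ in terms of $\e_{p,j}^{-2/(p-1)}$: since $\e_{p,l}=\mu_{p,l}e^{-p/4}$, one has $\e_{p,j}^{-2/(p-1)}/\e_{p,l}^{-2/(p-1)}=(\mu_{p,j}/\mu_{p,l})^{2/(p-1)}$ (the $e^{-p/4}$ factors cancel in the ratio), which produces exactly the factor $\bigl(\tfrac{\mu_{p,j}}{\mu_{p,l}}\bigr)^{2/(p-1)}$ in the stated formula.

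Finally I would collect the $l=j$ and $l\neq j$ contributions, noting that all the genuinely lower-order remainders are $O(\e_{p,j}/p)$ uniformly for $x\in B_d(x_{p,j})$ (using that the $\mu_{p,l}$ and $\e_{p,l}$ are mutually comparable up to constants by \eqref{nn3-29-03}, so $\e_{p,l}/p=O(\e_{p,j}/p)$), which gives \eqref{3-29-8}. The main obstacle I expect is purely bookkeeping: keeping the exact constants $C_0,C_1$, the signs, and the powers of $p$ and $\e_{p,j}$ consistent through the substitution $\e_{p,j}=\mu_{p,j}e^{-p/4}$ and through the $\tfrac{1}{p-1}$-vs-$\tfrac1p$ discrepancies in the exponents — in particular verifying that the error from replacing $\e_{p,j}^{2/(p-1)}$-type factors by their leading behavior is genuinely absorbed into $O(\e_{p,j}/p)$ rather than some larger term. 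There is no conceptual difficulty beyond a careful application of Lemma \ref{l1-29-8} term by term.
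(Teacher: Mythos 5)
Your overall strategy is the paper's own: the paper proves this lemma with the single sentence ``It follows from Lemma~\ref{l1-29-8}'', i.e.\ exactly the term-by-term application of \eqref{1-29-8}--\eqref{2-29-8} for the index $l=j$ and of \eqref{tta} for $l\neq j$, followed by the substitution $\e_{p,l}=\mu_{p,l}e^{-p/4}$. Your $l=j$ computation is essentially right, provided you make explicit the one cancellation you only gesture at: the projection annihilates the constant $p$ inside the bracket of \eqref{2-26-8} (a constant is harmonic, so its projection vanishes), so $PW_{p,j}-W_{p,j}$ contains a term $-p$ which exactly cancels the $+p$ produced by $-\log(64\e_{p,j}^4)=-\log(64\mu_{p,j}^4)+p$. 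Also, \eqref{2-29-8} contributes $-C_i\log\frac{1}{\e_{p,j}}=+C_i\log\e_{p,j}=+C_i\log(\mu_{p,j}e^{-p/4})$, so the $+\log(1/\e_{p,j})$ appearing in your displayed intermediate formula carries the wrong sign (you flagged this yourself; the final sign in \eqref{3-29-8} is then correct).

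The genuine gap is your treatment of the self-profiles $W_{p,l}(x)$ for $l\neq j$. They are \emph{not} exponentially small on $B_d(x_{p,j})$. For $|x-x_{p,l}|\geq c>0$ one has $U\bigl(\tfrac{x-x_{p,l}}{\e_{p,l}}\bigr)=\log(64\e_{p,l}^4)+4\log\tfrac{1}{|x-x_{p,l}|}+O(\e_{p,l}^2)=-p+\log(64\mu_{p,l}^4)+4\log\tfrac{1}{|x-x_{p,l}|}+O(\e_{p,l}^2)$, so the large constant $p$ in the bracket of \eqref{2-26-8} is cancelled and the bracket equals $\log(64\mu_{p,l}^4)+4\log\tfrac1{|x-x_{p,l}|}+\tfrac{C_0}{4}+O(1/p)$, an $O(1)$ quantity; multiplied by the prefactor $\sim\sqrt e/p$ this gives $W_{p,l}(x)=O(1/p)$, i.e.\ of the \emph{same} order as the $G(x,x_{p,l})$ terms you retain, and nowhere near $O(\e_{p,j}/p)$. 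Hence, under the literal reading of the left-hand side as $W_p-\sum_{l=1}^kW_{p,l}$, your step ``so it is precisely $PW_{p,l}$ that survives'' fails, and the displayed right-hand side would be off by $\sum_{l\neq j}W_{p,l}(x)=O(1/p)$. The resolution is that the summation index collides with the fixed index $j$ of the ball: what is meant, and what is actually used later (in Lemma~\ref{l2-29-8} and Proposition~\ref{l1-7-9}, where \eqref{1-26-8} approximates $u_p$ near $x_{p,j}$ by the \emph{single} profile $W_{p,j}$), is $W_p(x)-W_{p,j}(x)$, with only the profile centred at $x_{p,j}$ subtracted. Then the $l\neq j$ contribution is the full $PW_{p,l}$, which \eqref{tta} converts into the stated $G(x,x_{p,l})$ terms with nothing left over. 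If instead you insist on the literal form, you must carry the terms $-W_{p,l}(x)$, $l\neq j$; combined with $PW_{p,l}$ via \eqref{1-29-8}--\eqref{2-29-8} they produce $\tfrac{1}{p^{p/(p-1)}\e_{p,l}^{2/(p-1)}}\bigl[-\log(64\mu_{p,l}^4)-8\pi\bigl(1-\tfrac{C_0}{4p}-\tfrac{C_1}{4p^2}\bigr)H(x,x_{p,l})+\bigl(C_0+\tfrac{C_1}{p}\bigr)\tfrac{\log(\mu_{p,l}e^{-p/4})}{p}\bigr]$ rather than the $G$-terms of \eqref{3-29-8}.
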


\begin{proof}
It follows from
Lemma~\ref{l1-29-8}.
\end{proof}
 We prove the following result.
\begin{Lem}\label{l2-29-8}
Let
\[\begin{split}
\boldsymbol{\mu}_{p}:=(\mu_{p,1},\cdots,\mu_{p,k}),\;\;
\boldsymbol{x}_{p}:=(x_{p,1},\cdots,x_{p,k}),
\end{split}\]
where $x_{p,j}$ and $\mu_{p,j}$ for $j=1,\cdots,k$ are given in \eqref{def:xpj} and \eqref{0-4-4} respectively. Then
it holds
\begin{equation}\label{4-29-8}
\begin{split}
F_{p,j}(\boldsymbol{\mu}_{p},\boldsymbol{x}_{p})
%=
%&-\log (64 \mu_{p,j}^4) -\bigl(8\pi-\frac{2C_0\pi}{p}- \frac{2C_1\pi}{p^2} \bigr) H(x_{p,j},x_{p,j})-\bigl(\frac{C_0 }p+\frac{C_1}{p^2}\bigr)\log \frac1{\e_{p,j}}
%\\
%&\qquad +\sum_{l\ne j}\frac{ \mu_{p,j}^{\frac2{p-1}} }{\mu_{p,l}^{\frac2{p-1} } }
% \bigl(8\pi-\frac{2C_0\pi}{p}- \frac{2C_1\pi}{p^2} \bigr) G(x_{p,j},x_{p,l})
=O\big(\frac1{p^3}\big),
\end{split}
\end{equation}
as $p\rightarrow +\infty$, where $F_{p,j}$ is defined in \eqref{10-30-8}.
In particular,
\begin{equation}\label{n3-29-8}
 W_{p}(x)- \sum_{j=1}^k W_{p, j}(x)=\frac1p
O\big(\frac{1}{p^3}+|x-x_{p,j}|\big),\quad x\in  B_d(x_{p,j}),\; j=1,\cdots,k.
\end{equation}
\end{Lem}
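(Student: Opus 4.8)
The strategy is to show that the particular choice $(\boldsymbol{\mu}_{p},\boldsymbol{x}_{p})$, where $x_{p,j}$ are the (shifted) maximum points of $u_p$ and $\mu_{p,j}=\e_{p,j}e^{p/4}$, \emph{almost} solves the defining system \eqref{10-30-8}, up to an error of order $p^{-3}$. The point is that $(x_{p,j},\mu_{p,j})$ encode genuine information on the solution $u_p$, namely the high-order asymptotics of Theorem~\ref{thm:asymptoticup} and Remark~\ref{remak:expansion:up}; plugging these into $F_{p,j}$ and comparing with the known expansions of $\e_{p,j}$ in \eqref{nn3-29-03}–\eqref{3-29-03} should produce the cancellation. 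Once \eqref{4-29-8} is established, \eqref{n3-29-8} follows immediately: indeed by \eqref{3-29-8} we have, for $x\in B_d(x_{p,j})$,
\[
W_{p}(x)-\sum_{l=1}^k W_{p,l}(x)
=\frac{1}{p^{\frac{p}{p-1}}\e_{p,j}^{\frac{2}{p-1}}}\Bigl[F_{p,j}(\boldsymbol{\mu}_{p},\boldsymbol{x}_{p})
+O\bigl(|x-x_{p,j}|\bigr)\Bigr]+O\Bigl(\frac{\e_{p,j}}{p}\Bigr),
\]
where the $O(|x-x_{p,j}|)$ collects the first-order Taylor terms of $H(x,x_{p,j})$ and $G(x,x_{p,l})$ around $x_{p,j}$ (their values at $x_{p,j}$ are exactly what $F_{p,j}$ evaluates), and $\frac{1}{p^{\frac{p}{p-1}}\e_{p,j}^{\frac{2}{p-1}}}=\frac{u_p(x_{p,j})}{p}=\frac{\sqrt e}{p}(1+o(1))$ is comparable to $\frac1p$. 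Combining with \eqref{4-29-8} gives \eqref{n3-29-8}.

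\textbf{Proof of \eqref{4-29-8}.} First I would use \eqref{nn3-29-03}, rewritten as
\[
\e_{p,j}=e^{-p/4}e^{-(2\pi\Psi_{k,j}(\boldsymbol{\mathrm x}_\infty)+\frac{3\log 2}{2}+\frac34)}\bigl(1+O(p^{\delta-1})\bigr),
\]
so that $\mu_{p,j}=e^{-(2\pi\Psi_{k,j}(\boldsymbol{\mathrm x}_\infty)+\frac{3\log2}{2}+\frac34)}(1+O(p^{\delta-1}))$; this needs to be sharpened to an $O(1/p^3)$-type control: here the role of Remark~\ref{remak:expansion:up}, i.e. the improved three-term expansion \eqref{refinedExpansion}–\eqref{1-23-8} of $w_{p,j}$, is essential, because the coefficients $C_0,C_1$ appearing in $F_{p,j}$ come precisely from the $\log r$ behaviour \eqref{estimatewi} of $w_0,w_1$, and matching $u_p$ to $W_{p,j}$ up to $O(p^{-4})$ forces the discrete system to be satisfied up to the next order. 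Concretely, one evaluates the expansion of $u_p$ near $x_{p,j}$ on the one hand using \eqref{1-26-8}, and on the other hand using Lemma~\ref{luo-1} (the Green-function representation valid away from the spikes), and matches them in the overlapping annular region $\{|x-x_{p,j}|\sim d\}$; the compatibility of the two matchings is exactly $F_{p,j}(\boldsymbol{\mu}_p,\boldsymbol{x}_p)=O(p^{-3})$. The terms $\sum_{l\neq j}(\mu_{p,j}/\mu_{p,l})^{2/(p-1)}G(\xi_j,\xi_l)$ in \eqref{10-30-8} are handled using \eqref{3-29-03}: since $(\mu_{p,j}/\mu_{p,l})^{2/(p-1)}=1+O(1/p)$ and $\e_{p,j}/\e_{p,l}$ is explicit, the interaction terms are under control to the required order, and the system \eqref{10-30-8} (which, recall, has a unique solution $\bar\mu_{p,j}$ by the implicit function theorem with a uniformly invertible linearization) is stable enough that an $O(p^{-3})$ residual is what one gets from the $O(p^{-4})$ pointwise accuracy of \eqref{1-26-8} after dividing by the $O(p^{-1})$ size of $\frac{u_p(x_{p,j})}{p}$ appearing as a prefactor.

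\textbf{Main obstacle.} The delicate point is bookkeeping the precise order of each term: one must verify that the \emph{next} correction beyond the $w_1/p^2$ term — i.e. the remainder controlled by \eqref{1-23-8} with exponent $\tau_1$ — together with the $O(\e_{p,j}^2)$ and $O(\e_{p,j})$ errors in the projection estimates \eqref{1-29-8}–\eqref{2-29-8}, and the $O(1/p)$ errors in the ratios $(\mu_{p,j}/\mu_{p,l})^{2/(p-1)}$, all contribute at order $\le p^{-3}$ after multiplication by the common prefactor $p^{\frac{p}{p-1}}\e_{p,j}^{\frac{2}{p-1}}\sim p$. In particular one needs $|x_{p,j}-x_{\infty,j}|$ to be small enough — here $o(1)$ from Theorem~\ref{thm:asymptoticup} combined with the $C^1$-dependence of $\bar\mu_{p,\cdot}$ on $\boldsymbol\xi$ and the fact that $x_{p,j}$ is a critical point of $u_p$ (so that the first-order term in the matching of $u_p$ with $W_{p,j}$ vanishes automatically) suffices to keep the error in $F_{p,j}$ at the claimed order. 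I would organise the computation by writing $F_{p,j}(\boldsymbol\mu_p,\boldsymbol x_p)=\frac{p}{u_p(x_{p,j})}\bigl(u_p - \sum_l W_{p,l}\bigr)(y)$ evaluated at a suitable radius $|y-x_{p,j}|=d$, invoking \eqref{1-26-8} on one side and \eqref{luo-1}–\eqref{luoluo1} on the other, and then reading off \eqref{4-29-8}; \eqref{n3-29-8} is then the restatement of \eqref{3-29-8} with this information inserted.
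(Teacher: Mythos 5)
Your overall strategy --- compare the local refined expansion of $u_p$ with a Green-function representation, and read the residual of the system \eqref{10-30-8} off the compatibility condition --- is indeed the idea behind the paper's proof, and your derivation of \eqref{n3-29-8} from \eqref{4-29-8} via \eqref{3-29-8} is correct. However, the concrete implementation you propose has a gap. You suggest matching the inner expansion \eqref{1-26-8} against the outer expansion of Lemma 2.3 pointwise in the annulus $\{|x-x_{p,j}|\sim d\}$. This cannot work as stated: the error in \eqref{1-26-8} is $O\bigl(p^{-4}(1+|x-x_{p,j}|/\e_{p,j})^{\tau_1}\bigr)$, which at $|x-x_{p,j}|=d$ is of size $e^{\tau_1 p/4}p^{-4}$ and hence useless for an $O(p^{-4})$ pointwise comparison; and the outer expansion \eqref{luo-1} carries coefficients $C_{p,j}=\frac1p(8\pi\sqrt e+o(1))$ known only to relative precision $o(1)$, whereas the argument needs them to absolute precision $O(p^{-4})$. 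The paper circumvents both problems by working with the exact identity $u_p(x_{p,j})=\int_\Omega G(y,x_{p,j})u_p^p(y)\,dy$ evaluated at the concentration point itself, and by inserting \eqref{1-26-8} only \emph{under integrals} against $u_p^p\sim \e_{p,j}^{-2}e^{U}$, where the weight $e^{U}$ makes the $(1+|y|)^{\tau_1}$ growth of the error harmless.

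Moreover, the computations that actually produce the terms of $F_{p,j}$ are absent from your plan. One must evaluate $\int_{B_d(x_{p,j})}u_p^p$ and $\int_{B_d(x_{p,j})}\log\frac1{|y-x_{p,j}|}\,u_p^p$ to precision $O(p^{-4})$: expanding $(1+\frac{w_{p,j}}p)^p$ to two orders beyond $e^U$ introduces the functions $f_0,f_1$ of \eqref{6-29-8}--\eqref{7-29-8}, and one needs the identities $\int_{\R^2}e^{U}f_i=-2\pi C_i$ (obtained from the far-field behaviour \eqref{0-29-8} of $w_i'$) and $\int_{\R^2}\log\frac1{|y|}e^{U}f_i=2\pi w_i(0)=0$ (obtained from Proposition \ref{p1-30-8} together with the normalization $w_i(0)=0$). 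The first identity is where the coefficients $8\pi-\frac{2C_0\pi}{p}-\frac{2C_1\pi}{p^2}$ come from; the second, together with $\int\log\frac1{|y|}e^U=-2\pi\log 64$, is where the $-\log(64\mu_{p,j}^4)$ term comes from. Without carrying out these integrations one cannot conclude \eqref{4-29-8}, so the proposal, while pointing in the right direction, does not yet constitute a proof.
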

\begin{proof}
\eqref{n3-29-8} follows from \eqref{4-29-8}, observing that from \eqref{3-29-8}   one has
\[W_{p}(x_{p,j})- \sum_{j=1}^k W_{p, j}(x_{p,j})= \frac1{ p^{\frac p{p-1}}\e_{p,j}^{\frac2{p-1}}} F_{p,j}(\boldsymbol{\mu}_{p},\boldsymbol{x}_{p}) +O(\frac{\e_{p,j}}p).\]
Next we prove \eqref{4-29-8}.
By the Green representation theorem and \eqref{11-14-03N}, we have
\begin{equation}\label{s5-8-3}
\begin{split}
u_p(x_{p,j})=& \int_{\Omega}G(y,x_{p,j})
 u_{p}^p(y)\,dy=
 \sum^k_{l=1}\int_{B_d(x_{p,l})}G(y,x_{p,j})u_{p}^p(y)\,dy
+O\Big(\frac{C^p}{p^p}\Big).
\end{split}
\end{equation}
Firstly, we estimate
$\displaystyle\int_{B_d(x_{p,j})}u_{p}^p(y)\,dy$.
We have the following expansion
\begin{equation}\label{5-29-8}
\begin{split}
&\Bigl( 1+ \frac{U\bigl(\frac{x-x_{p,j}}{\e_{p,j} }\bigr)}p+\frac{w_{0}\bigl(\frac{x-x_{p,j}}{\e_{p,j} }\bigr)}{p^2}+\frac{w_{1}\bigl(\frac{x-x_{p,j}}{\e_{p,j} }\bigr)}{p^3}
+O\bigl(\frac{1}{p^4}\bigr) \Bigr)^p\\
=& e^{ U\bigl(\frac{x-x_{p,j}}{\e_{p,j} }\bigr)  }\left[1+\frac1p\Bigl(
w_{0}\bigl(\frac{x-x_{p,j}}{\e_{p,j} }\bigr)-\frac12 U^2\bigl(\frac{x-x_{p,j}}{\e_{p,j} }\bigr)\Bigr)\right.\\
&\qquad \left.+\frac1{p^2} \Bigl(w_{1}\bigl(\frac{x-x_{p,j}}{\e_{p,j} }\bigr)-
U\bigl(\frac{x-x_{p,j}}{\e_{p,j} }\bigr)w_{0}\bigl(\frac{x-x_{p,j}}{\e_{p,j} }\bigr)+\frac13 U^3\bigl(\frac{x-x_{p,j}}{\e_{p,j} }\bigr)+\frac12
w_{0}^2\bigl(\frac{x-x_{p,j}}{\e_{p,j} }\bigr)\right.\\
& \left. \qquad +\frac18 U^4\bigl(\frac{x-x_{p,j}}{\e_{p,j} }\bigr)-\frac12 U^2\bigl(\frac{x-x_{p,j}}{\e_{p,j} }\bigr)w_{0}\bigl(\frac{x-x_{p,j}}{\e_{p,j} }\bigr)
\Bigr)+O\Bigl( \frac{ \log^6 (2+ \frac{|x-x_{p,j}|}{\e_{p,j} }) }{p^3} \Bigr)
\right].
\end{split}
\end{equation}
For simplicity, we denote
\begin{equation}\label{6-29-8}
f_0(r):=w_{0}(r)-\frac12 U^2(r)
\end{equation}
and
\begin{equation}\label{7-29-8}
f_1(r):= w_{1}(r)-
U(r)w_{0}(r)+\frac13 U^3(r)+\frac12
w_{0}^2(r)
+\frac18 U^4(r)-\frac12 U^2(r)w_{0}(r).
\end{equation}
And then  \eqref{6-4-2} and \eqref{10-19-8} may be rewritten as \[-\Delta w_{0}=e^{U}f_{0}\quad\mbox{ in }\mathbb R^{2}~~~~\mbox{and}~~~~-\Delta w_{1}=e^{U}f_{1}\quad\mbox{ in }\mathbb R^{2}.\]
Hence using \eqref{1-26-8}  and \eqref{5-29-8}, we have
\begin{equation}\label{8-29-8}
\begin{split}
\int_{B_d(x_{p,j})}u_{p}^p(y)\,dy
 =&u^p_p(x_{p,j})\int_{B_d(x_{p,j})}e^{ U\bigl(\frac{x-x_{p,j}}{\e_{p,j} }\bigr)  }\left[1+\frac1p
f_0\bigl(\frac{x-x_{p,j}}{\e_{p,j} }\bigr)
+\frac1{p^2} f_1\bigl(\frac{x-x_{p,j}}{\e_{p,j} }\bigr)+O\Bigl( \frac{ (2+ \frac{|x-x_{p,j}|}{\e_{p,j} })^{\tau_1} }{p^3} \Bigr)
\right] \\
 =&u^p_p(x_{p,j})\e_{p,j}^2\left[\int_{B_{d \e_{p,j}^{-1}}(0)}e^{ U  }\Bigl(1+\frac1p
f_0
+\frac1{p^2} f_1\Bigr)+O\Bigl( \frac{1 }{p^3} \Bigr)\right] \\
 =&\frac1{ p^{\frac p{p-1}}\e_{p,j}^{\frac2{p-1}}}\int_{\mathbb R^2}e^{ U  }\Bigl(1+\frac1p
f_0
+\frac1{p^2} f_1\Bigr)+O\Bigl( \frac{1 }{p^4} \Bigr).
\end{split}
\end{equation}
We also have
\[
\int_{\mathbb R^2}e^{ U  }=8\pi.
\]
Moreover, it follows from \eqref{0-29-8},
\[
\int_{\mathbb R^2}e^{ U  } f_i=\lim_{R\to +\infty}\int_{B_R(0)}e^{ U  } f_i = -\lim_{R\to +\infty}\int_{\partial B_R(0)} w_i'=-2\pi C_i.
\]
So we have proved that
\begin{equation}\label{9-29-8}
\begin{split}
\int_{B_d(x_{p,j})}u_{p}^p(y)\,dy
=\frac1{ p^{\frac p{p-1}}\e_{p,j}^{\frac2{p-1}}}
\Bigl(8\pi -\frac{2C_0\pi}p -\frac{2C_1\pi}{p^2}
\Bigr)+O\Bigl( \frac{1 }{p^4} \Bigr).
\end{split}
\end{equation}
It follows from \eqref{s5-8-3} and \eqref{9-29-8} that
\begin{equation}\label{10-29-8}
\begin{split}
u_p(x_{p,j})=& \int_{\Omega}G(y,x_{p,j})
 u_{p}^p(y)\,dy\\=&
 \sum^k_{l=1}\int_{B_d(x_{p,l})}G(y,x_{p,j})u_{p}^p(y)\,dy
+O\Big(\frac{C^p}{p^p}\Big)\\
=&\frac1{2\pi} \int_{B_d(x_{p,j})}\log\frac1{|y-x_{p,j}|}u_{p}^p(y)\,dy-H(x_{p,j},x_{p,j})\int_{B_d(x_{p,j})}u_{p}^p(y)\,dy
\\&+
\sum_{l\ne j}\int_{B_d(x_{p,l})}G(x_l,x_{p,j})u_{p}^p(y)\,dy
+O(\e_{p,j})\\
=&\frac1{2\pi} \int_{B_d(x_{p,j})}\log\frac1{|y-x_{p,j}|}u_{p}^p(y)\,dy
\\&+
\Bigl(8\pi -\frac{2C_0\pi}p -\frac{2C_1\pi}{p^2}
\Bigr)\left[
\sum_{l\ne j}\frac1{ p^{\frac p{p-1}}\e_{p,l}^{\frac2{p-1}}}G(x_{p,l},x_{p,j})-\frac1{ p^{\frac p{p-1}}\e_{p,j}^{\frac2{p-1}}}H(x_{p,j},x_{p,j})\right]
+O\big(\e_{p,j}\big).
\end{split}
\end{equation}
Similar to \eqref{8-29-8}, we can prove that
\begin{equation}\label{11-29-8}
\begin{split}
&\int_{B_d(x_{p,j})}\log\frac1{|y-x_{p,j}|}u_{p}^p(y)\,dy\\
&=u^p_p(x_{p,j})\int_{B_d(x_{p,j})}\log\frac1{|y-x_{p,j}|}e^{ U\bigl(\frac{y-x_{p,j}}{\e_{p,j} }\bigr)  }\left[1+\frac1p
f_0\bigl(\frac{y-x_{p,j}}{\e_{p,j} }\bigr)
+\frac1{p^2} f_1\bigl(\frac{y-x_{p,j}}{\e_{p,j} }\bigr)+O\Bigl( \frac{\log^6 (2+ \frac{|y-x_{p,j}|}{\e_{p,j} }) }{p^3} \Bigr)
\right]\\
&=u^p_p(x_{p,j})\e_{p,j}^2\left[\int_{B_{d \e_{p,j}^{-1}}(0)}\Bigl[\log \frac1{\e_{p,j}}+\log\frac1{|y|}\Bigr]e^{ U  }\Bigl(1+\frac1p
f_0
+\frac1{p^2} f_1\Bigr)+O\Bigl( \frac{1 }{p^3} \Bigr)\right]\\
=&\frac1{ p^{\frac p{p-1}}\e_{p,j}^{\frac2{p-1}}}\log\frac1{\e_{p,j}}
\Bigl(8\pi -\frac{2C_0\pi}p -\frac{2C_1\pi}{p^2}
\Bigr)
+\frac1{ p^{\frac p{p-1}}\e_{p,j}^{\frac2{p-1}}}\int_{\mathbb R^2}\log\frac1{|y|}e^{ U  }\Bigl(1+\frac1p
f_0
+\frac1{p^2} f_1\Bigr)+O\Bigl( \frac{1 }{p^4} \Bigr).\\
\end{split}
\end{equation}
Now, we use Proposition~\ref{p1-30-8} to calculate the integrals on the right hand side of \eqref{11-29-8}.
Since
\[
U(x)= \log\frac1{(1+\frac18 |x|^2)^2} =4\log\frac1{ |x|}+\log64 +O\bigl(\frac1{|x|^2}\bigr),\quad\text{as}\; |x|\to +\infty,
\]
using Proposition~\ref{p1-30-8}, we find
\[
U(0)= \frac1{2\pi}\int_{\mathbb R^2}\log\frac1{|y|}e^{ U  }+\log 64,
\]
which, together with $U(0)=0$, gives
\[
 \frac1{2\pi}\int_{\mathbb R^2}\log\frac1{|y|}e^{ U  }=-\log 64.
\]
On the other hand, by \eqref{estimatewi}  and
 Proposition~\ref{p1-30-8}, we find that
\[
\int_{\mathbb R^2}\log\frac1{|y|}e^{ U  }
f_i= w_i(0)
=0.
\]
So we have proved that
\begin{equation}\label{12-29-8}
\begin{split}
&\int_{B_d(x_{p,j})}\log\frac1{|y-x_{p,j}|}u_{p}^p(y)\,dy\\
=&\frac1{ p^{\frac p{p-1}}\e_{p,j}^{\frac2{p-1}}}\left[ \log\frac1{\e_{p,j}}
\Bigl(8\pi -\frac{2C_0\pi}p -\frac{2C_1\pi}{p^2}
\Bigr)-\log 64\right]
+O\Bigl( \frac{1 }{p^4} \Bigr).
\end{split}
\end{equation}
Combining \eqref{10-29-8} and \eqref{12-29-8}, in view of \eqref{nn1-26-8},
we have
\begin{equation}\label{20-29-8}
\begin{split}
1=& \frac1{2\pi p}\left[\log\frac1{\e_{p,j}}
\Bigl(8\pi -\frac{2C_0\pi}p -\frac{2C_1\pi}{p^2}
\Bigr)-\log 64\right]\\
&+\frac1p
\Bigl(8\pi -\frac{2C_0\pi}p -\frac{2C_1\pi}{p^2}
\Bigr)\left[
\sum_{l\ne j}\frac{ p^{\frac p{p-1}}\e_{p,j}^{\frac2{p-1}} }{ p^{\frac p{p-1}}\e_{p,l}^{\frac2{p-1}} }G(x_{p,l},x_{p,j})-H(x_{p,j},x_{p,j})\right]
+O\Big(\frac1{p^4}\Big).
\end{split}
\end{equation}
But
\[
\frac4{ p}\log\frac1{\e_{p,j}}=1+\frac1p \log\frac1{\mu_{p,j}^4},
\]
which, together with \eqref{20-29-8}, gives
\[
\begin{split}
&-\log (64\mu_{p,j}^4)-\Bigl(\frac{C_0}p +\frac{C_1}{p^2}
\Bigr)\log\frac1{\e_{p,j}}
\\
&+
\Bigl(8\pi -\frac{2C_0\pi}p -\frac{2C_1\pi}{p^2}
\Bigr)\left[
\sum_{l\ne j}\frac{ \mu_{p,j}^{\frac2{p-1}} }{\mu_{p,l}^{\frac2{p-1} } }G(x_{p,l},x_{p,j})-H(x_{p,j},x_{p,j})\right]
=O\Big(\frac1{p^3}\Big).
\end{split}
\]
\end{proof}
We now estimate $\|W_{p}- u_p\|_{L^\infty(\Omega)}$.
\begin{Prop}\label{l1-7-9}
Let $W_{p}$ be as in \eqref{def:Wp}. For any small $\tau>0$, it holds
\begin{equation*}
\|W_{p}- u_p\|_{L^\infty(\Omega)}=O\Big(\frac{1}{p^{4-\tau}}\Big).
\end{equation*}
\end{Prop}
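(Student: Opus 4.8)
The goal is to show that the global approximation $W_p = \sum_{j=1}^k PW_{p,j}$ differs from the genuine solution $u_p$ by at most $O(p^{-4+\tau})$ in $L^\infty(\Omega)$. The natural strategy is to compare $W_p$ and $u_p$ via the Green representation, splitting the domain into the $k$ concentration balls $B_d(x_{p,j})$ and the complement, and using the pointwise estimates already established. The first step is to write, for any $x\in\Omega$,
\[
u_p(x)-W_p(x) = \int_\Omega G(y,x)\Bigl(u_p^p(y) + \Delta W_p(y)\Bigr)\,dy,
\]
since $-\Delta u_p = u_p^p$ and, by the definition of the projection $P$ in \eqref{defProjection}, $-\Delta(PW_{p,j}) = -\Delta W_{p,j} = \varepsilon_{p,j}^{-2}\cdot\frac{1}{p^{\frac{p}{p-1}}\varepsilon_{p,j}^{\frac{2}{p-1}}}\,e^{U((\cdot-x_{p,j})/\varepsilon_{p,j})}\bigl(1+O(1/p)\bigr)$ supported essentially near $x_{p,j}$. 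Thus the remainder is controlled by the discrepancy between $u_p^p$ and the sum of the rescaled exponential profiles, region by region.

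**Key steps in order.** First I would record the pointwise estimate for $u_p^p$ near each $x_{p,j}$: from Remark \ref{remak:expansion:up}, equation \eqref{1-26-8}, one has $u_p = W_{p,j} + O\bigl(p^{-4}(1+|\tfrac{x-x_{p,j}}{\varepsilon_{p,j}}|)^{\tau_1}\bigr)$ in $C^2_{loc}(B_{d_0}(x_{p,j}))$, and raising to the $p$-th power together with \eqref{nn1-26-8} yields an expansion of $u_p^p$ whose leading terms match $-\Delta W_{p,j}$ up to order $p^{-2}$ inside $B_d(x_{p,j})$. Second, outside $\bigcup_j B_d(x_{p,j})$, both $u_p^p$ (by \eqref{11-14-03N}) and $\Delta W_p$ (by \eqref{tta}, which gives $PW_{p,j}=O(\varepsilon_{p,j}/p)$ there, hence its Laplacian is exponentially small) are of size $O(C^p/p^p)$, so this contribution to the integral is negligible. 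Third — and this is the crucial point — inside the balls the leading discrepancy is exactly cancelled by the choice of $\mu_{p,j}=\varepsilon_{p,j}e^{p/4}$ and $x_{p,j}$ solving (up to $O(p^{-3})$) the system \eqref{10-30-8}: Lemma \ref{l2-29-8}, specifically \eqref{n3-29-8}, tells us $W_p(x)-\sum_j W_{p,j}(x) = \tfrac1p O(p^{-3}+|x-x_{p,j}|)$ in $B_d(x_{p,j})$, which is precisely the statement that the "interaction error" between the bubbles is of the right order. Combining the local expansion of $u_p^p+\Delta W_p$ (which is $O(p^{-4}\varepsilon_{p,j}^{-2}(1+|\cdot|)^{\tau_1})$ pointwise after the error terms of orders $p^{-1},p^{-2},p^{-3}$ have been accounted for via \eqref{9-29-8}, \eqref{12-29-8} and the matching of the equations $-\Delta w_i = e^U f_i$) with the bound $\int_\Omega |G(y,x)|(1+|\tfrac{y-x_{p,j}}{\varepsilon_{p,j}}|)^{\tau_1}\,dy = O(\varepsilon_{p,j}^{2}\log\tfrac1{\varepsilon_{p,j}})$, which costs only a power of $\log\tfrac1{\varepsilon_{p,j}}=O(p)$, i.e. absorbed into $p^\tau$, gives the claimed $O(p^{-4+\tau})$.

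**Main obstacle.** The delicate part is bookkeeping the three lower-order terms in the expansion of $u_p^p$ — the $w_0/p$, $w_1/p^2$ and the interaction corrections — and checking that each is matched by a corresponding term in $\Delta W_p$ so that only the $O(p^{-4})$ remainder survives the convolution against $G$. This is where the specific normalizations enter: $w_0(0)=w_1(0)=0$ force the $\log\tfrac1{|y|}$-moments of $e^U f_i$ to vanish (used in \eqref{12-29-8}), the radial symmetry of $w_0,w_1$ from Proposition \ref{prop:summaryImporvedAsympt} is needed so that $f_0,f_1$ are radial and Proposition~\ref{p1-30-8} applies, and the definition of $F_{p,j}$ in \eqref{10-30-8} is reverse-engineered so that $F_{p,j}(\boldsymbol\mu_p,\boldsymbol x_p)=O(p^{-3})$ is equivalent to the leading interaction terms cancelling. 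I would therefore organize the proof as: (i) the exterior estimate (easy, exponential smallness); (ii) the interior estimate of $u_p^p + \Delta W_p$ using \eqref{1-26-8}, \eqref{5-29-8} and \eqref{n3-29-8}; (iii) the convolution bound. The only real risk is an arithmetic slip in step (ii) in tracking powers of $p$ and of $\varepsilon_{p,j}$; conceptually everything is already in place from Lemma \ref{l2-29-8} and Remark \ref{remak:expansion:up}.
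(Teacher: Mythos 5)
Your overall strategy differs from the paper's: you represent the \emph{difference} as $u_p-W_p=\int_\Omega G(y,x)\bigl(u_p^p+\Delta W_p\bigr)(y)\,dy$ and estimate the source pointwise, whereas the paper never convolves the error near the spikes. It first gets the bound \emph{directly and pointwise} on the tiny balls $B_{2\e_{p,j}p^3}(x_{p,j})$ from \eqref{1-26-8} and \eqref{n3-29-8} (there $(1+|z|)^{\tau_1}\le Cp^{3\tau_1}$, so the weighted remainder is genuinely $O(p^{-4+\tau})$), and only outside these balls does it use Green representations --- separately for $u_p$ (via \eqref{5-7-9}--\eqref{6-7-9}) and for $W_p$ (via \eqref{tta}, giving \eqref{1-4-4}) --- observing that the two expansions carry \emph{identical} coefficients $\frac{1}{p^{p/(p-1)}\e_{p,j}^{2/(p-1)}}\bigl(8\pi-\frac{2C_0\pi}{p}-\frac{2C_1\pi}{p^2}\bigr)$ in front of $G(x_{p,j},x)$, so the large logarithmic terms cancel exactly and only $O(p^{-4})$ survives.

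The gap in your version is quantitative and occurs exactly where you wave it away: you write that the convolution ``costs only a power of $\log\frac1{\e_{p,j}}=O(p)$, i.e.\ absorbed into $p^\tau$''. It cannot be: $\log\frac1{\e_{p,j}}\sim\frac p4$ is a full power of $p$, not $p^{\tau}$. Concretely, after the exact cancellation of the radial profiles, the local source $h_j:=u_p^p+\Delta W_{p,j}$ is only $O\bigl(\e_{p,j}^{-2}p^{-4}e^{U(z)}(1+|z|)^{\tau_1}\bigr)$ with total mass $O(p^{-4})$ (this is the content of \eqref{9-29-8}, whose error is genuinely of that order because of the nonzero remainder $q_{p,j}/p^3$ in Proposition \ref{prop:summaryImporvedAsympt}); convolving this against $G(y,x)\sim\frac1{2\pi}\log\frac1{|y-x|}$ for $x$ at distance $O(\e_{p,j})$ from $x_{p,j}$ produces $G(x_{p,j},x)\cdot O(p^{-4})=O(p^{-3})$, and no further cancellation is available in $h_j$ (it is a one-signed remainder, not radial). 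So your argument, as written, proves only $\|W_p-u_p\|_{L^\infty}=O(p^{-3})$ near the concentration points. A second, related issue: the weight $(1+|z|)^{\tau_1}$ in \eqref{1-26-8} is of size $\e_{p,j}^{-\tau_1}=e^{\tau_1 p/4}$ on the outer part of $B_{d}(x_{p,j})$, so the pointwise expansion cannot be used uniformly on the whole ball either. To repair the proof you must split as the paper does: use \eqref{1-26-8} and \eqref{n3-29-8} directly (no Green function) on $B_{2\e_{p,j}p^3}(x_{p,j})$, and in the complement exploit (i) the radial symmetry of the leading profile of $u_p^p$ together with the mean value property of the harmonic function $G(\cdot,x)$ on $B_{\e_{p,j}p^3}(x_{p,j})$, and (ii) the exact matching of the $G(x_{p,j},\cdot)$ coefficients between \eqref{6-7-9} and \eqref{1-4-4}, so that no $O(p^{-4})$ error is ever multiplied by $\log\frac1{\e_{p,j}}$.
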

\begin{proof}
It follows from \eqref{1-26-8} and \eqref{n3-29-8} that for any small $\tau>0$,
it holds
\begin{equation}\label{2-7-9}
 |W_{p}(x)- u_p(x)|\le \frac C{p^{4-\tau}},\quad x\in \displaystyle\Cup_{j=1}^k B_{2\e_{p,j} p^{3}}(x_{p,j}).
\end{equation}
We also have that for $x\in\Omega\setminus \Cup_{j=1}^k B_{2\e_{p,j} p^{3}}(x_{p,j})$,
\begin{equation}\label{3-7-9}
\begin{split}
u_p(x)=& \int_{\Omega}G(y,x)
 u_{p}^p(y)\,dy=
 \sum^k_{l=1}\int_{B_d(x_{p,l})}G(y,x)u_{p}^p(y)\,dy
+O\Big(\frac{C^p}{p^p}\Big).
\end{split}
\end{equation}
On the other hand,
\begin{equation}\label{4-7-9}
\begin{split}
&\int_{B_d(x_{p,j})\setminus B_{ \e_{p,j} p^{3}}(x_{p,j})}G(y,x)u_{p}^p(y)\,dy\\
\le & Cu^p_p(x_{p,j})\int_{B_d(x_{p,j})\setminus B_{\e_{p,j} p^{3}}(x_{p,j})}|\log\frac1{|y-x|}|e^{ U\bigl(\frac{x-x_{p,j}}{\e_{p,j} }\bigr)  }\\
\le & C u^p_p(x_{p,j})\e_{p,j}^2\left[\int_{B_{d \e_{p,j}^{-1}}(0)\setminus B_{  p^{3}}(0)}e^{ U  }\Bigl(|\log\e_{p,j}|+|\log \frac1{|y-\frac{x-x_{p,j}}
{\e_{p, j}}|}| \Bigr)\right]\\
=&O\Bigl( \frac{1 }{p^4} \Bigr).
\end{split}
\end{equation}
Moreover, for $x\notin B_{ 2\e_{p,j} p^{3}}(x_{p,j})$, similar to \eqref{9-29-8},
we can prove that
\begin{equation}\label{5-7-9}
\begin{split}
& \int_{B_{ \e_{p,j} p^{3}}(x_{p,j})}G(y,x)
 u_{p}^p(y)\,dy\\=&
G(x_{p,j}, x) \int_{B_{ \e_{p,j}  p^{3}}(x_{p,j})}
 u_{p}^p(y)\,dy
+O\Big(\e_{p,j}\Big)\\
=& \frac1{ p^{\frac p{p-1}}\e_{p,j}^{\frac2{p-1}}}
\Bigl(8\pi -\frac{2C_0\pi}p -\frac{2C_1\pi}{p^2}
\Bigr)G(x_{p,j}, x)+O\Bigl( \frac{1 }{p^4} \Bigr).
\end{split}
\end{equation}
Combining \eqref{3-7-9}, \eqref{4-7-9} and \eqref{5-7-9}, we are led to
\begin{equation}\label{6-7-9}
\begin{split}
u_p(x)=& \sum_{j=1}^k\frac1{ p^{\frac p{p-1}}\e_{p,j}^{\frac2{p-1}}}
\Bigl(8\pi -\frac{2C_0\pi}p -\frac{2C_1\pi}{p^2}
\Bigr)G(x_{p,j}, x)+O\Bigl( \frac{1 }{p^4} \Bigr),\quad x\in\Omega\setminus \Cup_{j=1}^k B_{2\e_{p,j} p^{3}}(x_{p,j}).
\end{split}
\end{equation}
It is easy to see from Lemma~\ref{l1-29-8}
that

\begin{equation}\label{1-4-4}
\begin{split}
W_p(x)=& \sum_{j=1}^k\frac1{ p^{\frac p{p-1}}\e_{p,j}^{\frac2{p-1}}}
\Bigl(8\pi -\frac{2C_0\pi}p -\frac{2C_1\pi}{p^2}
\Bigr)G(x_{p,j}, x)+O\Bigl( \frac{1 }{p^4} \Bigr),\quad x\in\Omega\setminus \Cup_{j=1}^k B_{2\e_{p,j} p^{3}}(x_{p,j}).
\end{split}
\end{equation}
Hence the result follows from \eqref{2-7-9}, \eqref{6-7-9} and \eqref{1-4-4}.
\end{proof}
Next, for any given $\boldsymbol{\mathrm \xi}=(\xi_1,\cdots, \xi_k)\in\Omega^{k}$ close to $\boldsymbol{\mathrm x}_{\infty}=(x_{\infty,1},\cdots,x_{\infty,k})$, let
\[\boldsymbol{\bar\mu}_{p}:=(\bar \mu_{p,1},\cdots,\bar\mu_{p,k}),   \]
where $\bar \mu_{p,j}=\bar \mu_{p,j}(\boldsymbol{\mathrm  \xi})$, $j=1,\cdots,k$, are defined in \eqref{10-30-8}. Let also $\bar\varepsilon_{p,j}$, $\bar{W}_{ p, j}$ be as in \eqref{def:barepsilon} and \eqref{12-30-8}, respectively. We define
\begin{equation*}
\bar W_{p}:=\sum_{j=1}^k  P\bar W_{p, j},
\end{equation*}
where $P$ is the projection in \eqref{defProjection}.

\vskip 0.1cm

To avoid confusion, we use $\boldsymbol{\bar\mu}_{p}^*$, $\bar \mu_{p,j}^*$, $\bar\varepsilon_{p,j}^*$, $\bar{W}_{ p, j}^*$ and $\bar W_p^*$ to denote $\boldsymbol{\bar\mu}_{p}$, $\bar \mu_{p,j}$, $\bar\varepsilon_{p,j}$, $\bar{W}_{p,j}$ and $\bar W_p$, respectively, when
 $\boldsymbol{\xi}=\boldsymbol{x}_{p}:=(x_{p,1},\cdots, x_{p,k})$.
Recall the $\mu_{p,j}$ is defined in \eqref{0-4-4}. Using Lemma~\ref{l2-29-8}, we have
\begin{equation*}
|\mu_{p,j}-\bar \mu_{p,j}^*|=O\bigl(\frac1{p^3}\bigr),
\end{equation*}
namely
\begin{equation}\label{11-30-8BIS}
\frac{  \bar\e_{p,j}^* }{ \e_{p,j} }=\frac{  \bar\mu_{p,j}^* }{ \mu_{p,j} }=1+O\bigl(\frac1{p^3}\bigr),
\end{equation}
as $p\rightarrow +\infty$.
Then by Proposition~\ref{l1-7-9} and \eqref{11-30-8BIS}, we can deduce that
\begin{equation}\label{10-7-9}
\begin{split}
\|\bar W^*_{p}-u_p\|_{L^\infty(\Omega)}\le &
\|\bar W^*_{p}-W_p\|_{L^\infty(\Omega)}+\| W_{p}-u_p\|_{L^\infty(\Omega)}
\\
=& \|\bar W^*_{p}-W_p\|_{L^\infty(\Omega)}+ O\bigl(\frac1{p^{4-\tau}}\bigr)= O\bigl(\frac1{p^{4-\tau}}\bigr).
  \end{split}
\end{equation}

Let
$\|u\|:=\bigl(\displaystyle\int_{\Omega} |\nabla u|^2\bigr)^{\frac12}$ for $u\in H_0^1(\Omega)$, we have
\begin{Prop}\label{prop10-30-8}
 For any small $\tau>0$, it holds
\begin{equation*}
\|u_p-\bar W_{p}^*\|^2=O\bigl(
\frac{1}{p^{7-\tau}}\bigr),~~~\mbox{as}~~~p\rightarrow +\infty.
\end{equation*}
\end{Prop}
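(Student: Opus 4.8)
The plan is to bound the energy norm $\|u_p-\bar W_{p}^*\|$ by combining the $L^\infty$-estimate just established in \eqref{10-7-9} with the equation satisfied by the difference. Write $\omega_p:=u_p-\bar W_p^*$. Since $\Delta (P\bar W_{p,j}^*)=\Delta \bar W_{p,j}^*$ in $\Omega$ and $\omega_p\in H^1_0(\Omega)$, integration by parts gives
\[
\|\omega_p\|^2=\int_\Omega \nabla u_p\cdot\nabla\omega_p - \sum_{j=1}^k\int_\Omega \nabla \bar W_{p,j}^*\cdot\nabla \omega_p
=\int_\Omega u_p^p\,\omega_p + \sum_{j=1}^k\int_\Omega \Delta \bar W_{p,j}^*\,\omega_p .
\]
Thus it suffices to estimate $\int_\Omega\bigl(u_p^p+\sum_j\Delta\bar W_{p,j}^*\bigr)\omega_p$, and by Cauchy--Schwarz/Hölder it is enough to control the $L^1$ norm (times $\|\omega_p\|_{L^\infty}$) or an $L^{6/5}$-type norm of the "error" $\mathcal E_p:=u_p^p+\sum_j\Delta\bar W_{p,j}^*$. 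The key input is that $-\Delta\bar W_{p,j}^*=e^{U(\cdot)}\bigl(1+\tfrac1p f_0+\tfrac1{p^2}f_1\bigr)$-type quantities (rescaled), which by the very construction of $w_0$ and $w_1$ in Proposition \ref{prop:summaryImporvedAsympt} match the expansion \eqref{5-29-8} of $u_p^p$ up to order $p^{-3}$; the mismatch is the $O\bigl(p^{-3}(2+|y|)^{\tau_1}\bigr)$ remainder on the ball $B_{\e_{p,j}p^3}(x_{p,j})$ and an exponentially small contribution outside.

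Concretely, I would split $\Omega=\bigl(\bigcup_j B_{\e_{p,j}p^3}(x_{p,j})\bigr)\cup \Omega'$ where $\Omega'$ is the complement. On each ball, after rescaling $y=(x-x_{p,j})/\e_{p,j}$, one has $u_p(x)=W_{p,j}(x)+O(p^{-4}(1+|y|)^{\tau_1})$ by \eqref{1-26-8}, and $\bar W_{p,j}^*$ differs from $W_{p,j}$ by $O(p^{-4})$ because of \eqref{11-30-8BIS}; feeding this into the expansion \eqref{5-29-8} and using $-\Delta w_0=e^U f_0$, $-\Delta w_1=e^U f_1$ shows $\mathcal E_p = u_p(x_{p,j})^p \e_{p,j}^{-2}\,e^{U(y)}O\bigl(p^{-3}(1+|y|)^{\tau_1}\bigr)+(\text{cross terms from }l\neq j)$. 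Since $u_p(x_{p,j})^p\e_{p,j}^{-2}\sim p^{-1}\cdot(\text{const})$ by \eqref{nn1-26-8}, and $e^{U(y)}(1+|y|)^{\tau_1}$ is integrable (indeed $e^{U}\sim |y|^{-4}$), the rescaled $L^{6/5}(\mathbb R^2)$ (or just $L^1$) norm of this is $O(p^{-4})$ in the right scale, giving a ball contribution of order $p^{-4}\cdot p^{\text{(scaling)}}$; keeping careful track of the $\e_{p,j}^2$ Jacobian one lands at $\|\mathcal E_p\|_{L^{6/5}(\Omega)}=O(p^{-4+\tau})$ roughly, whence $\int \mathcal E_p\,\omega_p = O(p^{-4+\tau})\|\omega_p\|_{L^6}\lesssim O(p^{-4+\tau})\|\omega_p\|$, so $\|\omega_p\|^2\lesssim p^{-4+\tau}\|\omega_p\| + \|\omega_p\|_{L^\infty}\cdot\|\mathcal E_p\|_{L^1}$. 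Using $\|\omega_p\|_{L^\infty}=O(p^{-4+\tau})$ from \eqref{10-7-9} and $\|\mathcal E_p\|_{L^1}=O(p^{-4+\tau})$ then forces $\|\omega_p\|=O(p^{-4+\tau})$ — but this is better than needed; more carefully, the Sobolev/Moser route gives $\|\omega_p\|^2 = O(p^{-4+\tau})\cdot\|\omega_p\| + O(p^{-8+\tau})$, and solving the quadratic inequality yields $\|\omega_p\|=O(p^{-4+\tau})$, hence $\|\omega_p\|^2=O(p^{-8+\tau})$. To actually reach the slightly weaker stated bound $O(p^{-7+\tau})$ one only needs the cruder pairing estimate, so there is room to spare; I would present the argument so that the exponent drops out cleanly as $7-\tau$.

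On the exterior region $\Omega'$ I would use \eqref{6-7-9} and \eqref{1-4-4}: both $u_p$ and $\bar W_p^*$ (via $W_p$, up to the $O(p^{-3})$ correction in the $\mu$'s) equal the same combination of Green's functions $\sum_j \frac{1}{p^{p/(p-1)}\e_{p,j}^{2/(p-1)}}(8\pi+O(1/p))G(x_{p,j},\cdot)$ up to $O(p^{-4})$, while $u_p^p$ and $\sum_j\Delta \bar W_{p,j}^*$ are both exponentially small there ($O(C^p/p^p)$, since $u_p=O(\log p/p)$ outside the concentration balls and $\bar W_{p,j}^*$ is harmonic away from $\xi_j$ up to lower-order terms). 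So the exterior contribution to $\int\mathcal E_p\,\omega_p$ is $O(C^p/p^p)$, negligible. Finally I assemble: $\|\omega_p\|^2 = \int_\Omega \mathcal E_p\,\omega_p \le \|\mathcal E_p\|_{L^{6/5}(\Omega)}\|\omega_p\|_{L^6(\Omega)} + (\text{exp. small}) \le C p^{-(4-\tau)}\|\omega_p\| $, giving $\|\omega_p\|\le Cp^{-(4-\tau)}$ and then, re-inserting, $\|\omega_p\|^2=O(p^{-(7-\tau)})$ after adjusting $\tau$.

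The main obstacle is bookkeeping the scaling factors: all the relevant norms involve the weights $u_p(x_{p,j})^p$, $\e_{p,j}^{-2}$, $p^{-p/(p-1)}$, and $\e_{p,j}^{-2/(p-1)}$, and one must verify that the polynomial-in-$|y|$ growth ($(1+|y|)^{\tau_1}$ or $\log^6(2+|y|)$) of the remainders in \eqref{5-29-8}, \eqref{1-23-8}, \eqref{refinedExpansion} stays integrable against $e^U$ after rescaling and contributes only the harmless factor $\tau$ in the exponent. A secondary delicate point is the cross terms: on $B_{\e_{p,j}p^3}(x_{p,j})$ the contributions of $\bar W_{p,l}^*$ for $l\neq j$ are smooth and of size $O(1/p)$ with $O(|x-x_{p,j}|)$ oscillation (this is exactly the content of \eqref{n3-29-8}, which rests on the solvability relations \eqref{10-30-8}), so these enter at order $p^{-1}\cdot\e_{p,j}p^3$ locally and must be shown to integrate to something of the claimed order; here \eqref{4-29-8}--\eqref{n3-29-8} do the heavy lifting. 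Everything else — the integration by parts, Hölder, Sobolev embedding $H^1_0(\Omega)\hookrightarrow L^6(\Omega)$ — is routine.
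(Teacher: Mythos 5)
Your proposal follows essentially the same route as the paper: write $\|\omega_p\|^2=\int_\Omega(-\Delta\omega_p)\,\omega_p$ with $\omega_p=u_p-\bar W_p^*$, note that $-\Delta\omega_p=u_p^p-\sum_j(-\Delta P\bar W^*_{p,j})$ is controlled via the expansions \eqref{30-30-8}, \eqref{3-26-8}, \eqref{n3-29-8} and \eqref{11-30-8BIS}, and pair against the $L^\infty$ bound \eqref{10-7-9}. One correction: the $L^{6/5}$--$L^6$ H\"older variant you sketch does not work, because the error $\mathcal{E}_p$ concentrates at scale $\bar\e_{p,j}$ with amplitude of order $\bar\e_{p,j}^{-2}p^{-4}$, so $\|\mathcal{E}_p\|_{L^{6/5}}$ is exponentially large and only $\|\mathcal{E}_p\|_{L^1}=O(p^{-4})$ is available; the $L^1\times L^\infty$ pairing, which you also state and which is what the paper uses, is the one that closes the argument and yields the claimed $O(p^{-(7-\tau)})$ with room to spare.
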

\begin{proof}
We have
\begin{equation}\label{1-27-8}
\begin{split}
&\|u_p- \bar W_{p}^*\|^2=\int_\Omega\Bigl(-\Delta (u_p-
\bar W_{p}^*)\Bigr)(u_p-
\bar W_{p}^*).
\end{split}
\end{equation}
On the other hand, it follows from \eqref{1-26-8} and \eqref{5-29-8} that
\begin{equation}\label{30-30-8}
\begin{split}
-\Delta u_p
=&\big(u_p(x_{p,j})\big)^{p}\Bigl( 1+ \frac{U\bigl(\frac{x-x_{p,j}}{\e_{p,j} }\bigr)}p+\frac{w_{0}\bigl(\frac{x-x_{p,j}}{\e_{p,j} }\bigr)}{p^2}+\frac{w_{1}\bigl(\frac{x-x_{p,j}}{\e_{p,j} }\bigr)}{p^3}
+O\bigl(\frac{(1+ \frac{|x-x_{p,j}|}{\e_{p,j} })^{\tau_1}}{p^4}\bigr) \Bigr)^p\\
 =&\frac1{ p^{\frac p{p-1}}\e_{p,j}^{\frac2{p-1}+2}}
 e^{ U\bigl(\frac{x-x_{p,j}}{\e_{p,j} }\bigr)  }\left[1+\frac1p f_0\bigl(\frac{x-x_{p,j}}{\e_{p,j} }\bigr) +\frac1{p^2} f_1\bigl(\frac{x-x_{p,j}}{\e_{p,j} }\bigr)+O\Bigl( \frac{ (1+ \frac{|x-x_{p,j}|}{\e_{p,j} })^{\tau_1} }{p^3} \Bigr)
\right],
 \quad \text{in}\; B_{d_0}(x_{p, j}),
\end{split}
\end{equation}
where $f_{0}$ and $f_{1}$ are defined in \eqref{6-29-8} and \eqref{7-29-8}, respectively. But
\begin{equation}\label{3-26-8}
\begin{split}
-\Delta P\bar W_{p,j}^*
=\frac1{ p^{\frac p{p-1}}(\bar\e_{p,j}^*)^{\frac2{p-1}+2}} e^{ U\bigl(\frac{x-x_{p,j}}{\bar\e_{p,j}^* }\bigr)  }\left[1+\frac1p f_0\bigl(\frac{x-x_{p,j}}{\bar\e_{p,j}^* }\bigr) +\frac1{p^2} f_1\bigl(\frac{x-x_{p,j}}{\bar \e_{p,j}^* }\bigr)
\right],
 \quad \text{in}\; B_{d_0}(x_{p, j}).
\end{split}
\end{equation}
Combining \eqref{1-27-8}, \eqref{30-30-8} and \eqref{3-26-8},
using \eqref{n3-29-8} and \eqref{11-30-8BIS}, we are led
to
\begin{equation*}
\begin{split}
\|u_p- \bar W_{p}^*\|^2=\frac1{ p^{\frac p{p-1}}(\bar\e_{p,j}^*)^{\frac2{p-1}}}O\Bigl(
\frac{1}{p^{6-\tau}}\Bigr)=O\Bigl(
\frac{1}{p^{7-\tau}}\Bigr).
\end{split}
\end{equation*}
\end{proof}
\begin{proof}[{\bf Proof of Proposition \ref{prop:upHasTheFormInEMP}}]
We now consider the following problem
\begin{equation}\label{32-30-8}
\begin{split}
m_p:= \inf\Bigl\{ \|u_p- \bar W_{\boldsymbol{\mathrm\alpha},p}\|^2:\;\; |\alpha_j-1|\le \frac{1}{p^{\frac{5}{2}}},\; |\xi_j-x_{p, j}|\le \frac{1}{p^{2} e^{\frac{p}{4} }}\Bigr\},
\end{split}
\end{equation}
where $\bar W_{\boldsymbol{\mathrm\alpha},p}$ is defined as in \eqref{def:Wpalpha}. Then, by Proposition \ref{prop10-30-8}, for any small $\tau>0$, it holds $m_p= O\bigl(
p^{-7+\tau}\bigr)$ as $p\to +\infty$.
Let $\boldsymbol{\mathrm\alpha}_{p}=(\alpha_{p,1},\cdots,\alpha_{p,k})\in\mathbb R^{k}$ and $\boldsymbol{\mathrm \xi}_{p}=( \xi_{p,1},\cdots, \xi_{p,k})\in\Omega^{k}$
be a minimizer of \eqref{32-30-8}.  Now we denote $\bar{\mu}^{**}_{p,j}:=\bar \mu_{p,j}(\boldsymbol{\mathrm  \xi_p})$ and $\bar{\e}^{**}_{p,j}:=\bar{\mu}^{**}_{p,j}e^{-\frac{p}{4}}$. Then by \eqref{10-30-8}, it holds
\begin{equation*}
|\bar{\mu}^{**}_{p,j}-\bar \mu_{p,j}^*|=O\bigl(\|\boldsymbol{\mathrm  \xi_p}-\boldsymbol{\mathrm  x_p}\|\bigr),
\end{equation*}
namely
\begin{equation*}
\frac{  \bar\e_{p,j}^* }{ \bar{\e}^{**}_{p,j} }=\frac{  \bar\mu_{p,j}^* }{ \bar{\mu}^{**}_{p,j} }=1+O\bigl(\|\boldsymbol{\mathrm  \xi_p}-\boldsymbol{\mathrm  x_p}\|\bigr),
\end{equation*}
as $p\rightarrow +\infty$.

\vskip 0.1cm

Also by Proposition \ref{prop10-30-8},
\begin{equation}\label{21-4-9}
\begin{split}
\|\bar W_{\boldsymbol{\mathrm\alpha},p}- \bar W^*_{p}\|^2&=
\|\sum_{j=1}^k \alpha_{p,j} P\bar W_{p, j}- \bar W^*_{p}\|^2\le 2\|u_p- \sum_{j=1}^k \alpha_{p,j} P\bar W_{p, j}\|^2+2\|u_p- \bar W^*_{p}\|^2\\
&\le 4\|u_p- \bar W^*_{p}\|^2
= {O\bigl(
\frac{1}{p^{7-\tau}}\bigr)}.
\end{split}
\end{equation}
%where $x_p=(x_{p,1},\cdots, x_{p,k})$.
%
Furthermore, for $i\ne j$,  by \eqref{3-26-8}, we have
\[
\begin{split}
 -\int_{\Omega} \Delta\bar W_{p, j}   P\bar W^*_{p, i}
=&-\int_{B_d(\xi_j)}\Delta\bar W_{p, j}   P\bar W^*_{p, i}+O\Big(\frac{1}{e^{\frac{p}{4} }}\Big).
\end{split}\]
Also, by \eqref{tta}, we know
\begin{equation*}
 P\bar W^*_{p, i} =  \frac{8\pi}{ p^{\frac p{p-1}}(\bar{\e}^{**}_{p,i})^{\frac2{p-1}}} \Big(1-\frac{C_0}{4p}-\frac{C_1}{4p^2}\Big)G(x,x_{p,i})+O\Big(\frac{1}{e^{\frac{p}{4} }}\Big),\quad \text{in}\; B_d(\xi_j).
\end{equation*}
By Taylor's expansion of $G(x,x_{p,i})$ at $\xi_j$,  we get
\begin{equation}\label{hast}
\begin{split}
 -\int_{\Omega} \Delta\bar W_{p, j}   P\bar W^*_{p, i}
=   \tau_{p,j}
 G\big(\xi_j,x_{p,i}\big)  +O\Big(\frac{1}{e^{\frac{p}{4} }}\Big),
\end{split}
\end{equation}where
\begin{equation}\label{hast1}
\begin{split}
\tau_{p,j}:=&
 \Big(1-\frac{C_0}{4p}-\frac{C_1}{4p^2}\Big)
\frac{64\pi^2}{  p^{\frac {2p}{p-1}}(\bar{\e}_{p,j})^{\frac2{p-1}}  (\bar \e_{p,j}^{**})^{\frac2{p-1}}}
 \underbrace{ \int_{\R^2} \Bigl(\Delta U(x)+\frac{\Delta w_0(x)}{p} +\frac{\Delta w_1(x)}{p^2} \Bigr) dx}_{\text{this is a constant}}.
 \end{split}
\end{equation}
Hence by \eqref{32-30-8}, \eqref{hast} and \eqref{hast1}, it holds
\[
\begin{split}
\int_{\Omega} \Delta\bar W_{p, j}   \bigl( P\bar W_{p, i}-  P\bar W^*_{p, i} \bigr)=&
\tau_{p,j}\Big( G\big(\xi_j,\xi_{i}\big) - G\big(\xi_j,x_{p,i}\big) \Big)
\\=& O\Bigl(\frac{ |\xi_i-x_{p, i}|  }{p^2}
\Bigr) +O\Big(\frac{1}{e^{\frac{p}{4} }}\Big)=O\Big(\frac{1}{e^{\frac{p}{4} }}\Big).
\end{split}
\]By similar computations, we find
\[
\begin{split}
&\int_{\Omega}\bigl(\Delta\bar W_{p, i} -\Delta\bar W^*_{p, i}\bigr) \bigl( P\bar W_{p, j}-  P\bar W^*_{p, j} \bigr)
=O\Big(\frac{1}{e^{\frac{p}{4} }}\Big).
\end{split}
\]
And then we deduce that
\begin{equation*}
 \begin{split}
&\bigl\langle  \alpha_{p,i} P\bar W_{p, i}-  P\bar W^*_{p, i},\;
 \alpha_{p,j} P\bar W_{p, j}-  P\bar W^*_{p, j}\bigr\rangle
\\
=&\int_{\Omega}\Bigl( -\alpha_{p,i}\Delta\bar W_{p, i} +\Delta\bar W^*_{p, i}\Bigr)
\Bigl( \alpha_{p,j} P\bar W_{p, j}-  P\bar W^*_{p, j} \Bigr)\\
=&-\int_{\Omega}\Bigl( (\alpha_{p,i}-1)\Delta\bar W_{p, i} +
\Delta\bar W_{p, i}-\Delta\bar W^*_{p, i}\Bigr)
\Bigl( (\alpha_{p,j}-1) P\bar W_{p, j}+P\bar W_{p, j}-  P\bar W^*_{p, j} \Bigr)\\
=&O\Bigl(\frac{ |\alpha_{p,j}-1|\cdot|\alpha_{p,i}-1| }{p^2}
\Bigr) +O\Big(\frac{1}{e^{\frac{p}{4} }}\Big).
\end{split}
\end{equation*}
Thus, it holds
\begin{equation}\label{33-30-8}
\begin{split}
\|\bar W_{\boldsymbol{\mathrm\alpha},p}- \bar W^*_{p}\|^2
=&
\sum_{j=1}^k\| \alpha_{p,j} P\bar W_{p, j}- P \bar W^*_{p,j}\|^2 +O\Bigl(\sum^k_{j=1}\sum_{i\neq j}\frac{ |\alpha_{p,j}-1|\cdot |\alpha_{p,i}-1| }{p^2}
\Bigr) +O\Big(\frac{1}{e^{\frac{p}{4} }}\Big).
 \end{split}
\end{equation}
Let us observe that
\begin{equation}\label{nn33-30-8}
\begin{split}
\|\alpha_{p,j} P\bar W_{p, j}- P \bar W^*_{p, j}\|^2
= \alpha_{p,j}^2\| P\bar W_{p, j}\|^2+\| P \bar W^*_{p, j}\|^2-2\alpha_{p,j}
\int_\Omega
\nabla P\bar W_{p, j}\nabla P \bar W^*_{p, j}.
\end{split}
\end{equation}
Then \eqref{nn33-30-8} gives
\begin{equation*}
\begin{split}
\|\alpha_{p,j} P\bar W_{p, j}- P \bar W^*_{p, j}\|^2
\ge  \alpha_{p,j}^2\| P\bar W_{p, j}\|^2+\| P \bar W^*_{p, j}\|^2-2\alpha_{p,j}
\| P\bar W_{p, j}\|\cdot\| P \bar W^*_{p, j}\|.
\end{split}
\end{equation*}
Similar to  the proof of (5.6) in \cite{EMP2006}, we can prove by \eqref{32-30-8} that
\begin{equation}\label{31-5-4t}
\| P\bar W_{p, j}\|^2-\| P \bar W^*_{p, j}\|^2=O\bigl(\frac{ |\xi_j-x_{p, j}| }{p^2}\bigr)=O\Bigl(\frac{1}{p^{4} e^{\frac{p}{4} }}\Bigr).
\end{equation}
So we have
\begin{equation}\label{31-5-4}
\begin{split}
\|\alpha_{p,j} P\bar W_{p, j}- P \bar W^*_{p, j}\|^2
\ge  \bigl(\alpha_{p,j}-1)^2\| P\bar W_{p, j}\|^2+O\Bigl(\frac{1}{p^{4} e^{\frac{p}{4} }}\Bigr).
\end{split}
\end{equation}
Combining \eqref{21-4-9}, \eqref{33-30-8} and \eqref{31-5-4}, we obtain
\[
\sum^k_{j=1} \bigl(\alpha_{p,j}-1)^2\| P\bar W_{p, j}\|^2\le C\Bigl(
\frac{1}{p^{7-\tau}}
+\sum^k_{j=1}\frac{ |\alpha_{p,j}-1|^2 }{p^2}
\Bigr),
\]which, together with $\| P\bar W_{p, j}\|^2\sim \frac{1}{p}$, gives
\begin{equation}\label{luo-1mt}
|\alpha_{p,j}-1|=O\Bigl(
\frac{1}{p^{(6-\tau)/2}}\Bigr),~~~\mbox{for}~~~j=1,\cdots,k.
\end{equation}
On the other hand, by \eqref{nn33-30-8}, \eqref{31-5-4t} and \eqref{luo-1mt}, we get
\begin{equation}\label{ttsma}
\begin{split}
\|&\alpha_{p,j} P\bar W_{p, j}- P \bar W^*_{p, j}\|^2
\\= &\alpha_{p,j}^2\| P\bar W_{p, j}\|^2+\| P \bar W^*_{p, j}\|^2-2\alpha_{p,j}
\int_\Omega
\nabla P\bar W_{p, j}\nabla P \bar W^*_{p, j}\\
=&
(\alpha_{p,j}-1)^2\| P\bar W_{p, j}\|^2-2\alpha_{p,j}
\int_\Omega
\nabla P\bar W_{p, j}\nabla \big( P \bar W^*_{p, j}-P\bar W_{p, j}\big)
-{\| P\bar W_{p, j}\|^2+\| P \bar W^*_{p, j}\|^2}\\
=
&
(\alpha_{p,j}-1)^2\| P\bar W_{p, j}\|^2-2\alpha_{p,j}
\int_\Omega
\nabla P\bar W_{p, j}\nabla \big( P \bar W^*_{p, j}-P\bar W_{p, j}\big)
+ {O\Bigl(\frac{ |\xi_j-x_{p, j}|}{p^2}\Bigr)}\\=&
 2\alpha_{p,j}
\int_\Omega \Delta (P \bar W_{p, j}) \big( P \bar W^*_{p, j}-P\bar W_{p, j}\big)
+O\Bigl(\frac{|\alpha_{p,j}-1|^2}{p} + {\frac{ |\xi_j-x_{p, j}|}{p^2}\Bigr)}\\=& 2\alpha_{p,j}
\int_\Omega \Delta (P\bar W_{p, j}) \big( P \bar W^*_{p, j}-P\bar W_{p, j}\big)+
O\Bigl( {
\frac{1}{p^{7-\tau}}}
\Bigr).
\end{split}
\end{equation}
For $x\in B_\delta(\xi_{j})$,
\begin{equation*}
-\Delta (P\bar W_{p,j})  =\frac{1}{  p^{\frac p{p-1}}   (\bar \e_{p,j}^{**})^{\frac2{p-1}+2}}
e^{U\bigl(\frac{x-\xi_j}{\bar  \e_{p,j}^{**} }\bigr)  } \Bigl(1+ \frac1p f_0\bigl(\frac{x-\xi_j}{\bar  \e_{p,j}^{**} }\bigr) + \frac1{p^2}
 f_1\bigl(\frac{x-\xi_j}{\bar  \e_{p,j}^{**} }\bigr)+O\bigl(\frac1{p^3}\bigr)\Bigr),
\end{equation*}
where $f_{0}$ and $f_{1}$ are defined in \eqref{6-29-8} and \eqref{7-29-8}, respectively. And
by \eqref{2-26-8}, \eqref{1-29-8} and \eqref{2-29-8}, for $x\in B_\delta(\xi_{j})$, it holds
\begin{equation}\label{luo-3}
\begin{split}
 P \bar W^*_{p, j}-P\bar W_{p, j}
=&\big(\bar W^*_{p, j}-\bar W_{p, j}\big)+O\big(\frac{|x_{p,j}-\xi_j|}{p} \big)+
O\Big(\frac{1}{  p } \big| \frac{1}{(\bar \e_{p,j}^{**})^{\frac2{p-1}}}-\frac{1}{(\bar \e_{p,j}^{*})^{\frac2{p-1}}}\big| \Big)\\&
+O\Big(\frac{1}{  p } \big| \bar \mu_{p,j}^{**} - \bar \mu_{p,j}^{*}\big| \Big)+O\big(\frac{\bar \e_{p,j}^{*}}{p}\big)\\=&
\frac{1}{  p^{\frac p{p-1}}   (\bar \e_{p,j}^{**})^{\frac2{p-1}}}\left[
V_p\bigl(\frac{x-\xi_j}{\bar  \e_{p,j}^{**} }\bigr)-V_p\bigl(\frac{x-x_{p,j}}{\bar  \e_{p,j}^{**} }\bigr)
\right]
+O\Big(\frac{1}{pe^{\frac{p}{4} }}\Big),
\end{split}
\end{equation}
where
\begin{equation*}
V_p(x):=U\bigl(x\bigr) +  \frac1p   w_0\bigl(x\bigr) + \frac1{p^2} w_1\bigl(x\bigr).
\end{equation*}
Let us point out that $V_p$ is a radial function in $\R^2$.
Hence
\begin{align}\label{tstat1}
&\int_\Omega \Delta (P\bar W_{p, j}) \big( P \bar W^*_{p, j}-P\bar W_{p, j}\big)dx\notag
\\=&-
\frac{1}{  p^{\frac {2p}{p-1}}   (\bar \e_{p,j}^{**})^{\frac4{p-1}+2}}\int_{B_\delta(\xi_j)}
e^{U\bigl(\frac{x-\xi_j}{\bar  \e_{p,j}^{**} }\bigr)  } \left[1+ \frac1p f_0\bigl(\frac{x-\xi_j}{\bar  \e_{p,j}^{**} }\bigr) + \frac1{p^2}
 f_1\bigl(\frac{x-\xi_j}{\bar  \e_{p,j}^{**} }\bigr)+O\bigl(\frac1{p^3}\bigr)\right] \notag\\&
\,\,\,\,\,\,\,\,\,\,\,\,
\times \left[
V_p\bigl(\frac{x-\xi_j}{\bar  \e_{p,j}^{**} }\bigr)-V_p\bigl(\frac{x-x_{p,j}}{\bar  \e_{p,j}^{**} }\bigr)
\right]dx
+O\Big(\frac{1}{pe^{\frac{p}{4} }}\Big)\notag
\\=&
\frac{1}{  p^{\frac {2p}{p-1}}   (\bar \e_{p,j}^{**})^{\frac4{p-1}}}\underbrace{\int_{\R^2}
e^{U\bigl(x\bigr)  } \left[1+ \frac1p f_0\bigl(x\bigr) + \frac1{p^2}
 f_1\bigl(x\bigr)+O\bigl(\frac1{p^3}\bigr)\right]
\times
\left[ V_p\bigl(x+ \frac{\xi_j-x_{p,j}}{\bar  \e_{p,j}^{**} }\bigr) -
V_p\bigl(x\bigr)
\right]dx}_{=:A_{p}}\notag \\&
+O\Big(\frac{1}{pe^{\frac{p}{4} }}\Big).
\end{align}

On the other hand, by Taylor's expansion, we have
\begin{equation}\label{tsta}
\begin{split}
&V_p\bigl(x+ \frac{\xi_j-x_{p,j}}{\bar  \e_{p,j}^{**} }\bigr) -V_p\bigl(x\bigr)\\
=&
\langle \nabla V_p\bigl(x\bigr), \frac{\xi_j-x_{p,j}}{\bar  \e_{p,j}^{**} }\rangle+\frac{1}{2}
  \nabla^2 V_p\bigl(x\bigr)\Big(\frac{\xi_j-x_{p,j}}{\bar  \e_{p,j}^{**} },\frac{\xi_j-x_{p,j}}{\bar  \e_{p,j}^{**} }\Big)
+O\Big(\big|\frac{\xi_j-x_{p,j}}{\bar  \e_{p,j}^{**} }\big|^2\Big).
\end{split}
\end{equation}
Then by \eqref{32-30-8},
 \eqref{tsta}, the symmetry of $f_0(x)$, $f_1(x)$ and $V_p(x)$, we deduce
\begin{equation}\label{tstat}
\begin{split}
A_{p}=&\int_{\R^2}
e^{U\bigl(x\bigr)  } \left[1+ \frac1p f_0\bigl(x\bigr) + \frac1{p^2}
 f_1\bigl(x\bigr)+O\bigl(\frac1{p^3}\bigr)\right]
\times
\left[ V_p\bigl(x+ \frac{\xi_j-x_{p,j}}{\bar  \e_{p,j}^{**} }\bigr) -
V_p\bigl(x\bigr)
\right]dx\\=& \frac{1}{2}
\int_{\R^2}
e^{U\bigl(x\bigr)  } \left[1+ \frac1p f_0\bigl(x\bigr) + \frac1{p^2}
 f_1\bigl(x\bigr)\right]
\times
\left[  \nabla^2 V_p\bigl(x\bigr)\Big(\frac{\xi_j-x_{p,j}}{\bar  \e_{p,j}^{**} },\frac{\xi_j-x_{p,j}}{\bar  \e_{p,j}^{**} }\Big)
\right]dx+O\Bigl(\frac{|\xi_j-x_{p,j}|}{p^3|\bar  \e_{p,j}^{**} |}\Bigr)\\=&
\left[\frac{1}{4}
\int_{\R^2}\frac{1}{\big(1+\frac{|x|^2}{8}\big)^4} +o\big(1\big)\right] \frac{|x_{p,j}-\xi_j|^2}{(\bar  \e_{p,j}^{**} )^2}+O\Bigl(\frac{|\xi_j-x_{p,j}|}{p^3|\bar  \e_{p,j}^{**} |}\Bigr).
\end{split}\end{equation}
Hence from \eqref{tstat1} and \eqref{tstat}, we get
\begin{equation}\label{sbdaa}\begin{split}
& \int_\Omega \Delta (P\bar W_{p, j}) \big( P \bar W^*_{p, j}-P\bar W_{p, j}\big)dx\\=&
 \frac{1}{  p^{\frac {2p}{p-1}}   (\bar \e_{p,j}^{**})^{\frac4{p-1}}}\left[\left(\frac{1}{4}
\int_{\R^2}\frac{1}{\big(1+\frac{|x|^2}{8}\big)^4} +o\big(1\big)\right) \frac{|x_{p,j}-\xi_j|^2}{(\bar  \e_{p,j}^{**} )^2}+O\Bigl(\frac{|\xi_j-x_{p,j}|}{p^3|\bar  \e_{p,j}^{**} |}\Bigr)\right] +O\Big(\frac{1}{pe^{\frac{p}{4} }}\Big).
\end{split}\end{equation}
On the other hand, from \eqref{21-4-9}, \eqref{33-30-8}, \eqref{luo-1mt} and \eqref{ttsma}, we have
\begin{equation}\label{sbda}
\int_\Omega \Delta (P\bar W_{p, j}) \big( P \bar W^*_{p, j}-P\bar W_{p, j}\big)=
O\Bigl( {
\frac{1}{p^{7-\tau}}}
\Bigr).
\end{equation}
Hence from \eqref{sbdaa} and \eqref{sbda}, we get
\begin{align*}
&\frac{1}{p^2}\frac{|x_{p,j}-\xi_j|^2}{(\bar  \e_{p,j}^{**} )^2}=O\Big(\frac{|x_{p,j}-\xi_j|}{p^5\bar  \e_{p,j}^{**}  }\Big)+O\Bigl( {
\frac{1}{p^{7-\tau}}}
\Bigr),
\end{align*}
which  gives $\frac{|x_{p,j}-\xi_j|}{\bar  \e_{p,j}^{**}}\leq  {\frac{C}{p^{(5-\tau)/2}}}$, that is
\begin{equation}\label{otherImprovement}|x_{p,j}-\xi_j|=O\Big(\frac{ {1}}{{p^{(5-\tau)/2}} e^{\frac{p}{4}}}\Big).
\end{equation}
So, \eqref{luo-1mt} and \eqref{otherImprovement} prove that the minimizers  $\boldsymbol{\mathrm\alpha}_{p}=(\alpha_{p,1},\cdots,\alpha_{p,k})$ and $\boldsymbol{\xi}_{p}=( \xi_{p,1},\cdots, \xi_{p,k})$ of  \eqref{32-30-8} are in the interior of the set
 { $\bigl\{  |\alpha_j-1|\le \frac{1}{p^{\frac52}},\; |\xi_j-x_{p, j}|\le \frac{1}{p^{2} e^{\frac{p}{4}}}\bigr\}$}.
As a result,
\begin{equation*}
\begin{split}
\bigl\langle  P\bar W_{p, j}, \omega_{p}\bigr\rangle = \bigl\langle  \frac{
\partial  P\bar W_{p, j}}{\partial \xi_{j,h}}, \omega_{p}\bigr\rangle=0,
\quad j=1,\cdots, k,\; h=1,2,
\end{split}
\end{equation*}
where
\[\omega_{p}:=u_p- \bar W_{\boldsymbol{\mathrm\alpha},p}.\]
 Furthermore, for any small $\tau\in (0,\tau_0)$,
\[
\|\omega_{p}\|\le \frac {C}{p^{\frac{7-\tau}2}}.
\]
By \eqref{10-7-9}, it holds
\begin{equation}\label{luo-11}
\begin{split}\|\omega_{p}\|_{L^\infty(\Omega)}=&
\|u_p- \bar W_{\boldsymbol{\mathrm\alpha},p}\|_{L^\infty(\Omega)}=\|\bar W_{\boldsymbol{\mathrm\alpha},p}- \bar W^*_{p}\|_{L^\infty(\Omega)}+O\big(\frac{1}{p^{4-\tau}} \big)\\=&
 \sum_{j=1}^k \underbrace{|\alpha_{p,j}-1|\cdot \| P\bar W_{p, j}\|_{L^\infty(\Omega)}}_{=
 O\Big({\frac{1}{p^{5/2 }}}\Big) }
+\|\sum_{j=1}^k\big( P \bar W^*_{p, j}-P\bar W_{p, j}\big)\|_{L^\infty(\Omega)}
+O\big(\frac{1}{p^{4-\tau}} \big)\\=&
\|\sum_{j=1}^k\big( P \bar W^*_{p, j}-P\bar W_{p, j}\big)\|_{L^\infty(\Omega)}
+O\Big({\frac{1}{p^{\frac{5}{2} }}}\Big).
\end{split}
\end{equation}
Also, by \eqref{luo-3}, we get
\begin{equation}\label{luo-12}
\begin{split}
\|P \bar W^*_{p, j}-P\bar W_{p, j}\|_{L^\infty(\Omega)}
=\underbrace{\|P \bar W^*_{p, j}-P\bar W_{p, j}\|_{L^\infty\big(B_\delta(\xi_j)\big)}}_{
=O\big({\frac{1}{p^{(7-\tau)/{2}}}}\big)}
+\underbrace{\|P \bar W^*_{p, j}-P\bar W_{p, j}\|_{L^\infty\big(\Omega\backslash B_\delta(\xi_j)\big)}}_{=O\big(\frac{1}{p^4}\big)}=
O\Big({\frac{1}{p^{\frac{7-\tau}{2}}}}\Big).
\end{split}
\end{equation}
Finally, \eqref{luo-11} and \eqref{luo-12} give us that
\begin{equation*}
\begin{split}\|\omega_{p}\|_{L^\infty(\Omega)}=&O\Big({\frac{1}{p^{\frac{5}{2} }}}\Big).
\end{split}
\end{equation*}
\end{proof}

\section{Estimates for the approximations}\label{section:approxPWproblems}
In this section we collect useful estimates on an approximated linearized problem (see Subsection \ref{section:approxLinearized}) and on the approximating profile $\bar W_{\boldsymbol{\mathrm\alpha},p}$ (see Subsection \ref{subsection:estimatesPW}).

\subsection{Estimates for the linear operator $-\Delta-p\bar W_{\boldsymbol{\mathrm\alpha},p}^{p-1}$}\label{section:approxLinearized}$\,$

\vskip 0.1cm

Given $\boldsymbol{\mathrm \xi}=(\xi_1,\cdots, \xi_k)\in\Omega^{k}$ close to $\boldsymbol{\mathrm x}_{\infty}=(x_{\infty,1},\cdots,x_{\infty,k})$, let $\bar \mu_{p,j}$, $\bar \varepsilon_{p,j}$ and $\bar W_{p, j}$, $j=1,\cdots, k$, be the associated parameters and profiles as defined in  \eqref{10-30-8}--\eqref{12-30-8}.

\vskip 0.1cm

Let the profile $\bar W_{\boldsymbol{\mathrm\alpha},p}$ be as in \eqref{def:Wpalpha}, namely
\[
\bar W_{\boldsymbol{\mathrm\alpha},p}:=\sum_{j=1}^k  \alpha_j P\bar W_{p, j},
\]
where, $\boldsymbol{\mathrm\alpha}$ and
$\boldsymbol{\mathrm\xi}$ satisfy $|\alpha_j-1|\le  p^{-\frac{5}{2}}$, and $|\xi_j-x_{\infty,j}|\le \tau_0$ for some small $\tau_0>0$, $j=1,\cdots, k$.
As in the previous section, in order to simplify the notation, we  write $\bar W_{p}$ for the profile $\bar W_{\boldsymbol{\mathrm\alpha},p}$ when $\boldsymbol{\mathrm\alpha}=(\alpha_{1},\cdots,\alpha_{k})=(1,\cdots,1)$, namely
\begin{equation}\label{def:potentialLinearpartial}
\bar W_{p}:= \sum_{j=1}^k  P\bar W_{p, j}.
\end{equation}
Consider
\begin{equation*}
\zeta_{p}:= \inf\biggl\{  \frac{\displaystyle\int_{\Omega} |\nabla \varphi|^2}{
\displaystyle\int_{\Omega} p\bar W_{\boldsymbol{\mathrm\alpha},p}^{p-1} \varphi^2}\, : \,
\varphi\in H^1_0(\Omega),\;\bigl\langle  P\bar W_{p, j}, \varphi\bigr\rangle = \bigl\langle  \frac{
\partial  P\bar W_{p, j}}{\partial \xi_{j,h}} , \varphi\bigr\rangle=0,
\ j=1,\cdots, k,\; h=1,2
\biggr\}.
\end{equation*}
We know that $\zeta_{p}$ is achieved by $\varphi_p$, which satisfies the linear equation
\begin{equation}\label{61-4-9}
 \begin{cases} -\Delta \varphi_p -\zeta_{p} p\bar W_{\boldsymbol{\mathrm\alpha},p}^{p-1} \varphi_p
=h_{p}~~~&\mbox{ in }\Omega,\\[2mm]
\varphi_{p}=0~~~&\mbox{ on }\partial\Omega,\end{cases}
\end{equation}
where
\begin{equation}
\label{def:h}
h_{p}:=\sum_{j=1}^k \beta_j\Delta P\bar W_{p, j}+ \sum_{j=1}^k\sum_{h=1}^2
\gamma_{jh}\Delta \frac{
\partial P \bar W_{p, j}}{\partial \xi_{j,h}}
\end{equation}
for some constants $\beta_j$ and $\gamma_{jh}$ depending on $p$. W.l.g. we may assume that $\|\varphi_p\|_{L^\infty(\Omega)}=1$.
Let us point out that for $|\alpha_j-1|\le p^{-\frac{5}{2}}$, it holds
$\alpha_j^{p-1}= 1+ O(p^{-\frac{3}{2}})$.
So we have
\begin{equation}
\label{10-6-4}
p\bar W_{\boldsymbol{\mathrm\alpha},p}^{p-1}=p\big(1+O(p^{-\frac{3}{2}})\big)\bar W_{p}^{p-1}.
\end{equation}

In this section, we will prove the following result.
\begin{Prop}\label{p10-30-8}
There exists a constant  $c_0>0$, such that
\begin{equation*}
\zeta_{p}\ge 1+\frac {c_0}p.
\end{equation*}
\end{Prop}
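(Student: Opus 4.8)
The plan is to argue by contradiction. If the conclusion fails, then $\liminf_{p\to\infty}p(\zeta_{p}-1)\le 0$, so along a subsequence $p(\zeta_{p}-1)\to\ell\in[-\infty,0]$ and $\zeta_{p}\to\zeta_{\infty}\in[0,1]$. Let $\varphi_{p}$ be the minimizer achieving $\zeta_{p}$, normalized by $\|\varphi_{p}\|_{L^{\infty}(\Omega)}=1$ and solving \eqref{61-4-9} with $h_{p}$ as in \eqref{def:h}. Testing \eqref{61-4-9} against $\varphi_{p}$, and noting that the orthogonality constraints defining the admissible class force $\int_{\Omega}h_{p}\varphi_{p}=0$, together with $\int_{\Omega}p\bar W_{\boldsymbol\alpha,p}^{p-1}=O(1)$ (a sum of $k$ bumps of mass $\to 8\pi$ each), yields $\|\varphi_{p}\|^{2}=\zeta_{p}\int_{\Omega}p\bar W_{\boldsymbol\alpha,p}^{p-1}\varphi_{p}^{2}=O(1)$.

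First I would estimate the Lagrange multipliers $\beta_{j},\gamma_{jh}$ in \eqref{def:h}: testing \eqref{61-4-9} against each $P\bar W_{p,j}$ and each $\partial_{\xi_{j,h}}P\bar W_{p,j}$, and using the orthogonality of $\varphi_{p}$ to these functions, their known mutual scales and almost-orthogonality (Section \ref{section:approxPWproblems}) and $\|\varphi_{p}\|_{L^{\infty}}=1$, one is led to a well-conditioned linear system giving $\beta_{j}=o(1)$, $\gamma_{jh}=o(1)$; hence the rescaled contribution of $h_{p}$ vanishes in the limit. Then I would rescale around each spike, $\widetilde\varphi_{p,j}(y):=\varphi_{p}(\xi_{j}+\bar\e_{p,j}y)$: the rescaled equation, the bound $\|\widetilde\varphi_{p,j}\|_{L^{\infty}}\le 1$, Step~1, \eqref{10-6-4} and $p\bar W_{p}^{p-1}(\xi_{j}+\bar\e_{p,j}\cdot)\sim e^{U}/\bar\e_{p,j}^{2}$ give, along a subsequence, $\widetilde\varphi_{p,j}\to V_{j}$ in $C^{1}_{loc}(\R^{2})$ with $-\Delta V_{j}=\zeta_{\infty}e^{U}V_{j}$ and $\int_{\R^{2}}|\nabla V_{j}|^{2}<\infty$ (by conformal invariance of the Dirichlet integral in dimension two and $\|\varphi_{p}\|^{2}=O(1)$). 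On the other hand, standard elliptic estimates give $\varphi_{p}\to 0$ in $C^{1}_{loc}(\overline\Omega\setminus\{x_{\infty,1},\dots,x_{\infty,k}\})$ (both $p\bar W_{\boldsymbol\alpha,p}^{p-1}$ and $h_{p}$ being uniformly small away from the spikes), and a barrier argument on the annuli $R\le|y|\le\tau/\bar\e_{p,j}$, copied from Lemma \ref{prop3-3prima} using the analogue of Lemma \ref{llma} for $\bar W_{\boldsymbol\alpha,p}$ (same profile as $u_{p}$), shows that if $V_{j}\equiv 0$ for all $j$ then $\varphi_{p}\to 0$ uniformly on $\overline\Omega$, contradicting $\|\varphi_{p}\|_{L^{\infty}}=1$. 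Hence $V_{j_{0}}\not\equiv 0$ for some $j_{0}$.

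Next I would identify $\zeta_{\infty}$. By Lemma \ref{lemma:limitEigenvalueProblem}, $\zeta_{\infty}\in\{0,1\}$. If $\zeta_{\infty}=0$, each $V_{j}$ is a bounded harmonic function, hence a constant $c_{j}$, and the constraint $\langle\varphi_{p},P\bar W_{p,j}\rangle=0$ rescales to $\int_{\R^{2}}e^{U}V_{j}=8\pi c_{j}=0$, so all $V_{j}\equiv 0$, against the previous paragraph. Therefore $\zeta_{\infty}=1$, and by Lemma \ref{llm} $V_{j}(y)=\sum_{q=1}^{2}a_{q,j}\tfrac{y_{q}}{8+|y|^{2}}+b_{j}\tfrac{8-|y|^{2}}{8+|y|^{2}}$. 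The constraint $\langle\varphi_{p},\partial_{\xi_{j,h}}P\bar W_{p,j}\rangle=0$ rescales — using that the explicit translation part $\sim\bar\e_{p,j}^{-1}e^{U}\partial_{h}U$ of $\Delta\partial_{\xi_{j,h}}P\bar W_{p,j}$ dominates the part produced by the $\boldsymbol\xi$-dependence of $\bar\mu_{p,j}$ — to $\int_{\R^{2}}e^{U}\partial_{h}U\,V_{j}=0$; by oddness and \eqref{luo-1-2} this forces $a_{q,j}=0$ for all $q,j$, so $V_{j}=b_{j}\tfrac{8-|y|^{2}}{8+|y|^{2}}$ and $b_{j_{0}}\ne 0$. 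Finally I would apply a local Pohozaev identity to the pair $(\bar W_{\boldsymbol\alpha,p},\varphi_{p})$ of the type \eqref{3-12-04}, using that $\bar W_{\boldsymbol\alpha,p}$ solves $-\Delta\bar W_{\boldsymbol\alpha,p}=\bar W_{\boldsymbol\alpha,p}^{\,p}$ up to a controlled error (Proposition \ref{prop:summaryImporvedAsympt}, Proposition \ref{prop:upHasTheFormInEMP}, Section \ref{section:approxPWproblems}) and that $h_{p}$ is negligible by Step~1, and reproducing the computation that leads to \eqref{3-12-05} in Proposition \ref{lemma:autofunz2casi}; this gives $\zeta_{p}-1=\tfrac{6}{p}(1+o(1))$, contradicting $p(\zeta_{p}-1)\to\ell\le 0$. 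This contradiction proves the Proposition.

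I expect the main obstacle to be this last Pohozaev step: since $\bar W_{\boldsymbol\alpha,p}$ is only an approximate solution and $\varphi_{p}$ solves a perturbed linearized equation with right-hand side $h_{p}$, one must verify that every error term entering the identity is $o(p^{-2})$, so that the delicate balance in \eqref{3-12-05} is preserved; this rests on the sharp profile decomposition of Propositions \ref{prop:summaryImporvedAsympt} and \ref{prop:upHasTheFormInEMP}, the estimates of Section \ref{section:approxPWproblems}, and the multiplier bounds of Step~1. The intermediate-region barrier in Step~2 and the multiplier linear system in Step~1 are the other technical ingredients, but both follow well-trodden patterns (Lemma \ref{prop3-3prima} and standard Lyapunov--Schmidt estimates).
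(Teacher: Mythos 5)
Your overall architecture matches the paper's proof for the first half: argue by contradiction, normalize $\|\varphi_p\|_{L^\infty}=1$, estimate the Lagrange multipliers $\beta_j,\gamma_{jh}$, rescale to obtain a limit $V_j$ solving $-\Delta V_j=\zeta_\infty e^U V_j$, rule out $V_j\equiv 0$ for all $j$ by a barrier on the intermediate region, and use the two families of orthogonality constraints to force $\zeta_\infty=1$ and to kill the translation modes, leaving $V_j=b_j\frac{8-|y|^2}{8+|y|^2}$. This is essentially the paper's Step~1 and Step~3. Where you diverge is the final, decisive step: you propose to run a local Pohozaev identity for the pair $(\bar W_{\boldsymbol\alpha,p},\varphi_p)$ and reproduce \eqref{3-12-05} to obtain $\zeta_p-1=\tfrac{6}{p}(1+o(1))$. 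The paper instead (Step~2) constructs an explicit test function $Pv$, with $-\Delta Pv - p\bar W_p^{p-1}Pv\approx e^{U}Z_0$ after rescaling, multiplies \eqref{61-4-9} by it, and arrives at \eqref{13-6-9}, from which only the one-sided bound $1-\zeta_n\ge -\tfrac1{np_n}$ is needed to force $a=0$.

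There is a concrete gap in your route. You assert $\beta_j=o(1)$; Lemma~\ref{lemma:contantsControl} only gives $\beta_j=O(1)$, and this is sharp a priori. The estimate \eqref{20-5-9} reduces $\int p\bar W_{\boldsymbol\alpha,p}^{p-1}P\bar W_{p,i}\varphi_p$ to $-\int\Delta P\bar W_{p,i}\varphi_p+O(\tfrac1p)=0+O(\tfrac1p)$, and the $O(\tfrac1p)$ comes from the $f_0/p$ correction in the profile, which does not vanish faster unless one already knows $\varphi_p\to 0$; arguing that way would be circular. Now $\bar W_{\boldsymbol\alpha,p}$ and $\varphi_p$ do not solve the exact equations, so the analogue of \eqref{5-20-3} produces two additional error terms: $\int (x-\xi_j)\cdot\nabla\varphi_p\,l$ with $l:=-\Delta\bar W_{\boldsymbol\alpha,p}-\bar W_{\boldsymbol\alpha,p}^p$, and $\int (x-\xi_j)\cdot\nabla\bar W_{\boldsymbol\alpha,p}\,h_p$. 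The first is $O(\int|l|)=O(p^{-5/2})=o(p^{-2})$ and is harmless. But in the second, near $\xi_j$ one has $(x-\xi_j)\cdot\nabla\bar W_{\boldsymbol\alpha,p}\sim \tfrac1p\,y\cdot\nabla U$ and $\beta_j\Delta P\bar W_{p,j}\sim -\tfrac{\beta_j}{p\bar\e_{p,j}^2}e^U$, so after rescaling the contribution is $\sim -\tfrac{\beta_j}{p^2}\int_{\mathbb R^2}e^U\,y\cdot\nabla U\,dy=\tfrac{16\pi\beta_j}{p^2}$. This is the \emph{same} order $p^{-2}$ as the two sides of \eqref{3-12-05}. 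With $\beta_j$ only $O(1)$ and of unknown sign, the balance that would yield $\zeta_p-1=\tfrac{6}{p}(1+o(1))$ is not established, and the claimed contradiction does not follow. The paper's test-function argument avoids this precisely because $Pv$ is built so that $\int_\Omega h_{p_n}Pv=O(e^{-p_n/4})$: in the rescaled variable the dominant part of $Pv$ is $\sim\log\bar\e_{n,i}\,Z_0$, and $\int e^U Z_0=0$ kills the leading pairing with $\beta_j\Delta P\bar W_{p_n,j}$. The Pohozaev multiplier $(x-\xi_j)\cdot\nabla\bar W_{\boldsymbol\alpha,p}$ does not enjoy this cancellation. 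In addition, your scheme needs an outer Green-type expansion of $\varphi_p$ of the form \eqref{3-12-03} to compute the quadratic form $P_j$; for $\varphi_p$ solving \eqref{61-4-9} (with the extra source $h_p$ and the approximate potential) this expansion would have to be re-derived from scratch with errors tracked to $o(p^{-2})$, which is not done in the paper and is not obviously available.

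A smaller point: even had the Pohozaev step gone through, the paper only needs the one-sided estimate $1-\zeta_n\ge -\tfrac{1}{np_n}$ from the contradiction hypothesis to conclude $a=0$ from \eqref{13-6-9}; your version attempts a full two-sided expansion of $\zeta_p-1$, which is strictly more demanding than necessary.
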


To prove Proposition~\ref{p10-30-8}, we will argue by contradiction. Suppose
that up to a subsequence, $\zeta_{p}\le 1+o(\frac {1}p)$  and $\|\varphi_p\|_{L^\infty(\Omega)}=1$.
We first estimate the constants  $\beta_j$ and $\gamma_{jh}$ in \eqref{61-4-9}.
\begin{Lem}\label{lemma:contantsControl}It holds
 \begin{equation}\label{22-5-9}
\beta_j = O(1),\quad \gamma_{jh}=O\bigl(
\frac{1}{pe^{\frac p4}}
\bigr),~~\mbox{
as $p\rightarrow +\infty$}.
\end{equation}
\end{Lem}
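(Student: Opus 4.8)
The plan is to test the equation \eqref{61-4-9} for $\varphi_p$ against the functions $P\bar W_{p,i}$ and $\partial P\bar W_{p,i}/\partial\xi_{i,h}$ that appear in the definition of $h_p$, and then to exploit the (near-)orthogonality of this system together with the orthogonality constraints satisfied by $\varphi_p$. First I would multiply \eqref{61-4-9} by $P\bar W_{p,i}$ and integrate by parts: the left-hand side contributes $\langle \varphi_p,P\bar W_{p,i}\rangle - \zeta_p p\int_\Omega \bar W_{\boldsymbol{\mathrm\alpha},p}^{p-1}\varphi_p P\bar W_{p,i}$, and the first term vanishes by the orthogonality built into the definition of $\zeta_p$. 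On the right-hand side one gets $\sum_j \beta_j \langle P\bar W_{p,j},P\bar W_{p,i}\rangle + \sum_{j,h}\gamma_{jh}\langle \partial P\bar W_{p,j}/\partial\xi_{j,h},P\bar W_{p,i}\rangle$. Similarly, multiplying by $\partial P\bar W_{p,i}/\partial\xi_{i,h}$ produces a second batch of linear relations. Thus the vector $(\beta_j,\gamma_{jh})$ solves a linear system whose coefficient matrix is the Gram matrix of $\{P\bar W_{p,j},\partial P\bar W_{p,j}/\partial\xi_{j,h}\}$ and whose right-hand side involves the weighted integrals $p\int_\Omega \bar W_{\boldsymbol{\mathrm\alpha},p}^{p-1}\varphi_p(\cdot)$.

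The key estimates are then: (i) the Gram matrix is, after suitable rescaling, asymptotically block-diagonal and invertible — specifically $\|P\bar W_{p,j}\|^2\sim \tfrac{1}{p}$ (as used already in the proof of Proposition~\ref{prop:upHasTheFormInEMP}), $\|\partial P\bar W_{p,j}/\partial\xi_{j,h}\|^2\sim \tfrac{c}{p\,\bar\e_{p,j}^2}$, the cross terms between $P\bar W_{p,j}$ and its $\xi$-derivative vanish by the radial symmetry of $U,w_0,w_1$ (the integrand is odd), and the interaction terms between different spikes $i\ne j$ are $O(e^{-p/4})$ because they live on $G$-type tails; and (ii) the right-hand side terms are small: using \eqref{10-6-4}, $\|\varphi_p\|_{L^\infty(\Omega)}=1$, the scaling $x=x_{p,j}+\bar\e_{p,j}y$, and $\bar W_{\boldsymbol{\mathrm\alpha},p}^{p-1}\approx \bar\e_{p,j}^{-2} p^{-(p/(p-1)) (p-1)} e^{U}$ concentrated near each $\xi_j$, one finds $p\int_\Omega \bar W_{\boldsymbol{\mathrm\alpha},p}^{p-1}\varphi_p P\bar W_{p,i} = O(1)$ (this carries the $\log$-tail of $P\bar W_{p,i}$ times $e^U$, which is integrable) and $p\int_\Omega \bar W_{\boldsymbol{\mathrm\alpha},p}^{p-1}\varphi_p\,\partial P\bar W_{p,i}/\partial\xi_{i,h} = O(\bar\e_{p,i}^{-1})=O(e^{p/4})$, since $\partial P\bar W_{p,i}/\partial\xi_{i,h}$ scales like $\bar\e_{p,i}^{-1}\partial_y U$. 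Solving the (rescaled) near-diagonal system then yields $\beta_i = O(1)$ and $\gamma_{ih} = O\!\big((p\bar\e_{p,i}^2)\cdot \bar\e_{p,i}^{-1}\big)\cdot(\text{harmless factors}) = O\!\big(\tfrac{1}{p\,e^{p/4}}\big)$, after dividing the $\gamma$-row by $\|\partial P\bar W_{p,j}/\partial\xi_{j,h}\|^2\sim c\,(p\bar\e_{p,j}^2)^{-1}$ and recalling $\bar\e_{p,j}\sim c\,e^{-p/4}$ (by \eqref{def:barepsilon} and \eqref{MissingEstimateBarMu}).

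I would organize the write-up as: first record the two scaling facts $\|P\bar W_{p,j}\|^2\sim c_0 p^{-1}$ and $\|\partial P\bar W_{p,j}/\partial\xi_{j,h}\|^2\sim c_1 (p\bar\e_{p,j}^2)^{-1}$, together with the vanishing/smallness of all off-diagonal Gram entries (radial symmetry for the within-spike cross term, exponential smallness for between-spike terms — both already appear in Section~\ref{Section:ImprovedAsymptoticup}); then derive the linear system from testing \eqref{61-4-9}; then bound its right-hand side using $\|\varphi_p\|_{L^\infty}=1$, \eqref{10-6-4}, and the explicit form \eqref{12-30-8} of $\bar W_{p,j}$; finally invert and read off \eqref{22-5-9}. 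The main obstacle I anticipate is bookkeeping the powers of $p$ and of $\bar\e_{p,j}$ correctly when passing between $P\bar W_{p,j}$ and its $\xi$-derivative — in particular making sure the $\gamma_{jh}$ estimate really produces the claimed $O(1/(p e^{p/4}))$ rather than merely $O(1)$; this is exactly where the factor $\bar\e_{p,j}\sim e^{-p/4}$ in the normalization of $\partial P\bar W_{p,j}/\partial\xi_{j,h}$ must be tracked carefully, and where the estimate $\|\varphi_p\|_{L^\infty(\Omega)}=1$ (rather than an $H^1$ bound) is essential to keep the weighted integral against $\partial P\bar W_{p,i}/\partial\xi_{i,h}$ only of size $O(e^{p/4})$ and not larger.
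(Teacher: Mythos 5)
Your skeleton — test \eqref{61-4-9} against $P\bar W_{p,i}$ and $\partial P\bar W_{p,i}/\partial\xi_{i,l}$, form the Gram system, use the scalings $\|P\bar W_{p,j}\|^2\sim p^{-1}$, $\|\partial P\bar W_{p,j}/\partial\xi_{j,h}\|^2\sim e^{p/2}/p$, and invert — is exactly the one the paper uses. But your bounds on the right-hand side of the linear system are too weak to close the argument, and you have flagged precisely the spot where it goes wrong without identifying the mechanism that fixes it. The issue: you estimate the weighted integrals using only $\|\varphi_p\|_{L^\infty}=1$, getting $p\int_\Omega\bar W_{\boldsymbol{\mathrm\alpha},p}^{p-1}\varphi_p P\bar W_{p,i}=O(1)$ and (once the $1/p$ coming from the prefactor in $\partial P\bar W_{p,i}/\partial\xi_{i,l}\sim\frac{c}{p\bar\e_{p,i}}\partial_l U$ is tracked, which you seem to drop) $O(e^{p/4}/p)$. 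Feeding these into the $\beta$-row $\frac{1}{p}\beta_i\sim O(1)$ only yields $\beta_i=O(p)$, and the $\gamma$-row $\frac{e^{p/2}}{p}\gamma_{il}\sim O(e^{p/4}/p)$ only yields $\gamma_{il}=O(e^{-p/4})$ — both a full factor of $p$ short of \eqref{22-5-9}. Your final displayed arithmetic $O\bigl((p\bar\e_{p,i}^2)\cdot\bar\e_{p,i}^{-1}\bigr)=O\bigl(\tfrac{1}{pe^{p/4}}\bigr)$ simply isn't true: $p\bar\e_{p,i}^2\cdot\bar\e_{p,i}^{-1}=p\bar\e_{p,i}\sim p\,e^{-p/4}$, which differs from $\tfrac{1}{pe^{p/4}}$ by $p^2$ — that discrepancy is hidden in your ``harmless factors''.

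The missing idea is that the orthogonality constraints carried by $\varphi_p$ are used a second time, inside the estimate of the weighted integrals themselves, not just to kill $\langle\varphi_p,P\bar W_{p,i}\rangle$ on the left. Since $-\Delta P\bar W_{p,i}$ is, up to an error of relative size $O(1/p)$, a multiple of $\bar\e_{p,i}^{-2}e^{U(\cdot/\bar\e_{p,i})}$ (see \eqref{60-5-9}--\eqref{61-5-9}), one rewrites
\[
\int_\Omega p\bar W_{\boldsymbol{\mathrm\alpha},p}^{p-1}P\bar W_{p,i}\,\varphi_p
\sim -\int_\Omega \Delta P\bar W_{p,i}\,\varphi_p + O\bigl(\tfrac1p\bigr)\|\varphi_p\|_{L^\infty}
=\langle P\bar W_{p,i},\varphi_p\rangle+O\bigl(\tfrac1p\bigr)=O\bigl(\tfrac1p\bigr),
\]
and analogously $\int_\Omega p\bar W_{p}^{p-1}\frac{\partial P\bar W_{p,i}}{\partial\xi_{i,l}}\varphi_p\sim -\frac{1}{p\bar\e_{p,i}}\int_\Omega\Delta\frac{\partial P\bar W_{p,i}}{\partial\xi_{i,l}}\varphi_p+O\bigl(\tfrac{1}{\bar\e_{p,i}p^2}\bigr)=O\bigl(\tfrac{e^{p/4}}{p^2}\bigr)$, the leading term vanishing by $\langle\partial P\bar W_{p,i}/\partial\xi_{i,l},\varphi_p\rangle=0$ (these are \eqref{20-5-9} and \eqref{21-5-9}). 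These gain the extra $1/p$ that your $L^\infty$ bound cannot see and that is what produces $\beta_i=O(1)$ and $\gamma_{ih}=O(e^{-p/4}/p)$ upon inverting the near-diagonal system. One secondary inaccuracy: the within-spike cross term $\langle P\bar W_{p,j},\partial P\bar W_{p,j}/\partial\xi_{j,h}\rangle$ does not vanish by radial symmetry — the projection $P$ introduces the non-radial regular part $H(\cdot,\xi_j)$ — but it is small, $O\bigl(\tfrac{1}{p^4\bar\e_{p,j}}\bigr)$ as in \eqref{3-5-9}, which suffices.
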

\begin{proof}
 From equation \eqref{61-4-9}, it follows that
$\beta_j$ and $\gamma_{jh}$ are determined by
the following systems
\begin{equation}\label{10-5-9}
\begin{split}
& \bigl\langle  P\bar W_{p, i}, \varphi_p\bigr\rangle -\zeta_{p} \int_\Omega  p\bar W_{\boldsymbol{\mathrm\alpha},p}^{p-1} P\bar W_{p, i} \varphi_p\\
=&\sum_{j=1}^k \beta_j\int_\Omega P\bar W_{p, i}\Delta P\bar W_{p, j}+ \sum_{j=1}^k\sum_{h=1}^2\gamma_{jh}\int_\Omega
P\bar W_{p, i}\Delta \frac{
\partial P \bar W_{p, j}}{\partial \xi_{j,h}} ,
\end{split}
\end{equation}
and
\begin{equation}\label{11-5-9}
\begin{split}
& \bigl\langle  \frac{
\partial P \bar W_{p, i}}{\partial \xi_{i,l}} , \varphi_p\bigr\rangle -\zeta_{p}  \int_\Omega  p\bar W_{\boldsymbol{\mathrm\alpha},p}^{p-1}  \frac{
\partial P \bar W_{p, i}}{\partial \xi_{i,l}}  \varphi_p\\
=&\sum_{j=1}^k \beta_j \int_\Omega\frac{
\partial P \bar W_{p, i}}{\partial \xi_{i,l}} \Delta P\bar W_{p, j}+ \sum_{j=1}^k\sum_{h=1}^2
\gamma_{jh}\int_\Omega \frac{
\partial P \bar W_{p, i}}{\partial \xi_{i,l}} \Delta \frac{
\partial P \bar W_{p, j}}{\partial \xi_{j,h}}.
\end{split}
\end{equation}
Direct computations show that
\begin{equation}\label{1-5-9}
\| P\bar W_{p, j}\|^2\sim \frac1p,\,~~~~~~
\| \frac{
\partial P \bar W_{p, j}}{\partial \xi_{j,h}} \|^2\sim \frac{e^{\frac p2}}{p},
\end{equation}
and
\begin{equation}\label{3-5-9}
\bigl\langle \bar W_{p, j},\;\frac{
\partial P \bar W_{p, j}}{\partial \xi_{j,h}}\bigr\rangle =O\Bigl( \frac1{p^4\bar \e_{p,j}}\Big)=O\Bigl(\frac{e^{\frac{p}{4}}}{p^{4}}\Big).
\end{equation}
Moreover, we have
\begin{equation}\label{50-5-9}
\bigl\langle  P\bar W_{p, i}, \varphi_p\bigr\rangle=\bigl\langle  \frac{
\partial P \bar W_{p, i}}{\partial \xi_{i,l}} , \varphi_p\bigr\rangle=0.
\end{equation}
Hence \eqref{10-5-9} and \eqref{11-5-9} become respectively
\begin{equation}\label{10-5-9BIS}
\begin{split}
 \zeta_{p} \int_\Omega  p\bar W_{\boldsymbol{\mathrm\alpha},p}^{p-1} P\bar W_{p, i} \varphi_p
&\sim
\frac{1}{p}\Bigl(\beta_i+o(1)\sum_{j\ne i}|\beta_j|\Bigr)+ O\Bigl( \frac{e^{\frac p4}}{p} \sum_{h=1}^2
\sum_{j=1}^k|\gamma_{ih}|\Big)
\end{split}\end{equation}
and
\begin{equation}\label{11-5-9BIS}
\begin{split}
  \zeta_{p}  \int_\Omega  p\bar W_{\boldsymbol{\mathrm\alpha},p}^{p-1}  \frac{
\partial P \bar W_{p, i}}{\partial \xi_{i,l}} \varphi_p
& \sim
\sum_{j=1}^k |\beta_{j}| O\Bigl( \frac{e^{\frac p4}}{p}\Big)+\frac{e^{\frac p2}}{p}
\Bigl(\gamma_{il} +o(1)
 \sum_{h\ne l, \text{or}\; j\ne i}
|\gamma_{jh}| \Bigr).
\end{split}
\end{equation}

On the other hand, for $x\in B_\delta(\xi_{i})$,
\begin{equation}\label{60-5-9}
 p\bar W_{\boldsymbol{\mathrm\alpha},p}^{p-1}(x)
 =
\frac{\alpha_j^{p-1}}{\bar \e_{p,i}^2} e^{U(\frac{x-\xi_{i}}{\bar \e_{p,{i}}})}\left(
1+\frac1p \left( w_0(\frac{x-\xi_{i}}{\bar \e_{p,{i}}}) -U(\frac{x-\xi_{i}}{\bar \e_{p,{i}}}) -\frac12 U^2(\frac{x-\xi_{i}}{\bar \e_{p,{i}}})  \right)+O\bigl(\frac{\log^4(2+\frac{|x-\xi_{i}|}{\bar \e_{p,{i}}} )}{ p^2}  \bigr)\right)
\end{equation}
and
\begin{equation}\label{61-5-9}
\begin{split}
 P\bar W_{p, i} =\frac1{ p^{\frac p{p-1}}\bar\e_{p,{i}}^{\frac2{p-1}}}
\bigl( p +O(1)\bigr)\sim 1+O\Bigl(\frac1p\Bigr).
\end{split}
\end{equation}
So using \eqref{50-5-9}  \eqref{60-5-9} and \eqref{61-5-9}, we can prove that
\begin{equation}\label{20-5-9}
\begin{split}
\int_\Omega  p\bar W_{\boldsymbol{\mathrm\alpha},p}^{p-1} P\bar W_{p, i} \varphi_p\sim &\frac1{\bar \e_{p,i}^2}\int_{ B_\delta(\xi_{i})} e^{U(\frac{x-\xi_{i}}{\bar \e_{p,{i}}})}\Bigl(
1 +O(\frac1p\bigr)\Bigr)\varphi_p\\
\sim &-\int_\Omega\Delta P\bar W_{p, {i}} \varphi_p+
O\Bigl( \frac{1  }{ p}\Bigr)\|\varphi_p\|_{L^\infty(\Omega)}=O\Bigl( \frac{1  }{ p}\Bigr).
\end{split}
\end{equation}
Similarly,  we can prove that
\begin{equation}\label{21-5-9}
\begin{split}
\int_\Omega  p\bar W_{p}^{p-1}\frac{
\partial P \bar W_{p, {i}}}{\partial \xi_{i,l}} \varphi_p
\sim &\frac1{\bar \e_{p,i}^2}\int_{ B_\delta(\xi_{i})} e^{U(\frac{x-\xi_{i}}{\bar \e_{p,{i}}})}
\Bigl(
1 +O(\frac1p\bigr)\Bigr)\frac1{p\bar \e_{p,{i}} }\frac{
\partial U(\frac{x-\xi_{i}}{\bar \e_{p,{i}}})
}{\partial \xi_{i,l}} \varphi_p +O\Bigl( \frac{1  }{\bar \e_{p,{i}} p^2}\Bigr)\\
\sim & -\frac1{p\bar \e_{p,{i}} }\int_\Omega \Delta  \frac{
\partial P \bar W_{p, i}}{\partial \xi_{i,l}} \varphi_p+
O\Bigl( \frac{1  }{\bar \e_{p,{i}} p^2}\Bigr)=O\Bigl( \frac{e^{\frac p4}  }{ p^2}\Bigr).
\end{split}
\end{equation}
Solving \eqref{10-5-9BIS} and \eqref{11-5-9BIS}, using \eqref{20-5-9}
and \eqref{21-5-9},
we find  \eqref{22-5-9}.
\end{proof}

\

\begin{proof}[{\bf Proof of Proposition~\ref{p10-30-8}}]
We will modify the proof of Proposition~3.1 in \cite{EMP2006} to prove
this proposition.
We argue by contradiction. Suppose that $\zeta_{n}:=\zeta_{p_n}\le 1+\frac1{np_n}$
for some $p_n\to +\infty$.
 We can find $\varphi_n\in H_0^1(\Omega)$, satisfying \eqref{61-4-9} for $p=p_n$, and
\[
 \bigl\langle  P\bar W_{p_n, j}, \varphi_n\bigr\rangle = \bigl\langle  \frac{
\partial  P\bar W_{p_n, j}}{\partial \xi_{j,h}} , \varphi_n\bigr\rangle=0.
\]
We may assume w.l.g. that $\|\varphi_n\|_{L^\infty(\Omega)}=1$. In order to simplify the notation we set $\bar \e_{n,i} :=\bar \e_{p_n,i}$.
Let \[\widetilde\varphi_n(y):= \varphi_{n}(\bar \e_{n,i} y+ \xi_{i}).\]
%
%
%We denote   % W_{p} was already something else... this was confusing!
%\[
%W(x)=p\bigl( \sum_{j=1}^k  P\bar W_{p, j}  \bigr)^{p-1}.
%\]
%
%
{\sl Step~1.  We show that
\begin{equation}\label{3-6-9}
\widetilde\varphi_{n}\rightarrow a Z_0(y):=a \frac{ (|y|^2-1)}{|y|^2+1}\quad\mbox{ in }C^1_{loc}(\mathbb R^2),
\end{equation}
for a constant $a\geq 0$, as $p\rightarrow +\infty$.}
\\
\\
Let us observe that $\widetilde\varphi_n$ satisfies
\begin{equation*}
\begin{split}
&-\Delta \widetilde\varphi_n - \bar \e^2_{n,i} \zeta_{n}  p_{n}(\bar W_{\boldsymbol{\mathrm\alpha_n},p_{n}})^{p_{n}-1}
  (\bar \e_{n,i} y+ \xi_{i}) \widetilde\varphi_n \\
  = &\bar \e^2_{n,i}\Bigl(  \sum_{j=1}^k \beta_j\Delta P\bar W_{p_n, j}(\bar \e_{n,i} y+ \xi_{i})+ \sum_{j=1}^k\sum_{h=1}^2
\gamma_{jh}\Delta \frac{
\partial P \bar W_{p_n, j}(\bar \e_{n,i} y+ \xi_{i})}{\partial \xi_{j,h}} \Bigr)
\end{split}
\end{equation*}
and $\|\widetilde\varphi_{n}\|_{L^{\infty(\Omega)}}=1$. In view of
\eqref{10-6-4}, \eqref{60-5-9}, Lemma \ref{lemma:contantsControl} and  $\zeta_{n}\in [0,1+\frac1{np_n}]$, we assume that up to a subsequence, $\widetilde\varphi_{n}\to \widetilde\varphi$ in $C^1_{loc}(\mathbb R^2)$, and $\widetilde\varphi$ satisfies
\begin{equation}\label{2-6-9}
\begin{split}
-\Delta \widetilde\varphi -\zeta e^{U} \widetilde\varphi=0,\quad \text{in}\; \mathbb R^2,
\end{split}
\end{equation}
where $\zeta\in [0, 1]$ is a constant.

\vskip 0.1cm

Suppose that $\widetilde\varphi\ne 0$.
Note that \eqref{2-6-9} has no bounded nontrivial solution if $t\in (0, 1)$.
So either $\zeta=0$ or $\zeta=1$.  If $\zeta=0$, $\widetilde\varphi$ is a constant. On the other hand, from
$\bigl\langle  P\bar W_{p_{n}, j},\varphi_{n}\bigr\rangle=0$, we see that
\[
\int_{\mathbb R^2} e^{U} \widetilde\varphi=0.
\]
This gives $\zeta=1$. Moreover, from $\bigl\langle  \frac{
\partial  P\bar W_{p_{n}, j}}{\partial \xi_{j,h}} , \varphi_{n}\bigr\rangle=0$, we deduce that
\[
\widetilde\varphi(y)=a Z_0(y),
\]
for a constant $a$. We may assume $a\geq 0$, otherwise we just replace $\widetilde\varphi_n$ by $-\widetilde\varphi_n$. This concludes the proof of \eqref{3-6-9}.\\

\noindent{\sl Step~2.  We show that in \eqref{3-6-9} the constant $a=0$.}\\

We will follow Step~4 in
the proof of Proposition~3.1 in \cite{EMP2006}. Hence we will find a suitable function $v\in H^1_0(\Omega)$, such that
\begin{equation}\label{4-6-9}
-\Delta v - p_{n}(\bar W_{p_{n}})^{p_{n}-1} v \approx e^{ U(\frac{x-\xi_{i}}{\bar \e_{n,i}}) } Z_0(\frac{x-\xi_{i}}{\bar \e_{n,i}}),
\end{equation}
and then we will  multiply both sides of \eqref{4-6-9} by $v$ and
integrate on $\Omega$ to obtain the result.

\vskip 0.1cm

Let us define $v$. In view of \eqref{60-5-9}, we first get the ODE solution $w$ of
\[
\Delta w+ e^{U} w= e^{U} Z_0,
\]
and the ODE solution $t$ of
\[
\Delta t+ e^{U} t= e^{U}.
\]
Then as $|y|\to +\infty$,
\[
w(y) =\frac43 \log |y| +O\bigl(\frac1{|y|}\bigr),\quad t(y)=O\bigl(\frac1{|y|}\bigr).
\]
Define
\[
v(x) :=w(\frac{x-\xi_{i}}{\bar \e_{n,i}})  +\frac43 \log \bar \e_{n,i} Z_0(\frac{x-\xi_{i}}{\bar \e_{n,i}}) +\frac{8\pi}3 H(\xi_{i}, \xi_{i}) t(\frac{x-\xi_{i}}{\bar \e_{n,i}}).
\]
Then by
\eqref{MissingEstimateBarMu}, we find
\[
Pv = v-\frac{8\pi}3 H(\cdot,\xi_{i}) +O\bigl(e^{-\frac{ p_{n}}{4}}\bigr),
\]
and
\begin{equation}\label{5-6-9}
\begin{split}
&\Delta Pv + p_{n}(\bar W_{p_{n}})^{p_{n}-1} Pv \\
= &e^{ U(\frac{x-\xi_{i}}{\bar \e_{n,i}})+2\log\frac{1}{\bar \e_{n,i}} } Z_0(\frac{x-\xi_{i}}{\bar \e_{n,i}})+
\left(p_{n}(\bar W_{p_{n}})^{p_{n}-1}-e^{ U (\frac{x-\xi_{i}}{\bar \e_{n,i}})  +2\log\frac{1}{\bar \e_{n,i}}  } \right) Pv +R,
\end{split}
\end{equation}
where
\begin{equation*}
R:= \left(Pv- v +\frac{8\pi}3 H(\xi_{i},\xi_{i})\right)e^{ U(\frac{x-\xi_{i}}{\bar \e_{n,i}})+  2\log\frac{1}{\bar \e_{n,i}}   }.
\end{equation*}
Now we multiply \eqref{5-6-9} by $\varphi_n$, integrate by parts and use \eqref{61-4-9} to obtain
\begin{equation}\label{7-6-9}
\begin{split}
&\int_\Omega e^{ U(\frac{x-\xi_{i}}{\bar \e_{n,i}})+  2\log\frac{1}{\bar \e_{n,i}}    } Z_0(\frac{x-\xi_{i}}{\bar \e_{n,i}})\varphi_n+
\int_\Omega
\left(p_{n}(\bar W_{p_{n}})^{p_{n}-1}-e^{ U (\frac{x-\xi_{i}}{\bar \e_{n,i}})  +  2\log\frac{1}{\bar \e_{n,i}}    } \right) Pv\varphi_n\\
=&\int_\Omega\Bigl((1-\zeta_{n}) p_{n}(\bar W_{\boldsymbol{\mathrm\alpha_n}, p_{n}})^{p_{n}-1} \varphi_n  +p_n\bigl(\bar W_{ p_{n}})^{p_{n}-1}-\bar W_{\boldsymbol{\mathrm\alpha_n}, p_{n}})^{p_{n}-1}
\bigr) \varphi_n- h_{p_{n}}
\Bigr) Pv  -\int_\Omega R\varphi_n,
\end{split}
\end{equation}
where $h_{p_{n}}$ is defined in \eqref{def:h}.
From the computations on pages 49 and 50 in \cite{EMP2006}, we have
\begin{equation}\label{8-6-9}
\begin{split}
\int_\Omega e^{ U(\frac{x-\xi_{i}}{\bar \e_{n,i}}) +  2\log\frac{1}{\bar \e_{n,i}}   } Z_0(\frac{x-\xi_{i}}{\bar \e_{n,i}})\varphi_n=\frac{8a\pi}3+o(1),
\end{split}
\end{equation}
\begin{equation}\label{9-6-9}
\begin{split}
\int_\Omega
\left(p_{n}(\bar W_{p_{n}})^{p_{n}-1}-e^{ U (\frac{x-\xi_{i}}{\bar \e_{n,i}})  +  2\log\frac{1}{\bar \e_{n,i}}    } \right) Pv \varphi_n=\frac{8a\pi}3+o(1)
\end{split}
\end{equation}
and
\begin{equation}\label{10-6-9}
\int_\Omega R \varphi_n=O(e^{-\frac{p_{n}}{4}}),
\end{equation}
as $n\rightarrow +\infty$. Similar to the  computations as  on page 49 in \cite{EMP2006}, we   also obtain
\begin{equation}\label{12-6-9}
\begin{split}
&\int_\Omega p_{n}(\bar W_{p_{n}})^{p_{n}-1} \varphi_n  Pv \\
=&\int_\Omega p_{n}(\bar W_{p_{n}})^{p_{n}-1} \varphi_n \Bigl( \frac43 \log \bar \e_{n,i}Z_0(\frac{x-\xi_{i}}{\bar \e_{n,i}})  +O(1)\Bigr)=  \frac43 \log \bar \e_{n,i} \Bigl(
\frac{8a\pi}3+o(1)\Bigr),
\end{split}
\end{equation}
and in view of \eqref{10-6-4}
\begin{equation*}
\begin{split}
\int_\Omega p_n\big|\bigl(\bar W_{ p_{n}})^{p_{n}-1}-\bar W_{\boldsymbol{\mathrm\alpha_n}, p_{n}})^{p_{n}-1}
\bigr) \varphi_n
 Pv\big|  = O\bigl(|\log \bar \e_{n,i}| p^{-\frac32+\tau_0}\bigr),
\end{split}
\end{equation*}
and it is also easy to see from  Lemma \ref{lemma:contantsControl}  that
\begin{equation*}
\begin{split}
\int_\Omega h_{n} Pv =O(e^{-\frac{ p_{n}}{4}}),
\end{split}
\end{equation*}
as $n\rightarrow +\infty$.  Combining \eqref{7-6-9}, \eqref{8-6-9}, \eqref{9-6-9}, \eqref{10-6-9} and \eqref{12-6-9}, we obtain
\begin{equation}\label{13-6-9}
\frac{16\pi a} 3 -\frac43 (1-\zeta_{n})\log \bar \e_{n,i} \bigl(\frac{8a\pi}3+o(1)\bigr)=o(1).
\end{equation}
Since $(1-\zeta_{n})\ge -\frac1{np_{n}}$,  and
$\bar \e_{n,i} \sim c e^{-p_{n}/4}$ by
 \eqref{MissingEstimateBarMu},  we have $-\frac43 (1-\zeta_{n})\log \bar \e_{n,i} \ge
-\frac{C}n$. Hence we obtain a contradiction from \eqref{13-6-9} in view of $a>0$.\\\\
{\sl Step~3. Conclusion. } \\\\Let
\[
\|f\|_*:=\max_{x\in\Omega}\Bigl(\sum_{j=1}^k \frac{\bar\e_{n,j}}{(\bar \e_{n,j}^2
+|x-\xi_{j}|^2)^{\frac32}}\Bigr)^{-1}|f(x)|.
\]
Arguing as in Steps~1 and 2 in the proof of Proposition~3.1
in \cite{EMP2006}, we can prove  that for $x\in \Omega\setminus \Cup_{j=1}^k
B_{R \bar \e_{n,j}}(\xi_{j})$, where $R>0$ is large, there exists a constant $C>0$ such that
\begin{equation}\label{20-6-9}
|\varphi_n(x)|\le C \max_{x\in \cup_{j=1}^k
B_{R \bar \e_{n,j}}(\xi_{j})} |\varphi_n| + C\|h_{p_{n}}\|_*,
\end{equation}
where $h_{p_{n}}$ is defined in \eqref{def:h} with $p=p_{n}$.

\vskip 0.1cm

On the other hand, we have
\[
| \beta_j\Delta P\bar W_{p_{n}, j}|\le \frac{ C}{p_{n}} \frac{\bar\e_{n,j}^2}{(\bar \e_{n,j}^2
+|x-\xi_{j}|^2)^{2}}\le \frac{ C}{p_{n}}  \frac{\bar\e_{n,j}}{(\bar \e_{n,j}^2
+|x-\xi_{j}|^2)^{\frac32}},
\]
and
\[
|\gamma_{jh}\Delta \frac{
\partial P \bar W_{p_{n}, j}}{\partial \xi_{j,h}} |\le \frac C{p_{n}^2} \frac{\bar\e_{n,j}^2}{(\bar \e_{n,j}^2
+|x-\xi_{j}|^2)^{2}}\le \frac C{p_{n}^2}  \frac{\bar\e_{n,j}}{(\bar \e_{n,j}^2
+|x-\xi_{j}|^2)^{\frac32}}.
\]
Thus, $\|h_{p_{n}}\|_*= o(1)$ as $n\rightarrow +\infty$, which, together with \eqref{20-6-9},
 gives
 \[
\|\varphi_n\|_{L^{\infty}(\Omega)}
\le C \max_{x\in \bigcup_{j=1}^k
B_{R \bar \e_{n,j}}(\xi_{j})} |\varphi_n(x)| + o(1)= C \max_{y\in
B_{R} (0)} |\widetilde\varphi_n(y)| + o(1).
\]
As a consequence, {\sl Step 1} and {\sl Step 2} imply
\[
\|\varphi_n\|_{L^{\infty}(\Omega)}=o(1),
\]
as $n\rightarrow +\infty$, a contradiction to $\|\varphi_n\|_{L^\infty(\Omega)}=1$.
\end{proof}
Similarly, we can also prove the following result.
\begin{Prop}\label{p10-7-9}
Let $\hat h\in C(\overline\Omega)$, $t\in [0, 1]$, and $\beta_j$ and $\gamma_{j,h}$ be some constants.
Let us suppose that  that $\omega$ satisfies the linear problem
\begin{equation*}
\begin{cases}
-\Delta \omega -t p\bar W_{\boldsymbol{\mathrm\alpha},p}^{p-1} \omega =\hat h+\displaystyle\sum_{j=1}^k \beta_j\Delta P\bar W_{p, j}+ \displaystyle\sum_{j=1}^k\sum_{h=1}^2
\gamma_{jh}\Delta \frac{
\partial P \bar W_{p, j}}{\partial \xi_{j,h}} &\mbox{ in }\Omega,\\
\omega=0 &\mbox{ on }\partial\Omega,
\end{cases}
\end{equation*}
where $\bar W_{p}$ is defined in \eqref{def:potentialLinearpartial}, and that
\[
\bigl\langle  P\bar W_{p, j}, \omega\bigr\rangle = \bigl\langle  \frac{
\partial  P\bar W_{p, j}}{\partial \xi_{j,h}} , \omega\bigr\rangle=0,
\quad j=1,\cdots, k,\; h=1,2.
\]
 Then for $p$ large, it holds
\[\|\omega\|_{L^\infty(\Omega)}\le C p \|\hat h\|_*,
\]
where \[
\|\hat h\|_*:=\max_{x\in\Omega}\Bigl(\sum_{j=1}^k \frac{\bar\e_{p,j}}{(\bar \e_{p,j}^2
+|x-\xi_{j}|^2)^{\frac32}}\Bigr)^{-1}|\hat h(x)|.
\]
\end{Prop}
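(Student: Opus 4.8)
The plan is to reproduce the contradiction/blow-up argument of Proposition~\ref{p10-30-8} — which supplies the spectral gap $\zeta_p\ge1+c_0/p$ — inside the weighted $L^\infty$ framework of \cite[Proposition~3.1]{EMP2006}. If $\hat h\equiv0$ there is nothing to do: testing the equation against $\omega$ itself and using the orthogonality conditions gives $\int_\Omega|\nabla\omega|^2=tp\int_\Omega\bar W_{\boldsymbol\alpha,p}^{p-1}\omega^2$, while the definition of $\zeta_p$ forces $\int_\Omega|\nabla\omega|^2\ge\zeta_p\,p\int_\Omega\bar W_{\boldsymbol\alpha,p}^{p-1}\omega^2$, and since $t\le1<\zeta_p$ this yields $\omega\equiv0$. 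In general, suppose the estimate fails: by linearity there are $p_n\to+\infty$, $t_n\in[0,1]$, $\hat h_n\in C(\overline\Omega)$, constants $\beta_{n,j},\gamma_{n,jh}$ and solutions $\omega_n$ meeting the orthogonality conditions, with $\|\omega_n\|_{L^\infty(\Omega)}=1$ and $p_n\|\hat h_n\|_*\to0$; one must derive a contradiction.

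First I would bound the multipliers. Testing the equation against $P\bar W_{p_n,i}$ and $\partial_{\xi_{i,l}}P\bar W_{p_n,i}$, using the orthogonality relations and the estimates \eqref{1-5-9}--\eqref{3-5-9}, produces the same linear system for $\beta_{n,j},\gamma_{n,jh}$ as in the proof of Lemma~\ref{lemma:contantsControl}, except for the new contributions $\int_\Omega\hat h_n\,P\bar W_{p_n,i}$ and $\int_\Omega\hat h_n\,\partial_{\xi_{i,l}}P\bar W_{p_n,i}$; since $\|P\bar W_{p_n,i}\|_{L^\infty}=O(1)$, $\|\partial_{\xi_{i,l}}P\bar W_{p_n,i}\|_{L^\infty}=O(e^{p_n/4}/p_n)$ and the weight $\sum_j\bar\e_{p_n,j}(\bar\e_{p_n,j}^2+|x-\xi_j|^2)^{-3/2}$ is uniformly integrable over $\Omega$, these are $O(\|\hat h_n\|_*)$, resp. $O(e^{p_n/4}\|\hat h_n\|_*/p_n)$, hence negligible. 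One obtains $\beta_{n,j}=O(1)$ and $\gamma_{n,jh}=O(p_n^{-1}e^{-p_n/4})$, so the full right-hand side $h_n:=\hat h_n+\sum_j\beta_{n,j}\Delta P\bar W_{p_n,j}+\sum_{j,h}\gamma_{n,jh}\Delta\partial_{\xi_{j,h}}P\bar W_{p_n,j}$ obeys $\|h_n\|_*=o(1)$, using $\|\Delta P\bar W_{p_n,j}\|_*=O(p_n^{-1})$ and $\|\Delta\partial_{\xi_{j,h}}P\bar W_{p_n,j}\|_*=O(p_n^{-2})$. Then, using the weight as a supersolution of $-\Delta-t_np_n\bar W_{\boldsymbol\alpha,p_n}^{p_n-1}$ away from the concentration balls, as in Steps~1--2 of the proof of \cite[Proposition~3.1]{EMP2006}, one obtains for $R$ large fixed
\[\|\omega_n\|_{L^\infty(\Omega)}\le C\max_{1\le j\le k}\ \max_{B_R(0)}|\widetilde\omega_{n,j}|+C\|h_n\|_*=C\max_{1\le j\le k}\ \max_{B_R(0)}|\widetilde\omega_{n,j}|+o(1),\]
where $\widetilde\omega_{n,j}(y):=\omega_n(\bar\e_{p_n,j}y+\xi_j)$; in particular $\omega_n$ is bounded in $H^1_0(\Omega)$.

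Along a subsequence $t_n\to\zeta\in[0,1]$ and $\widetilde\omega_{n,j}\to\widetilde\omega_j$ in $C^1_{loc}(\R^2)$ with $-\Delta\widetilde\omega_j=\zeta e^U\widetilde\omega_j$ and $\int_{\R^2}|\nabla\widetilde\omega_j|^2<\infty$ (the rescaled right-hand side tends to $0$ locally uniformly since $\bar\e_{p_n,j}^2|h_n(\bar\e_{p_n,j}y+\xi_j)|\le\|h_n\|_*(1+|y|^2)^{-3/2}+o(1)$). If every $\widetilde\omega_j\equiv0$, the outer estimate forces $\|\omega_n\|_{L^\infty}\to0$, a contradiction; hence $\widetilde\omega_{j_0}\not\equiv0$ for some $j_0$. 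By Lemma~\ref{lemma:limitEigenvalueProblem}, $\zeta\in\{0,1\}$, and passing $\langle P\bar W_{p_n,j},\omega_n\rangle=0$ to the limit gives $\int_{\R^2}e^U\widetilde\omega_j=0$ for all $j$; if $\zeta=0$ then $\widetilde\omega_j$ would be a bounded harmonic, hence constant, function with zero mean against $e^U$, i.e. $\widetilde\omega_j\equiv0$ — impossible. So $\zeta=1$, i.e. $t_n\to1$, and by Lemma~\ref{llm} together with the limit of $\langle\partial_{\xi_{j,h}}P\bar W_{p_n,j},\omega_n\rangle=0$ (which annihilates the translation modes $\partial_{x_h}U$) we get $\widetilde\omega_{j_0}=aZ_0$ with $Z_0(y):=(|y|^2-1)/(|y|^2+1)$ as in \eqref{3-6-9}, and, after possibly changing $\omega_n$ into $-\omega_n$, $a\ge0$. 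It remains to show $a=0$. I would do this exactly as in Step~2 of the proof of Proposition~\ref{p10-30-8} (i.e. Step~4 of \cite[Proposition~3.1]{EMP2006}): construct the projected test function $Pv$ from the radial ODE solutions of $\Delta w+e^Uw=e^UZ_0$ and $\Delta t+e^Ut=e^U$, multiply the corresponding identity for $\Delta Pv+p_n\bar W_{p_n}^{p_n-1}Pv$ by $\omega_n$, integrate by parts using the equation for $\omega_n$, and use $\|Pv\|_{L^\infty(\Omega)}=O(p_n)$ — so that $|\int_\Omega\hat h_n\,Pv|=O(p_n\|\hat h_n\|_*)=o(1)$ — together with the multiplier bounds; one is led to $a\bigl[\tfrac{16\pi}{3}-\tfrac{32\pi}{9}(1-t_n)\log\bar\e_{p_n,j_0}\bigr]=o(1)$. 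Since $t_n\le1$ and $\log\bar\e_{p_n,j_0}<0$, the bracket is $\ge16\pi/3>0$, whence $a=o(1)$, contradicting $a>0$. This contradiction proves the proposition.

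The main obstacle is this last step: one must verify that the delicate test-function computation of \cite{EMP2006} survives the two features present here — the extra inhomogeneity $\hat h_n$ and the fact that $t_n$ is a priori only in $[0,1]$ rather than within $o(1/p_n)$ of $1$. The first is harmless because $p_n\|\hat h_n\|_*\to0$ while all quantities it meets grow at most like $|\log\bar\e_{p_n,j_0}|=O(p_n)$; the second is actually favourable, since the orthogonality and the blow-up analysis force $t_n\to1$, and the sign $1-t_n\ge0$ makes the terminal inequality strictly easier than in Proposition~\ref{p10-30-8}. Everything else — multiplier bounds, outer barriers, and the classification of the blow-up limit — is a routine transcription of Lemma~\ref{lemma:contantsControl}, Proposition~\ref{p10-30-8} and \cite[Proposition~3.1]{EMP2006}.
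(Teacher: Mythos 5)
Your proposal is correct and follows essentially the same route as the paper, which itself simply refers back to the contradiction/blow-up argument of Proposition \ref{p10-30-8} and \cite{EMP2006}, replacing \eqref{13-6-9} by $\frac{16\pi a}{3}=O\big(\|\hat h\|_*\log \bar \e_{p,i}\big)+o(1)=o(1)$. Your additional observation that the a priori uncontrolled $t_n\in[0,1]$ is harmless because $1-t_n\ge 0$ and $\log\bar\e_{p_n,j_0}<0$ make the bracket bounded below by $16\pi/3$ is exactly the point implicitly used there, and the rest (multiplier bounds, barriers, classification of the limit) matches the paper's scheme.
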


\begin{proof}
The proof is very similar to that of Proposition~\ref{p10-30-8}. In
this proposition,   \eqref{13-6-9} is replaced by
\begin{equation*}
\frac{16\pi a} 3 =O\big(\|\hat h\|_*\log \bar \e_{p,i}\big)+o\big(1\big)=o\big(1\big).
\end{equation*}
See also \cite{EMP2006} for details.
\end{proof}

\

\subsection{Some crucial estimates} $\,$\\\label{subsection:estimatesPW}

\begin{Lem}\label{l1-8-9}
 It holds
\[
\begin{split}
&\int_\Omega \nabla \bar W_{\boldsymbol{\mathrm\alpha},p} \nabla  P\bar W_{p, j}-\int_\Omega (\bar W_{\boldsymbol{\mathrm\alpha},p})^{p}P\bar W_{p, j}=
\frac{8\pi (\alpha_j-\alpha_j^p) }{ p^{\frac p{p-1}}\bar\e_{p,j}^{\frac2{p-1}}}
+O\Bigl(\frac1{p^2} \sum_{i=1}^k |\alpha_i-
\alpha_i^p|+\frac1{p^{4}}\Bigr).
\end{split}
\]
\end{Lem}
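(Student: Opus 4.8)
The plan is to reduce the left-hand side to a sum of local integrals near each concentration point $\xi_j$ plus negligible remainders, using the defining property $\Delta (P\bar W_{p,j})=\Delta \bar W_{p,j}$ of the projection. First I would integrate by parts to rewrite
\[
\int_\Omega \nabla \bar W_{\boldsymbol{\mathrm\alpha},p}\cdot\nabla P\bar W_{p,j}
= -\int_\Omega \bar W_{\boldsymbol{\mathrm\alpha},p}\,\Delta P\bar W_{p,j}
= -\int_\Omega \bar W_{\boldsymbol{\mathrm\alpha},p}\,\Delta \bar W_{p,j},
\]
and observe that $\Delta \bar W_{p,j}$ is supported (up to the projection correction, which is harmonic and $O(e^{-p/4})$ after testing against a bounded function) in $B_\delta(\xi_j)$, where by \eqref{3-26-8}-type formulas $-\Delta \bar W_{p,j}$ equals the explicit profile $\frac1{p^{p/(p-1)}\bar\e_{p,j}^{2/(p-1)+2}} e^{U}(1+\frac1p f_0+\frac1{p^2}f_1+O(p^{-3}))$ in the rescaled variable. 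So the whole left-hand side becomes
\[
\int_{B_\delta(\xi_j)}\Bigl(-\Delta \bar W_{p,j}\Bigr)\Bigl(\bar W_{\boldsymbol{\mathrm\alpha},p} - (\bar W_{\boldsymbol{\mathrm\alpha},p})^{p}\cdot(\text{something})\Bigr) + \text{l.o.t.},
\]
but cleaner: write the difference as $\int_\Omega(-\Delta \bar W_{p,j})\bar W_{\boldsymbol{\mathrm\alpha},p} - \int_\Omega(\bar W_{\boldsymbol{\mathrm\alpha},p})^p P\bar W_{p,j}$ and handle the two pieces in parallel.

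**The main term.**
On $B_\delta(\xi_j)$ one has $\bar W_{\boldsymbol{\mathrm\alpha},p}=\alpha_j P\bar W_{p,j}+\sum_{i\ne j}\alpha_i P\bar W_{p,i}$, and the cross terms $i\ne j$ contribute only $O(e^{-p/4})$ against the exponentially localized $\Delta \bar W_{p,j}$ (by the Green-function decay \eqref{tta}). Also $(\bar W_{\boldsymbol{\mathrm\alpha},p})^p = \alpha_j^p (P\bar W_{p,j})^p(1+o(e^{-p/4}))$ near $\xi_j$, using \eqref{tta} again for the tails. Hence, after scaling $x=\xi_j+\bar\e_{p,j}y$ and using \eqref{1-29-8}, \eqref{2-29-8} to replace $P\bar W_{p,j}$ by $\frac1{p^{p/(p-1)}\bar\e_{p,j}^{2/(p-1)}}(p+U+\frac{w_0}p+\frac{w_1}{p^2}+\text{bounded harmonic corrections})$, the leading balance is
\[
(\alpha_j-\alpha_j^p)\int_{\mathbb R^2}e^{U(y)}\cdot\frac{1}{p^{p/(p-1)}\bar\e_{p,j}^{2/(p-1)}}\bigl(p+O(1)\bigr)\,dy
=\frac{8\pi(\alpha_j-\alpha_j^p)}{p^{p/(p-1)}\bar\e_{p,j}^{2/(p-1)}}\bigl(1+O(1/p)\bigr),
\]
since $\int_{\mathbb R^2}e^U=8\pi$. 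The $f_0/p$ and $f_1/p^2$ corrections multiply $(\alpha_j-\alpha_j^p)$ and get absorbed into the $O(p^{-2}\sum_i|\alpha_i-\alpha_i^p|)$ error (using $\int e^U f_i=-2\pi C_i$), while the difference between $\bar W_{p,j}$ and $P\bar W_{p,j}$ in the exponent produces a further $O(\bar\e_{p,j})=O(e^{-p/4})$, hence the $O(p^{-4})$ error.

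**Bookkeeping and the main obstacle.**
Then I would collect: the $i=j$ diagonal piece gives the stated main term; the off-diagonal and boundary-layer pieces give $O(e^{-p/4})$; the expansion-remainder pieces (the $O(p^{-3})$ in the profile, plus $\alpha_i^{p-1}=1+O(p^{-3/2})$ from \eqref{10-6-4}) give $O(p^{-2}\sum_i|\alpha_i-\alpha_i^p|)+O(p^{-4})$; and crucially the terms \emph{not} proportional to $\alpha_j-\alpha_j^p$ must cancel — this is where the defining equation for $P\bar W_{p,j}$ and the precise choice \eqref{10-30-8} of $\bar\mu_{p,j}$ (equivalently \eqref{4-29-8}, $F_{p,j}=O(p^{-3})$) enter, so that the ``$W_p$ is an approximate solution'' identity makes the $\alpha_j=\alpha_j^p$ part of $\int(-\Delta\bar W_{p,j})\bar W_{p,j}-\int(\bar W_{p,j})^p P\bar W_{p,j}$ of size $O(p^{-4})$ rather than $O(p^{-2})$. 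The main obstacle is exactly this cancellation: tracking the logarithmic terms $\log(64\bar\e_{p,j}^4)$, $\log(\bar\mu_{p,j}e^{-p/4})$ and the Robin/Green contributions through the scaling so that they assemble into $F_{p,j}(\boldsymbol{\bar\mu}_p,\boldsymbol{\xi})$ and hence vanish to the required order; this is the same type of computation as in Lemma~\ref{l2-29-8} and in \cite{EMP2006}, and I would cite those rather than redo it. The rest is routine Lebesgue-dominated passage to the limit using Lemma~\ref{llma} for the tail control.
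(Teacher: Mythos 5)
Your overall architecture (integrate by parts, localize near the concentration points, insert the explicit profile of $-\Delta P\bar W_{p,i}$, use $\int_{\R^2}e^U=8\pi$, and invoke the consistency condition \eqref{10-30-8} so that $(\bar W_{\boldsymbol{\mathrm\alpha},p})^p$ matches that profile) is the same as the paper's. However, there is a genuine quantitative error in your treatment of the interaction terms, and it sits exactly at the order where the lemma's error estimate lives. You claim that near $\xi_j$ the cross terms $\alpha_i P\bar W_{p,i}$, $i\neq j$, contribute only $O(e^{-p/4})$ against $\Delta \bar W_{p,j}$, and that $(\bar W_{\boldsymbol{\mathrm\alpha},p})^p=\alpha_j^p(P\bar W_{p,j})^p(1+o(e^{-p/4}))$ there. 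Neither is true: by \eqref{tta}, $P\bar W_{p,i}$ decays away from $\xi_i$ only like $\frac{8\pi}{p^{p/(p-1)}\bar\e_{p,i}^{2/(p-1)}}G(\cdot,\xi_i)=O(1/p)$, not exponentially, so each cross term in the gradient part is $O(\alpha_i/p^2)$; likewise $\int_{B_\delta(\xi_i)}(\bar W_{\boldsymbol{\mathrm\alpha},p})^pP\bar W_{p,j}=O(\alpha_i^p/p^2)$ for $i\neq j$, and near $\xi_j$ the neighbouring bubbles enter $(\bar W_{\boldsymbol{\mathrm\alpha},p})^p$ through a factor of the form $\bigl(1+\tfrac{8\pi}{p}\sum_{l\neq j}G(\xi_j,\xi_l)+\cdots\bigr)^p$, which is a bounded multiplicative constant, not $1+o(e^{-p/4})$. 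If these $O(1/p^2)$ pieces were simply estimated, the lemma's error would degrade to $O(1/p^2)$ independently of $\boldsymbol{\mathrm\alpha}$, which is not enough downstream: in Step~2 of Proposition \ref{p20-7-9} one needs $\alpha_j-\alpha_j^p=O(p^{-2})$, hence $|\alpha_j-1|=O(p^{-3})=o(p^{-5/2})$, to place the point in the interior of $S_p$; an unconditional $O(1/p^2)$ error only yields $|\alpha_j-1|=O(p^{-2})$ and the homotopy argument breaks.

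The missing idea is that the off-diagonal contributions from the two integrals cancel exactly up to the factor $\alpha_i-\alpha_i^p$. The paper achieves this by the symmetric decomposition: writing $\int_\Omega \nabla \bar W_{\boldsymbol{\mathrm\alpha},p}\nabla P\bar W_{p,j}=-\sum_i\alpha_i\int_\Omega\Delta(P\bar W_{p,i})P\bar W_{p,j}=\sum_i\frac{\alpha_i\,h_{p,i,j}}{p^{p/(p-1)}\bar\e_{p,i}^{2/(p-1)}}+O(p^{-4})$ and, using \eqref{10-30-8} to identify $(\bar W_{\boldsymbol{\mathrm\alpha},p})^p$ on each $B_\delta(\xi_i)$ with $\alpha_i^p$ times the very same profile as $-\Delta P\bar W_{p,i}$ up to relative $O(p^{-3})$, obtaining $\int_\Omega(\bar W_{\boldsymbol{\mathrm\alpha},p})^pP\bar W_{p,j}=\sum_i\frac{\alpha_i^p\,h_{p,i,j}}{p^{p/(p-1)}\bar\e_{p,i}^{2/(p-1)}}+O(p^{-4})$ with the \emph{same} coefficients $h_{p,i,j}$. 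Subtracting, every term carries a factor $\alpha_i-\alpha_i^p$, and $h_{p,j,j}=8\pi+O(1/p)$, $h_{p,i,j}=O(1/p)$ for $i\neq j$ give precisely the stated main term and error. Your one-sided localization (moving the Laplacian only onto $P\bar W_{p,j}$) obscures this pairing; you would need to restore it, or argue the symmetry $\int\Delta(P\bar W_{p,i})P\bar W_{p,j}=\int\Delta(P\bar W_{p,j})P\bar W_{p,i}$ explicitly, to close the proof.
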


\begin{proof}
We have
\[
\begin{split}
 \int_\Omega \nabla \bar W_{\boldsymbol{\mathrm\alpha},p} \nabla  P\bar W_{p, j}=&-
\int_\Omega \sum_{i=1}^k \alpha_i\Delta (P\bar W_{p, i})   P\bar W_{p, j}\\
 = &-\sum_{i=1}^k\int_{ B_\delta(\xi_{i})}\alpha_i\Delta (P\bar W_{p, i})   P\bar W_{p, j}+O\bigl(\frac1{p^4}\bigr).
\end{split}
\]
In $B_\delta(\xi_i)$, we have
\begin{equation}\label{60-6-4}
-\Delta (P\bar W_{p, i})  =\frac1{ p^{\frac p{p-1}}\bar\e_{p,i}^{\frac2{p-1}+2}}
e^{U (\frac{x-\xi_i}{\bar \e_{p,i}})  } \Bigl(1+ \frac1p f_0\bigl(\frac{x-\xi_i}{\bar \e_{p,i}}\bigr) + \frac1{p^2}
 f_1\bigl(\frac{x-\xi_i}{\bar \e_{p,i}}\bigr) +O\bigl(\frac1{p^3}\bigr)\Bigr),
\end{equation}
where $f_{0}$ and $f_{1}$ are defined in \eqref{6-29-8} and \eqref{7-29-8}, respectively.
Hence
\begin{equation}\label{2-8-9}
\begin{split}
&\int_\Omega \nabla \bar W_{\boldsymbol{\mathrm\alpha},p} \nabla  P\bar W_{p, j}\\
=& \sum_{i=1}^k \frac{\alpha_i}{ p^{\frac p{p-1}}\bar\e_{p,i}^{\frac2{p-1}}}
\underbrace{\frac1{\e_{p,i}^2}\int_{B_\delta(\xi_i)}
e^{U (\frac{x-\xi_i}{\bar \e_{p,i}})  }\Bigl(1+ \frac1p f_0\bigl(\frac{x-\xi_i}{\bar \e_{p,i}}\bigr) + \frac1{p^2}
 f_1\bigl(\frac{x-\xi_i}{\bar \e_{p,i}}\bigr) \Bigr)P\bar W_{p, j}
 }_{:= h_{p,i,j}  }+O\bigl(\frac1{p^4}\bigr).
 \end{split}
\end{equation}
It is easy to check that
\begin{equation*}
 h_{p,j,j}  =8\pi +O\bigl(\frac1p\bigr),\quad
  h_{p,j,j}  =O\bigl(\frac1p\bigr), \;\;i\ne j.
\end{equation*}

On the other hand,
we can also prove that in $B_\delta(\xi_i)$, it holds
\begin{equation*}
\begin{split}
(\bar W_{\boldsymbol{\mathrm\alpha},p})^p
=\frac{ \alpha_j^p  }{ p^{\frac p{p-1}}\bar\e_{p,j}^{\frac2{p-1}+2}}
e^{U (\frac{x-\xi_j}{\bar \e_{p,j}})  } \Bigl(1+ \frac1p f_0\bigl(\frac{x-\xi_j}{\bar \e_{p,j}} \bigr)+ \frac1{p^2}
 f_1\bigl(\frac{x-\xi_j}{\bar \e_{p,j}})  \bigr)+O\bigl(\frac1{p^3}\bigr)\Bigr).
\end{split}
\end{equation*}
Hence we have
\begin{equation}\label{3-8-9}
\int_\Omega (\bar W_{\boldsymbol{\mathrm\alpha},p})^{p}P\bar W_{p, j}=
\sum_{i=1}^k \frac{\alpha_i^p h_{p,i,j}}{ p^{\frac p{p-1}}\bar\e_{p,i}^{\frac2{p-1}}}+O\bigl(\frac1{p^{4}}\bigr).
\end{equation}

Using  \eqref{2-8-9} and \eqref{3-8-9}, we find
\begin{equation*}
\begin{split}
& -\int_{ B_\delta(\xi_i)}\sum_{i=1}^k \alpha_i\Delta (P\bar W_{p, i})   P\bar W_{p, j}-\int_{ B_\delta(\xi_j)} (\bar W_{\boldsymbol{\mathrm\alpha},p})^pP\bar W_{p, j}\\
=& \sum_{i=1}^k\frac{(\alpha_i-\alpha_i^p) h_{p,i,j}}{ p^{\frac p{p-1}}\bar\e_{p,j}^{\frac2{p-1}}}+O\bigl(\frac1{p^{4}}\bigr)=
\frac{8\pi (\alpha_j-\alpha_j^p) }{ p^{\frac p{p-1}}\bar\e_{p,j}^{\frac2{p-1}}}
+O\bigl(\frac1{p^2} \sum_{i=1}^k |\alpha_i-
\alpha_i^p|+\frac1{p^{4}}\bigr).
\end{split}
\end{equation*}
\end{proof}
Let us define the cut-off function \begin{equation}\label{def:Eta-theta}\eta_j(y):=\eta\big(|y-\xi_j|\big),\end{equation} where
\[
\eta(t)  \begin{cases}
=1 &\mbox{ if }|t|\leq d,\\
=0&\mbox{ if }|t|\ge d+\theta,\\
\in [0,1]& \mbox{ if }d\leq|t| \leq d+\theta,
\end{cases}
 \qquad \eta'(t)\sim -\frac1 \theta\mbox{ for }t\in (d, d+\theta),\]
for a fixed $\theta>0$.
\begin{Lem}\label{l3-8-9}
 It holds
\[
\begin{split}
&\int_{\Omega} \nabla \bar W_{\boldsymbol{\mathrm\alpha},p} \nabla (\eta_j\frac{\partial \bar W_{\boldsymbol{\mathrm\alpha},p}}{
\partial y_h  })-\int_{\Omega}
(\bar W_{\boldsymbol{\mathrm\alpha},p})^p \eta_j\frac{\partial \bar W_{\boldsymbol{\mathrm\alpha},p}}{
\partial y_h  }\\
=& \frac{64\pi^2 e}{p^2}\frac{\partial \Psi_k(\xi_1,
\cdots,\xi_k)}{\partial \xi_{jh}} +O\Bigl(\frac \theta {p^2}+\frac1{p^3}+\sum_{j=1}^k\frac{|\alpha_j-1|}p\Bigr),
\end{split}
\]
where $\theta>0$ is the constant in \eqref{def:Eta-theta}.
\end{Lem}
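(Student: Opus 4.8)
The plan is to convert the left-hand side into a purely \emph{boundary} quantity concentrated on the cut-off region of $\eta_j$, and then to recognise it through the bilinear identities of Lemma~\ref{lem2-1}. Write $v=\bar W_{\boldsymbol{\mathrm\alpha},p}$. Using only the pointwise identities $\nabla v\cdot\nabla\frac{\partial v}{\partial y_h}=\frac12\frac{\partial}{\partial y_h}|\nabla v|^2$ and $v^{p}\frac{\partial v}{\partial y_h}=\frac1{p+1}\frac{\partial}{\partial y_h}v^{p+1}$ (the equation is never invoked here) and integrating by parts to throw the derivative onto the cut-off $\eta_j$ of \eqref{def:Eta-theta}, the left-hand side of the lemma becomes
\begin{align*}
&-\tfrac12\int_\Omega \Bigl(\tfrac{\partial\eta_j}{\partial y_h}\Bigr)|\nabla\bar W_{\boldsymbol{\mathrm\alpha},p}|^2
+\int_\Omega\Bigl(\tfrac{\partial\bar W_{\boldsymbol{\mathrm\alpha},p}}{\partial y_h}\Bigr)\,\nabla\bar W_{\boldsymbol{\mathrm\alpha},p}\cdot\nabla\eta_j\\
&\qquad+\tfrac1{p+1}\int_\Omega \Bigl(\tfrac{\partial\eta_j}{\partial y_h}\Bigr)(\bar W_{\boldsymbol{\mathrm\alpha},p})^{p+1}.
\end{align*}
Every integrand is supported in the annulus $\{d<|x-\xi_j|<d+\theta\}$, which for $d$ small lies away from all the spikes; there $\bar W_{\boldsymbol{\mathrm\alpha},p}=O(1/p)$, so the third integral is $O(p^{-(p+1)})$, and letting the transition layer collapse to $\partial B_d(\xi_j)$ (which costs $O(\theta/p^2)$) the first two integrals become $\tfrac12 Q_j(\bar W_{\boldsymbol{\mathrm\alpha},p},\bar W_{\boldsymbol{\mathrm\alpha},p})$ in its $h$-component, i.e.\ the boundary form \eqref{abd} taken over $\partial B_d(\xi_j)$ with $\xi_j$ in place of $x_{p,j}$.

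On $\partial B_d(\xi_j)$ I would then insert the far-field expansion \eqref{tta} (in $C^1$ away from the spikes):
\[
\bar W_{\boldsymbol{\mathrm\alpha},p}=\frac{8\pi}{p^{p/(p-1)}\bar\e_{p,j}^{2/(p-1)}}\Bigl(\sum_{i=1}^kG(x,\xi_i)\Bigr)\bigl(1+O(1/p)+O(\textstyle\sum_i|\alpha_i-1|)\bigr)+O\bigl(\bar\e_{p,j}/p\bigr),
\]
using $|\alpha_i-1|\le p^{-5/2}$ and $(\bar\e_{p,i}/\bar\e_{p,j})^{2/(p-1)}=1+O(1/p)$. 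By bilinearity $Q_j(\bar W_{\boldsymbol{\mathrm\alpha},p},\bar W_{\boldsymbol{\mathrm\alpha},p})=\bigl(\frac{8\pi}{p^{p/(p-1)}\bar\e_{p,j}^{2/(p-1)}}\bigr)^2\sum_{s,m}Q_j\bigl(G(\cdot,\xi_s),G(\cdot,\xi_m)\bigr)$ plus lower order, and \eqref{aluo1} (valid verbatim with $\xi$ replacing $x_p$) annihilates all terms with $s,m\neq j$ and leaves the diagonal term $-\partial_{x_h}R(\xi_j)$ together with the $s=j\neq m$ and $m=j\neq s$ terms; by the symmetry of $G$ these combine, via the expression for $\nabla\Psi_k$ recalled in the Appendix, into $\frac{\partial\Psi_k(\xi_1,\cdots,\xi_k)}{\partial\xi_{jh}}$. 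Finally $\frac{1}{p^{p/(p-1)}\bar\e_{p,j}^{2/(p-1)}}=\frac{\sqrt e}{p}(1+o(1))$ by \eqref{def:barepsilon} and \eqref{MissingEstimateBarMu}, so the prefactor equals $\frac{64\pi^2 e}{p^2}(1+o(1))$, yielding the asserted main term, the remainder $O(\theta/p^2)+O(1/p^3)+O(\sum_j|\alpha_j-1|/p)$ absorbing respectively the cut-off width, the $O(1/p)$ corrections in \eqref{tta}, and the $\alpha$-dependence.

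The main obstacle is the exact bookkeeping of the constant $64\pi^2 e$ and of its sign: one must check that the $(1-\frac{C_0}{4p}-\cdots)$ factors, the ratios $(\bar\e_{p,i}/\bar\e_{p,j})^{2/(p-1)}$ and the $\alpha_i-1$'s in \eqref{tta} disturb $Q_j(\bar W_{\boldsymbol{\mathrm\alpha},p},\bar W_{\boldsymbol{\mathrm\alpha},p})$ only at the order of the claimed remainder, and then combine the four cases of the $Q_j(G,G)$-identities of Lemma~\ref{lem2-1} consistently with the precise normalisation of $\Psi_k$ in \eqref{stts}. An essentially equivalent, more computational route --- in the spirit of the proof of Lemma~\ref{l1-8-9} above --- keeps the left-hand side as $\int_\Omega R_p\,\eta_j\frac{\partial\bar W_{\boldsymbol{\mathrm\alpha},p}}{\partial y_h}$ with $R_p:=-\Delta\bar W_{\boldsymbol{\mathrm\alpha},p}-(\bar W_{\boldsymbol{\mathrm\alpha},p})^p$, localises near $\xi_j$, rescales $x=\xi_j+\bar\e_{p,j}z$, and uses the profile expansions \eqref{60-6-4}, \eqref{2-26-8}, \eqref{1-29-8}, \eqref{2-29-8}: the radial ``spike part'' of $\frac{\partial\bar W_{\boldsymbol{\mathrm\alpha},p}}{\partial y_h}$ integrates to $0$ against the even weight $e^U$ by parity, while the interaction of $R_p$ with the regular part reproduces the same $\partial\Psi_k$ contribution --- here the normalisation $F_{p,j}(\boldsymbol{\bar\mu}_p,\boldsymbol{\xi})=0$ of \eqref{10-30-8} is exactly what cancels the bulk constants so that only the gradient survives.
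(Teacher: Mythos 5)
Your proposal is correct and follows essentially the same route as the paper's proof: both convert the left-hand side into a boundary quantity concentrated on $\partial B_d(\xi_j)$ by integrating by parts against the cut-off $\eta_j$, discard the $\bar W^p$-term as exponentially small, expand $\bar W_{\boldsymbol{\mathrm\alpha},p}$ on that boundary as $\frac{8\pi\sqrt e}{p}\sum_i G(\cdot,\xi_i)$ up to $O\big(\frac1p\sum_i|\alpha_i-1|+\frac1{p^2}\big)$ corrections, and read off $\frac{64\pi^2 e}{p^2}\,\partial_{\xi_{jh}}\Psi_k$ from the resulting Pohozaev-type boundary integrals. The only stylistic difference is that you name the limiting boundary quantity $\tfrac12 Q_j(\bar W_{\boldsymbol{\mathrm\alpha},p},\bar W_{\boldsymbol{\mathrm\alpha},p})$ and invoke the identities \eqref{aluo1}, whereas the paper writes the boundary integrals of $\nabla\sum G(\cdot,\xi_i)$ out explicitly and identifies them with $\partial_{\xi_{jh}}\Psi_k$ directly; these are the same bookkeeping.
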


\begin{proof}
It is easy to see that
\begin{equation}\label{31-9-9}
\int_{\Omega}
(\bar W_{\boldsymbol{\mathrm\alpha},p})^p \eta_j\frac{\partial (\bar W_{\boldsymbol{\mathrm\alpha},p})}{
\partial y_h  }
= O(\bar \e_{p,j}^2).
\end{equation}
On the other hand,
\begin{equation}\label{32-9-9BIS}
\begin{split}
&\int_{\Omega} \nabla \bar W_{\boldsymbol{\mathrm\alpha},p} \nabla (\eta_j\frac{\partial \bar W_{\boldsymbol{\mathrm\alpha},p}}{
\partial y_h  })=\int_{\Omega} \eta_j\nabla \bar W_{\boldsymbol{\mathrm\alpha},p} \nabla \frac{\partial \bar W_{\boldsymbol{\mathrm\alpha},p}}{
\partial y_h  }+\int_{\Omega}\frac{\partial \bar W_{\boldsymbol{\mathrm\alpha},p}}{
\partial y_h  } \nabla \bar W_{\boldsymbol{\mathrm\alpha},p} \nabla\eta_j \\
=&\int_{B_{d+\theta}(\xi_j)} \nabla \bar W_{\boldsymbol{\mathrm\alpha},p} \nabla \frac{\partial \bar W_{\boldsymbol{\mathrm\alpha},p}}{
\partial y_h  }+\int_{B_{d+\theta}(\xi_j)\setminus B_{d}(\xi_j)}\frac{\partial \bar W_{\boldsymbol{\mathrm\alpha},p}}{
\partial y_h  } \nabla \bar W_{\boldsymbol{\mathrm\alpha},p} \nabla\eta_j+O\bigl(\frac{\theta}{p^2}\bigr)
\\
=&\frac12 \int_{\partial B_{d+\theta}(\xi_j)}|\nabla \bar W_{\boldsymbol{\mathrm\alpha},p}|^2 \nu_h-
\int_{\partial B_{d}(\xi_j)}\frac{\partial \bar W_{\boldsymbol{\mathrm\alpha},p}}{
\partial y_h  } \nabla \bar W_{\boldsymbol{\mathrm\alpha},p} +O\bigl(\frac{\theta}{p^2}\bigr).
\end{split}
\end{equation}
Here, we have used
\[
\int_{B_{d+\theta}(\xi_j)\setminus B_{d}(\xi_j)}\frac{\partial \bar W_{\boldsymbol{\mathrm\alpha},p}}{
\partial y_h  } \nabla \bar W_{\boldsymbol{\mathrm\alpha},p} \nabla\eta_j=
-
\int_{\partial B_{d}(\xi_j)}\frac{\partial \bar W_{\boldsymbol{\mathrm\alpha},p}}{
\partial y_h  } \nabla \bar W_{\boldsymbol{\mathrm\alpha},p} +O\bigl(\frac{\theta}{p^2}\bigr).
\]
But on $\partial B_{d+\theta}(\xi_j)$,
\[
\begin{split}
&\bar W_{\boldsymbol{\mathrm\alpha},p}(y)= {\sum_{i=1}^k}\frac{8\pi}{ p^{\frac p{p-1}}\bar\e_{p,i}^{\frac2{p-1}}}
\Bigl(\alpha_i G(y, \xi_i)+O(\bar \e_{p,i}^2)\Bigr)\\
=&\frac{8\pi \sqrt e}p {\sum_{i=1}^k}G(y, \xi_i)+ O\Bigl(\frac1p \sum_{i=1}^k
|\alpha_i-1|+\frac1{p^2}+\bar \e_{p,i}^2
\Bigr).
\end{split}
\]
So by \eqref{31-9-9} and \eqref{32-9-9BIS}, we obtain
\begin{align*}
&\int_{\Omega} \nabla \bar W_{\boldsymbol{\mathrm\alpha},p} \nabla (\eta_j\frac{\partial \bar W_{\boldsymbol{\mathrm\alpha},p}}{
\partial y_h  })-\int_{\Omega}
(\bar W_{\boldsymbol{\mathrm\alpha},p})^p \frac{\partial (\eta_j\bar W_{\boldsymbol{\mathrm\alpha},p})}{
\partial y_h  }
\\
= & \int_{\Omega} \nabla \bar W_{\boldsymbol{\mathrm\alpha},p} \nabla (\eta_j\frac{\partial \bar W_{\boldsymbol{\mathrm\alpha},p}}{
\partial y_h  })+O(\bar \e_{p,j}^2)
\\
=&\frac12 \int_{\partial B_{d+\theta}(\xi_j)}|\nabla \bar W_{\boldsymbol{\mathrm\alpha},p}|^2 \nu_h-
\int_{\partial B_{d}(\xi_j)}\frac{\partial \bar W_{\boldsymbol{\mathrm\alpha},p}}{
\partial y_h  } \nabla \bar W_{\boldsymbol{\mathrm\alpha},p} +O\bigl(\frac{\theta}{p^2}\bigr)
\\
=&\frac{64\pi^2 e}{p^2}\left[
\frac12 \int_{\partial B_{d+\theta}(\xi_j)}|\nabla \sum_{j=1}^k G(y, \xi_j)|^2 \nu_h-
\int_{\partial B_{d}(\xi_j)}\frac{\partial \sum_{j=1}^k G(y, \xi_j)}{
\partial y_h  } \nabla \sum_{j=1}^k G(y, \xi_j)
\right]\\
&+O\Bigl(\frac \theta {p^2}+\frac1{p^3}+\sum_{j=1}^k\frac{|\alpha_j-1|}p\Bigr)
\\
=& \frac{64\pi^2 e}{p^2}\frac{\partial \Psi_k(\xi_1,
\cdots,\xi_k)}{\partial \xi_{jh}} +O\Bigl(\frac \theta {p^2}+\frac1{p^3}+\sum_{j=1}^k\frac{|\alpha_j-1|}p\Bigr).
\end{align*}
\end{proof}
\
\section{The degree counting formula}\label{section:degreeAndUniqueness}

It follows from Proposition~\ref{prop:upHasTheFormInEMP} that
 any  $k$-spike solution $u_p$ of problem \eqref{1.1} has the form
 \[
 u_p=\bar W_{\boldsymbol{\mathrm\alpha_{p}},p}+\omega_{p}
 \]
   for some  $(\boldsymbol{\mathrm\alpha}_{p}, \boldsymbol{\xi}_{p}, \omega_{p})$  in the interior of $  S_{p}$, where the set $S_p$ is defined in \eqref{35-30-8}.
We consider the functional $I_{p}:H^{1}_{0}(\Omega)\rightarrow \mathbb R$
\[
I_{p}(u):=\frac12 \int_\Omega |\nabla u|^2 -\frac{1}{p+1}\int_\Omega |u|^{p+1},
\]
whose critical points are solutions of problem \eqref{1.1}. In this section,
we will compute the Leray-Schauder degree of $I'_p$ in $S_p$.

\vskip 0.1cm

We define the following maps on $(\boldsymbol{\mathrm\alpha}, \boldsymbol{\mathrm\xi}, \omega)\in S_{p}$, where the set $S_{p}$ is defined in \eqref{35-30-8}:
\begin{equation*}
\begin{split}
A_{1,j}(\boldsymbol{\mathrm\alpha}, \boldsymbol{\mathrm\xi}, \omega):=\bigl\langle I_p' (\bar W_{\boldsymbol{\mathrm\alpha},p} +\omega),     P\bar W_{p, j}\bigr\rangle,
\end{split}
\end{equation*}
\begin{equation}\label{31-7-9}
\begin{split}
A_{2,j,h}(\boldsymbol{\mathrm\alpha}, \boldsymbol{\mathrm\xi}, \omega):=\bigl\langle I_p' (\bar W_{\boldsymbol{\mathrm\alpha},p}+\omega),\eta_j \frac{
\partial (\bar W_{\boldsymbol{\mathrm\alpha},p}+\omega)}{\partial y_{h}} \bigr\rangle,
\end{split}
\end{equation}
for $j=1,\cdots, k$, $ h=1,2$, where $\bar W_{\boldsymbol{\mathrm\alpha},p}$ and $P\bar W_{p, j}$ are as in
\eqref{12-30-8} and \eqref{def:Wpalpha}, respectively, and $\eta_j$ is the cut-off function around $\xi_{j}$ defined in \eqref{def:Eta-theta}.
Moreover we also consider the map $A_{3}(\boldsymbol{\mathrm\alpha}, \boldsymbol{\mathrm\xi}, \omega)\in E_{p}$, which is determined by
\begin{equation*}
\begin{split}
\bigl\langle A_{3}(\boldsymbol{\mathrm\alpha}, \boldsymbol{\mathrm\xi}, \omega), \widetilde\omega\bigr\rangle:=\bigl\langle I_p' (\bar W_{\boldsymbol{\mathrm\alpha},p}+\omega),  \widetilde\omega \bigr\rangle,\quad \forall\; \widetilde\omega\in E_{p},
\end{split}
\end{equation*}
where we denote
\begin{equation}\label{def:Ep}
E_{p}:=\Bigl\{\widetilde\omega\in H^1_0(\Omega):\ \bigl\langle  P\bar W_{p, j}, \widetilde\omega\bigr\rangle = \bigl\langle  \frac{
\partial  P\bar W_{p, j}}{\partial \xi_{j,h}} , \widetilde\omega\bigr\rangle=0,
\ j=1,\cdots, k,\ h=1,2\Bigr\}.
\end{equation}
\begin{Prop}\label{prop:equivalence}
The existence of any solution $u_p$ concentrating at the $k$ distinct points $x_{\infty,1},
\cdots, x_{\infty, k}\in\Omega$ is equivalent to
the existence of $(\boldsymbol{\mathrm\alpha},\boldsymbol{\mathrm\xi}, \omega)\in S_{p}$, such that
\[
A(\boldsymbol{\mathrm\alpha},\boldsymbol{\mathrm \xi}, \omega):= \big(A_{1,j}(\boldsymbol{\mathrm\alpha},\boldsymbol{\mathrm\xi}, \omega), A_{2,j, h}(\boldsymbol{\mathrm\alpha}, \boldsymbol{\mathrm\xi}, \omega), A_{3}(\boldsymbol{\mathrm\alpha}, \boldsymbol{\mathrm\xi},\omega)\big)
=0.
\]
And in such a case \[u_{p}=\bar W_{\boldsymbol{\mathrm\alpha},p}+\omega.\]
\end{Prop}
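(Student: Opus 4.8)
The statement is the standard ``finite-dimensional reduction equivalence'': one direction is essentially the content of Proposition~\ref{prop:upHasTheFormInEMP}, and the other direction is a Lyapunov--Schmidt style argument showing that if the three blocks of projections $A_{1,j}$, $A_{2,j,h}$, $A_3$ all vanish then $u_p:=\bar W_{\boldsymbol{\mathrm\alpha},p}+\omega$ is a genuine critical point of $I_p$, hence a solution of \eqref{1.1} which concentrates at $\boldsymbol{\mathrm x}_\infty$ because $(\boldsymbol{\mathrm\alpha},\boldsymbol{\mathrm\xi},\omega)\in S_p$ forces $\xi_j\to x_{\infty,j}$, $\alpha_j\to 1$, $\omega\to 0$.

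\emph{First} I would treat the ``only if'' direction. Let $u_p$ be a $k$-spike solution concentrating at $(x_{\infty,1},\dots,x_{\infty,k})$. By Proposition~\ref{prop:upHasTheFormInEMP} there exist $(\boldsymbol{\mathrm\alpha}_p,\boldsymbol{\xi}_p,\omega_p)$ in the interior of $S_p$ with $u_p=\bar W_{\boldsymbol{\mathrm\alpha}_p,p}+\omega_p$, and by the orthogonality conditions built into $S_p$ one has $\omega_p\in E_p$. Since $u_p$ solves \eqref{1.1}, $I_p'(u_p)=0$ in $H^{-1}(\Omega)$, so in particular $\langle I_p'(\bar W_{\boldsymbol{\mathrm\alpha}_p,p}+\omega_p),\cdot\rangle$ annihilates $P\bar W_{p,j}$, $\eta_j\partial_{y_h}(\bar W_{\boldsymbol{\mathrm\alpha}_p,p}+\omega_p)$ and every $\widetilde\omega\in E_p$. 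That gives $A_{1,j}=A_{2,j,h}=0$ and $A_3(\boldsymbol{\mathrm\alpha}_p,\boldsymbol{\xi}_p,\omega_p)=0$, which is exactly $A(\boldsymbol{\mathrm\alpha}_p,\boldsymbol{\xi}_p,\omega_p)=0$. This direction is immediate.

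\emph{Conversely}, suppose $(\boldsymbol{\mathrm\alpha},\boldsymbol{\xi},\omega)\in S_p$ satisfies $A(\boldsymbol{\mathrm\alpha},\boldsymbol{\xi},\omega)=0$ and set $u:=\bar W_{\boldsymbol{\mathrm\alpha},p}+\omega$. I want to show $I_p'(u)=0$. The condition $A_3=0$ says $I_p'(u)\perp E_p$, i.e.\ $I_p'(u)$ lies in the finite-dimensional space $\mathrm{span}\{P\bar W_{p,j},\partial_{\xi_{j,h}}P\bar W_{p,j}\}$; write $I_p'(u)=\sum_j b_j P\bar W_{p,j}+\sum_{j,h}c_{jh}\,\partial_{\xi_{j,h}}P\bar W_{p,j}$ (identifying $H^{-1}$ and $H^1_0$ via Riesz, so that $-\Delta$ of the right-hand side matches). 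The remaining $3k$ equations $A_{1,j}=0$ and $A_{2,j,h}=0$ are to be read as a linear system for $(b_j,c_{jh})$: testing $I_p'(u)$ against $P\bar W_{p,j}$ and against $\eta_j\partial_{y_h}u$ produces a $3k\times 3k$ matrix whose entries are the pairings $\langle P\bar W_{p,i},P\bar W_{p,j}\rangle$, $\langle\partial_{\xi}P\bar W,P\bar W\rangle$, $\langle\partial_\xi P\bar W,\partial_\xi P\bar W\rangle$ and the analogous pairings with $\eta_j\partial_{y_h}(\bar W_{\boldsymbol{\mathrm\alpha},p}+\omega)$. Using the orthogonality built into $S_p$ (so $\omega$ contributes only through $\langle\eta_j\partial_{y_h}\omega,\cdot\rangle$, which is lower order since $\|\omega\|\le p^{-2-\tau_0}$ and $\|\partial_{y_h}\omega\|$ is controlled on the support of $\eta_j$ away from the spikes), together with the sharp norm estimates $\|P\bar W_{p,j}\|^2\sim 1/p$, $\|\partial_{\xi_{j,h}}P\bar W_{p,j}\|^2\sim e^{p/2}/p$ of \eqref{1-5-9} and the cross terms \eqref{3-5-9}, one checks that this matrix is diagonally dominant (after the obvious diagonal rescaling separating the $\alpha$-block from the $\xi$-block), hence invertible for $p$ large. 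Therefore $A_{1,j}=A_{2,j,h}=0$ forces $b_j=0$ and $c_{jh}=0$, so $I_p'(u)=0$ and $u$ is a solution of \eqref{1.1}. Finally, since $(\boldsymbol{\mathrm\alpha},\boldsymbol{\xi},\omega)\in S_p$, we have $|\xi_j-x_{\infty,j}|\le\tau_0$, $|\alpha_j-1|\le p^{-5/2}$, $\|\omega\|_{L^\infty}\le p^{-2-\tau_0}$, and by the asymptotics of $P\bar W_{p,j}$ in Lemma~\ref{l1-29-8} this $u$ is a $k$-spike solution concentrating at $(x_{\infty,1},\dots,x_{\infty,k})$; hence $u=u_p$, the unique such solution (or one of them), and $u_p=\bar W_{\boldsymbol{\mathrm\alpha},p}+\omega$.

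\textbf{Main obstacle.} The delicate point is the invertibility of the $3k\times 3k$ coefficient matrix uniformly in $p$: the two groups of test functions live at very different scales (size $\sim 1$ in $L^\infty$ versus derivatives of size $\sim e^{p/4}$), so one must rescale the system correctly and then show that the off-diagonal and $\omega$-dependent entries are genuinely negligible compared with the rescaled diagonal. This is where the estimates \eqref{1-5-9}, \eqref{3-5-9}, \eqref{50-5-9}, the bounds defining $S_p$, and an integration-by-parts control of $\langle\eta_j\partial_{y_h}\omega,P\bar W_{p,i}\rangle$ and $\langle\eta_j\partial_{y_h}\omega,\partial_{\xi}P\bar W_{p,i}\rangle$ all enter; this is precisely the analogue of the standard argument in \cite{EMP2006} and can be carried out along the same lines, which is why I would only sketch it and refer to the constructions of Section~\ref{section:approxPWproblems}.
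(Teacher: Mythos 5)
Your proposal is correct and follows essentially the same route as the paper: one direction is immediate from Proposition \ref{prop:upHasTheFormInEMP}, and for the converse both you and the paper observe that $A_{3}=0$ places the residual in $\mathrm{span}\{\Delta P\bar W_{p,j},\Delta\partial_{\xi_{j,h}}P\bar W_{p,j}\}$ and then read $A_{1,j}=A_{2,j,h}=0$ as a $3k\times 3k$ linear system whose coefficient matrix is, after the scale separation you describe (using \eqref{1-5-9}, \eqref{3-5-9} and integration by parts to handle the test functions $\eta_j\partial_{y_h}(\bar W_{\boldsymbol{\mathrm\alpha},p}+\omega)$), diagonally dominant and hence forces all coefficients to vanish. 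The only slight imprecision is that membership in $S_p$ gives $|\xi_j-x_{\infty,j}|\le\tau_0$ with $\tau_0$ fixed rather than $\xi_j\to x_{\infty,j}$, so to conclude concentration exactly at $\boldsymbol{\mathrm x}_\infty$ one should, as the paper does, invoke that $\boldsymbol{\mathrm x}_\infty$ is an isolated (non-degenerate) critical point of $\Psi_k$.
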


\begin{proof}
By Proposition \ref{prop:upHasTheFormInEMP} we know that if $u_p$ is a solution of problem \eqref{1.1} concentrating at $x_{\infty,1},\cdots,x_{\infty,k}$ then, for $p$ sufficiently large,
there exists $(\boldsymbol{\mathrm\alpha}, \boldsymbol{\xi}, \omega)\in S_{p}$, such that
\[%{36-30-8}
u_p=\bar W_{\boldsymbol{\mathrm\alpha},p}+\omega,
\]
obviously $A(\boldsymbol{\mathrm\alpha},\boldsymbol{\mathrm \xi}, \omega)=0$.

\vskip 0.1cm

 Next we show that the opposite is also true. Indeed $A_3=0$ implies
\begin{equation}\label{1-9-9}
-\Delta (\bar W_{\boldsymbol{\mathrm\alpha},p}+\omega)- (\bar W_{\boldsymbol{\mathrm\alpha},p}+\omega)^p
=\sum_{j=1}^k \beta_j\Delta P\bar W_{p, j}+ \sum_{j=1}^k\sum_{h=1}^2
\gamma_{jh}\Delta \frac{
\partial P \bar W_{p, j}}{\partial \xi_{j,h}} \quad\mbox{ in }\Omega,
\end{equation}
for some constants $\beta_j$ and $\gamma_{jh}$ (which depend on $p$).

\vskip 0.1cm

On the other hand, $A_{1,i}=0$ and $A_{2,l,m}=0$ with \eqref{1-9-9}, give
\begin{equation}\label{2-9-9}
\sum_{j=1}^k \beta_j\int_\Omega P\bar W_{p, i}\Delta P\bar W_{p, j}+ \sum_{j=1}^k\sum_{h=1}^2\int_\Omega
\gamma_{jh}P\bar W_{p, i}\Delta \frac{
\partial P \bar W_{p, j}}{\partial \xi_{j,h}} =0,
\end{equation}
and
\begin{equation}\label{3-9-9}
\sum_{j=1}^k \beta_j \int_\Omega\eta_l\frac{
\partial (\bar W_{\boldsymbol{\mathrm\alpha},p}+\omega)}{\partial y_{m}}\Delta P\bar W_{p, j}+ \sum_{j=1}^k\sum_{h=1}^2
\gamma_{jh}\int_\Omega \eta_l\frac{
\partial (\bar W_{\boldsymbol{\mathrm\alpha},p}+\omega)}{\partial y_{m}}\Delta \frac{
\partial P \bar W_{p, j}}{\partial \xi_{j,h}} =0.
\end{equation}
With direct computations (see \eqref{1-5-9} and \eqref{3-5-9}), \eqref{2-9-9} gives
\begin{equation}\label{2-9-9NEW}
\beta_i+o\bigl(\sum_{j=1}^k |\beta_j|\bigr)+ O\Bigl( \frac1{p\bar \e_{p,i}}\Big)\sum_{j=1}^k\sum_{h=1}^2
|\gamma_{jh}|
=0.
\end{equation}
Furthermore  for $(\boldsymbol{\mathrm\alpha},\boldsymbol{\mathrm\xi},\omega)\in S_{p}$, using the symmetry of the function $\eta_l\Delta P\bar W_{p, {j}}$, we can prove that

\[
\int_\Omega\eta_l\frac{
\partial (\bar W_{\boldsymbol{\mathrm\alpha},p}+\omega)}{\partial y_{m}}\Delta P\bar W_{p, j}
=-\int_\Omega(\bar W_{\boldsymbol{\mathrm\alpha},p}+\omega)\frac{
\partial }{\partial y_{m}}[\eta_l\Delta P\bar W_{p, {j}}]=o\bigl(\frac1{p\bar \e_{p,l}}\bigr),
\]
and
\[
\begin{split}
&\int_\Omega \eta_l \frac{
\partial (\bar W_{\boldsymbol{\mathrm\alpha},p}+\omega)}{\partial y_{m}}\Delta \frac{
\partial P \bar W_{p, j}}{\partial \xi_{j,h}} \\
=&-\int_\Omega(\bar W_{\boldsymbol{\mathrm\alpha},p}+\omega)\frac{
\partial }{\partial y_{m}}\Bigl[\eta_l \Delta \frac{
\partial P \bar W_{p, {j}}}{\partial \xi_{j,h}} \Bigr]=
\frac{c_0+o(1)}{\bar\e_{p,{l}}^2} \delta_{hm},\quad c_0\ne 0,
\end{split}
\]where $\delta_{hm}$ denotes the Kronecker's symbol. Hence \eqref{3-9-9} gives
\begin{equation}\label{3-9-9NEW}
\sum_{j=1}^k |\beta_l| o\bigl(\frac1{p}\bigr)+ \frac{c_0+o(1)}{\bar\e_{p,l}}
\gamma_{lm}+ o( e^{\frac p4})\sum_{j=1}^k \sum_{h=1}^2
|\gamma_{jh}| =0.
\end{equation}
Thus, solving \eqref{2-9-9NEW} and \eqref{3-9-9NEW} yields $\beta_j=0$,
$\gamma_{jh}=0$.

\vskip 0.1cm

As a consequence, by \eqref{1-9-9}, we have that $u_{p}:=\bar W_{\boldsymbol{\mathrm\alpha},p}+\omega$ is a solution of problem \eqref{1.1}. Furthermore it concentrates  at $(x_{\infty,1},\cdots,x_{\infty,k})$, since $(\boldsymbol{\mathrm\alpha}, \boldsymbol{\xi}, \omega)\in S_{p}$,
and $(x_{\infty,1},\cdots,x_{\infty,k})$ is a non-degenerate critical of
the Kirchhoff-Routh function $\Psi_{k}$ (and thus
is isolated).

\end{proof}
\begin{Rem}
Let us point out that in the right hand side of \eqref{31-7-9} we do not use
\[
\bigl\langle I_p' (\bar W_{\boldsymbol{\mathrm\alpha},p}+\omega),
\frac{\partial P \bar W_{p, j}}{\partial \xi_{j,h}}   \bigr\rangle,
\]
because this term involves $\displaystyle\int_\Omega\bar W_{\boldsymbol{\mathrm\alpha},p}^{p-2}\omega^2 \frac{\partial P \bar W_{p, j}}{\partial \xi_{j,h}}$, which
is extremely difficult to control.
Notice that, by using the integration by parts, we can avoid the estimates
near the concentration point $\xi_j$ for the term in the right
hand side of \eqref{31-7-9}.
\end{Rem}

We have the following degree formula for the map $A$.
\begin{Thm}\label{th1-9-9}
Let $\boldsymbol{\mathrm x}_{\infty}:=(x_{\infty,1},
\cdots, x_{\infty, k}) \in \Omega^{k}$, $x_{\infty,i}\neq x_{\infty,j}$ if $i\neq j$,  be a nondegenerate critical point for the Kirchhoff-Routh function $\Psi_{k}$.
Then we have
\[
\deg(A,0,S_{p}) =(-1)^{k+m(\boldsymbol{\mathrm x}_{\infty},\Psi_k)},
\]
for $p$ large.
\end{Thm}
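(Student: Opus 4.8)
The plan is to compute $\deg(A,0,S_p)$ by a homotopy that decouples the three blocks of the map $A=(A_{1,j},A_{2,j,h},A_3)$ and reduces everything to a finite–dimensional degree on $\mathbb R^k\times\Omega^k$. First I would split off the infinite–dimensional variable $\omega$. Consider the homotopy $H_1(t,\boldsymbol{\mathrm\alpha},\boldsymbol{\mathrm\xi},\omega)$ which keeps $A_{1,j}$ and $A_{2,j,h}$ fixed but replaces $A_3$ by $A_3^t$ determined by $\langle A_3^t,\widetilde\omega\rangle:=\langle -\Delta(\bar W_{\boldsymbol{\mathrm\alpha},p}+\omega)-t\,p\bar W_{\boldsymbol{\mathrm\alpha},p}^{p-1}\omega-(1-t)(\text{something})-\cdots,\widetilde\omega\rangle$ — more precisely, the standard move is to deform the nonlinearity $(\bar W_{\boldsymbol{\mathrm\alpha},p}+\omega)^p$ appearing in $A_3$ to its linearization $\bar W_{\boldsymbol{\mathrm\alpha},p}^p+p\bar W_{\boldsymbol{\mathrm\alpha},p}^{p-1}\omega$ plus the Lyapunov–Schmidt error. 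Along this homotopy one must check $A^t\neq 0$ on $\partial S_p$; the key a priori estimate is Proposition~\ref{p10-7-9} (the invertibility of $-\Delta-t\,p\bar W_{\boldsymbol{\mathrm\alpha},p}^{p-1}$ on $E_p$ with the weighted $\|\cdot\|_*$ bound), which forces the $\omega$–component to stay of order $p^{-(2+\tau_0)}$ in the interior, so it cannot touch the boundary constraint $\|\omega\|_{L^\infty}=p^{-2-\tau_0}$ or $\|\omega\|=p^{-2-\tau_0}$. By the product/excision property of Leray–Schauder degree this reduces $\deg(A,0,S_p)$ to the degree of the finite–dimensional map $(\boldsymbol{\mathrm\alpha},\boldsymbol{\mathrm\xi})\mapsto(A_{1,j}(\boldsymbol{\mathrm\alpha},\boldsymbol{\mathrm\xi},\omega(\boldsymbol{\mathrm\alpha},\boldsymbol{\mathrm\xi})),A_{2,j,h}(\boldsymbol{\mathrm\alpha},\boldsymbol{\mathrm\xi},\omega(\boldsymbol{\mathrm\alpha},\boldsymbol{\mathrm\xi})))$ on the box $\{|\alpha_j-1|\le p^{-5/2},\ |\xi_j-x_{\infty,j}|\le\tau_0\}$, where $\omega(\boldsymbol{\mathrm\alpha},\boldsymbol{\mathrm\xi})$ is the unique small solution of $A_3=0$ guaranteed by the contraction mapping argument behind Proposition~\ref{p10-7-9}.

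Next I would compute the leading order of the two finite–dimensional blocks. For $A_{1,j}$: using $\langle I_p'(\bar W_{\boldsymbol{\mathrm\alpha},p}+\omega),P\bar W_{p,j}\rangle=\int_\Omega\nabla\bar W_{\boldsymbol{\mathrm\alpha},p}\nabla P\bar W_{p,j}-\int_\Omega|\bar W_{\boldsymbol{\mathrm\alpha},p}+\omega|^{p-1}(\bar W_{\boldsymbol{\mathrm\alpha},p}+\omega)P\bar W_{p,j}$, and expanding the nonlinearity together with Lemma~\ref{l1-8-9} and the bound on $\omega$, one gets
\[
A_{1,j}(\boldsymbol{\mathrm\alpha},\boldsymbol{\mathrm\xi},\omega)=\frac{8\pi(\alpha_j-\alpha_j^p)}{p^{\frac{p}{p-1}}\bar\e_{p,j}^{\frac{2}{p-1}}}+\text{lower order}=-\frac{8\pi(p-1)}{p^{\frac{p}{p-1}}\bar\e_{p,j}^{\frac{2}{p-1}}}(\alpha_j-1)\bigl(1+o(1)\bigr)+o\bigl(\tfrac{1}{p}|\boldsymbol{\mathrm\alpha}-\boldsymbol 1|\bigr)+O\bigl(\text{terms in }\boldsymbol{\mathrm\xi}\bigr),
\]
so the $\boldsymbol{\mathrm\alpha}$–part is, to leading order, a diagonal negative-definite linear map in $(\alpha_j-1)$. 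For $A_{2,j,h}$: by Lemma~\ref{l3-8-9} (after replacing $\bar W_{\boldsymbol{\mathrm\alpha},p}$ by $\sqrt e\,p^{-1}\sum_i G(\cdot,\xi_i)$ on the annular region where $\eta_j$ varies and controlling the $\omega$ and $\alpha$ corrections)
\[
A_{2,j,h}(\boldsymbol{\mathrm\alpha},\boldsymbol{\mathrm\xi},\omega)=\frac{64\pi^2 e}{p^2}\,\frac{\partial\Psi_k(\xi_1,\cdots,\xi_k)}{\partial\xi_{jh}}+o\bigl(\tfrac{1}{p^2}\bigr)+\text{(negligible }\boldsymbol{\mathrm\alpha}\text{-dependence)}.
\]
Thus the full finite–dimensional map is, at leading order and after rescaling each block by its (constant-sign) normalizing factor, a block–triangular map whose $\boldsymbol{\mathrm\alpha}$-block is $-\mathrm{Id}_k$ (up to a positive scalar per coordinate) and whose $\boldsymbol{\mathrm\xi}$-block is $\nabla\Psi_k$. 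Here I would invoke the standard homotopy that linearizes $A_{2,j,h}$ near the nondegenerate critical point $\boldsymbol{\mathrm x}_\infty$ to $D^2\Psi_k(\boldsymbol{\mathrm x}_\infty)$ and shrinks the $\boldsymbol{\mathrm\xi}$-box, keeping $A\ne 0$ on the boundary because $\boldsymbol{\mathrm x}_\infty$ is isolated and nondegenerate.

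The degree then factorizes: $\deg(A,0,S_p)=\deg(A_1\text{-block},0)\cdot\deg(\nabla\Psi_k,0,B_{\tau_0}(\boldsymbol{\mathrm x}_\infty))$. The first factor is $\mathrm{sgn}\det(-c\,\mathrm{Id}_k)=(-1)^k$ for a positive scalar $c$; the second is, by the definition of the Morse index of a nondegenerate critical point, $\mathrm{sgn}\det D^2\Psi_k(\boldsymbol{\mathrm x}_\infty)=(-1)^{m(\boldsymbol{\mathrm x}_\infty,\Psi_k)}$. Multiplying gives $\deg(A,0,S_p)=(-1)^{k+m(\boldsymbol{\mathrm x}_\infty,\Psi_k)}$, as claimed. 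The main obstacle — and the place where the improved asymptotics of Section~\ref{Section:ImprovedAsymptoticup} and the estimates of Section~\ref{section:approxPWproblems} are indispensable — is verifying that $A\ne 0$ on $\partial S_p$ throughout all the homotopies, in particular the sharp control of $\omega$ in terms of $\|\hat h\|_*$ from Proposition~\ref{p10-7-9} (to keep $\omega$ away from the $L^\infty$ and $H^1_0$ boundary constraints), and the fact, hidden in Lemma~\ref{l3-8-9}, that the $\boldsymbol{\mathrm\xi}$-equation really sees $\nabla\Psi_k$ with a nonzero constant coefficient $64\pi^2e/p^2$ rather than something degenerate. Once those non-vanishing-on-the-boundary facts are in hand, the degree computation itself is a routine application of the homotopy invariance, excision and product formulas for the Leray–Schauder degree, exactly as in \cite{cl1,cl2}.
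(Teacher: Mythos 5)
Your proposal is correct and follows essentially the same approach as the paper: homotope $A$ to a block-decoupled map (the paper's $B$, via the linear homotopy $A_t=tA+(1-t)B$ of Proposition~\ref{p20-7-9}, whose boundary non-vanishing rests on Propositions~\ref{p10-30-8} and~\ref{p10-7-9} and the estimates of Lemmas~\ref{l1-8-9} and~\ref{l3-8-9}), reduce by the product formula to the finite-dimensional map $\bar B$ on $D_p$, deform $\bar B$ to $\widetilde B\approx\bigl((\alpha_j-\alpha_j^p)\tfrac{8\pi\sqrt e}{p},\ \tfrac{64\pi^2e}{p^2}\nabla\Psi_k(\boldsymbol\xi)\bigr)$, and conclude $(-1)^k\cdot(-1)^{m(\boldsymbol{\mathrm x}_\infty,\Psi_k)}$ from the negative-definiteness of $\alpha\mapsto\alpha-\alpha^p$ near $\alpha=1$ and the standard degree of $\nabla\Psi_k$ at a nondegenerate critical point. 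The only cosmetic difference is that you phrase the infinite-dimensional reduction partly as a Lyapunov--Schmidt solve $\omega=\omega(\boldsymbol\alpha,\boldsymbol\xi)$, whereas the paper homotopes $A_3$ directly to $B_3(\omega)=\omega$ and applies the product formula without ever inverting for $\omega$; the two are interchangeable here and carry the same burden (uniform non-vanishing on $\partial S_p$ through the homotopy).
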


\begin{Rem}
Theorem \ref{theorem:topological-degree-formula} is an obvious consequence of Theorem \ref{th1-9-9} and Proposition \ref{prop:equivalence}.
\end{Rem}

The rest of the section is devoted to the proof of Theorem \ref{th1-9-9}. In order to compute $\deg(A,0,S_{p})$, we will reduce to a simpler finite dimensional map $B$ homotopic to $A$ and compute the degree of $B$.

\vskip 0.1cm

Let us define the  map
\begin{equation}\label{def:B}
B(\boldsymbol{\mathrm\alpha},\boldsymbol{\mathrm \xi}, \omega):= \big(B_{1,j}(\boldsymbol{\mathrm\alpha},\boldsymbol{\mathrm \xi}), B_{2,j, h}(\boldsymbol{\mathrm\alpha},\boldsymbol{\mathrm\xi}), B_{3}(\omega)\big)
\end{equation} on $(\boldsymbol{\mathrm\alpha}, \boldsymbol{\mathrm\xi}, \omega)\in S_{p}$, as follows
\begin{equation*}
B_{1,j}(\boldsymbol{\mathrm\alpha}, \boldsymbol{\mathrm\xi}):=\bigl\langle I_p' (\bar W_{\boldsymbol{\mathrm\alpha},p} ),     P\bar W_{p, j}\bigr\rangle,~~~~~
B_{2,j,h}(\boldsymbol{\mathrm\alpha},\boldsymbol{\mathrm\xi}):=\bigl\langle I_p' (\bar W_{\boldsymbol{\mathrm\alpha},p}), \eta_j \frac{
\partial (\bar W_{\boldsymbol{\mathrm\alpha},p})}{\partial y_{h}}\bigr\rangle,
\end{equation*}
for $j=1,\cdots, k$, $ h=1,2$, and $B_{3}( \omega)\in E_{p}$ determined by
\begin{equation*}
\bigl\langle B_{3}( \omega), \widetilde\omega\bigr\rangle:=\bigl\langle \omega,  \widetilde\omega \bigr\rangle,\quad \forall\; 	\widetilde\omega\in E_{p},
\end{equation*}
where $E_{p}$ is the same set in \eqref{def:Ep}.

\vskip 0.1cm
It is obvious that the computation of the degree of $B$ in $S_{p}$ is a finite
dimensional problem. In fact, we have
\begin{equation}\label{BBarB-SameDeg}
\deg(B, 0, S_{p})=\deg ( \bar B, 0, D_{p}),
\end{equation}
where
\begin{equation}\label{defbarB}
\bar B (\boldsymbol{\mathrm\alpha}, \boldsymbol{\mathrm\xi}):= \big(B_{1,j}(\boldsymbol{\mathrm\alpha}, \boldsymbol{\mathrm\xi}), B_{2,j, h}(\boldsymbol{\mathrm\alpha}, \boldsymbol{\mathrm\xi})\big),
\end{equation}
and
\begin{equation}\label{def:Dp}
D_{p}:=  \Bigl\{ (\boldsymbol{\mathrm\alpha}, \boldsymbol{\mathrm\xi})\in \mathbb R^{k}\times \Omega^{k}\ :  \;  |\alpha_j-1|\le
\frac1{p^{\frac{5}{2}}},\, |\xi_j-x_{\infty,j}|\le
\tau_0\Bigr\}.
\end{equation}
We prove that the maps $A$ and $B$ are homotopic in $S_{p}$.
\begin{Prop}\label{p20-7-9}
Let $A_t:=  t A  +(1-t)B$.  Then  for any $t\in [0, 1]$, $A_t\ne 0$ on $\partial S_{p}$.
\end{Prop}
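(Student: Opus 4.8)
\textbf{Proof plan for Proposition \ref{p20-7-9}.}

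The plan is to argue by contradiction: suppose there are sequences $t_p\in[0,1]$ and $(\boldsymbol{\mathrm\alpha}_p,\boldsymbol{\mathrm\xi}_p,\omega_p)\in\partial S_p$ with $A_{t_p}(\boldsymbol{\mathrm\alpha}_p,\boldsymbol{\mathrm\xi}_p,\omega_p)=0$. The first step is to read off from the vanishing of the $A_3$-component (the $E_p$-component) a useful a priori bound on $\omega_p$. Since $A_{t}$ acts on the $E_p$-slot as $t_p\langle I_p'(\bar W_{\boldsymbol{\mathrm\alpha}_p,p}+\omega_p),\widetilde\omega\rangle+(1-t_p)\langle\omega_p,\widetilde\omega\rangle$, setting this to zero for all $\widetilde\omega\in E_p$ shows that $\omega_p$ solves a linear problem of exactly the type handled by Proposition \ref{p10-7-9}: expanding $I_p'(\bar W_{\boldsymbol{\mathrm\alpha}_p,p}+\omega_p)=-\Delta(\bar W_{\boldsymbol{\mathrm\alpha}_p,p}+\omega_p)-(\bar W_{\boldsymbol{\mathrm\alpha}_p,p}+\omega_p)^p$ and using $p\bar W_{\boldsymbol{\mathrm\alpha}_p,p}^{p-1}$ as the potential, one gets $-\Delta\omega_p-t_p p\bar W_{\boldsymbol{\mathrm\alpha}_p,p}^{p-1}\omega_p=\hat h_p+(\text{linear combination of }\Delta P\bar W_{p,j},\ \Delta\partial_{\xi_{j,h}}P\bar W_{p,j})$, where $\hat h_p$ collects the ``approximation error'' $(\bar W_{\boldsymbol{\mathrm\alpha}_p,p})^p-(-\Delta\bar W_{\boldsymbol{\mathrm\alpha}_p,p})$ together with the genuinely nonlinear remainder $(\bar W_{\boldsymbol{\mathrm\alpha}_p,p}+\omega_p)^p-(\bar W_{\boldsymbol{\mathrm\alpha}_p,p})^p-p(\bar W_{\boldsymbol{\mathrm\alpha}_p,p})^{p-1}\omega_p$. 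The orthogonality conditions defining $S_p$ are precisely the hypotheses of Proposition \ref{p10-7-9}, so $\|\omega_p\|_{L^\infty(\Omega)}\le Cp\|\hat h_p\|_*$. The error part of $\hat h_p$ is estimated as in Section \ref{Section:ImprovedAsymptoticup} (using \eqref{36-30-8}-type expansions and \eqref{n3-29-8}, \eqref{11-30-8BIS}), giving $\|\hat h_p\|_*=O(p^{-3})$ or better; the nonlinear part is controlled by $\|\omega_p\|_{L^\infty}$ itself together with the constraint $\|\omega_p\|_{L^\infty(\Omega)}\le p^{-2-\tau_0}$ on $S_p$, so it is absorbed. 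Hence $\|\omega_p\|_{L^\infty(\Omega)}=o(p^{-2-\tau_0})$ and $\|\omega_p\|=o(p^{-2-\tau_0})$, which means $\omega_p$ lies strictly in the interior of the $\omega$-slice of $S_p$.

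The second step is to exploit the vanishing of the finite-dimensional components. Because $(\boldsymbol{\mathrm\alpha}_p,\boldsymbol{\mathrm\xi}_p,\omega_p)\in\partial S_p$ and we have just shown $\omega_p$ is interior, the boundary must be attained in the $(\boldsymbol{\mathrm\alpha},\boldsymbol{\mathrm\xi})$ variables: either $|\alpha_{p,j}-1|=p^{-5/2}$ for some $j$, or $|\xi_{p,j}-x_{\infty,j}|=\tau_0$ for some $j$. Now I compute $A_{t_p,1,j}$ and $A_{t_p,2,j,h}$. Writing $A_{t,1,j}=t\langle I_p'(\bar W_{\boldsymbol{\mathrm\alpha},p}+\omega),P\bar W_{p,j}\rangle+(1-t)\langle I_p'(\bar W_{\boldsymbol{\mathrm\alpha},p}),P\bar W_{p,j}\rangle$, the term $\langle I_p'(\bar W_{\boldsymbol{\mathrm\alpha},p}+\omega),P\bar W_{p,j}\rangle$ differs from $\langle I_p'(\bar W_{\boldsymbol{\mathrm\alpha},p}),P\bar W_{p,j}\rangle$ only by terms involving $\omega_p$, which by the bound from Step 1 and the orthogonality $\langle P\bar W_{p,j},\omega_p\rangle=0$ are of lower order; so both endpoints, and hence the convex combination $A_{t_p}$, have the same leading behavior as $B_{1,j}$. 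By Lemma \ref{l1-8-9}, $B_{1,j}(\boldsymbol{\mathrm\alpha},\boldsymbol{\mathrm\xi})=\frac{8\pi(\alpha_j-\alpha_j^p)}{p^{p/(p-1)}\bar\e_{p,j}^{2/(p-1)}}+O(p^{-2}\sum_i|\alpha_i-\alpha_i^p|+p^{-4})$, and since $\alpha_j-\alpha_j^p=(p-1)(\alpha_j-1)(1+o(1))$ for $|\alpha_j-1|\le p^{-5/2}$ while $p^{p/(p-1)}\bar\e_{p,j}^{2/(p-1)}\sim p$, we get $B_{1,j}\sim 8\pi(\alpha_j-1)(1+o(1))+O(p^{-4})$; on the face $|\alpha_{p,j}-1|=p^{-5/2}$ this is of exact size $\sim p^{-5/2}$, in particular nonzero, a contradiction. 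Similarly, using Lemma \ref{l3-8-9}, $B_{2,j,h}(\boldsymbol{\mathrm\alpha},\boldsymbol{\mathrm\xi})=\frac{64\pi^2 e}{p^2}\partial_{\xi_{j,h}}\Psi_k(\boldsymbol{\mathrm\xi})+O(\theta p^{-2}+p^{-3}+\sum_j|\alpha_j-1|p^{-1})$; since $\boldsymbol{\mathrm x}_\infty$ is a nondegenerate critical point of $\Psi_k$, on the face $|\xi_{p,j}-x_{\infty,j}|=\tau_0$ (for $\tau_0$ fixed small) we have $|D\Psi_k(\boldsymbol{\mathrm\xi}_p)|\ge c\tau_0>0$, so $B_{2,j,h}$ is bounded below in modulus by $c\tau_0 p^{-2}$ up to lower-order corrections, again a contradiction (here one must also choose $\theta$ small enough in the definition \eqref{def:Eta-theta} of $\eta_j$ and check the $\omega_p$-correction to $A_{t,2,j,h}$ is negligible, which follows from integration by parts as in Remark after Proposition \ref{prop:equivalence} combined with the bound on $\|\omega_p\|_{L^\infty}$).

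The main obstacle I anticipate is Step 1: carefully identifying that the $A_3=0$ equation really puts $\omega_p$ into the exact framework of Proposition \ref{p10-7-9} with a right-hand side error term of the claimed smallness. Two points need care there. First, the error term $(\bar W_{\boldsymbol{\mathrm\alpha}_p,p})^p+\Delta\bar W_{\boldsymbol{\mathrm\alpha}_p,p}$ is not literally of the form covered by the projection-orthogonality setup; one must expand it using \eqref{3-26-8}, \eqref{60-6-4} and the definitions of $f_0,f_1$, and absorb the part proportional to $\Delta P\bar W_{p,j}$ into the ``$\beta_j$'' coefficients, leaving a genuinely small remainder in $\|\cdot\|_*$. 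Second, the $\alpha$-dependence: since $|\alpha_{p,j}-1|$ may be as large as $p^{-5/2}$, one uses \eqref{10-6-4} to replace $p\bar W_{\boldsymbol{\mathrm\alpha}_p,p}^{p-1}$ by $p(1+O(p^{-3/2}))\bar W_p^{p-1}$, and the $O(p^{-3/2})$ discrepancy, multiplied by $\omega_p$, must be pushed into $\hat h_p$ and shown to still be $o(\cdot)$ in $\|\cdot\|_*$. Once these bookkeeping points are settled, the rest is a direct application of Lemma \ref{l1-8-9}, Lemma \ref{l3-8-9}, and the nondegeneracy of $\boldsymbol{\mathrm x}_\infty$, and the contradiction closes on every face of $\partial S_p$.
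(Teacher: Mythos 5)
Your proposal is correct and follows essentially the same route as the paper: contradiction on $\partial S_p$, extraction of the linear equation for $\omega$ from the vanishing of the $E_p$-component, the a priori bounds via Proposition \ref{p10-7-9} (the paper additionally invokes the coercivity of Proposition \ref{p10-30-8} for the $H^1$ bound), and then Lemma \ref{l1-8-9} and Lemma \ref{l3-8-9} together with nondegeneracy of $\boldsymbol{\mathrm x}_\infty$ to rule out the $\boldsymbol{\mathrm\alpha}$- and $\boldsymbol{\mathrm\xi}$-faces. The only cosmetic difference is that you split into cases according to which face of $\partial S_p$ is attained, while the paper shows all three coordinates are interior and concludes; these are logically equivalent.
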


\begin{proof}
We argue by contradiction.  Suppose that there exist
$t\in [0, 1]$ and $(\boldsymbol{\mathrm\alpha}, \boldsymbol{\mathrm\xi}, \omega)\in \partial S_{p}$, such that
\begin{equation*}
A_t(\boldsymbol{\mathrm\alpha}, \boldsymbol{\mathrm\xi}, \omega)=0.
\end{equation*}
Then from $t A_3  +(1-t)B_3=0$, we know
\begin{equation}\label{34-7-9}
\begin{split}
 -t\Delta (\bar W_{\boldsymbol{\mathrm\alpha},p}+\omega)- t(\bar W_{\boldsymbol{\mathrm\alpha},p}+\omega)^p-(1-t)\Delta\omega
= \sum_{j=1}^k \beta_j\Delta P\bar W_{p, j}+ \sum_{j=1}^k\sum_{h=1}^2
\gamma_{jh}\Delta \frac{
\partial P \bar W_{p, j}}{\partial \xi_{j,h}},
\end{split}
\end{equation}
for some constant $\beta_j$ and $\gamma_{jh}$. We rewrite \eqref{34-7-9}
as
\begin{equation}\label{35-7-9}
-\Delta \omega- tp (\bar W_{\boldsymbol{\mathrm\alpha},p})^{p-1}\omega+ l +N(\omega)
=\sum_{j=1}^k \beta_j\Delta P\bar W_{p, j}+ \sum_{j=1}^k\sum_{h=1}^2
\gamma_{jh}\Delta \frac{
\partial P \bar W_{p, j}}{\partial \xi_{j,h}},
\end{equation}
where
\begin{equation*}
l:=t\Big[-\Delta \bar W_{\boldsymbol{\mathrm\alpha},p} - (\bar W_{\boldsymbol{\mathrm\alpha},p})^p\Big],
\end{equation*}
and
\begin{equation*}
N(\omega) :=-t \Bigl[(\bar W_{\boldsymbol{\mathrm\alpha},p}+\omega)^p -(\bar W_{\boldsymbol{\mathrm\alpha},p})^p -p(\bar W_{\boldsymbol{\mathrm\alpha},p})^{p-1}\omega   \Bigr].
\end{equation*}
{\sl{Step 1.
We show that
%\edz{\textcolor{red}{not clear where we need the estimate of $\|\nabla \omega\|_{L^\infty\big(\Omega
%\setminus \cup_{j=1}^k B_{\frac d 2}(\xi_j)\big)}$ in \eqref{l2-7-9}}}%%%% in STEP 3!
\begin{equation}\label{l2-7-9}
\|\omega\|=
O\Bigl(\frac{1}{p^{\frac{9}{4}+\frac{\tau_0}{2}}}\Bigr), \;\; \|\omega\|_{L^\infty(\Omega)},\; \|\nabla \omega\|_{L^\infty\big(\Omega
\setminus \cup_{j=1}^k B_{\frac d 2}(\xi_j)\big)}
= O\Bigl(\frac1{p^{3}}\Bigr).
\end{equation}

}}

Let us observe that \eqref{35-7-9} implies that
 \begin{equation}\label{38-7-9}
\| \omega\|^2- tp\int_\Omega (\bar W_{\boldsymbol{\mathrm\alpha},p})^{p-1}\omega^2+\int_\Omega\big( l +N(\omega)\bigr)
\omega=0.
\end{equation}
On one hand,  Proposition~\ref{p10-30-8} gives that
\begin{equation}\label{39-7-9}
\| \omega\|^2- tp\int_\Omega (\bar W_{\boldsymbol{\mathrm\alpha},p})^{p-1}\omega^2
\ge \bigl(1-\frac{t}{1+\frac{c_0}p}\bigr)\|\omega\|^2
\ge \frac{c_0}{p+c_0} \|\omega\|^2.
\end{equation}

Next, using \eqref{60-6-4} and \eqref{61-6-4}, we find
\[
\int_{\Omega}|l|=O\Bigl( \frac1p \sum_{j=1}^k |\alpha_j-\alpha_j^p|+\frac1{p^4}  \Bigr)=O\Bigl( \frac1{p^{\frac{5}{2}  }}  \Bigr),
\]
which, together with $\|\omega\|_{L^\infty(\Omega)}  $
\begin{equation}\label{40-7-9}
\big|\int_\Omega l
\omega\,\big|\le C \|\omega\|_{L^\infty(\Omega)}\int_{\Omega}|l|\le\frac
 C{p^{\frac{9}{2}+\tau_0  }},
\end{equation}
and
\begin{equation}\label{41-7-9}
|\int_\Omega N(\omega)
\omega\,|\le C \|\omega\|^3_{L^\infty(\Omega)}p(p-1)\int_{\Omega}(\bar W_{\boldsymbol{\mathrm\alpha},p})^{p-2}\le
C p\|\omega\|^3_{L^\infty(\Omega)}
\le\frac C{p^{5+3\tau_0 }}.
\end{equation}
Combining \eqref{38-7-9}, \eqref{39-7-9}, \eqref{40-7-9} and \eqref{41-7-9}, we obtain
\[
\|\omega\| \le \frac{C}{p^{\frac{9}{4}+\frac{\tau_0}{2}}}.
\]

Now we use  Proposition~\ref{p10-7-9}  to obtain
\begin{equation}\label{ll1}
\|\omega\|_{L^\infty(\Omega)}\le Cp \big(\|l\|_*+\|N(\omega)\|_*\big).
\end{equation}
Furthermore, we have
\begin{equation}\label{ll2}\|l\|_*\leq  \frac{C}{p^{4}},\end{equation}
(see \cite[Proposition 2.1]{EMP2006}), and
\begin{equation}\label{ll3}\|N(\omega)\|_*\leq Cp\|\omega\|_{L^{\infty}(\Omega)}^{2} \leq \frac{C}{p^{1+\tau_{0}}}\|\omega\|_{L^{\infty}(\Omega)},\end{equation}
(see for instance the proof of \cite[Lemma 4.1]{EMP2006}).
 Hence from \eqref{ll1}, \eqref{ll2} and \eqref{ll3}, we get
 \[
\|\omega\|_{L^\infty(\Omega)} \le \frac C{p^{3}}.
\]
As for the estimate of $\nabla\omega$, for $x_0\in \Omega
\setminus \Cup_{j=1}^k B_{\frac d 2}(\xi_j)$,
\[
\|\omega\|_{W^{2,3}\big(B_{\frac d8}(x_0)\big)}\le C\Big(
\|\omega\|_{L^\infty\big(B_{\frac d4}(x_0)\big)}+|f|_{L^3(B_{\frac d4}\big(x_0)\big)}\Big) \le
\frac{C}{p^{3}},
\]
where
\[
f:=-tp \bar W_{\boldsymbol{\mathrm\alpha},p}^{p-1}\omega- l -N(\omega)+\sum_{j=1}^k \beta_j\Delta P\bar W_{p, j}+ \sum_{j=1}^k\sum_{h=1}^2
\gamma_{jh}\Delta \frac{
\partial P \bar W_{p, j}}{\partial \xi_{j,h}}.
\]
This concludes the proof of \eqref{l2-7-9}.\\

{\sl Step 2.
We show that
\begin{equation}\label{l2-8-9}
\alpha_j=1+ O\bigl(\frac1{p^{3}}\bigr).
\end{equation}
}
By {\sl Step 1} we have
 $\|\omega\|_{L^\infty(\Omega)}= O\bigl(\frac1{p^{3} } \bigr)$, hence  \begin{equation}\label{20-7-9}
\begin{split}
A_{1,j}=&\bigl\langle I_p' (\bar W_{\boldsymbol{\mathrm\alpha},p} +\omega),     P\bar W_{p, j}\bigr\rangle\\
=& \int_\Omega \nabla \bar W_{\boldsymbol{\mathrm\alpha},p} \nabla  P\bar W_{p, j}-\int_\Omega (\bar W_{\boldsymbol{\mathrm\alpha},p} +\omega)^pP\bar W_{p, j}\\
=& \int_\Omega \nabla \bar W_{\boldsymbol{\mathrm\alpha},p} \nabla  P\bar W_{p, j}-\int_\Omega (\bar W_{\boldsymbol{\mathrm\alpha},p})^{p}P\bar W_{p, j}
-p\int_\Omega (\bar W_{\boldsymbol{\mathrm\alpha},p})^{p-1}P\bar W_{p, j}\omega + O\Bigl(p\|\omega\|^2_{L^\infty(\Omega)}\Bigr) \\
=&\int_\Omega \nabla \bar W_{\boldsymbol{\mathrm\alpha},p} \nabla  P\bar W_{p, j}-\int_\Omega (\bar W_{\boldsymbol{\mathrm\alpha},p})^{p}P\bar W_{p, j} +
O\bigl(\frac1{p^{3}}\bigr)
\\=
&B_{1,j}+
O\bigl(\frac1{p^{3}}\bigr).
\end{split}
\end{equation}
Since $tA_{1}+(1-t)B_{1}=0$,  \eqref{20-7-9} implies that
\begin{equation}\label{20-7-9BIS}
B_{1,j}=\int_\Omega \nabla \bar W_{\boldsymbol{\mathrm\alpha},p} \nabla  P\bar W_{p, j}-\int_\Omega (\bar W_{\boldsymbol{\mathrm\alpha},p})^{p}P\bar W_{p, j}=
O\bigl(\frac1{p^{3}}\bigr).
\end{equation}
Thus, Lemma~\ref{l1-8-9} and \eqref{20-7-9BIS} give \[
\alpha_j-\alpha_j^p =O\bigl(\frac1{p^{2}}\bigr),
\]
from which, we obtain
\begin{equation*}
\begin{split}
1-\alpha_j^{p-1}=O\bigl(\frac1{p^{2}}\bigr).
\end{split}
\end{equation*}
But $\alpha_j^{p-1}= 1+(p-1)(\alpha_j-1)+O\big(p^2(\alpha_j-1)^2\big)$. So we see
$\alpha_j-1=O\bigl(\frac1{p^{3}}\bigr),$ namely \eqref{l2-8-9}.\\

\

\noindent{\sl Step 3.
We show that
\begin{equation*}
|\xi_j-x_{\infty, j}|\to 0.
\end{equation*}
}

Since by {\sl Step 1},
\[
\|\nabla \omega\|_{L^\infty\big(\Omega
\setminus \cup_{j=1}^k B_{\frac d 2}(\xi_j)\big)}
= O\bigl(\frac1{p^{3}}  \bigr),
\]
and
\[
|\nabla \bar W_{\boldsymbol{\mathrm\alpha},p}|\sim \frac1p,\quad \text{in}\; \Omega
\setminus \Cup_{j=1}^k B_{\frac d 2}(\xi_j),
\]
we can prove that
\begin{equation*}
\begin{split}
&\int_{\Omega} (\bar W_{\boldsymbol{\mathrm\alpha},p}+\omega)^p\eta_j\frac{
\partial (\bar W_{\boldsymbol{\mathrm\alpha},p}+\omega)}{\partial y_{h}}\\
=&
\int_{
 B_{d+\theta}(\xi_j)} (\bar W_{\boldsymbol{\mathrm\alpha},p}+\omega)^p\frac{ \partial (\bar W_{\boldsymbol{\mathrm\alpha},p}+\omega)}{\partial y_{h}}+
 O\Bigl(  \frac1{p^p} \Bigr)
\\
=&\frac1{p+1}\int_{
\partial B_{d+\theta}(\xi_j)} (\bar W_{\boldsymbol{\mathrm\alpha},p}+\omega)^{
p+1}+O\Bigl(  \frac1{p^p} \Bigr)
=O\Bigl(  \frac1{p^p}
\Bigr),
\end{split}
\end{equation*}
where $\eta_{j}$ and $\theta$ are defined in \eqref{def:Eta-theta}.
On the other hand,
\begin{equation}\label{51-9-9}
\begin{split}
&\bigl\langle \nabla (\bar W_{\boldsymbol{\mathrm\alpha},p}+\omega), \nabla [\eta_j\frac{
\partial (\bar W_{\boldsymbol{\mathrm\alpha},p}+\omega)}{\partial y_{h}}] \bigr\rangle\\
=& \int_{\Omega} \Bigl(\eta_j\nabla (\bar W_{\boldsymbol{\mathrm\alpha},p}+\omega)\nabla \frac{
\partial (\bar W_{\boldsymbol{\mathrm\alpha},p}+\omega)}{\partial y_{h}}+\frac{
\partial (\bar W_{\boldsymbol{\mathrm\alpha},p}+\omega)}{\partial y_{h}}
\nabla (\bar W_{\boldsymbol{\mathrm\alpha},p}+\omega)\nabla \eta_j \Bigr)\\
=&\int_{B_{d+\theta}(\xi_j)} \nabla (\bar W_{\boldsymbol{\mathrm\alpha},p}+\omega)\nabla \frac{
\partial (\bar W_{\boldsymbol{\mathrm\alpha},p}+\omega)}{\partial y_{h}}\\
&+ \int_{B_{d+\theta}(\xi_j)} \Bigl((\eta_j-1)\nabla \bar W_{\boldsymbol{\mathrm\alpha},p}\nabla \frac{
\partial \bar W_{\boldsymbol{\mathrm\alpha},p}}{\partial y_{h}}+\frac{
\partial \bar W_{\boldsymbol{\mathrm\alpha},p}}{\partial y_{h}}
\nabla \bar W_{\boldsymbol{\mathrm\alpha},p}\nabla \eta_j \Bigr)
+O\Bigl(\frac1{\theta p^{4}}
\Bigr).
\end{split}
\end{equation}   But
\begin{equation}\label{50-9-9}
\begin{split}
&\int_{B_{d+\theta}(\xi_j)} \nabla (\bar W_{\boldsymbol{\mathrm\alpha},p}+\omega)\nabla \frac{
\partial (\bar W_{\boldsymbol{\mathrm\alpha},p}+\omega)}{\partial y_{h}}\\
=&\frac12 \int_{\partial B_{d+\theta}(\xi_j)} |\nabla (\bar W_{\boldsymbol{\mathrm\alpha},p}+\omega)|^2 \nu_h
= \frac12 \int_{\partial B_{d+\theta}(\xi_j)} |\nabla \bar W_{\boldsymbol{\mathrm\alpha},p}|^2 \nu_h+
O\Bigl(\frac1{p^{4}}
\Bigr).
\end{split}
\end{equation}
Combining \eqref{51-9-9} and \eqref{50-9-9}, we obtain
\begin{equation*}
\begin{split}
\bigl\langle \nabla (\bar W_{\boldsymbol{\mathrm\alpha},p}+\omega), \nabla [\eta_j\frac{
\partial (\bar W_{\boldsymbol{\mathrm\alpha},p}+\omega)}{\partial y_{h}} ]\bigr\rangle
=\bigl\langle \nabla \bar W_{\boldsymbol{\mathrm\alpha},p}, \nabla [\eta_j\frac{
\partial (\bar W_{\boldsymbol{\mathrm\alpha},p}}{\partial y_{h}}] \bigr\rangle
+O\Bigl(\frac1{\theta p^{4}}
\Bigr).
\end{split}
\end{equation*}
Thus, we have proved that
\begin{equation}\label{56-9-9}
\begin{split}
A_{2,j}=&\bigl\langle I_p' (\bar W_{\boldsymbol{\mathrm\alpha},p}+\omega),  \eta_j\frac{
\partial (\bar W_{\boldsymbol{\mathrm\alpha},p}+\omega)}{\partial y_{h}} \bigr\rangle
=\bigl\langle I_p' (\bar W_{\boldsymbol{\mathrm\alpha},p}),  \eta_j\frac{
\partial \bar W_{\boldsymbol{\mathrm\alpha},p}}{\partial y_{h}} \bigr\rangle
+O\Bigl(\frac1{\theta p^{4}}
\Bigr)=
B_{2,j}+O\Bigl(\frac1{\theta p^{4}}\Bigr).
\end{split}
\end{equation}
Since $tA_{2}+(1-t)B_{2}=0$, then \eqref{56-9-9} implies that
\begin{equation}\label{20-7-9BIS}
B_{2,j}=\bigl\langle I_p' (\bar W_{\boldsymbol{\mathrm\alpha},p}),  \eta_j\frac{
\partial \bar W_{\boldsymbol{\mathrm\alpha},p}}{\partial y_{h}} \bigr\rangle=
O\bigl(\frac1{\theta p^{4}}\bigr).
\end{equation}
Using Lemma~\ref{l3-8-9}, we obtain
\begin{equation*}
\begin{split}
\frac{\partial \Psi_k(\xi_1,
\cdots,\xi_k)}{\partial \xi_{jh}} =O\Bigl(\theta+\frac1{p}+\sum_{j=1}^k|\alpha_j-1|p\Bigr).
\end{split}
\end{equation*}
By the non-degeneracy of the critical point, we find
\[
|\xi_j-x_{\infty,j}|=o\big(1\big).
\]

\vskip 0.2cm

\noindent{\sl Step 4. Conclusion.}

\vskip 0.2cm

 {\sl Step 1}, {\sl Step 2} and {\sl Step 3} imply that $(\boldsymbol{\mathrm\alpha}, \boldsymbol{\mathrm\xi}, \omega)$ is in the interior of $S_{p}$, thus giving a contradiction. This concludes the proof of Proposition~\ref{p20-7-9}.
\end{proof}

\

\

Now we can complete the proof for  the degree counting formula.
\begin{proof}[{\bf Proof of Theorem \ref{th1-9-9}}]  It follows from Proposition~\ref{p20-7-9} that
\[
\deg(A,0,S_{p}) =\deg(B,0,S_{p}),
\]
where $B$ is the map in \eqref{def:B}. Hence using \eqref{BBarB-SameDeg}, we get
\begin{equation}\label{AbarB}
\deg(A,0,S_{p}) =\deg ( \bar B, 0, D_{p}),
\end{equation}
where the map $\bar B(\boldsymbol{\mathrm\alpha},\boldsymbol{\mathrm \xi})$ for
$(\boldsymbol{\mathrm\alpha},\boldsymbol{\mathrm \xi})\in D_{p}$ is the one in \eqref{defbarB}-\eqref{def:Dp}, namely
\[\bar B(\boldsymbol{\mathrm\alpha},\boldsymbol{\mathrm \xi})=\left(\int_\Omega \nabla \bar W_{\boldsymbol{\mathrm\alpha},p} \nabla  P\bar W_{p, j}-\int_\Omega (\bar W_{\boldsymbol{\mathrm\alpha},p})^{p}P\bar W_{p, j}, \int_\Omega \nabla \bar W_{\boldsymbol{\mathrm\alpha},p} \nabla  \Big(\eta_j\frac{
\partial \bar W_{\boldsymbol{\mathrm\alpha},p}}{\partial y_{h}}\Big)-\int_\Omega (\bar W_{\boldsymbol{\mathrm\alpha},p})^{p}\eta_j\frac{
\partial \bar W_{\boldsymbol{\mathrm\alpha},p}}{\partial y_{h}} \right).\]
\\
Noting that
\[
\frac{8\pi }{ p^{\frac p{p-1}}\bar\e_{p,1}^{\frac2{p-1}}}=\frac{\sqrt e}{p}\bigl(
1+o(1)\bigr),
\]
by Lemma~\ref{l1-8-9} and Lemma \ref{l3-8-9},  we can further deform $\bar B$
in $S_{p}$ to
\[
\widetilde B:=\Bigl( \bigl( (\alpha_1-\alpha_1^p)\frac{8\pi \sqrt e }{ p},\cdots, (\alpha_k-\alpha_k^p)\frac{8\pi \sqrt e }{ p}\bigr),\;   \frac{64\pi^2 e}{p^2}\nabla \Psi_k(\xi_1,
\cdots,\xi_k)\Bigr),
\]
so that
\begin{equation}\label{barBtildeB}\deg ( \bar B, 0, D_{p})=\deg(\widetilde B, 0, D_{p}).\end{equation}
It is obvious that
\begin{equation}\label{obviousdeg}
\deg(\widetilde B, 0, D_{p})=(-1)^{k+m(\boldsymbol{\mathrm x}_{\infty},\Psi_k)}.
\end{equation}
Hence \eqref{AbarB},
\eqref{barBtildeB} and \eqref{obviousdeg}  give the conclusion.
\end{proof}

\

\section{The proof of Theorem \ref{theorem:local-uniqueness}}\label{section:localUniqueness}$\,$\\
Using Theorem~\ref{th1.1} and  Theorem \ref{theorem:topological-degree-formula}, we can finally prove the
local  uniqueness result.
\begin{proof}[{\bf Proof of Theorem \ref{theorem:local-uniqueness}}]
From \cite[Theorem 1.1]{GILY2021}, we know that any $k$-spike solution concentrating at $\boldsymbol{\mathrm x}_{\infty}$ is
non-degenerate. So the number of $k$-spike solutions concentrating at $\boldsymbol{\mathrm x}_{\infty}$
is finite. Suppose this number is $q$.
\vskip 0.1cm

We also know from Theorem \ref{th1.1} that each $k$-spike solution concentrating at $\boldsymbol{\mathrm x}_{\infty}$
has Morse index $k+m(\boldsymbol{\mathrm x}_{\infty},\Psi_k)$. Thus its degree is
$(-1)^{k+m(\boldsymbol{\mathrm x}_{\infty},\Psi_k)}$. This gives that the total
degree of all the $k$-spike solutions concentrating at $\boldsymbol{\mathrm x}_{\infty}$ is
$q(-1)^{k+m(\boldsymbol{\mathrm x}_{\infty},\Psi_k)}$.

\vskip 0.1cm
On the other hand, Theorem~\ref{theorem:topological-degree-formula} shows that
the total
degree of all the $k$-spike solutions concentrating at $\boldsymbol{\mathrm x}_{\infty}$ is
$(-1)^{k+m(\boldsymbol{\mathrm x}_{\infty},\Psi_k)}$. Thus, $q=1$.
\end{proof}

\

\appendix
\section{The derivatives of the Kirchhoff-Routh function}
For the reader convenience we collect here the expression of the  derivatives of the Kirchhoff-Routh function $\Psi_{k}:\Omega^{k}(\subset\mathbb R^{2k})\rightarrow \mathbb R$ defined for $\boldsymbol{a}=(a_1,\cdots, a_k)$ with $a_{j}=({a_j}_{1}, {a_j}_{2})\in\Omega\ (\subset\mathbb R^{2}$), for $j=1,\cdots, k$, as
\[
\Psi_{k}(\boldsymbol{a}):= \sum^k_{j=1} \Psi_{k,j}(\boldsymbol{a}),\quad~\mbox{ with }~\Psi_{k,j}(\boldsymbol{a}):=  R\big(a_j\big)- \sum^{k}_{m=1,m\neq j} G\big(a_j,a_m\big).
\]
We introduce the notation
$y_{2j-2+i}:={a_j}_{i}$
for $j=1,\cdots, k$, $i=1,2$, hence
 \[\boldsymbol{y}=(y_{1},\cdots, y_{2k})=\boldsymbol a\in\mathbb R^{2k}.\]
Easy computations show that
\begin{equation*}
\frac{\partial \Psi_{k}(\boldsymbol{y})}{\partial y_{2j-2+i}}=
\frac{\partial R(a_{j})}{\partial x_{i}}
-2\sum_{m=1,m\neq j}^{k}\partial_{i}G(a_{j},a_{m})
\end{equation*}
and
\begin{equation}\label{SecondDer}
\frac{\partial^{2}\Psi_{k}(\boldsymbol{y})}{\partial y_{2j-2+i}\ \partial y_{2h-2+q}}=
\Big[\frac{\partial^{2} R(a_{j})}{\partial x_{i}\partial x_{q}}
-2\sum_{m=1,m\neq j}^{k}
\partial_{iq}^{2}G(a_{j},a_{m})\Big]\delta_{hj}-2
\partial_{i}D_{q} G(a_{j},a_{h})
(1-\delta_{hj})
\end{equation}
for $j, h=1,\cdots, k$ and $i,q=1,2$, where $\delta_{hj}$ denotes the Kronecker's symbol.

\section{A linear problem in $\mathbb R^2$}

Consider
\begin{equation}\label{1-30-8}
-\Delta u = f(x),\quad \text{in}\; \mathbb R^2.
\end{equation}
We assume that $f$ satisfies the following condition
\begin{equation*}
|f(x)|\le \frac C{1+  |x|^p},~~~\mbox{for some $p>3$ and $C>0$}.
\end{equation*}

\begin{Prop}\label{p1-30-8}
Let $u$ be a solution of problem \eqref{1-30-8}, satisfying that as $|x|\to +\infty$,
\[
u(x) = A \log |x| + B + O\Big(\frac1{|x|}\Big),
\]
for some constants $A>0$ and $B$, then
\[
u(x)=\frac1{2\pi} \int_{\mathbb R^2} \log\frac1{|y-x|} f(y)\,dy + B,~~
\mbox{if $\displaystyle\int_{\mathbb R^2}  f(y)\,dy>0$},
\]
 while
\[
u(x)=\frac1{2\pi} \int_{\mathbb R^2} \log\frac1{|y-x|} f(y)\,dy - B,~~
\mbox{if $\displaystyle\int_{\mathbb R^2}  f(y)\,dy<0$}.
\]
\end{Prop}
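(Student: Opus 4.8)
The plan is to represent $u$ explicitly as the logarithmic Newtonian potential of $f$ plus a constant, and then to identify that constant from the prescribed behaviour of $u$ at infinity. Set
\[
v(x):=\frac1{2\pi}\int_{\mathbb R^2}\log\frac1{|x-y|}\,f(y)\,dy .
\]
Since $|f(y)|\le C(1+|y|)^{-p}$ with $p>3$, the integrand is absolutely integrable (in two dimensions the logarithmic singularity at $y=x$ is integrable, and $|y|^{-p}\log|y|$ is integrable near infinity because $p>2$), so $v$ is well defined, lies in $C^1(\mathbb R^2)$, and satisfies $-\Delta v=f$ in the distributional sense on $\mathbb R^2$. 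Hence $w:=u-v$ is harmonic in the distributional sense on all of $\mathbb R^2$, and so, by Weyl's lemma, a genuine harmonic function.

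First I would establish the expansion of $v$ at infinity. Writing $\log\frac1{|x-y|}=\log\frac1{|x|}+\log\frac{|x|}{|x-y|}$ gives
\[
v(x)=-\frac{\log|x|}{2\pi}\int_{\mathbb R^2}f+\frac1{2\pi}\int_{\mathbb R^2}\log\frac{|x|}{|x-y|}\,f(y)\,dy,
\]
and the key point is that the last integral is $O(1/|x|)$ as $|x|\to\infty$. I would bound it region by region: on $\{|y|\le|x|/2\}$ one has $\bigl|\log\tfrac{|x|}{|x-y|}\bigr|\le C|y|/|x|$, so this part is $O\bigl(|x|^{-1}\int_{\mathbb R^2}(1+|y|)|f(y)|\,dy\bigr)=O(1/|x|)$ thanks to $p>3$; on $\{|x|/2<|y|<2|x|\}$ one has $|f(y)|\le C|x|^{-p}$ together with $\int_{|z|<3|x|}\bigl|\log\tfrac{|x|}{|z|}\bigr|\,dz=O(|x|^2)$, so this part is $O(|x|^{2-p})$; and on $\{|y|\ge 2|x|\}$ one has $\bigl|\log\tfrac{|x|}{|x-y|}\bigr|\le C\log|y|$, so this part is $O(|x|^{2-p}\log|x|)$. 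Therefore
\[
v(x)=-\frac1{2\pi}\Bigl(\int_{\mathbb R^2}f\Bigr)\log|x|+O\bigl(1/|x|\bigr),\qquad |x|\to\infty .
\]
I expect this to be the only genuinely technical step; the difficulties are the bookkeeping of the logarithmic singularity near $y\approx x$ and the control of the far region, both handled precisely by the assumption $p>3$.

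Combining this with the hypothesis $u(x)=A\log|x|+B+O(1/|x|)$ shows that $w=u-v$ grows at most like $C\log(2+|x|)$ on $\mathbb R^2$. A Liouville-type argument then forces $w$ to be constant: applying the interior gradient estimate on $B_{|x_0|/2}(x_0)$ gives $|\nabla w(x_0)|\le C|x_0|^{-1}\log|x_0|\to0$, so the harmonic vector field $\nabla w$ is bounded on $\mathbb R^2$ and hence vanishes identically; equivalently, writing $w$ as the real part of an entire function $F$, the Borel--Carath\'eodory inequality shows that $F$ has logarithmic growth and is therefore constant. Write $w\equiv c$.

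It remains to identify $c$. Since $c=u(x)-v(x)=\bigl(A+\tfrac1{2\pi}\int_{\mathbb R^2}f\bigr)\log|x|+B+O(1/|x|)$ must be constant, necessarily $A=-\tfrac1{2\pi}\int_{\mathbb R^2}f$, so in particular the sign of $\int_{\mathbb R^2}f$ is determined by that of $A$; matching the remaining constant terms then fixes $c$, and substituting $u=v+c$ back yields
\[
u(x)=\frac1{2\pi}\int_{\mathbb R^2}\log\frac1{|x-y|}\,f(y)\,dy+c ,
\]
which is exactly the asserted identity, the two displayed formulae corresponding to the two sign cases for $\int_{\mathbb R^2}f$.
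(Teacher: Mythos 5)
Your proof follows the same route as the paper's: write $u$ as the logarithmic potential $v$ of $f$ plus a harmonic remainder, reduce the remainder to a constant via its logarithmic growth, expand $v$ at infinity and match constants; your treatment of the Liouville step and of the region-by-region expansion of $v$ is in fact more detailed than the paper's one-line assertions. One caveat: your matching yields $A=-\frac1{2\pi}\int_{\mathbb R^2}f$ and $c=B$ in \emph{both} cases, so the ``$-B$'' branch of the statement does not actually emerge from your computation (nor, arguably, from the paper's) — this reflects an inconsistency in the sign conventions of the proposition as stated (under the literal reading, $A>0$ forces $\int_{\mathbb R^2}f<0$, making the first case vacuous), not a gap in your argument.
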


\begin{proof}
Since $u(x)-\frac1{2\pi} \displaystyle\int_{\mathbb R^2} \log\frac1{|y-x|} f(y)\,dy$
 is harmonic, whose growth at infinity is bounded by $\log |x|$, $u$ can be represented as
\[
u(x)=\frac1{2\pi} \int_{\mathbb R^2} \log\frac1{|y-x|} f(y)\,dy + C,
\]
for some constant $C$.

\vskip 0.1cm

On the other hand, for $|x|$ large,
\[
\begin{split}
&\frac1{2\pi} \int_{\mathbb R^2} \log \frac1{|y-x|} f(y)\,dy\\
=&\frac1{2\pi} \int_{B_{\delta |x|}(0)}\log \frac1{|y-x|} f(y)\,dy+\frac1{2\pi} \int_{\mathbb R^2\setminus B_{\delta |x|}(0)} \log\frac1{|y-x|} f(y)\,dy\\
=&\frac1{2\pi} \int_{B_{\delta |x|}(0)} \Bigl(\log \frac1{|x|}+O\bigl(\frac{|y|}
{|x|}\bigr)\Bigr) f(y)\,dy+O\Bigl( \frac1{|x|}\int_{\mathbb R^2\setminus B_{\delta |x|}(0)} |\log\frac1{|y-x|}| \frac1{1+|y|^{p-1}}\,dy
\Bigr)\\
=&\frac1{2\pi|x|} \int_{\mathbb R^2} f(y)\,dy+O\Bigl(\frac1{|x|}\Bigr).
\end{split}
\]
Thus, we see that $C=B$, if $\displaystyle\int_{\mathbb R^2}  f(y)\,dy>0$, while $C=-B$
if $\displaystyle\int_{\mathbb R^2}  f(y)\,dy<0$.
\end{proof}

\noindent\textbf{Acknowledgments} ~Peng Luo and Shusen Yan were supported by National Key R\&D Program (No. 2023YFA1010002). Isabella Ianni was partially supported by the MUR-PRIN-20227HX33Z ``Pattern formation in nonlinear
phenomena'' and the INDAM-GNAMPA project ``Propriet$\grave{a}$ qualitative delle soluzioni di equazioni ellittiche''.
Peng Luo was supported by NSFC grants (No. 12422106). Shusen Yan was supported by NSFC grants (No. 12171184).

\renewcommand\refname{References}
\renewenvironment{thebibliography}[1]{%
\section*{\refname}
\list{{\arabic{enumi}}}{\def\makelabel##1{\hss{##1}}\topsep=0mm
\parsep=0mm
\partopsep=0mm\itemsep=0mm
\labelsep=1ex\itemindent=0mm
\settowidth\labelwidth{\small[#1]}%
\leftmargin\labelwidth \advance\leftmargin\labelsep
\advance\leftmargin -\itemindent
\usecounter{enumi}}\small
\def\newblock{\ }
\sloppy\clubpenalty4000\widowpenalty4000
\sfcode`\.=1000\relax}{\endlist}
\bibliographystyle{model1b-num-names}
%\bibliography{refs}

\begin{thebibliography}{99}
{\footnotesize






\bibitem{AdimurthiGrossi}
Adimurthi, M. Grossi,
\emph{Asymptotic estimates for a two-dimensional problem with polynomial nonlinearity},
Proc. Amer. Math. Soc. 132 (2004), 1013--1019.	


%\bibitem{ADI2022}
%A. Amadori, F. De Marchis, I. Ianni,
%\emph{Morse index computation for radial solutions of the H\'enon problem in the disk},
%Nonlinear Analysis, Theory, Methods and Applications 215 (2022).

\bibitem{BLR95} A. Bahri, Y. Li, O. Rey, \emph{On a variational problem with lack of compactness: the topological effect of the critical points at infinity}, Calc. Var. PDE 3 (1995), 67--93.

\bibitem{BCGP}
T. Bartsch, M. Clapp, M. Grossi, F. Pacella, \emph{Asymptotically radial solutions in expanding annular domains}, Math. Ann. 352 (2012), 485--515.

%\bibitem{BartschWeth}
%T. Bartsch, T. Weth, \emph{A note on additional properties of sign changing solutions to superlinear elliptic equations},
%		Topological Methods in Nonlinear Analysis 22 (2003), 1--14.

 \bibitem{CGPY2019} D. Cao, Y. Guo, S. Peng, S. Yan, \emph{Local uniqueness for vortex patch problem in incompressible planar steady flow}, J. Math. Pures Appl. 131 (2019), 251--289.


 %\bibitem{CaoLuoPeng2021}
%D. Cao, P. Luo, S.Peng, \emph{The number of positive solutions to the Brezis-Nirenberg problem},
%Trans. Amer. Math. Soc. 374 (2021), 1947--1985.

\bibitem{CNY1}
D. Cao, E. Noussair, S. Yan, \emph{Existence and uniqueness results on
single-peaked solutions of a semilinear problem}, Ann. Inst. H. Poincar\'e Anal. Non Lin\'eaire  15 (1998), 73--111.

\bibitem{cl1}
C. Chen, C. Lin,
\emph{Sharp estimates for solutions of multi-bubbles in compact Riemann surfaces}, Comm. Pure Appl. Math. 55 (2002), 728--771.


\bibitem{cl2}
 C. Chen, C. Lin,
 \emph{Topological degree for a mean field equation on Riemann surfaces}, Comm. Pure Appl. Math. 56 (2003),  1667--1727.


\bibitem{CKL16}  W. Choi, S. Kim, K. Lee, \emph{Qualitative properties of multi-bubble solutions for nonlinear elliptic equations involving critical exponents}, Adv. Math. 298 (2016), 484--533.

\bibitem{DamascelliGrossiPacella}
L. Damascelli, M. Grossi, F. Pacella,  \emph{Qualitative properties of positive solutions of elliptic
	equations in symmetric domains via the maximum principle}, Ann. Inst. H. Poincar\'e Anal. Non Lin\'eaire 16 (1999), 631--652.


\bibitem{Dancer88}
E. Dancer, \emph{The effect of domain shape on the number of positive solutions of certain nonlinear equations}, J. Differential Equations 74 (1988), 120--156.

\bibitem{DancerMA2003}
E. Dancer, \emph{Real analyticity and non-degeneracy}, Math. Ann.  325 (2003), 369--392.

\bibitem{DGIP2018} F. De Marchis, M. Grossi, I. Ianni, F. Pacella, \emph{$L^{\infty}$-norm and energy quantization for the planar Lane-Emden problem with large exponent}, Arch. Math. 111 (2018), 421--429.

\bibitem{DGIP2019} F. De Marchis, M. Grossi, I. Ianni, F. Pacella, \emph{Morse index and uniqueness of positive solutions of the Lane-Emden problem in planar domains}, J. Math. Pures Appl. 128 (2019), 339--378.

\bibitem{DIP2017-1} F. De Marchis, I. Ianni, F. Pacella, \emph{Asymptotic profile of positive solutions of Lane-Emden problems in dimension two},  J. Fixed Point Theory Appl. 19 (2017), 889--916.

%\bibitem{DIP2017-2} F. De Marchis, I. Ianni, F. Pacella, \emph{Exact Morse index computation for nodal radial solutions of Lane-Emden problems.} Math. Ann. 367(1) (2017) 185--227.

%\bibitem{DIP2017-3} F. De Marchis, I. Ianni, F. Pacella, A Morse index formula for radial solutions of Lane-Emden problems, Adv. Math. 322(2017) 682--737.

\bibitem{DLY100} Y. Deng, C. Lin, S. Yan, \emph{On the prescribed scalar curvature problem in $\mathbb R^N$, local uniqueness and periodicity}, J. Math. Pures Appl.  104 (2015), 1013--1044.

\bibitem{EMP2006} P. Esposito, M. Musso, A. Pistoia, \emph{Concentrating solutions for a planar elliptic problem involving nonlinearities with large exponent}, J. Differential Equations 227 (2006), 29--68.

\bibitem{GNN}
B. Gidas, W. Ni, L. Nirenberg,
\emph{Symmetry and related properties via the maximum principle}, Commun. Math. Phys. 68 (1979), 209--243.

\bibitem{GT1983}
D. Gilbarg, N. Trudinger, \emph{Elliptic partial differential equations of second order}. Third edition. Springer-Verlag, Berlin, 1998.

%\bibitem{GGPS2014}F. Gladiali, M. Grossi, H. Ohtsuka, T. Suzuki,  \emph{Morse indices of multiple blow-up solutions to the two-dimensional Gel'fand problem.} Comm. Partial Differential Equations 39 (2014), 2028--2063.

  \bibitem{G100}
L. Glangetas,  \emph{Uniqueness of positive solutions of a nonlinear elliptic equation involving the critical exponent}, Nonlinear Anal. 20 (1993), 571--603.

\bibitem{GM100}
 M. Grossi,  \emph{On the number of single-peak solutions of the nonlinear Schr\"odinger equation}, Ann. Inst. H. Poincar\'e Anal. Non Lin\'eaire 19 (2002), 261--280.

\bibitem{GILY2021} M. Grossi, I. Ianni, P. Luo, S. Yan, \emph{Non-degeneracy and local uniqueness of positive solutions to the Lane-Emden problem in dimension two}, J. Math. Pure Appl. 157 (2022), 145--210.

\bibitem{GP2005} M. Grossi, F. Pacella, \emph{On an eigenvalue problem related to the critical exponent}, Math. Z. 250 (2005), 225--256.

\bibitem{GrossiTakahashi}
M. Grossi, F. Takahashi, \emph{Nonexistence of multi-bubble solutions to some elliptic equations on convex domains}, J. Funct. Anal.  259 (2010), 904--917.


%\bibitem{Han1991}
%Z.-C. Han, \emph{Asymptotic approach to singular solutions for nonlinear elliptic equations involving
%critical Sobolev exponent}, Ann. Inst. H. Poincar\'e Anal. Non Lin\'eaire 8 (1991) 159--174.


%\bibitem{Hor} G. Horedt, \emph{Exact solutions of the Lane-Emden equation in N-dimensional space.} Astron. Astrophys. 160 (1986), 148--156.

\bibitem{KS2018} N. Kamburov, B. Sirakov, \emph{Uniform a priori estimates for positive solutions of the Lane-Emden equation in the plane}, Calc. Var. PDEs. 57 (2018), 164.

%\bibitem{LiWeiZou2023} H. Li, J. Wei, W. Zou, \emph{Uniqueness, multiplicity and nondegeneracy of positive solutions to the Lane-Emden problem.} J. Math. Pure Appl. 179 (2023) 1--67.


\bibitem{L94} C. Lin, \emph{Uniqueness of least energy solutions to a semilinear elliptic equations in $\R^2$},  Manuscr. Math. 84 (1994), 13--19.


%\bibitem{Musso1} M. Musso, A. Pistoia, \emph{Multispike solutions for a nonlinear elliptic problem involving the critical Sobolev exponent.} Indiana Univ. Math. J.  51 (2002), 541--579.

%\bibitem{Pohozaev}
%S. Pohozaev, \emph{On the eigenfunctions  of the equation $\Delta u+\lambda u=0$.} (Russian) Dokl. Akad. %Nauk  165, 36--39  (1965).

\bibitem{NY100} E. Noussair, S. Yan,
 The effect of the domain geometry in singular perturbation problems, Proc. London Math. Soc. 76
 (1998), 427--452.

\bibitem{RenWei1} X. Ren, J. Wei,
\emph{On a two-dimensional elliptic problem with large exponent in nonlinearity}, Trans. Amer. Math. Soc. 343 (1994), 749--763.
	
	\bibitem{RenWei2}
	X. Ren,  J. Wei,
	\emph{Single-point condensation and least-energy solutions}, Proc. Amer. Math. Soc. 124
 (1996), 111--120.
	
\bibitem{Rey} O. Rey, \emph{The topological impact of critical points at infinity in a variational problem with lack of compactness: the dimension 3}, Adv. Differential Equations 4 (1999), 581--616.


%\bibitem{Schoen198889}
%R. Schoen, Graduate course in topics of differential geometry, given at Stanford University and Courant Institute, 1988--1989.


%\bibitem{Struwe1984}
%M. Struwe, \emph{A global compactness result for elliptic boundary value problems involving limiting nonlinearities}, Math. Z. 187 (1984) 511--517.


\bibitem{T2019} P. Thizy, \emph{Sharp quantization for Lane-Emden problems in dimension two}, Pac. J. Math. 300 (2019), 491--497.

}
\end{thebibliography}

\end{document}